\newcommand*{\mc}[1]{\mathcal{#1}}
\newcommand*{\ms}[1]{\mathsf{#1}}
\newcommand*{\mo}[1]{\mathbf{#1}}
\newcommand*{\mf}[1]{\mathfrak{#1}}
\newcommand{\N}{\mathbb{N}}
\newcommand{\Nn}{\mathbb{N}_0}
\newcommand{\Q}{\mathbb{Q}}
\newcommand{\R}{\mathbb{R}}
\newcommand{\Rp}{\R_{\geq0}}
\newcommand{\Rpp}{\R_{>0}}
\newcommand{\ball}{\mathrm{B}}
\newcommand{\lcb}{\left\lbrace} 
\newcommand{\rcb}{\right\rbrace} 
\newcommand{\cb}[1]{\lcb #1 \rcb} 
\newcommand{\lab}{\left[} 
\newcommand{\rab}{\right]} 
\newcommand{\ab}[1]{\lab #1 \rab} 
\newcommand{\abOf}[1]{\!\ab{#1}} 
\newcommand{\lb}{\left(} 
\newcommand{\rb}{\right)} 
\newcommand{\br}[1]{\lb #1 \rb} 
\newcommand{\brOf}[1]{\!\br{#1}} 
\newcommand{\absof}[1]{\vert #1 \vert} 
\newcommand{\absOf}[1]{\left\vert #1 \right\vert} 
\newcommand{\normof}[1]{\Vert#1\Vert}
\newcommand{\normOf}[1]{\left\Vert#1\right\Vert}
\newcommand{\ipof}[2]{\langle #1, #2\rangle}
\newcommand{\ipOf}[2]{\left\langle#1\,,\, #2\right\rangle}
\renewcommand{\Pr}{\mathbf{P}} 
\newcommand{\Prof}[1]{\Pr(#1)}
\newcommand{\PrOf}[1]{\Pr\brOf{#1}}
\newcommand{\E}{\mathbf{E}} 
\newcommand{\Eof}[1]{\E[#1]}
\newcommand{\EOf}[1]{\E\abOf{#1}}
\newcommand{\V}{\mathbf{V}} 
\newcommand{\VOf}[1]{\V\abOf{#1}}
\newcommand{\sizedMid}[2]{#1 \, \kern-\nulldelimiterspace\mathopen{}\left| \vphantom{#1}\,#2\right.\mathclose{}\kern-\nulldelimiterspace}
\newcommand{\setByEle}[2]{\cb{\sizedMid{#1}{#2}}}
\newcommand{\setByEleInText}[2]{\{#1 \mid #2\}}
\newcommand{\pr}{^\prime}
\newcommand{\prr}{^{\prime\prime}}
\newcommand{\compl}{^\mathsf{c}}%
\newcommand{\ind}{\mathds{1}}
\newcommand{\indOf}[1]{\ind_{\!#1}}%
\newcommand{\indOfOf}[2]{\ind_{\!#1}\!\brOf{#2}}%
\newcommand{\eqcm}{\,,} 
\newcommand{\eqfs}{\,.}
\newcommand{\dl}{\mathrm{d}} 
\newcommand{\kullback}{\mathrm{K\!L}} 
\DeclareMathOperator*{\argmin}{arg\,min}
\newcommand{\ol}[2]{\overline{#1#2}}
\newcommand{\tran}{\tau}
\newcommand{\dtran}{\tau\pr}
\newcommand{\ddtran}{\tau\prr}
\newcommand{\ddrtran}{\tau^{\prime\oplus}}
\newcommand{\invdtran}{(\tau\pr)^{-1}}
\newcommand{\trans}{\tau_{\sqrt{}}}
\newcommand{\dtrans}{\tau\pr_{\sqrt{}}}
\newcommand{\setcc}{\mc S}
\newcommand{\setcciz}{\mc S_0^+}
\newcommand{\sign}{\mathrm{sign}}
\newcommand{\median}{\mathsf{med}}
\newcommand{\diam}{\mathsf{diam}}
\newcommand{\ld}{^\ominus}
\newcommand{\rd}{^\oplus}
\newcommand{\geodft}[2]{\gamma_{#1\to#2}} 
\newcommand{\Op}{\mathbf{O}_{\Pr}}
\newcommand{\cato}{$\mathsf{CAT}(0)$}
\newcommand{\ballopen}{\ball}
\newcommand{\ballclosed}{\overline{\ball}}
\newcommand{\bowtiechar}{%
	\tikz[baseline=(baseline), yscale=0.15, xscale=0.1]{
		\coordinate (baseline) at (0,-0.62);
		\path[draw=black, line join=round, rounded corners=0.15ex] (1,1) -- (1,-1) -- (0,-0.1) -- (-1, -1) -- (-1,1) -- (0, 0.1) -- cycle;
	}%
}
\newcommand{\bowtieset}{\bowtiechar}
\newcommand{\bowtiesetcompl}{\bowtiechar\compl\!}
\newcounter{sarrow}
\newcommand\xrsquigarrow[1]{%
    \stepcounter{sarrow}%
    \mathrel{\begin{tikzpicture}[baseline= {( $ (current bounding box.south) + (0,-0.5ex) $ )}]
            \node[inner sep=.5ex] (\thesarrow) {$\scriptstyle #1$};
            \path[draw,<-,decorate,
            decoration={zigzag,amplitude=0.7pt,segment length=1.2mm,pre=lineto,pre length=4pt}] 
            (\thesarrow.south east) -- (\thesarrow.south west);
    \end{tikzpicture}}%
}
\DeclareMathOperator{\artanh}{artanh}
\newcommand{\collOf}[1]{\ms{coll}\brOf{#1}}
\newcommand{\collof}[1]{\ms{coll}(#1)}
\newcommand{\suppof}[1]{\ms{supp}(#1)}
\newbox{\myorcidthanksbox}
\sbox{\myorcidthanksbox}{\large\includegraphics[height=1.8ex]{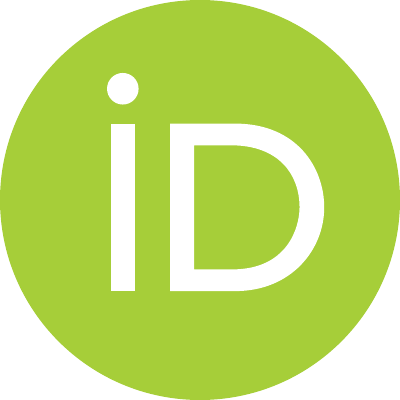}}
\newcommand{\orcidthanks}[1]{%
    \href{https://orcid.org/#1}{\raisebox{-0.5ex}{\usebox{\myorcidthanksbox}}\,#1}}
\def\postBoxSkip{1.0ex}
\def\postBoxSkipCmd{\vskip\postBoxSkip}
\def\preBoxSkip{1.0ex}
\def\preBoxSkipCmd{\vskip\preBoxSkip}
\declaretheoremstyle[
	bodyfont=\normalfont,
	postfoothook={\postBoxSkipCmd},
	preheadhook={\preBoxSkipCmd},
	mdframed={
		backgroundcolor = black!2,
		startcode={},
}]{ruledBoxStyle}
\declaretheoremstyle[
	bodyfont=\normalfont,
	postfoothook={\postBoxSkipCmd},
	preheadhook={\preBoxSkipCmd},
	mdframed={
		backgroundcolor=white,
}]{ruledBoxStyleWhite}
\declaretheoremstyle[
	bodyfont=\normalfont,
	postfoothook={\postBoxSkipCmd},
	preheadhook={\preBoxSkipCmd},
	mdframed={
		backgroundcolor=black!2,
		linecolor = black!2,
		tikzsetting = {
			draw = black,
			line width = 2pt,%
			dashed,%
			dash pattern = on 10pt off 3pt
		},
}]{dashedBoxStyle}
\declaretheoremstyle[
	bodyfont=\normalfont,
	postfoothook={\postBoxSkipCmd},
	preheadhook={\preBoxSkipCmd},
	mdframed={
		linecolor = white,
		startcode={},
		tikzsetting = {
			draw = black,
			line width = 1pt,%
			loosely dotted,
		},
	}
]{dashedStyle}
\declaretheoremstyle[
	bodyfont=\normalfont,
	postfoothook={\postBoxSkipCmd},
	preheadhook={\preBoxSkipCmd},
	mdframed={
		linecolor = black,
		innerlinewidth=1pt,outerlinewidth=1pt,
		middlelinewidth=1pt,
		linecolor=black,middlelinecolor=white,
		startcode={},
	}
]{doubleStyle}
\declaretheoremstyle[
	bodyfont=\normalfont,
	postfoothook={\postBoxSkipCmd},
	preheadhook={\preBoxSkipCmd},
	mdframed={
		backgroundcolor = black!4,
		linecolor = black!4,
		startcode={},
}]{boxStyle}
\declaretheoremstyle[
	headfont=\normalfont\itshape,
	notefont=\normalfont\itshape,
	notebraces={}{},
	bodyfont=\normalfont,
	qed=\qedsymbol,
	numbered=no,
	headindent=0pt,
	postheadspace=1ex,
	name={Proof},
	postheadhook={},
	mdframed={
		hidealllines = true,
		innerrightmargin = 0pt,
		innerleftmargin = 0pt,
		innertopmargin = 0pt,
		innerbottommargin = 0pt,
		leftmargin = 0pt,
		rightmargin = 0pt,
	}
]{proofStyle}
\declaretheoremstyle[
	bodyfont=\normalfont,
	postfoothook={\postBoxSkipCmd},
	preheadhook={\preBoxSkipCmd},
	mdframed={
		backgroundcolor = white,
		linecolor = black,
		startcode={},
		leftline = false,
		rightline = false,
}]{tobBottomStyle}
\declaretheoremstyle[
bodyfont=\normalfont,
]{standardStyle}
\declaretheorem[style=ruledBoxStyle,name=Definition, numberwithin=section]{definition}
\declaretheorem[style=ruledBoxStyle,name=Lemma,numberlike=definition]{lemma}
\declaretheorem[style=ruledBoxStyle,name=Proposition,numberlike=definition]{proposition}
\declaretheorem[style=ruledBoxStyle,name=Theorem,numberlike=definition]{theorem}
\declaretheorem[style=ruledBoxStyle,name=Corollary,numberlike=definition]{corollary}
\declaretheorem[style=boxStyle,name=Remark,numberlike=definition]{remark}
\declaretheorem[style=boxStyle,name=Notation,numberlike=definition]{notation}
\declaretheorem[style=dashedStyle,name=Example,numberlike=definition]{example}
\title{Transformed Fr\'echet Means for Robust Estimation in Hadamard Spaces}
\date{}
\author[1,2]{Christof Sch\"otz\thanks{christof.schoetz@tum.de, \orcidthanks{0000-0003-3528-4544}}}
\affil[1]{Technical University of Munich, Germany; Munich Climate Center; TUM School of Engineering and Design, Department of Aerospace and Geodesy, Earth System Modelling Group}
\affil[2]{Potsdam Institute for Climate Impact Research, Germany; Artificial Intelligence Group}
\begin{document}
\maketitle
\begin{abstract}
	We establish finite-sample error bounds in expectation for transformed Fr\'echet means in Hadamard spaces under minimal assumptions. Transformed Fr\'echet means provide a unifying framework encompassing classical and robust notions of central tendency in metric spaces. Instead of minimizing squared distances as for the classical 2-Fr\'echet mean, we consider transformations of the distance that are nondecreasing, convex, and have a concave derivative. This class spans a continuum between median and classical mean. It includes the Fr\'echet median, power Fr\'echet means, and the (pseudo-)Huber mean, among others. We obtain the parametric rate of convergence under fewer than two moments, and a subclass of estimators exhibits a breakdown point of 1/2. Our results apply in general Hadamard spaces---including infinite dimensional Hilbert spaces and nonpositively curved geometries---and yield new insights even in Euclidean settings.

\end{abstract}
\tableofcontents
\section{Introduction}
\subsection{The Transformed Fr\'echet Mean}
%
The \emph{transformed Fr\'echet mean} (or $\tran$-Fr\'echet mean) \cite{varinequ} provides a unifying framework for classical and robust notions of centrality. Given a metric space $(\mc Q,d)$, a transformation $\tran\colon\Rp\to\R$, and a $\mc Q$-valued random variable $Y$, it is defined as any element
\begin{equation}
    m \in \argmin_{q\in\mc Q}\Eof{\tran(\ol Yq)}
    \eqcm
\end{equation}
where we write $\ol yq := d(y,q)$. Choosing $\tran(x)=x^{2}$ yields the classical $2$-Fr\'echet mean, while $\tran(x)=x$ gives the Fr\'echet median; in Euclidean spaces these reduce to the expectation and the geometric (or spatial) median, respectively.

We consider a large class of transformed Fr\'echet means where $\tran$ is a nondecreasing, convex function with concave derivative. Concavity of $\dtran$ caps growth at the quadratic rate and controls robustness: mass at $Y$ pulls at a candidate $q$ with strength $\dtran(\ol Yq)$, so concavity limits the relative pull of far-away mass ($\ol Yq$ large). Examples of such transformations include $\tran(x) = x^\alpha$ with $\alpha \in [1,2]$, the Huber loss $\tran(x) = x^2 \ind_{[0,1)}(x) + (2x-1) \ind_{[1,\infty)}(x)$ \cite{huber64}, the pseudo-Huber loss $\tran(x) = \sqrt{1+x^2}-1$ \cite{charbonnier94}, and $\tran(x) = \log(\cosh(x))$ \cite{green90}. The resulting means are in some sense between median and expectation and accordingly exhibit robustness to heavy tails and, in some cases, to contamination, as we show below. 

The transformed Fr\'echet mean $m$ is estimated by its empirical version $m_n$ based on $n\in\N$ independent and identically distributed (iid) copies $Y_1, Y_2, \dots, Y_n$ of $Y$, which is
\begin{equation}
    m_n \in \argmin_{q\in\mc Q} \sum_{i=1}^n \tran(\ol{Y_i}q)
    \eqfs
\end{equation}
As $\tran$ is fixed throughout a given context, we suppress the dependence of $m$ and $m_n$ on $\tran$ in our notation.

One strand of the robust statistics literature (see, e.g., \cite{Lugosi2019, Hubert2008}) takes as its target a functional whose plug-in estimator is itself not robust, such as the $2$-Fr\'echet mean, and achieves robustness by replacing the plug-in estimator with a more sophisticated procedure, often at the cost of introducing tuning parameters. In contrast, our target of estimation is the $\tran$-Fr\'echet mean $m$ itself, and we demonstrate robustness properties of its plug-in estimator $m_n$.

Treating the $\tran$-Fr\'echet mean as the estimand can be advantageous. On the one hand, for suitable transformations $\tran$, it remains estimable at the parametric rate even when $\Eof{\ol Ym^2}=\infty$, as we show below. In particular, transformations with bounded derivative ($\lim_{x\to\infty}\dtran(x)<\infty$) achieve this rate under extremely weak moment conditions. By contrast, without a finite second moment, the parametric rate is unattainable for the $2$-Fr\'echet mean, regardless of the estimator employed \cite[Theorem~3.1]{Devroye2016}. On the other hand, $\tran$-Fr\'echet means with unbounded influence ($\lim_{x\to\infty}\dtran(x)=\infty$), such as those generated by $\tran(x)=x^\alpha$ with $1<\alpha<2$, still retain quantitative information about the tails of the distribution of $Y$, unlike the (Fr\'echet) median. They are therefore particularly well suited for heavy-tailed settings in which the notion of central tendency is expected to reflect tail behavior.

Beyond these favorable robustness properties, the following facts support treating $m$ as an estimand in its own right. First, in a general metric space the $2$-Fr\'echet mean is less canonical than in Euclidean space: assuming $\tran(0)=0$, our conditions imply that $\sqrt{\tran}\circ d$ is a metric on $\mc Q$ inducing the same topology as $d$, and the $\tran$-Fr\'echet mean in $(\mc Q,d)$ is exactly the $2$-Fr\'echet mean in $(\mc Q,\sqrt{\tran}\circ d)$. Second, in $\R^{k}$ the empirical $\tran$-Fr\'echet mean is the maximum likelihood estimator of the location parameter $m$ in the family with Lebesgue density proportional to $\exp(-\tran(\normof{y-m})/b)$, for every value of the scale parameter $b>0$; under standard regularity conditions it is asymptotically efficient in this model \cite[Theorem~5.39 and Chapter~8]{Vaart1998}.
\subsection{Hadamard Spaces}
Our results are set in the framework of \emph{Hadamard spaces}, that is, complete geodesic metric spaces (each pair of points is connected by a geodesic) of nonpositive curvature in the sense of Alexandrov (geodesic triangles are at least as ``thin'' as their Euclidean counterparts). They are also called \emph{global NPC spaces} or \emph{complete \cato{} spaces}. A Hadamard space $(\mc Q, d)$ can be defined as a complete metric space with the following property: For all $y_0, y_1 \in \mc Q$ there is an $m\in\mc Q$ such that 
\begin{equation}
    \frac12 \ol{y_0}q^2 + \frac12 \ol{y_1}q^2 - \frac14 \ol{y_0}{y_1}^2 \geq  \ol qm^2
\end{equation}
for all $q\in\mc Q$. In this case, $m$ is the midpoint between $y_0$ and $y_1$. More details on the geometry of Hadamard spaces can be found in the textbooks \cite{burago01,bacak14b}.
Prominent examples of Hadamard spaces include:
\begin{itemize}
    \item Euclidean and, more generally, Hilbert spaces \cite[Prop.\ 3.5]{sturm03};
    \item Hadamard manifolds, i.e., complete, simply connected Riemannian manifolds with nonpositive sectional curvature \cite[Prop.\ 3.1]{sturm03};
    \item $\R$-trees (also called metric trees), i.e., geodesic spaces containing no subset homeomorphic to a circle \cite{evans2008probability};
    \item open books, i.e., disjoint copies of a half-space glued together along their spines, \cite{Hotz2013};
    \item the space of phylogenetic trees with the Billera--Holmes--Vogtmann metric \cite{billera01};
    \item the cone of symmetric positive definite matrices with the affine-invariant metric, for which the $2$-Fr\'echet mean coincides with the matrix geometric mean \cite{bhatia06};
    \item tangent cones of Hadamard spaces, suitably completed \cite[Thm.\ 1.2.17]{bacak14b}, \cite[Thm.\ 9.1.44]{burago01}.
\end{itemize}
The class of Hadamard spaces is closed under a variety of natural constructions, including passing to closed convex subsets, taking isometric images, forming products and $L^2$-spaces of Hadamard-space-valued maps, and certain gluings \cite[Sec.\ 3]{sturm03}. Importantly, they are not required to be finite dimensional (e.g., in the Hausdorff sense) or separable. These examples and closure properties illustrate the broad applicability of the Hadamard space framework.

\subsection{Results}
\subsubsection{Power Fr\'echet Means}
A particularly important subclass of transformed Fr\'echet means arises when 
\begin{equation}
    \tran(x) = x^\alpha, \qquad \alpha \in\Rpp
    \eqcm
\end{equation}
in which case $\tran$-Fr\'echet means are known as \emph{power Fr\'echet means} or \emph{$\alpha$-Fr\'echet means}. 
We restrict $\alpha \in (1,2]$ and denote the $\alpha$-Fr\'echet mean as $m = \argmin_{q\in\mc Q} \Eof{\ol Yq^\alpha}$ with its empirical counterpart $m_n = \argmin_{q\in\mc Q} \sum_{i=1}^n \ol{Y_i}q^\alpha$. These minimizers are unique if $\Eof{\ol Yq^\alpha} < \infty$ for one (and hence all) $q\in\mc Q$ \cite[Corollary 5.8]{varinequ}. 
We aim for an upper bound on the expected loss, but at the same time want to show robustness against heavy tails. Here, heavy tails refers to the case $\Eof{\ol Ym^2} = \infty$. Unfortunately, this implies $\Eof{\ol m{m_n}^2} = \infty$, so that the square loss does not allow for useful convergence-rate results.
Instead, setting $\sigma_\varphi := \Eof{\ol Ym^\varphi}$ for $\varphi\in\R$, $\phi := \frac{2-\alpha}{\alpha-1}$, and assuming $\sigma_\alpha < \infty$, we obtain 
\begin{equation}\label{eq:intro:pw:simple}
    \EOf{\ol{m}{m_n}^\alpha}^{\frac1\alpha}
    \leq
    c_\alpha \br{
        \sigma_{\alpha-1}^{\phi}\, \sigma_{2\alpha-2}^{\frac12}\, n^{-\frac12}
        +
        \sigma_\alpha^{\frac1\alpha}\, n^{-\frac{1}{2(\alpha-1)}}
    }
    \qquad\text{for all } n\in\N
\end{equation}
(implied by \cref{cor:power:stdloss:sharp}), 
where we use the convention here and in the following that $c_\alpha\in\Rpp$ is a constant depending only on $\alpha$ and each occurrence of $c_\alpha$ may refer to a different value.
Furthermore, in every Hadamard space containing more than one point, there are distributions such that $\sigma_{\alpha-2}, \sigma_{2\alpha-2} \in \Rpp$ and
\begin{equation}
    \EOf{\ol m{m_n}^{\alpha}}^{\frac1\alpha}
    \geq 
    c_\alpha \sigma_{\alpha-2}^{-1} \sigma_{2\alpha-2}^{\frac12}  n^{-\frac12}
    \qquad
    \text{for all sufficiently large } n\in\N
    \eqcm
\end{equation}
see \cref{cor:lower:alpha}.
Thus, \eqref{eq:intro:pw:simple} is asymptotically optimal up to a constant and the gap between $\sigma_{\alpha-2}^{-1}$ and $\sigma_{\alpha-1}^{\frac{2-\alpha}{\alpha-1}}$, where Jensen's inequality yields $\sigma_{\alpha-2}^{-1} \leq \sigma_{\alpha-1}^{\frac{2-\alpha}{\alpha-1}}$.

\Cref{thm:power:main} gives a slightly sharper finite-sample bound that implies
\eqref{eq:intro:pw:simple}. The loss it uses is the quantity by which the variance inequality (\cref{thm:infdtr:vi})
bounds the excess risk from below, and hence the quantity our proof technique
controls directly. It penalizes large values of $\ol m{m_n}$ only with power
$\alpha$ while retaining an $L^2$-type error for small values of $\ol m{m_n}$:
\begin{equation}
	\EOf{\ol m{m_n}^2 \br{\sigma_{\alpha-1} + \ol m{m_n}^{\alpha-1}}^{-\phi}}
	\leq
	c_\alpha n^{-1} \br{\sigma_{2\alpha-2}	\sigma_{\alpha-1}^{\phi} + n^{-\phi}\sigma_{\alpha}}
    \qquad\text{for all } n\in\N
    \eqfs
\end{equation}
Strikingly, the only assumptions required for this parametric rate are the moment condition $\sigma_\alpha < \infty$ (i.e., less than two moments if $\alpha<2$) and the Hadamard structure of $\mc Q$. Moreover, the highest-order moment $\sigma_\alpha$ appears in the rate multiplied by a factor that decreases with $n$, so that for sufficiently large samples the risk is controlled primarily by the lower-order moment $\sigma_{2\alpha-2}$. This highlights the robustness of power Fr\'echet means, making them particularly attractive in settings with heavy-tailed data.

While $\ol{m}{m_n} = \Op(n^{-1/2})$ is the optimal rate for general distributions (\cref{cor:lower:alpha}), we show that convergence is faster for distributions with large small ball probabilities near $m$ (\cref{cor:fast:entropy:power}): Assume there are $b\in\Rpp$ and $\beta \in (\alpha, 2)$ such that 
\begin{equation}\label{eq:intro:smallball}
    \PrOf{\ol Ym \leq x} \geq b x^{\beta-\alpha}
\end{equation}
for all sufficiently small $x\in\Rpp$. Then we have $\ol{m}{m_n} = \Op(n^{-\frac{1}{2(\beta-1)}})$ and this rate is optimal for distributions satisfying \eqref{eq:intro:smallball} and $\sigma_{2\alpha-2} < \infty$ (\cref{cor:lower:alpha}). For the fast rates, in contrast to our other results, we require the space $\mc Q$ to be finite dimensional in some sense, as we use a different proof technique. Apart from this requirement, fast rates can occur in any Hadamard space whenever $\alpha<2$.
\subsubsection{General Transformations}
For general nondecreasing, convex transformations $\tran$ with concave derivative $\dtran$, we distinguish two cases:

If $\lim_{x\to\infty}\dtran(x) = \infty$, as in the case of $\tran(x)=x^\alpha$, $\alpha \in (1, 2]$, we essentially require the moment $\Eof{\dtran(\ol Ym)^2/\ddtran(\ol Ym)}$ to be finite to obtain the parametric rates of convergence (\cref{cor:infdtr:improved})
\begin{equation}
    \tran^{-1}\brOf{\EOf{\tran(\ol m{m_n})}} = \mo O\brOf{n^{-\frac12}}
    \qquad\text{and}\qquad 
    \EOf{\ol m{m_n}} = \mo O\brOf{n^{-\frac12}}
    \eqfs
\end{equation}
The aforementioned moment enters the error bound multiplied by a factor decreasing with $n$, and the bound is dominated by $\Eof{\dtran(\ol Ym)^2}$ for large $n$. Thus, the $\tran$-Fr\'echet mean is robust to heavy-tailed distributions.

If $\lim_{x\to\infty}\dtran(x) < \infty$ and $\ddtran(x)>0$ for all $x\in\Rpp$, as in the case of the pseudo-Huber loss $\tran(x)=\sqrt{1 + x^2}-1$, we require a minimal moment condition $\Eof{\ol Ym^\xi} < \infty$ for some $\xi \in\Rpp$, however small, to establish a parametric rate of convergence. Furthermore, we establish exponential tail bounds for estimation error in \cref{thm:finiteD:convex,thm:tailbound}. This allows us to obtain a bound of the mean squared error even though the second moment $\Eof{\ol Ym^2}$ may be infinite,
\begin{equation}
    \EOf{\ol m{m_n}^2} = \mo O\brOf{n^{-1}}
\end{equation}
(\cref{thm:posddrtran:main}).
Moreover, $\lim_{x\to\infty}\dtran(x) < \infty$ implies that the $\tran$-Fr\'echet mean has a breakdown point of $1/2$ (\cref{thm:breakdown}); hence, it is robust to heavy-tailed distributions and contaminated data.
\subsubsection{Fr\'echet Median}
For the Fr\'echet median, i.e., $\tran(x)=x$, we obtain under some conditions
\begin{equation}
	\EOf{\ol m{m_n}^2} = \mo O\brOf{n^{-1}}
    \eqcm
\end{equation}
(\cref{thm:median:main}).
We make a minimal moment requirement $\Eof{\ol Ym^\xi}<\infty$ for some $\xi\in\Rpp$, however small. Furthermore, we assume that $Y$ is not concentrated on a so-called bow tie
(\cref{def:bowtie}). In Hilbert spaces and Hadamard manifolds, this reduces
to requiring that $Y$ not be concentrated on a geodesic
(\cref{cor:median:notonageodesic}), which in the linear case is the condition
commonly imposed for the spatial median \cite{Chakraborty2014}.
Furthermore, we show exponential tail bounds for the Fr\'echet median, see \cref{thm:median:tailbound}. In particular, for $r \in\Rpp$, \cref{cor:median:tailbound} implies
\begin{equation}
	\PrOf{\ol m{m_n} > 4 r} \leq  \br{2\Prof{\ol Ym > r}^{\frac13}}^n
	\eqfs
\end{equation}
\subsection{Proof Technique}
The proofs of the convergence rate results follow the ideas of algorithm stability, which have been applied in the context of Fr\'echet means in \cite{Escande2024,brunel2025}. In contrast to chaining-based proofs \cite{schoetz19,Ahidar2020}, this allows us to obtain results not cursed by dimension, i.e., we do not require any notion of dimension to be finite or covering numbers to be bounded in some way.

For the algorithm stability proof, we build on two key results from prior work: the quadruple inequality \cite{quadruple} and the variance inequality \cite{varinequ} for transformations $\tran$ in Hadamard spaces. Both results assume that $\tran$ is nondecreasing and convex with a concave derivative. Both are highly nontrivial, and both are essential to our arguments.

Beyond these two properties, our proofs go substantially beyond classical algorithm stability arguments and the corresponding results for the $2$-Fr\'echet mean \cite{Escande2024,brunel2025}. The reason is that the variance inequality for general $\tran$ carries a distribution-dependent factor, which is one of the main technical obstacles to deriving results in expectation.
\subsection{Related Literature}
The $2$-Fr\'echet mean (also called barycenter or center of mass) was introduced in \cite{frechet48}; a foundational treatment in Hadamard spaces can be found in \cite{sturm03}.
State-of-the-art strong laws of large numbers for power Fr\'echet means ($\tran(x)=x^\alpha$) in general metric spaces are derived in \cite{jaffe2024}, while laws of large numbers for transformed Fr\'echet means are established in \cite{schoetz22}.

For rates of convergence, \cite{Petersen2019,schoetz19,Ahidar2020} use approaches related to chaining \cite{talagrand21} resulting in bounds that are cursed by dimension, meaning that they slow down in infinite dimensions. This is suboptimal, since elementary calculations in Hilbert spaces give dimension-free rates for the arithmetic mean. While the chaining-based results apply in great generality (apart from the dimension requirement), stricter geometric assumptions allow the construction of a tangent cone with Hilbert space structure, which yields convergence rates for the $2$-Fr\'echet mean that are not cursed by dimension \cite{LeGouic2023}. Furthermore, proofs based on algorithm stability \cite{Escande2024,brunel2025} reduce assumptions further while retaining the non-cursed convergence rates for $2$-Fr\'echet means.

While Fr\'echet means typically exhibit a parametric rate of convergence, in some settings the geometry of the underlying metric space induces slower rates (smeariness) \cite{eltzner19} or a positive probability of perfect estimation with finite samples (stickiness) \cite{Lammers2023}.

The $1$-Fr\'echet mean or Fr\'echet median ($\tran(x)=x$) generalizes the notion of spatial median (also called geometric median) in normed spaces. In Euclidean spaces, the spatial median is well understood \cite{mottonen10,Minsker2024}, and many results extend to Banach spaces \cite{Kemperman1987,Chakraborty2014,minsker15,romon2023quantiles}. Furthermore, the median on Riemannian manifolds is studied in \cite{yang10}. Practical computation of medians and means in Hadamard spaces is addressed in \cite{Bacak2014}, and the underlying proximal point methodology \cite{Bacak2013} adapts to transformed Fr\'echet means with nondecreasing convex $\tran$.

In the context of robust statistics in metric spaces, median-of-means estimators were examined in \cite{Yun2023,kim2025}. Like the transformation-function approach considered here, these estimators balance between the median and the classical mean. Another example of such a trade-off is the trimmed Fr\'echet mean \cite{oliveira2025,bartl2025}, which has been shown to be minimax optimal under adversarial sample contamination.

Beyond the power Fr\'echet means, further forms of transformed Fr\'echet means have been studied: \cite{romon2023convex} consider convex transformations in metric trees---a specific class of Hadamard space; \cite{lee2025} consider the Huber and pseudo-Huber losses on Riemannian manifolds; and \cite{Brizzi2026} consider Fr\'echet means with convex transformations in the Wasserstein space. Fundamental properties such as existence and uniqueness of transformed Fr\'echet means in Hadamard spaces were studied in \cite{varinequ}.

Beyond the $2$-Fr\'echet mean, no rate-of-convergence results under comparably minimal assumptions are available in Hadamard spaces, and several results in this paper are new even in Euclidean spaces, e.g., the finite-sample bound for power Fr\'echet means.
\subsection{Outline}
The remainder of the article is organized as follows. In \cref{sec:preliminaries}, we introduce the necessary background on transformed Fr\'echet means, the class of transformations under consideration, and the geometry of Hadamard spaces. \Cref{sec:unbounded} establishes finite-sample error bounds with explicit constants for power Fr\'echet means and, more generally, for transformations satisfying $\lim_{x\to\infty}\dtran(x)=\infty$. The complementary class of transformations, characterized by $\lim_{x\to\infty}\dtran(x)<\infty$, is studied in \cref{sec:bounded}, where we derive exponential tail bounds and corresponding finite-sample error bounds. This section also covers the special case $\tran(x)=x$, which yields the Fr\'echet median. Finally, \cref{sec:optimal} discusses settings in which convergence rates exceeding the parametric rate are attainable and examines the optimality of the obtained results.

\section{Preliminaries}\label{sec:preliminaries}
Throughout the article, we use the convention $0^0:=1$.
\subsection{Nondecreasing, Convex Functions with Concave Derivative}\label{sec:ncfcd}
\begin{definition}\label{def:tran}
    Let $\setcc$ be the set of nondecreasing convex functions $\tran\colon\Rp\to\R$ that are differentiable on $\Rpp$ with concave derivative $\dtran$. We extend the domain of $\dtran$ to $\Rp$ by setting  $\dtran(0) := \lim_{x\searrow0} \dtran(x)$, which exists, as $\dtran$ is nonnegative and nondecreasing.
\end{definition}
Although convex functions need not be differentiable everywhere, they are differentiable Lebesgue almost everywhere. In this work we restrict attention to the subclass for which the derivative exists on all of $\Rpp$, which is sufficient for our purposes.
For technical reasons it is often more convenient to work with $\setcciz\subset\setcc$, the subset of strictly increasing functions $\tran\in\setcc$ with $\tran(0) = 0$,
\begin{align}
    \setcciz
    &:=
    \setByEle{\tran\in\setcc}{\tran(0) = 0 \text{ and } \forall x\in\Rpp\colon \dtran(x)>0}
    \\&=
    \setByEle{x \mapsto \tran(x)-\tran(0)}{\tran\in\setcc}\setminus\cb{x\mapsto 0}
    \eqfs
\end{align}
This is not restrictive, as we essentially only exclude constant functions.
To be able to talk about derivatives of $\tran\in\setcc$ at $0$ and second derivatives, let us recall the definition of the one-sided derivatives.
\begin{notation}\label{nota:leftrightderiv}
    Let $A\subset \R$ and $f\colon A \to \R$. Let $x_0\in A$ such that there is $\epsilon>0$ such that $(x_0-\epsilon, x_0] \subset A$. Then denote the left derivative of $f$ at $x_0$ as $f\ld(x_0) := \lim_{x \nearrow x_0} \frac{f(x) - f(x_0)}{x-x_0}$ if the limit exists. Similarly, for $x_0\in A$ with $\epsilon>0$ such that $[x_0, x_0+\epsilon) \subset A$, we denote the right derivative of $f$ at $x_0$ as $f\rd(x_0) := \lim_{x \searrow x_0} \frac{f(x) - f(x_0)}{x-x_0}$ if the limit exists.
\end{notation}
Let us note some basic continuity properties of functions in $\setcc$ and existence of one-sided derivatives. See \cite{varinequ} for proofs.
\begin{lemma}\label{lmm:tran:continuous}
    Let $\tran\in\setcc$. Then
    \begin{enumerate}[label=(\roman*)]
        \item $\tran\rd(0)\in \Rp$ exists and $\tran\rd(0) = \dtran(0)$;
        \item $\ddrtran(x) \in \Rp$ exists for all $x\in\Rpp$, $\ddrtran(0) \in [0,\infty]$ exists, and $\ddrtran(0) = \lim_{x\searrow0} \ddrtran(x)$;
		\item $\tran\colon\Rp\to\R$ and $\dtran\colon\Rp\to\Rp$ are continuous and nondecreasing, and $\ddrtran\colon\Rp\to[0,\infty]$ is nonincreasing.
    \end{enumerate}
\end{lemma}
Further important properties of the functions $\tran \in \setcc$ are listed in \cref{ssec:qi} and \cref{sec:tran}. Moreover, this function class is studied in \cite{Pinelis2015} in the context of von Bahr-Esseen bounds for martingales.
\subsection{Geometry}
We state some basic definitions regarding geodesics and convexity in metric spaces and define the bow tie set, which we will use in the context of Fr\'echet medians.
Let $(\mc Q, d)$ be a nonempty metric space and denote $\ol qp := d(q,p)$ for $q,p\in\mc Q$.
\begin{definition}\label{def:geodesic}
	Let $I\subset \R$ be convex.
	\begin{enumerate}[label=(\roman*)]
		\item A function $\gamma\colon I \to \mc Q$ is called \emph{geodesic} if and only if
		\begin{equation}
			\ol{\gamma(r)}{\gamma(t)} =  \ol{\gamma(r)}{\gamma(s)} + \ol{\gamma(s)}{\gamma(t)}
		\end{equation}
		for all $r,s,t\in I$ with $r<s<t$.
		\item Let  $\gamma\colon I \to \mc Q$ be a geodesic. If there is $L\in\Rp$ such that $\ol{\gamma(s)}{\gamma(t)} = L\absof{s-t}$ for all $s,t\in I$, then the geodesic is said to have \emph{constant speed}. If $L = 1$, we call $\gamma$ a \emph{unit-speed geodesic}.
		\item
		The metric space $(\mc Q, d)$ is called \emph{unique geodesic space}, if and only if each pair of points $(q,p) \in \mc Q^2$ is connected by a unique unit-speed geodesic $\geodft qp \colon [0, \ol qp] \to \mc Q$ so that $\geodft qp(0) = q$ and $\geodft qp(\ol qp) = p$.
	\end{enumerate}
\end{definition}
Hadamard spaces are unique geodesic spaces. Next, we define the bow tie, which was introduced and illustrated in \cite{varinequ} for the study of the Fr\'echet median in general Hadamard spaces. \Cref{fig:bowtie} illustrates the bow tie in $\R^2$.
\begin{figure}
    \definecolor{myOrange}{HTML}{F16E04}
    \definecolor{myPurple}{HTML}{A152D6}
    \definecolor{metricSpacepColor}{HTML}{000000}
    \begin{center}
        \begin{tikzpicture}[blend group=normal]
            \fill[black!10] (-6,-3) rectangle (6,3);
            \draw[step=1,white,line width=0.3pt] (-6,-3) grid (6,3);
            \draw[draw=none,pattern={Dots[radius=1pt,distance=5pt,xshift=2.5pt,yshift=2.5pt]},pattern color=myPurple] (-6,-2) -- (6,2) -- (6,-2) -- (-6,2) -- cycle;
            \draw[draw=none,pattern={Dots[radius=1pt,distance=5pt]},pattern color=myOrange] (-6,-8/3) -- (6,4/3) -- (6,-4/3) -- (-6,8/3) -- cycle;
            \draw[line width=0.5pt,myPurple] (-6,-2) -- (6,2);
            \draw[line width=0.5pt,myPurple] (6,-2) -- (-6,2);
            \draw[line width=0.5pt,myOrange] (-6,-8/3) -- (6,4/3);
            \draw[line width=0.5pt,myOrange] (6,-4/3) -- (-6,8/3);
            \draw[line width=1pt] (-6,0) -- (6,0);
            \fill (0,0) circle (2pt);
            \fill (2,0) circle (2pt);
            \node[above,fill=black!10,yshift=2pt, inner sep=1pt] at (-5,0) {$\gamma$};
            \node[below,fill=black!10,yshift=-2pt, inner sep=1pt] at (0,0) {$q$};
            \node[below,fill=black!10,yshift=-2pt, inner sep=1pt] at (2,0) {$p$};
            \node (bowtie) at (1,2) {$\bowtieset(q, p, w)$};
            \draw[->,>=stealth] (bowtie) -- (-2.5,1.6);
            \draw[->,>=stealth] (bowtie) -- (-0.8,1);
            \draw[->,>=stealth] (bowtie) -- (1,0.5);
            \draw[->,>=stealth] (bowtie) -- (2.8,1);
            \draw[->,>=stealth] (bowtie) -- (4.5,1.6);
        \end{tikzpicture}
        \caption{Visualization of the bow tie 
            in $\R^2$ between the knots $q = (0,0)$ and $p = (2,0)$ with \emph{widening} $w =  10^{-\frac12}$. The Euclidean plane is depicted as a gray area with unit grid. Shown in black is the geodesic (extended) geodesic $\gamma = \geodft qp\colon\R\to\R^2$ with $\gamma(0) = q$ and $\gamma(\ol qp) = p$. As we are in a Euclidean space, we have $\ol y\gamma\rd(0) = -\cos(\angle(y,q,p))$, where $\angle(y,q,p)$ is the angle at $q$ in the triangle with corners $y,q,p$. Thus, the purple area $\setByEleInText{y\in\R^2}{\ol y\gamma\rd(0)^2 \geq 1 - w^2}$ consists of the points $y$ that fulfill $|\sin(\angle(y,q,p))| \leq w = 10^{-\frac12}$; equivalently the geodesics $\geodft qy$ have absolute slopes $|\tan(\angle(y,q,p))| \leq \sqrt{w^2/(1-w^2)} = 1/3$. The same characterization at the other knot, $\ol y\gamma\ld(\ol qp) = \cos(\angle(y,p,q))$, yields the set $\setByEleInText{y\in\R^2}{\ol y\gamma\ld(\ol qp)^2 \geq 1 - w^2}$, shown as the orange dotted area. The bow tie 
            is the union of both dotted areas.}\label{fig:bowtie}
    \end{center}
\end{figure}
\begin{definition}\label{def:bowtie}
	Assume $\mc Q$ is a unique geodesic space.
	Let $q,p\in\mc Q$ with $q\neq p$.
	The \emph{bow tie} between the \emph{knots} $q$ and $p$ with \emph{widening} $w \in [0,1]$ is the set
	\begin{equation}
		\bowtieset(q, p, w) := \setByEle{y \in \mc Q}{\max\brOf{\ol y{\geodft qp}\rd(0)^2, \ol y{\geodft qp}\ld(\ol qp)^2} \geq 1-w^2}
		\eqfs
	\end{equation}
	Furthermore, set $\bowtieset(q, q, w) := \{q\}$ for all $q\in\mc Q$ and $w \in [0,1)$ and $\bowtieset(q, q, 1) := \mc Q$.
\end{definition}
The notion of convexity can be transferred to Hadamard spaces, see, e.g., \cite[chapter 2]{bacak14b}. We use the term \emph{convex} here, but some authors prefer \emph{geodesically convex} in this context.
\begin{definition}\label{def:convex}
	Assume $\mc Q$ is a unique geodesic space.
	\begin{enumerate}[label=(\roman*)]
		\item A set $A\subset \mc Q$ is called \emph{convex} if and only if, for any $q,p\in A, q\neq p$, we have $\geodft qp\subset A$.
		\item A function $f\colon\mc Q\to\R$ is called \emph{convex} if and only if, for any $q,p\in\mc Q, q\neq p$, we have $f \circ \geodft qp$ is convex.
	\end{enumerate}
\end{definition}
\begin{notation}
    For a metric space $\mc Q$, a point $p\in\mc Q$, and $r \in \Rp$, denote the open and closed ball with center $p$ and radius $r$ as 
    \begin{equation}
        \ballopen(p, r) := \setByEleInText{q\in\mc Q}{\ol qp < r}\quad\text{and}\quad
        \ballclosed(p, r) := \setByEleInText{q\in\mc Q}{\ol qp \leq r}
        \eqcm\quad\text{respectively}\eqfs
    \end{equation}
\end{notation}
\subsection{Basic Setup}\label{ssec:prelim:setup}
Throughout the remainder of the paper, unless explicitly stated otherwise, we assume the following setup: Let $(\mc Q, d)$ be a Hadamard space. For $q,p\in\mc Q$, we denote $\ol{q}{p} := d(q, p)$. This metric space is equipped with its Borel-$\sigma$-algebra. Let $\Pr$ be a probability measure on a measurable space $\Omega$. The expectation of measurable functions $X \colon \Omega \to \R$ is denoted as $\Eof{X}$ if it exists. Let $Y$ be a measurable function $Y \colon \Omega \to \mc Q$, i.e., a $\mc Q$-valued random variable. We always assume that there is a separable set $\mc Y \subset \mc Q$ such that $\Prof{Y\in \mc Y} = 1$. Let $n\in\N$ and let the samples $Y_1, Y_2, \dots, Y_n$ be independent and identically distributed copies of $Y$.
Let $\tran \in \setcciz$. Let $o\in\mc Q$ be an arbitrary reference point. Assume $\Eof{\dtran(\ol Yo)} < \infty$. Let the population and sample $\tran$-Fr\'echet means be
\begin{equation}
	m \in \argmin_{q\in\mc Q} \EOf{\tran(\ol {Y}q)-\tran(\ol {Y}o)}\eqcm\qquad
	m_n \in \argmin_{q\in\mc Q} \sum_{j=1}^n \tran(\ol {Y_j}q)
	\eqcm
\end{equation}
where $m$ does not depend on the choice of $o$, see \cref{ssec:tfm}.
\begin{remark}
    We follow \cite[beginning of Section 4]{sturm03} and avoid measurability issues by the requirement that the distribution of $Y$ is concentrated on a separable set $\mc Y$.
    Results based on this requirement are, in some sense, equivalent to results based on the condition that the whole space $\mc Q$ is separable:
    As $\mc Q$ is Hadamard, the closed convex hull $\overline{\ms{conv}}(\mc Y)$ exists and is separable, see \cref{lmm:convsepa}.
    Hence, we have $\Prof{Y\in \overline{\ms{conv}}(\mc Y)} = 1$ and $(\overline{\ms{conv}}(\mc Y), d)$ is a Hadamard space.
    By \cite[Proposition 5.2]{varinequ}, the $\tran$-Fr\'echet mean set in $\mc Q$ is the same as in $\overline{\ms{conv}}(\mc Y)$ because $\overline{\ms{conv}}(\mc Y)$ is closed and convex.
\end{remark}
\subsection{Transformed Fr\'echet Mean}\label{ssec:tfm}
Basic properties of the transformed Fr\'echet mean were derived in \cite{varinequ}. We briefly summarize the essential concepts here and refer to \cite{varinequ} for proofs and further details.

The assumption $\Eof{\dtran(\ol Yo)} < \infty$ implies $\Eof{\absof{\tran(\ol Yq)-\tran(\ol Yp)}} < \infty$ for all $q,p\in\mc Q$ \cite[Proposition 3.2]{varinequ}. In this case, we define the $\tran$-Fr\'echet mean set as
\begin{equation}
	M := \argmin_{q\in\mc Q} \EOf{\tran(\ol Yq)-\tran(\ol Yo)}
	\eqfs
\end{equation}
The set $M$ is nonempty, closed, convex, and bounded \cite[Proposition 5.2]{varinequ}. If $\mc Q$ is locally compact, then $M$ is compact \cite[Remark 5.3]{varinequ}. 
Local compactness of $\mc Q$ may not be required for compactness of $M$: By \cite[Example 2.5 and Corollary 3.10]{jaffe2024}, if $\tran(x) = x^\alpha$ with $\alpha \geq 1$, and $\mc Q$ is a separable Hadamard space, then $M$ is compact.
The set $M$ does not depend on the choice of $o$ \cite[Proposition 5.2]{varinequ}.
Moreover, if $\Eof{\tran(\ol Yo)}$ is finite, then $\Eof{\tran(\ol Yq)}$ is finite for all $q\in\mc Q$ (\cref{lmm:tran:add}) and $M = \argmin_{q\in\mc Q} \Eof{\tran(\ol Yq)}$.

Let $x_0 := \inf\setByEleInText{x\in\Rpp}{\ddrtran(x) = 0}$, with the convention $\inf\emptyset = \infty$. If $m \in M$ and $\Prof{\ol Ym < x_0} > 0$, then $M= \{m\}$ \cite[Corollary 5.7]{varinequ}. Thus, if $\ddrtran(x) > 0$ for all $x\in\Rpp$, then the $\tran$-Fr\'echet mean is unique. Alternatively, if $\mc Q$ is separable and the support of $Y$ is convex, then the $\tran$-Fr\'echet mean is unique for any $\tran\in\setcciz$ \cite[Corollary 6.14]{varinequ}.

Let the empirical transformed Fr\'echet mean set be
\begin{equation}
	M_n := \argmin_{q\in\mc Q} \sum_{i=1}^n \tran(\ol {Y_i}q) = \argmin_{q\in\mc Q} \frac1n \sum_{i=1}^n \br{\tran(\ol {Y_i}q)-\tran(\ol {Y_i}o)}
	\eqfs
\end{equation}
This estimator of $M$ satisfies a strong law of large numbers \cite{schoetz22, Evans2024, jaffe2024} under a transformed first-moment condition, which in this setting amounts to $\Eof{\dtran(\ol Yo)} < \infty$ for transformed Fr\'echet means with $\tran \in \setcciz$. If $M$ is not a singleton, the convergence guaranteed by the strong law is generally one-sided: convergent subsequences of $m_n \in M_n$ have limits in $M$, but not every $m \in M$ arises as the limit of an empirical sequence. See \cite{Blanchard2025} for a relaxation technique that yields convergence in the Hausdorff metric, i.e., two-sided convergence.
\subsection{Quadruple Inequality}\label{ssec:qi}
The first central ingredient for the main proofs is a quadruple inequality, detailed in \cite{quadruple}.
Quadruple inequalities generalize the Cauchy--Schwarz inequality of Hilbert spaces $\mc H$ with the square transformation, i.e.,
\begin{equation}
	\normOf{y-q}^2 - \normOf{y-p}^2 - \normOf{z-q}^2 + \normOf{z-p}^2 = 2 \ipof{p-q}{y-z} \leq 2 \normOf{q-p} \normOf{y-z}
\end{equation} 
for all $q,p,y,z\in\mc H$, to Hadamard spaces and transformations in $\setcciz$.
\begin{proposition}[{\cite[Theorem 1]{quadruple}}]\label{thm:infdtr:qi}
    For all $q,p,y,z\in\mc Q$,
    \begin{equation}\label{eq:qi:tran}
        \tran(\ol yq) -  \tran(\ol yp) - \tran(\ol zq) + \tran(\ol zp) \leq 2 \, \ol qp\, \dtran(\ol yz)
        \eqfs
    \end{equation}
\end{proposition}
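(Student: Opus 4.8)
The plan is to integrate the inequality along the geodesic from $q$ to $p$, reduce it to a pointwise bound on a one-sided derivative, and then — via an Alexandrov comparison — reduce that bound to an inequality about the one-variable function $\dtran$ alone. Write $\gamma := \geodft qp\colon[0,\ell]\to\mc Q$ for the unit-speed geodesic, $\ell := \ol qp$, and set $\rho_w(s) := \ol w{\gamma(s)}$. Since $\tran$ is nondecreasing and convex and each $\rho_w$ is convex (nonpositive curvature), $s\mapsto\tran(\rho_z(s))-\tran(\rho_y(s))$ is a difference of convex functions, hence absolutely continuous on $[0,\ell]$, and its total increment equals the left-hand side of \eqref{eq:qi:tran}. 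It therefore suffices to bound its a.e.-derivative by $2\dtran(\ol yz)$. By the chain rule and the first variation formula for distance functions in Hadamard spaces, at every $s$ with $\gamma(s)\notin\{y,z\}$,
\begin{equation}\label{eq:plan:ptwise}
	\frac{\dl}{\dl s}\bigl(\tran(\rho_z(s))-\tran(\rho_y(s))\bigr) = \dtran(\rho_y(s))\cos\alpha_y - \dtran(\rho_z(s))\cos\alpha_z\eqcm
\end{equation}
where $\alpha_y,\alpha_z\in[0,\pi]$ are the angles at $\gamma(s)$ between the geodesic toward $p$ and the geodesics toward $y$ and toward $z$; the (at most two) exceptional parameters contribute nothing to the integral and are handled by continuity.

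To estimate the right-hand side of \eqref{eq:plan:ptwise} I pass to the Euclidean plane. The three directions at $\gamma(s)$ satisfy the triangle inequality for angles, so $|\alpha_y-\alpha_z|\le\angle_{\gamma(s)}(y,z)$, and nonpositive curvature gives $\angle_{\gamma(s)}(y,z)\le\bar\theta$, where $\cos\bar\theta=\frac{\rho_y^2+\rho_z^2-\ol yz^2}{2\rho_y\rho_z}$ is the comparison angle of the triangle $y\,\gamma(s)\,z$. Picking unit vectors $\xi,\eta\in\R^2$ with $\angle(\xi,\eta)=|\alpha_y-\alpha_z|$ and a unit vector $e$ with $\langle\xi,e\rangle=\cos\alpha_y$, $\langle\eta,e\rangle=\cos\alpha_z$, the right-hand side of \eqref{eq:plan:ptwise} equals $\langle\dtran(\rho_y)\xi-\dtran(\rho_z)\eta,\,e\rangle\le\normof{\dtran(\rho_y)\xi-\dtran(\rho_z)\eta}$; since $\angle(\xi,\eta)\le\bar\theta$ and $\cos$ decreases on $[0,\pi]$, and since two vectors $u,w\in\R^2$ of lengths $\rho_y,\rho_z$ enclosing angle $\bar\theta$ satisfy $\normof{u-w}=\ol yz$, the last norm is at most $\normOf{\dtran(\normof u)\tfrac u{\normof u}-\dtran(\normof w)\tfrac w{\normof w}}$ for such $u,w$. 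The whole problem thus reduces to the Euclidean key inequality: for all $u,w\in\R^2\setminus\{0\}$,
\begin{equation}\label{eq:plan:key}
	\normOf{\dtran(\normof u)\tfrac u{\normof u}-\dtran(\normof w)\tfrac w{\normof w}}\le 2\,\dtran(\normof{u-w})\eqfs
\end{equation}

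I would prove \eqref{eq:plan:key} from the integral representation of the nonnegative, nondecreasing, concave function $\dtran$, namely $\dtran(x)=\dtran(0)+bx+\int_{(0,\infty)}\min(x,t)\,\mu(\dl t)$ with $b:=\lim_{x\to\infty}\ddrtran(x)\ge0$ and a nonnegative Borel measure $\mu$. Then $G(u):=\dtran(\normof u)\tfrac u{\normof u}$ splits as $\dtran(0)\tfrac u{\normof u}+bu+\int_{(0,\infty)}\min(\normof u,t)\tfrac u{\normof u}\,\mu(\dl t)$: the first summand lands in a circle of radius $\dtran(0)$, so its increment is $\le2\dtran(0)$; the second is linear with increment $b\normof{u-w}$; and for each $t$ the map $u\mapsto\min(\normof u,t)\tfrac u{\normof u}$ agrees off the origin with the metric projection onto the closed ball of radius $t$, hence is $1$-Lipschitz, so its increment is $\le\min(\normof{u-w},t)$ and also $\le2t$, i.e.\ $\le2\min(\normof{u-w},t)$. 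Adding these bounds yields exactly $2\dtran(\normof{u-w})$.

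The main obstacle is regularity bookkeeping rather than the strategy. Geodesics in a general Hadamard space may branch, so the first variation formula and the handling of Alexandrov angles in \eqref{eq:plan:ptwise} need care, and $\tran$ is assumed only once differentiable. A clean workaround is to first mollify $\tran$ into a smooth element of $\setcc$, establish \eqref{eq:qi:tran} there, and pass to the limit — legitimate because both sides of \eqref{eq:qi:tran} are linear in $\tran$ (the right-hand side through $\dtran$) and converge monotonically under the mollification; alternatively one replaces $\frac{\dl}{\dl s}$ by finite differences and invokes the comparison inequality for the triangles $y\,\gamma(s)\,\gamma(s+h)$ and $z\,\gamma(s)\,\gamma(s+h)$ directly. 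Tracking the sharp constant $2$ throughout, rather than an unspecified one, is the other point that needs attention.
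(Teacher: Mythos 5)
The paper contains no internal proof of \cref{thm:infdtr:qi} to compare against: the statement is imported verbatim from \cite[Theorem 1]{quadruple}, whose (quite involved) argument is not reproduced here. Judged on its own terms, your sketch is a correct and essentially complete alternative route. Integrating $s\mapsto \tran(\ol z{\gamma(s)})-\tran(\ol y{\gamma(s)})$ along the geodesic $\gamma$ from $q$ to $p$, controlling the derivative via the first variation formula together with the triangle inequality for Alexandrov angles and the bound of the Alexandrov angle by the comparison angle, and then reducing the pointwise bound to the planar inequality $\normof{\dtran(\normof u)\,u/\normof u-\dtran(\normof w)\,w/\normof w}\le 2\,\dtran(\normof{u-w})$ is sound; all of these geometric inputs hold in arbitrary Hadamard spaces without local compactness or separability, and your proof of the Euclidean key inequality via the representation $\dtran(x)=\dtran(0)+bx+\int\min(x,t)\,\mu(\dl t)$ together with nonexpansiveness of the projection onto centered balls is correct (the three contributions sum to at most $2\dtran(\normof{u-w})$). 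What this buys is a short, self-contained proof of \eqref{eq:qi:tran} with the constant $2$; what it does not recover is the sharp power-case constant $2^{2-\alpha}\alpha$ of \cref{thm:power:qi}, which the paper cites separately, so your argument cannot replace that refinement. Two small repairs rather than gaps: your pointwise derivative identity should be asserted only for almost every $s$ (the radial functions are convex, hence two-sidedly differentiable off a countable set, and the Lipschitz/absolutely continuous integration needs no more), and the mollification detour is unnecessary --- geodesics in Hadamard spaces do not branch, $\dtran$ is automatically continuous by concavity, and the first-variation/chain-rule step uses nothing beyond the first derivative that is already assumed.
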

\cref{thm:infdtr:qi} implies the symmetrized quadruple inequality
\begin{equation}
	\absOf{\tran(\ol yq) -  \tran(\ol yp) - \tran(\ol zq) + \tran(\ol zp)} \leq  2 \min\brOf{\ol qp\, \dtran(\ol yz),\, \ol yz\, \dtran(\ol qp)}
\end{equation}
for all $q,p,y,z\in\mc Q$.
The constant $2$ on the right-hand side of \eqref{eq:qi:tran} can be slightly improved for $\tran(x) = x^\alpha$ to the optimal constant $2^{2-\alpha}$:
\begin{proposition}[{\cite[Theorem 3]{schoetz19}}]\label{thm:power:qi}
	Let $\alpha\in[1,2]$.
	Then, for all $q,p,y,z\in\mc Q$,
	\begin{equation}
		\ol yq^\alpha- \ol yp^\alpha - \ol zq^\alpha + \ol zp^\alpha \leq 2^{2-\alpha} \alpha \, \ol qp\, \ol yz^{\alpha-1}
		\eqfs
	\end{equation}
\end{proposition}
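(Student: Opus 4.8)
The plan is to freeze $q,y,z$, move $p$ along the geodesic from $q$ to $p$, and bound the resulting one‑dimensional derivative using the infinitesimal geometry of the Hadamard space. Write $L:=\ol qp$, let $\gamma\colon[0,L]\to\mc Q$ be the unit‑speed geodesic from $q$ to $p$, and set $F(t):=\ol{\gamma(t)}y^\alpha-\ol{\gamma(t)}z^\alpha$; then the left‑hand side of the claimed inequality equals $F(0)-F(L)$. Since $t\mapsto\ol{\gamma(t)}y$ is $1$‑Lipschitz (indeed convex in a Hadamard space) and $x\mapsto x^\alpha$ is locally Lipschitz on $\Rp$, the function $F$ is Lipschitz on $[0,L]$, so $F(0)-F(L)=-\int_0^L F'(t)\,\dl t\le L\cdot\esssup_t\absof{F'(t)}$. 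It therefore suffices to prove $\absof{F'(t)}\le 2^{2-\alpha}\alpha\,\ol yz^{\alpha-1}$ for almost every $t$.

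At a point $t$ of differentiability put $m:=\gamma(t)$, $r_y:=\ol my$, $r_z:=\ol mz$. By the first variation formula for distance functions in Hadamard spaces, $\tfrac{\dl}{\dl t}\ol{\gamma(t)}y=-\cos\theta_y$, where $\theta_y\in[0,\pi]$ is the Alexandrov angle at $m$ between the direction of $\gamma$ towards $p$ and the geodesic from $m$ to $y$ (and similarly $\theta_z$ for $z$). Hence $F'(t)=\alpha\br{r_z^{\alpha-1}\cos\theta_z-r_y^{\alpha-1}\cos\theta_y}$. Introduce the comparison vectors $\tilde y:=r_y(\cos\theta_y,\sin\theta_y)$ and $\tilde z:=r_z(\cos\theta_z,\sin\theta_z)$ in $\R^2$. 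The triangle inequality for Alexandrov angles gives $\angle_m(y,z)\ge\absof{\theta_y-\theta_z}$, and the Hadamard law‑of‑cosines comparison gives $\ol yz^2\ge r_y^2+r_z^2-2r_yr_z\cos\angle_m(y,z)\ge r_y^2+r_z^2-2r_yr_z\cos(\theta_y-\theta_z)=\euclof{\tilde y-\tilde z}^2$. Since $r_y^{\alpha-1}\cos\theta_y=\euclof{\tilde y}^{\alpha-2}(\tilde y)_1$ (and likewise for $z$), we obtain $\absof{F'(t)}=\alpha\,\bigl\lvert\euclof{\tilde z}^{\alpha-2}(\tilde z)_1-\euclof{\tilde y}^{\alpha-2}(\tilde y)_1\bigr\rvert\le\alpha\,\euclOf{\euclof{\tilde z}^{\alpha-2}\tilde z-\euclof{\tilde y}^{\alpha-2}\tilde y}$.

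The proof is then closed by the classical inequality (Hölder continuity of the $\alpha$‑duality map, with sharp constant): for $\alpha\in[1,2]$ and $u,v\in\R^2$,
\[
\euclOf{\euclof{v}^{\alpha-2}v-\euclof{u}^{\alpha-2}u}\le 2^{2-\alpha}\euclof{u-v}^{\alpha-1}\eqfs
\]
Applied with $u=\tilde y$, $v=\tilde z$ and combined with $\euclof{\tilde y-\tilde z}\le\ol yz$, this yields $\absof{F'(t)}\le 2^{2-\alpha}\alpha\,\ol yz^{\alpha-1}$, completing the argument. The one‑dimensional instance of the displayed inequality reduces to the elementary facts $\absof{s^{\alpha-1}-r^{\alpha-1}}\le\absof{s-r}^{\alpha-1}$ and $r^{\alpha-1}+s^{\alpha-1}\le 2^{2-\alpha}(r+s)^{\alpha-1}$ for $r,s\ge0$ (the latter by concavity of $x\mapsto x^{\alpha-1}$, with equality at $r=s$), which also makes transparent why $2^{2-\alpha}$ — attained at $u=-v$ — multiplied by the factor $\alpha$ coming from $(x^\alpha)'=\alpha x^{\alpha-1}$, is the correct constant.

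I expect the main obstacle to be the passage from the Hadamard geometry to the planar comparison: one must invoke the first variation formula and the angle/law‑of‑cosines comparison carefully (these are standard in $\mathsf{CAT}(0)$ geometry but rely on the Alexandrov‑angle machinery and on the fact that the right derivative of $t\mapsto\ol{\gamma(t)}y$ is $-\cos\theta_y$ everywhere, hence equals $F'$ almost everywhere), and one must dispose of the degenerate configurations $r_y=0$, $y\in\gamma$, $q=p$, $y=z$, and $\alpha=1$ by treating the one‑sided derivatives directly. It is worth emphasizing that merely iterating the general quadruple inequality \cref{thm:infdtr:qi} over subdivisions of the geodesic $\overline{yz}$ only yields the weaker constant $2\alpha$; obtaining the sharp $2^{2-\alpha}\alpha$ genuinely requires this infinitesimal $\mathsf{CAT}(0)$ input.
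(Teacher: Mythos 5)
The paper itself offers no proof of \cref{thm:power:qi}: the result is imported as a black box from \cite[Theorem 3]{schoetz19}, so there is no internal argument to compare yours against line by line. Taken on its own terms, your derivation is sound and genuinely different in spirit from simply quoting the source: writing the left-hand side as $F(0)-F(L)$ with $F(t)=\ol{\gamma(t)}{y}^\alpha-\ol{\gamma(t)}{z}^\alpha$ along the unit-speed geodesic from $q$ to $p$, invoking the exact first variation formula (valid in Hadamard spaces because comparison angles decrease to the Alexandrov angle as the parameter tends to $0$), the triangle inequality for Alexandrov angles, and the law-of-cosines comparison to dominate $\absof{F'(t)}$ by $\alpha\,\euclOf{\euclof{\tilde z}^{\alpha-2}\tilde z-\euclof{\tilde y}^{\alpha-2}\tilde y}$ with $\euclof{\tilde y-\tilde z}\le\ol yz$ is all correct, and the bookkeeping does produce $2^{2-\alpha}\alpha$. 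The only step you leave as a citation---the planar bound $\euclOf{\euclof{v}^{\alpha-2}v-\euclof{u}^{\alpha-2}u}\le 2^{2-\alpha}\euclof{u-v}^{\alpha-1}$ for $\alpha\in[1,2]$---is indeed true with that sharp constant, but in a self-contained write-up you should prove it rather than call it classical: normalize $\euclof{u}=1\ge\euclof{v}=r$, let $c$ be the cosine of the angle between $u$ and $v$, and square both sides; the right-hand side is then concave in $c$ (a concave power of the affine quantity $1+r^2-2rc$) while the left-hand side $1+r^{2\alpha-2}-2r^{\alpha-1}c$ is affine in $c$, so the inequality only needs to be verified at the collinear configurations $c=\pm1$, which are precisely the two one-dimensional facts you quote. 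The remaining housekeeping you flag (degenerate configurations $y\in\gamma(\lab 0,L\rab)$, $q=p$, $\alpha=1$; identifying $F'$ almost everywhere with the one-sided first-variation derivatives of the convex, $1$-Lipschitz distance functions) is harmless. Your closing observation is also accurate: specializing \cref{thm:infdtr:qi} to $\tran(x)=x^\alpha$ only yields the constant $2\alpha$, so the sharper constant $2^{2-\alpha}\alpha$ genuinely requires a separate argument such as yours or the one in the cited reference.
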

\subsection{Variance Inequality}
The second central ingredient for the main proofs in this article is a variance inequality, which is discussed in detail in \cite{varinequ}.
Transformed Fr\'echet means are defined by minimizing the objective function $q \mapsto \Eof{\tran(\ol Yq)}$.
Variance inequalities relate differences in the value of the objective function to the distance between its arguments.
\begin{proposition}\label{thm:infdtr:vi}
    Define
    \begin{equation}
        v(t) = \inf_{q\in\mc Q\setminus\ballopen(m,t)}\EOf{\tran(\ol Yq)-\tran(\ol Ym)}
        \eqfs
    \end{equation}
    Then $v \colon \Rp\to\Rp$ and $\Rpp\to\Rp,t\mapsto v(t)/t$ are nondecreasing with $v(0)=0$. Furthermore:
    \begin{enumerate}[label=(\roman*)]
        \item \label{thm:infdtr:vi:default}
        We have
        \begin{equation}
            v(t) \geq \frac12 t^2 \EOf{\ddrtran\brOf{\ol Ym + t}}
            \eqfs
        \end{equation}
        \item \label{thm:infdtr:vi:convex}
        Assume $\ddrtran$ is convex. Then
        \begin{equation}
            v(t) \geq \frac12 t^2 \EOf{\ddrtran\brOf{\ol Ym + \frac13 t}}
            \eqfs
        \end{equation}
        \item \label{thm:infdtr:vi:quantile}
        Let $r\in\Rpp$ and set $\rho : =\Prof{\ol Ym\leq r}$. Then
        \begin{equation}
            v(t) \geq \frac\rho2 t^2 \ddrtran\brOf{r + t}
            \eqfs
        \end{equation}        
        \item \label{thm:infdtr:vi:infty}
        Assume $\diam(\mc Q) = \infty$. Then
        \begin{equation}
            \lim_{t\to\infty} \frac{v(t)}{\tran(t)} = 1
            \eqfs
        \end{equation}
        \item \label{thm:infdtr:vi:combined}
        Let $t_0 \in \Rpp$ such that $\PrOf{\ol Ym \leq t_0} > 0$ and $\ddrtran(2t_0)>0$. Then, there is a constant $a\in(0,1]$ depending only on $t_0$, $\tran$, and the distribution of $Y$, such that
        \begin{equation}
            v(t)
            \geq 
            \begin{cases}
                \frac12 \PrOf{\ol Ym \leq t_0} \ddrtran(2t_0)\,t^2 &\text{for } t \leq t_0\eqcm\\
                a \tran(t) &\text{for } t\geq t_0 \eqfs\\
            \end{cases}
        \end{equation}
    \end{enumerate}
\end{proposition}
The proposition is based on \cite[Theorem 3.5, Proposition 5.2, Theorem 5.4]{varinequ} with additional arguments for \ref{thm:infdtr:vi:convex}, \ref{thm:infdtr:vi:quantile}, and \ref{thm:infdtr:vi:combined} as detailed in \cref{sec:proof:prelim}. 
\begin{remark}
    \begin{enumerate}[label=(\roman*)]
        \item
        \cref{thm:infdtr:vi} shows that the objective function in the defining minimizing problem of the $\tran$-Fr\'echet mean typically grows at least quadratically near the minimizer and, farther away, behaves like the transformation $\tran$ itself. 
        \item 
        Note that although the map $q\mapsto \Eof{\tran(\ol Yq)-\tran(\ol Ym)}$ is geodesically convex \cite[Proposition 5.2]{varinequ}, the function $v$ need not be convex, see \cref{exa:v:notconvex}. Instead, the function fulfills a weaker property: $v$ is star-shaped, meaning that $v(at) \leq av(t)$ for all $t\in\Rp$ and $a\in[0,1]$, which is equivalent to $t\mapsto v(t)/t$ being nondecreasing on $\Rpp$ since $v(0)=0$.
    \end{enumerate}
\end{remark}

If $\tran(x) = x$, then $\ddrtran(x) = 0$ and the bounds of \cref{thm:infdtr:vi} in terms of $\ddrtran$ are not helpful. In this case, we can still obtain a nontrivial variance inequality if $Y$ is not concentrated on a bow tie (\cref{def:bowtie}):
\begin{proposition}[{\cite[Theorem 6.15]{varinequ}}]\label{thm:median:vi}
    Let $\tran(x)=x$ so that $m$ is a Fr\'echet median. 
    Let $w\in[0,1]$.
    Let $q\in\mc Q\setminus \cb{m}$.
    Then
    \begin{equation}\label{eq:near:median}
        \EOf{\ol Yq - \ol Ym} 
        \geq 
        \frac12 w^2  \,\ol qm^2\, \EOf{\br{\ol Ym + \ol qm}^{-1} \indOf{\bowtiesetcompl(m, q, w)}(Y)}
        \eqfs
    \end{equation}
\end{proposition}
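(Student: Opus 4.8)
The plan is to prove \eqref{eq:near:median} as the expectation of a \emph{pointwise} inequality along the geodesic from $m$ to $q$, in the same spirit as \cref{thm:infdtr:vi} but with the bow-tie hypothesis playing the role of the strictly positive second derivative $\ddrtran$ there. Fix $y\in\mc Q$, set $\ell:=\ol qm>0$, and let $f(t):=d(y,\geodft mq(t))$ for $t\in[0,\ell]$; then $f$ is real-valued and convex (distance functions are geodesically convex in Hadamard spaces) and $1$-Lipschitz, so $f\rd(0)=\ol y{\geodft mq}\rd(0)$ and $f\ld(\ell)=\ol y{\geodft mq}\ld(\ol qm)$ exist and lie in $[-1,1]$. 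These are precisely the quantities in \cref{def:bowtie}, so for $y\in\bowtiesetcompl(m,q,w)$ we have $f\rd(0)^2\le1-w^2$ and $f\ld(\ell)^2\le1-w^2$.

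The geometric input is the standard quadratic refinement of geodesic convexity of the squared distance in a Hadamard space, $d(y,\geodft mq(s\ell))^2\le(1-s)f(0)^2+s\,f(\ell)^2-s(1-s)\ell^2$ for $s\in[0,1]$, whose midpoint case $s=\tfrac12$ is the defining inequality recalled in the introduction. Comparing right-derivatives of the two sides at $s=0$ (the left side convex, the right side polynomial) yields $2\ell f(0)f\rd(0)\le f(\ell)^2-f(0)^2-\ell^2$, that is,
\[
	f(\ell)^2 \ \ge\ f(0)^2 + 2\ell\,f(0)\,f\rd(0) + \ell^2
	\eqcm
\]
the Euclidean identity holding as an inequality in the favourable direction. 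Dividing the difference of squares by $f(0)+f(\ell)\ge\ell>0$ and subtracting the convexity term $\ell f\rd(0)$ turns this into
\[
	f(\ell) - f(0) - \ell\,f\rd(0) \ \ge\ \frac{\ell^2 - \ell\,f\rd(0)\,\br{f(\ell)-f(0)}}{f(0)+f(\ell)}
	\eqfs
\]

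To finish I would lower-bound the right-hand side on $\bowtiesetcompl(m,q,w)$ by proving $f\rd(0)(f(\ell)-f(0))\le\ell(1-w^2)$ there, via a case split on the sign of $f\rd(0)$: if $f\rd(0)\le0$, convexity gives $f(\ell)-f(0)\ge\ell f\rd(0)$, so multiplying by the nonpositive $f\rd(0)$ gives $f\rd(0)(f(\ell)-f(0))\le\ell f\rd(0)^2\le\ell(1-w^2)$; if $f\rd(0)>0$, convexity at the right endpoint gives $f(\ell)-f(0)\le\ell f\ld(\ell)$, whence $f\rd(0)(f(\ell)-f(0))\le\ell f\rd(0)f\ld(\ell)\le\ell(1-w^2)$ using $f\rd(0),f\ld(\ell)\le\sqrt{1-w^2}$ (the product being negative, hence harmless, when $f\ld(\ell)<0$). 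Together with $f(0)+f(\ell)\le2(f(0)+\ell)$ from the triangle inequality, this gives $f(\ell)-f(0)-\ell f\rd(0)\ge\tfrac12 w^2\ell^2/(f(0)+\ell)$ for such $y$; for $y\in\bowtieset(m,q,w)$ the bare convexity bound $f(\ell)-f(0)\ge\ell f\rd(0)$ suffices. Collecting both cases, for every $y\in\mc Q$,
\[
	\ol yq - \ol ym \ \ge\ \ol qm\,\ol y{\geodft mq}\rd(0) \ +\ \tfrac12 w^2\,\ol qm^2\,\br{\ol ym+\ol qm}^{-1}\,\indOf{\bowtiesetcompl(m,q,w)}(y)
	\eqfs
\]
All three terms are bounded functions of $y$, so I take expectations over $Y$; then \eqref{eq:near:median} follows once $\EOf{\ol Y{\geodft mq}\rd(0)}\ge0$, i.e.\ first-order optimality of the median. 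For this, the difference quotients $t^{-1}\br{\ol Y{\geodft mq(t)}-\ol Ym}$ are nonincreasing in $t\in(0,\ell]$ by convexity, decrease to $\ol Y{\geodft mq}\rd(0)$ as $t\searrow0$, and are bounded below by $-1$, so monotone convergence gives $\EOf{\ol Y{\geodft mq}\rd(0)}=\lim_{t\searrow0}t^{-1}\EOf{\ol Y{\geodft mq(t)}-\ol Ym}\ge0$, since $\geodft mq(t)\in\mc Q$ and $m$ minimizes $q'\mapsto\EOf{\ol Y{q'}-\ol Yo}$ (these expectations are all finite because distances from a common point differ by at most a constant).

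The main obstacle is the pointwise step: bare convexity recovers only the linear term $\ol qm\,\ol y{\geodft mq}\rd(0)$, whose expectation is nonnegative but carries no quantitative information, so the entire content lies in extracting the genuinely quadratic term $\asymp\ol qm^2$. This requires both the Hadamard-space refinement of convexity of $d(y,\cdot)^2$ and the bow-tie restriction, and — as the case split shows — it is the derivative of $f$ at the near endpoint $0$ or at the far endpoint $\ell$ that must be controlled, according to the sign of $f\rd(0)$. Without the bow-tie condition the coefficient of $\ol qm^2$ can collapse to $0$ (for instance when $Y$, $m$, $q$ are collinear in a Euclidean factor), the geometric counterpart of the degeneracy of \cref{thm:infdtr:vi} when $\ddrtran\equiv0$. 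One should also verify measurability of $y\mapsto\ol y{\geodft mq}\rd(0)$ and of the bow-tie sets, which is handled in \cite{varinequ} where these sets are introduced.
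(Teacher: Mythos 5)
Your argument is correct, and it is worth noting that the paper itself does not prove \cref{thm:median:vi} at all — it imports it verbatim from \cite[Theorem 6.15]{varinequ} — so what you have produced is a self-contained derivation of a result the paper only cites. I checked the chain: the NPC inequality $d(y,\geodft mq(s\ell))^2\le(1-s)f(0)^2+sf(\ell)^2-s(1-s)\ell^2$ compared at $s=0^+$ does give $f(\ell)^2\ge f(0)^2+2\ell f(0)f\rd(0)+\ell^2$ (the CAT(0) law-of-cosines inequality, with $f\rd(0)$ in place of $-\cos$ of the Alexandrov angle); dividing by $f(0)+f(\ell)\ge\ell>0$ and subtracting $\ell f\rd(0)$ yields exactly your displayed identity; the case split on $\sign f\rd(0)$, using $f(\ell)-f(0)\ge\ell f\rd(0)$ resp.\ $f(\ell)-f(0)\le\ell f\ld(\ell)$ and the complement-of-bow-tie bounds $f\rd(0)^2, f\ld(\ell)^2\le 1-w^2$ from \cref{def:bowtie}, gives $f\rd(0)(f(\ell)-f(0))\le\ell(1-w^2)$ (the subcase $f\rd(0)>0$, $f\ld(\ell)<0$ is in fact vacuous by monotonicity of one-sided derivatives of a convex function, but your treatment of it is harmless); and $f(0)+f(\ell)\le 2(f(0)+\ell)$ yields the pointwise bound with the constant $\tfrac12$. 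The expectation step and the first-order optimality $\EOf{\ol Y{\geodft mq}\rd(0)}\ge 0$ via bounded, monotone difference quotients and minimality of $m$ are also sound, since all quantities involved are bounded by $\max(1,\ol qm)$. Two small points you should make explicit if this were written out in full: you invoke the NPC inequality for all $s\in[0,1]$, whereas the paper's definition of a Hadamard space only states the midpoint case, so you need the standard (completeness-based) upgrade from midpoints to arbitrary $s$, available in \cite{sturm03,bacak14b}; and the measurability of $y\mapsto\ol y{\geodft mq}\rd(0)$, which you defer to \cite{varinequ}, follows directly by writing it as a pointwise limit of the continuous functions $y\mapsto t^{-1}(\ol y{\geodft mq(t)}-\ol ym)$ along $t=1/k$. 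Compared with simply citing \cite{varinequ}, your route buys a short, elementary proof of the median variance inequality from exactly the two geometric facts already used elsewhere in the paper (convexity of $d(y,\cdot)$ along geodesics and the NPC comparison), at the cost of reproving a known result.
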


\section{Unbounded Influence}\label{sec:unbounded}
In this section, we establish rates of convergence in expectation for $\tran$-Fr\'echet means with $\lim_{x\to\infty}\dtran(x) = \infty$. The resulting (unique) $\tran$-Fr\'echet means are robust to some heavy-tailed distributions, but not to arbitrary adversarial corruption of data (breakdown point $0$). The complementary case, where $\lim_{x\to\infty}\dtran(x) < \infty$, will be addressed in \cref{sec:bounded}. We start with transformations $\tran(x)=x^\alpha$ in \cref{sec:power}, where we get the cleanest results, and discuss a more general class of transformations in \cref{sec:tailrobust}.
\subsection{Power Fr\'echet Means}\label{sec:power}
We consider power Fr\'echet means, i.e., $\tran$-Fr\'echet means with $\tran(x) = x^\alpha$. We restrict to $\alpha\in (1, 2]$, which makes $\tran$ nondecreasing and convex with concave derivative allowing us to use the quadruple and variance inequalities. We exclude the case of the Fr\'echet median, $\alpha=1$, which is special and is treated in \cref{sec:bounded}. We derive convergence rates in expectation with explicit constants, see \cref{thm:power:main,cor:power:stdloss:sharp,rem:power:consts}. We illustrate the main result by applying it in the case $\alpha= \frac32$ in \cref{cor:power:threehalfs}. Moreover \cref{thm:power:lower:lin} shows the influence of a single extreme observation on the $\alpha$-Fr\'echet mean.

\begin{notation}\label{def:power:moments}
    Let $\varphi\in\R$. Use the convention $0^0:= 1$ and $0^{\varphi} := \infty$ for $\varphi<0$. Define the $\varphi$-moment of $Y$ as
    \begin{equation}
        \sigma_{\varphi} := \EOf{\ol Ym^\varphi}
        \eqfs
    \end{equation}
\end{notation}
\begin{remark}
    When we are only interested in whether a $\varphi$-moment is finite or not, the reference point does not matter if $\varphi \geq0$: Using the triangle inequality and \cref{lmm:power:subadd}, 
    \begin{equation}
        \EOf{\ol Yq^\varphi} \leq 2^{\max(0, \varphi-1)} \br{\EOf{\ol Yp^\varphi} + \ol qp^\varphi}
        \qquad \text{for all $q,p\in\mc Q$.}
    \end{equation}
\end{remark}
Now we state our main theorem on the finite-sample error bound in expectation for power Fr\'echet means.
\begin{theorem}\label{thm:power:main}
    Let $\alpha \in (1,2]$ and $\tran(x)=x^\alpha$.
    Set $\phi := \frac{2-\alpha}{\alpha-1} \in \Rp$.
    Assume $\Eof{\ol{Y}o^\alpha}<\infty$.
    Then, for all $n\in\N$,
    \begin{equation}\label{eq:power:main}
        \EOf{\ol{m}{m_n}^2 \br{\sigma_{\alpha-1} + \ol{m}{m_n}^{\alpha-1}}^{-\phi}}
        \leq
        C_\alpha\, n^{-1} \br{\sigma_{\alpha-1}^{\phi}\, \sigma_{2\alpha-2} + n^{-\phi}\, \sigma_\alpha}
        \eqcm
    \end{equation}
    where $C_\alpha\in\Rpp$ is a constant depending only on $\alpha$.
\end{theorem}
\begin{remark}\mbox{}
    \begin{enumerate}[label=(\roman*)]
        \item We measure the deviation of the sample power Fr\'echet mean $m_n$ from its population version with the loss $x \mapsto x^2(\sigma_{\alpha-1} + x^{\alpha-1})^{-\phi}$, which is the lower bound in the variance inequality of \cref{lmm:power:vi}. For $x\to 0$, this loss is the squared loss $\sigma_{\alpha-1}^{-\phi} x^2 (1 + \mo o(1))$. For $x\to \infty$, it is $x^\alpha (1 + \mo o(1))$. Thus, tails are weighted less than with the standard $L^2$-loss.
        \item Even though we effectively bound the $\alpha$-moment of the error, the $\alpha$-moment of $Y$ only has a vanishing contribution to the rate. The rate is dominated by the lower $(2\alpha-2)$-moment.
        \item The correspondence between $\phi$ and $\alpha$ is as follows: $\phi$ decreases from $\infty$ to $0$ as $\alpha$ increases from $1$ to $2$.
        \item Explicit constants for the bound in \cref{thm:power:main} are given in \cref{rem:power:consts}. Let us note here that $\sup_{\alpha\in[1+\epsilon, 2]}C_\alpha < \infty$ for all $\epsilon\in\Rpp$, while $\liminf_{\alpha\searrow1} C_\alpha = \infty$.
    \end{enumerate}
\end{remark}
As the loss in \cref{thm:power:main} is nonstandard, we use \cref{lmm:loss:toalpha} to derive a bound on the more common $L^\alpha$-norm.
\begin{corollary}\label{cor:power:stdloss:sharp}
    Let $\alpha \in (1,2]$ and $\tran(x)=x^\alpha$. Set $\phi := \frac{2-\alpha}{\alpha-1}$.
    Assume $\Eof{\ol{Y}o^\alpha}<\infty$.
    Then, for all $n\in\N$,
    \begin{equation}\label{eq:power:stdloss:sharp}
        \EOf{\ol{m}{m_n}^\alpha}^{\frac1\alpha}
        \leq
        C_\alpha \br{
            \sigma_{\alpha-1}^{\phi}\, \sigma_{2\alpha-2}^{\frac12}\, n^{-\frac12}
            +
            \sigma_{\alpha-1}^{\frac\phi2}\, \sigma_{\alpha}^{\frac12}\, n^{-\frac{1}{2(\alpha-1)}}
            +
            \sigma_\alpha^{\frac1\alpha}\, n^{-\frac{1}{\alpha(\alpha-1)}}
        }
        \eqcm
    \end{equation}
    where $C_\alpha\in\Rpp$ is a constant depending only on $\alpha$.
\end{corollary}
Applying \cref{thm:power:main} with the explicit constants given in \cref{rem:power:consts} yields the following bound for $\alpha = \frac32$ (\cref{lmm:loss:toalpha} is used for the bound in $L^\alpha$-norm).
\begin{corollary}\label{cor:power:threehalfs}
    Set $\tran(x) = x^{\frac32}$.
    Assume $\Eof{\ol Yo^{\frac32}}<\infty$. Then, for all $n\in\N$,
    \begin{equation}
        \EOf{\frac{\ol{m}{m_n}^2}{\sigma_{1/2} + \ol{m}{m_n}^{1/2}}}
        \leq
        \frac{193\, \sigma_1\, \sigma_{\frac12}}{n} + \frac{38\, \sigma_{\frac32}}{n^2}
    \end{equation}
    and
    \begin{equation}
        \EOf{\ol{m}{m_n}^{\frac32}}^{\frac23}
        \leq
        \frac{41\, \sigma_1^{\frac12}\, \sigma_{\frac12}}{n^{\frac12}} 
        + 
        \frac{7\, \sigma_{\frac12}^{\frac12}\, \sigma_{\frac32}^{\frac12}}{n} 
        + 
        \frac{18\, \sigma_{\frac32}^{\frac23}}{n^{\frac43}}
        \eqfs
    \end{equation}
\end{corollary}

While the $\alpha$-Fr\'echet mean is robust with respect to heavy-tailed distributions in that a parametric rate for estimation is achieved with less than $2$ moments, we next show a negative result with regard to robustness with respect to outliers: The sample $\alpha$-Fr\'echet mean depends linearly on extreme samples. This implies that our assumption of a finite $\alpha$-moment is necessary to obtain a rate of convergence in $L^\alpha$-loss such as in \cref{cor:power:stdloss:sharp}.
\begin{theorem}\label{thm:power:lower:lin}
    Let $\tran(x) = x^\alpha$ with $\alpha \in (1,2]$.
    \begin{enumerate}[label=(\roman*)]
        \item 
        Let $y_1, \dots, y_n \in \mc Q$. 
        Assume $n\geq 2$.
        Denote 
        \begin{equation}
            m_{1:n} = \argmin_{q\in\mc Q} \sum_{i=1}^n \ol{y_i}{q}^\alpha
            \qquad\text{and}\qquad
            m_{2:n} = \argmin_{q\in\mc Q} \sum_{i=2}^n \ol{y_i}{q}^\alpha
            \eqfs
        \end{equation}
        Assume 
        \begin{equation}
            \ol {y_1}{m_{2:n}} \geq \br{\frac{16 n}{n-1}\sum_{i=2}^n \ol{y_i}{m_{2:n}}^{\alpha-1}}^{\frac{1}{\alpha-1}}
            \eqfs
        \end{equation}
        Then
        \begin{equation}
            \ol{m_{1:n}}{m_{2:n}} \geq 2^{-\frac{\alpha+4}{\alpha-1}} n^{-\frac{1}{\alpha-1}} \ol{y_1}{m_{2:n}}
            \eqfs
        \end{equation}
        \item 
        Drop the standing assumption $\Eof{\ol Yo^{\alpha-1}} < \infty$.
        Assume there is $\xi\in\Rpp$ such that $\Eof{\ol {Y}o^\xi} = \infty$. Then $\Eof{\ol {m_n}q^\xi} = \infty$ for all $q\in \mc Q$. 
    \end{enumerate}
\end{theorem}
\subsection{General Transformations}\label{sec:tailrobust}
Here we discuss transformations with $\lim_{x\to\infty}\dtran(x) = \infty$ in general. This implies that $\ddrtran(x)>0$ on $\Rp$ and that $\dtran$ is strictly increasing with a well-defined inverse $\invdtran\colon[\dtran(0), \infty) \to \Rp$, which we extend to the domain $\Rp$ by setting $\invdtran(x)=0$ for $x\in [0, \dtran(0))$.
As a moment assumption, we will require conditions similar to $\Eof{\dtran(\ol Ym)^2 / \ddrtran(\ol Ym)} < \infty$. This is at least as strong as $\Eof{\tran(\ol Ym)} < \infty$ (see \cref{lmm:fractranmoment}), which allows us to define the $\tran$-Fr\'echet mean as the minimizer of $q\mapsto \Eof{\tran(\ol Yq)}$.

We define the notation $\sigma_{f}$ for the moment induced by a function $f$, and $\hat\sigma_{f}$ for its empirical version.
\begin{notation}\label{def:infdtr:moments}
    Let $f\colon \Rp \to \Rp$ be a measurable function. 
    Define
    \begin{equation}
        \sigma_{f} := \EOf{f(\ol Ym)} 
        \eqcm\qquad
        \hat\sigma_{f} := \frac1n \sum_{j=1}^n f(\ol {Y_j}m)
        \eqfs
    \end{equation}
\end{notation}
Now we state our main theorem on the excess risk of transformed Fr\'echet means with unbounded influence. The result can be translated to a bound on the error $\ol{m}{m_n}$ using a variance inequality (\cref{thm:infdtr:vi}), see \cref{cor:infdtr:improved} below.
\begin{theorem}\label{thm:infdtr:improved}
    Assume $\lim_{x\to\infty}\dtran(x) = \infty$.
    Denote $g(x) := \ddrtran(7 x)^{-1}$ and $h(x) := g(\invdtran(12 x))$.
    For $p\in\Rpp$, set
    \begin{align}
        S_{n,p} &:= \max\brOf{
            \sigma_{g^p}, 
            2 h(\sigma_{\dtran})^p,
            \EOf{ h(2\hat\sigma_{\dtran})^p}
        }\eqcm
        \\
        V_{n,p} &:= \frac1n  \EOf{\dtran(\ol{Y}m)^{2p} g(\ol{Y}{m})^p}  + 	\EOf{\dtran(\ol{Y}m)^{2p} h(2n^{-1} \dtran(\ol{Y}m))^p}\eqfs
    \end{align}
    Let $\chi \in  \median(\ol Ym)$ be a median of $\ol Ym$. 
    Let $p,q > 1$ such that $\frac1p + \frac1q = 1$.
    Set 
    \begin{equation}
        r_0 := \max\brOf{\chi, 2 \invdtran\brOf{16 \sigma_{\dtran}}}\eqfs
    \end{equation}
    Then
    \begin{equation}
        \EOf{\tran(\ol Y{m_n}) - \tran(\ol Ym)}
        \leq
        \frac{16}{n} \min\brOf{4 \sigma_{(\dtran)^{2}} S_{n,1} + V_{n,1},\, \frac{4 \sigma_{(\dtran)^2}}{\ddrtran(4r_0)} + b_n} 
        \eqcm
    \end{equation}
    where
    \begin{equation}
        b_n := \br{V_{n,p} + 4 \sigma_{(\dtran)^{2p}} S_{n,p}}^{\frac1p} \br{\exp\brOf{-\frac{n}{16}} +  \frac2n \br{\frac{\sigma_{(\dtran)^2}}{\sigma_{\dtran}^2}-1}}^{\frac1q}
        \eqfs
    \end{equation}
\end{theorem}
\begin{remark}
    \begin{enumerate}[label=(\roman*)]
        \item
        Using a variance inequality (\cref{thm:infdtr:vi} \ref{thm:infdtr:vi:default}), we have 
        \begin{equation}
            \EOf{\ol m{m_n}^2 \ddrtran(\ol Ym + \ol m{m_n})} \leq 2 \EOf{\tran(\ol Y{m_n}) - \tran(\ol Ym)}
            \eqcm
        \end{equation}
        which can be further bounded to obtain more usable bounds, see \cref{cor:infdtr:improved} below.
        \item 
        The occurrence of the median $\chi$ is arbitrary insofar as it can be replaced by any other quantile for a probability in $(0,1)$ with a suitable change of constants. By minimizing over these quantile probabilities, some constants could potentially be improved, but the theorem will not have optimal constants either way. Thus, we refrain from further optimization in this regard.
        \item
        Let us be imprecise for the sake of illustrating this result. We approximate $\tran(x) \approx x^2 \ddrtran(x) \approx \dtran(x)^2/\ddrtran(x)$, which is a valid approximation at least for $\tran(x) = x^\alpha$, $\alpha\in(1, 2]$. Then we effectively bound a risk of the loss $\tran$ applied to $\ol m{m_n}$ (for large $\ol m{m_n}$). One might expect a moment term $\Eof{\tran(\ol Ym)}$ to come up in such a risk bound. And indeed it does (in the form of functions related to $\dtran(x)^2/\ddrtran(x)$). But this moment is multiplied by factors that vanish for $n\to\infty$ so that the dominating moment is $\sigma_{(\dtran)^2} = \Eof{\dtran(\ol Ym)^2}$, which is a lower order moment (except when $\tran(x) \approx x^2$ for large $x$). Thus, not only do $\tran$-Fr\'echet means require just a $\tran$-moment instead of a second moment for a parametric rate of convergence, the dominating moment in the rate is of the even lower order $(\dtran)^2$.
    \end{enumerate}
\end{remark}
\begin{corollary}\label{cor:infdtr:improved}
    Use the setting of \cref{thm:infdtr:improved}. The asymptotic notation $\mo o$ and $\mo O$ refers to the limit $n\to\infty$.
    \begin{enumerate}[label=(\roman*)]
        \item\label{cor:infdtr:improved:simple}
        Assume $\EOf{\dtran(\ol{Y}m)^{2} h(\dtran(\ol{Y}m))} < \infty$ and $\lim_{x\searrow0}\ddrtran(x) = \infty$.
        Then
        \begin{equation}
            \EOf{\ol m{m_n}^2  \ddrtran(\chi + \ol m{m_n})}
            \leq
            \frac{256}{n} \br{\sigma_{(\dtran)^{2}} S_{n,1} + \mathbf{o}(1)}
            \eqfs
        \end{equation}
        \item \label{cor:infdtr:improved:low}
        Let $\epsilon>0$.
        Assume
        $\EOf{\dtran(\ol{Y}m)^{2(1+\epsilon)} h(\dtran(\ol{Y}m))^{1+\epsilon}} < \infty$,
        $\sup_{n\in\N}\EOf{h(2\hat\sigma_{\dtran})^{1+\epsilon}} < \infty$,
        and $\sigma_{g^{1+\epsilon}} < \infty$.
        Then
        \begin{equation}
            \EOf{\ol m{m_n}^2  \ddrtran(\chi + \ol m{m_n})}
            \leq
            \frac{256}{n} \br{\sigma_{(\dtran)^2}\ddrtran(4r_0)^{-1} + \mathbf{o}(1)}
            \eqfs
        \end{equation}
        \item\label{cor:infdtr:improved:root}
        Assume that either the conditions of \ref{cor:infdtr:improved:simple} together with $\sup_{n\in\N}\Eof{h(2 \hat \sigma_{\dtran})}<\infty$, or the conditions of \ref{cor:infdtr:improved:low}, are fulfilled.
        Then
        \begin{equation}
            \tran^{-1}\brOf{\EOf{\tran(\ol m{m_n})}} = \mo O(n^{-\frac12})
            \quad\text{and, in particular,}\quad
            \EOf{\ol m{m_n}} = \mo O(n^{-\frac12})
            \eqfs
        \end{equation}
    \end{enumerate}
\end{corollary}
\cref{thm:power:main} on the power Fr\'echet means is effectively an application of part \ref{cor:infdtr:improved:simple} of \cref{cor:infdtr:improved} to $\tran(x)=x^\alpha$, but with additional care taken to improve the constants. As another example, we take the transformation with $\dtran(x) = \log(x+1)$, where $\log$ denotes the natural logarithm.
\begin{example}\label{exa:logp1}
    We have
    \begin{align}
        \tran(x) &= (x + 1) \log(x + 1) - x
        \eqcm
        &\dtran(x) &= \log(x + 1)
        \eqcm\\
        \invdtran(x) &= \exp(x)-1
        \eqcm
        &\ddtran(x) &= \frac{1}{1+x}
        \eqcm\\
        g(x) &= 1 + 7 x
        \eqcm
        &h(x) &= 7\exp(12 x) - 6
        \eqfs
    \end{align}
    Assume $\Eof{\ol {Y}m^{25}} < \infty$.
    Then \cref{cor:infdtr:improved} \ref{cor:infdtr:improved:low} implies
    \begin{equation}
        \EOf{\min\brOf{\ol m{m_n}^2, \ol m{m_n}}}
        \leq
        \frac{C}{n}
        \eqfs
    \end{equation}
    for large enough $n$ with
    \begin{equation}
        C := c \EOf{\log(\ol Ym + 1)^2} \exp\brOf{18 \EOf{\log(\ol Ym + 1)}}
    \end{equation}
    and $c\in\Rpp$ is a universal constant.
\end{example}
\begin{remark}
    If $g$ and $\invdtran$ are subadditive up to a constant, in the sense that $f(x_1 + x_2) \leq c (f(x_1) + f(x_2))$ for $c>0$, then the constants in \cref{thm:infdtr:improved}, e.g., in the definition of $g$ and $h$, play only a minor role. This subadditivity condition is fulfilled for $\tran(x) = x^\alpha$. But, as seen in the example above, where $\invdtran(x) = \exp(x)-1$, it is not always true. In this case, these constants may lead to suboptimal requirements. In the example, the high moment requirement $\Eof{\ol {Y}m^{25}} < \infty$ comes from the condition $\Eof{ h(2\hat\sigma_{\dtran})^{1+\epsilon}} < \infty$ with $\epsilon = 1/24$ and Jensen's inequality for the convex function $x\mapsto \exp(cx)$. Intuitively, this requirement is suboptimal. For a specific transformation, following and adapting the proof of \cref{thm:power:main} and \cref{thm:infdtr:improved} suitably yields preferable results.
\end{remark}
\begin{remark}
    A lower bound analogous to \cref{thm:power:lower:lin} can also be established for general transformations $\tran$ with unbounded influence; see \cref{lmm:general:lower}. However, obtaining a linear lower bound as in \cref{thm:power:lower:lin} requires linearity of $\invdtran(\epsilon \dtran(x))$, specifically,
    \begin{equation}
      \forall \epsilon\in\Rpp \colon \exists \delta\in\Rpp \colon \forall x\geq x_0 \colon \quad \invdtran(\epsilon \dtran(x)) \geq \delta x
    \end{equation}
    for some $x_0\in\Rpp$. While this condition is satisfied for power transformations $\tran(x)=x^\alpha$, it fails for the logarithmic transformation considered in \cref{exa:logp1}. We therefore refrain from stating a general result here and instead refer to \cref{lmm:general:lower}. Improving the theory to obtain tight upper and lower bounds for logarithmic-type transformations such as in \cref{exa:logp1} remains an interesting direction for future research.
\end{remark}

\section{Bounded Influence}\label{sec:bounded}
We discuss convergence rates and exponential tail bounds for the $\tran$-Fr\'echet mean $m$ assuming $\lim_{x\to\infty}\dtran(x) < \infty$. Examples of such transformations are the identity (yielding the Fr\'echet median), the Huber loss, and the pseudo-Huber loss. In \cref{sec:largedevi}, we first consider deterministic bounds that quantify the maximum distance of $m$ from a set with mass $>\frac12$. As a corollary, we obtain that such transformed Fr\'echet means have a breakdown point of $\frac12$. Thereafter, we show that the estimator $m_n$ stays in a bounded region around $m$ with high probability. These results will be important for proving rates of convergence in expectation with minimal moment assumptions in \cref{sec:posddrtran}.
\subsection{Breakdown Point and Exponential Tail Bounds}\label{sec:largedevi}
First we find a deterministic bound on the distance between the $\tran$-Fr\'echet mean and a set with high mass.
\begin{notation}
    Denote the diameter of a set $\mc B \subset \mc Q$ as $\diam(\mc B) := \sup_{q,p\in\mc B} \ol qp$ and the distance from a point $p\in\mc Q$ to the set $\mc B$ as $d(p, \mc B) := \inf_{q\in\mc B} \ol qp$.
\end{notation}
Recall \cref{def:convex} for the definition of convex sets in Hadamard spaces. 
\begin{theorem}\label{thm:finiteD:convex}
    Assume $\lim_{x\to\infty}\dtran(x) =: D < \infty$.
    Let $\mc B \subset \mc Q$ be a convex and closed set with diameter $\diam(\mc B) \leq \delta \in \Rp$. Set $\rho := \Prof{Y \in \mc B}$.
    Let $R \in \Rpp$  and $\lambda \in (0,1]$ such that $\tran(R) \geq \lambda D R$.
    Assume $\rho > \frac{1}{1+\lambda}$.
    Set $a := \frac{1-\rho}{\rho}$ and
    \begin{equation}\label{eq:thm:finiteD:convex:x0}
        x_0
        :=
        \frac{\delta}{\lambda - a} \frac{a + \lambda \sqrt{1 -\lambda^2 + a^2}}{a + \lambda}
        \eqfs
    \end{equation}
    Then, 
    \begin{equation}
        d(m, \mc B)^2 \leq \max\brOf{x_0^2, R^2 - \delta^2}
        \eqfs
    \end{equation}
\end{theorem}
\begin{remark}\mbox{}
    \begin{enumerate}[label=(\roman*)]
        \item
        In \cref{thm:finiteD:convex}, we have
        \begin{equation}
            x_0
            \leq 
            \frac{\delta}{\lambda - a}
            \eqfs
        \end{equation}
        \item 
        As $\dtran$ is nondecreasing and $\lim_{x\to\infty}\dtran(x) = D$, the condition on $R$ and $\lambda$ can always be fulfilled: For all $\lambda \in [0,1)$ there is $R\in\Rpp$ such that $\tran(R) \geq \lambda D R$.
        In this case, we have $Dx \geq \tran(x) \geq \lambda D x$ for all $x \geq R$.
    \end{enumerate}
\end{remark}
\begin{corollary}\label{cor:finiteD:convex}
    Let $\tran(x)=x$ so that $m$ is a Fr\'echet median. 
    Let $\mc B \subset \mc Q$ be a convex and closed set with diameter $\diam(\mc B) \leq \delta \in \Rp$. Set $\rho := \Prof{Y \in \mc B}$.
    Assume $\rho > \frac{1}{2}$.
    Then
    \begin{equation}
        d(m, \mc B) \leq 2\rho \delta \frac{1 - \rho}{2\rho - 1}
        \eqfs
    \end{equation}
\end{corollary}
\begin{example}\label{exa:robustmedian}
    Let $\tran(x)=x$ so that $m$ is a Fr\'echet median. 
    Use $\mc Q = \R^2$ with the Euclidean norm $\normof{\cdot}$ and $\Prof{Y = (-1,0)} = \Prof{Y = (1,0)} = \rho/2$ where $\rho\in(\frac12, 1]$. Without knowing anything about the remaining $(1-\rho)$ mass of $Y$, \cref{cor:finiteD:convex} provides us with a bound on the location of $m$ using $\mc B = \setByEleInText{(x,0)}{x\in[-1,1]}$ and $\delta=2$:
    \begin{equation}
        \min_{x\in[-1,1]}\brOf{\normOf{m - \begin{pmatrix} x \\ 0 \end{pmatrix}}} \leq f(\rho)
        \qquad\text{with}\qquad f(\rho) = 4\rho \frac{1 - \rho}{2\rho - 1}
        \eqfs
    \end{equation}
    For example, $f(\frac23) = \frac83$ and $f(\frac34) = \frac32$. This is illustrated in \cref{fig:exa:robustmedian}.
\end{example}
\begin{figure}
    \begin{center}
        \begin{tikzpicture}
            \def\r{8/3}
            \def\s{3/2}
            
            \begin{scope}[opacity=.5, transparency group] 
                \fill[orange!40] (-1,0) circle (\r);
                \fill[orange!40] (1,0) circle (\r);
                \fill[orange!40] (-1,-\r) rectangle (1,\r);
            \end{scope}
            
            \begin{scope}[opacity=.5, transparency group] 
                \fill[orange!40] (-1,0) circle (\s);
                \fill[orange!40] (1,0) circle (\s);
                \fill[orange!40] (-1,-\s) rectangle (1,\s);
            \end{scope}
            
            \draw[->] (-4.5,0) -- (4.5,0) node[below right] {$x$};
            \draw[->] (0,-3.3) -- (0,3.3) node[above right] {$y$};
            
            \draw (-0.1,\r) -- (0.1,\r) node[right] {$\frac{8}{3}$};
            \draw (-0.1,-\r) -- (0.1,-\r) node[right] {$-\frac{8}{3}$};
            \draw (-0.1,\s) -- (0.1,\s) node[right] {$\frac{3}{2}$};
            \draw (-0.1,-\s) -- (0.1,-\s) node[right] {$-\frac{3}{2}$};
            \draw (1+\r,-0.1) -- (1+\r,0.1) node[below] {$\frac{11}{3}$};
            \draw (-1-\r,-0.1) -- (-1-\r,0.1) node[below] {$-\frac{11}{3}$};
            \draw (1+\s,-0.1) -- (1+\s,0.1) node[below] {$\frac{5}{2}$};
            \draw (-1-\s,-0.1) -- (-1-\s,0.1) node[below] {$-\frac{5}{2}$};
            
            \draw[very thick, orange] (-1,0) -- (1,0) node[pos=0.7, above] {$\mathcal{B}$};
            
            \filldraw[green!60!black] (-1,0) circle (2pt) node[below] {-1};
            \filldraw[green!60!black] (1,0) circle (2pt) node[below] {1};
            
            \def\x{6.2}
            \draw[->] (\x,2) -- (2.2,1.8);
            \node[draw, align=left, fill=white] at (\x,2) {
                $\rho = \frac23$
            };
            
            \draw[->] (\x,-2) -- (1.6,-0.8);
            \node[draw, align=left, fill=white] at (\x,-2) {
                $\rho = \frac34$
            };
        \end{tikzpicture}
    \end{center}
    \caption{Illustration of \cref{exa:robustmedian}.}\label{fig:exa:robustmedian}
\end{figure}
Using \cref{thm:finiteD:convex}, we can show that the breakdown point of $\tran$-Fr\'echet means with $\lim_{x\to\infty}\dtran(x) < \infty$ is $1/2$. The breakdown point of a statistic is the fraction of the mass of a probability distribution that an adversary needs to corrupt to let the statistic diverge.
\begin{definition}\label{def:breakdown}
    Let $\epsilon>0$. An \emph{$\epsilon$-contamination} of a probability distribution $P$ on $\mc Q$ is any probability distribution $\tilde P = \check P + \mu$, where $\check P$ is a measure with $\check P(\mc Q) = 1-\epsilon$ and $\check P(B) \leq P(B)$ for all measurable sets $B \subset \mc Q$ and $\mu$ is a measure with $\mu(\mc Q) = \epsilon$.
    
    Let $\mc P$ be a set of probability distributions. Let $T \colon \mc P \to \mc Z$ be a statistic with values in the measurable space $(\mc Z, \Sigma_{\mc Z})$. Let $\delta \colon \mc Z  \times \mc Z \to [0, \infty]$ be a function. The \emph{breakdown point} of $T$ at $P \in \mc P$ with respect to $\mc P$ and $\delta$ is
    \begin{equation}
        \varepsilon(P, \delta, \mc P, T) := \inf\setByEle{\epsilon>0}{\sup\setByEle{\delta(T(P), T(\tilde P))}{\tilde P \in \mc P \text{ is } \epsilon\text{-contamination of } P} = \infty}
        \eqfs
    \end{equation}
\end{definition}
Let $\mc P_0(\mc Q)$ be the set of all probability distributions on $\mc Q$ that are concentrated on a separable set.
For $\tran\in\setcciz$, let $\mc P_{\dtran}(\mc Q)$ be the set of all $P \in \mc P_0(\mc Q)$ such that $\Eof{\dtran(\ol Yq)} < \infty$ for one (and hence all) $q \in \mc Q$, where $Y \sim P$. 
\begin{theorem}\label{thm:breakdown}
    Assume $\diam(\mc Q) = \infty$.
    For $P \in \mc P_{\dtran}(\mc Q)$, let $M(P)$ be the set of $\tran$-Fr\'echet means of $Y \sim P$.
    For $A, B \subset \mc Q$, define
    \begin{equation}
        \delta(A, B) := \sup_{a\in A, b\in B} \ol ab
        \eqfs
    \end{equation}
    \begin{enumerate}[label=(\roman*)]
        \item
        Assume $\lim_{x\to\infty}\dtran(x) < \infty$. Then $ \mc P_{\dtran}(\mc Q)  =  \mc P_0(\mc Q) $ and
        \begin{equation}
            \forall P\in \mc P_0(\mc Q) \colon \varepsilon(P, \delta, \mc P_0(\mc Q), M) = \frac12
            \eqfs
        \end{equation}
        \item
        Assume $\lim_{x\to\infty}\dtran(x) = \infty$. Then
        \begin{equation}
            \forall P\in \mc P_{\dtran}(\mc Q) \colon\varepsilon(P, \delta, \mc P_{\dtran}(\mc Q), M) = 0
            \eqfs
        \end{equation}
    \end{enumerate}
\end{theorem}
We now turn the deterministic bounds on $m$ into exponential tail bounds on $\ol m{m_n}$ using the Chernoff bound. We denote
\begin{equation}
    \kullback(t, p)
    := t \log\brOf{\frac{t}{p}} + (1-t) \log\brOf{\frac{1-t}{1-p}}
    \eqcm
    \qquad t, p \in [0,1]
    \eqcm
\end{equation}
the binary relative entropy, with the conventions $0 \log 0 = 0$ and $\kullback(t,p) = +\infty$ if $t \notin \{0,1\}$ and $p \in \{0,1\}$.
\begin{theorem}\label{thm:tailbound}
    Assume $\lim_{x\to\infty}\dtran(x) =: D < \infty$.
    Let $R \in \Rpp$ and $\lambda \in (0,1]$ such that $\tran(R) \geq \lambda D R$.
    Let $r \in \Rp$ and set $\rho := \Prof{\ol Ym \leq r}$.
    Let $\eta \in (0,1]$.
    Assume $(\lambda + 1)\eta\rho > 1$ and $r \geq \frac12R$.
    Then
    \begin{equation}
        \PrOf{\ol m{m_n} > \br{\frac{\br{3 + \lambda}\eta\rho - 1}{\br{1 + \lambda}\eta\rho - 1}} r}
        \leq
        \exp\brOf{-n \kullback(\eta\rho, \rho)}
        \eqfs
    \end{equation}
\end{theorem}
\cref{thm:tailbound} applied with $\lambda = \frac9{10}$ and $\eta\rho = \frac23$ yields the
following corollary.
\begin{corollary}\label{cor:tailbound}
    Assume $\lim_{x\to\infty}\dtran(x) =: D < \infty$.
    Let $R \in \Rpp$ such that $\tran(R) \geq \frac9{10} D R$.
    Let $r \in \Rp$ such that $\rho := \Prof{\ol Ym \leq r} > \frac23$ and $r \geq \frac12 R$.
    Then
    \begin{equation}
        \PrOf{\ol m{m_n} > 6r}
        \leq
        \br{\frac{27}{4} (1-\rho) \rho^2}^{\frac n3}
        \leq
        \br{2 (1-\rho)^{\frac13}}^{n}
        \eqfs
    \end{equation}
\end{corollary}
\begin{remark}
     For transformations with $\lim_{x\to\infty}\dtran(x) < \infty$, \cref{thm:tailbound} and \cref{cor:tailbound} show that there is a finite radius $r$ depending on $\tran$ and the distribution of $Y$, such that we obtain exponential concentration of $m_n$ within a ball of radius $r$ centered at $m$. Unfortunately, these results do not allow us to reduce $r$ towards $0$ as $n\to\infty$. But, for any $\zeta\in\Rpp$, they ensure $\Eof{\ol{m}{m_n}^\zeta} < \infty$ for large enough $n$ under a minimal moment assumption, see \cref{lmm:posddrtran:expect}.
\end{remark}
Recall that the Fr\'echet median is the $\tran$-Fr\'echet mean with $\tran(x) = x$.
Let $q\in\mc Q$, $r\in \Rp$, and set $\rho := \Prof{\ol Yq \leq r}$. Assume  $\rho > \frac{1}{2}$.
By \cref{cor:finiteD:convex} applied to $\mc B = \ballclosed(q, r)$ with $\delta = 2r$, we have for all Fr\'echet medians $m$,
\begin{equation}
    \ol qm \leq \br{1 + 4 \rho\frac{1 - \rho}{2\rho - 1}} r
    \eqfs
\end{equation}
Combining this bound with the Chernoff bound as in the proof of \cref{thm:tailbound}, we obtain exponential tail bounds for the empirical Fr\'echet median. As $\tran(x) = x$ satisfies $\tran(R) \geq \lambda D R$ with $D = \lambda = 1$ for every $R\in\Rpp$, no condition of the form $r \geq \frac12 R$ is needed, and the resulting radius factor is smaller than the one of \cref{thm:tailbound} with $\lambda = 1$ by exactly $2\eta\rho$:
\begin{theorem}\label{thm:median:tailbound}
    Let $\tran(x)=x$ so that $m$ is a Fr\'echet median. 
    Let $r \in \Rp$ and set $\rho := \Prof{\ol Ym \leq r}$.
    Let $\eta \in (0,1]$.
    Assume $2\eta\rho > 1$.
    Then
    \begin{equation}
        \PrOf{\ol m{m_n} > \br{\frac{6\eta\rho - 1 - 4\eta^2\rho^2}{2\eta\rho - 1}} r}
        \leq
        \exp\brOf{-n \kullback(\eta\rho, \rho)}
        \eqfs
    \end{equation}
\end{theorem}
\cref{thm:median:tailbound} applied with $\eta\rho = \frac23$ yields the following corollary.
\begin{corollary}\label{cor:median:tailbound}
    Let $\tran(x)=x$ so that $m$ is a Fr\'echet median. 
    Let $r \in \Rp$ such that $\rho := \Prof{\ol Ym \leq r} > \frac23$.
    Then
    \begin{equation}
        \PrOf{\ol m{m_n} > \frac{11}3 r}
        \leq
        \br{\frac{27}{4} (1-\rho) \rho^2}^{\frac n3}
        \leq
        \br{2 (1-\rho)^{\frac13}}^{n}
        \eqfs
    \end{equation}
\end{corollary}
\subsection{Convergence Rates}\label{sec:posddrtran}
Here, we investigate the convergence rates for the $\tran$-Fr\'echet mean with $\lim_{x\to\infty}\dtran(x) < \infty$. First we consider $\ddrtran(x) > 0$ for all $x\in\Rpp$, which excludes the median and the standard Huber loss but includes the pseudo-Huber loss. Later we consider the Fr\'echet median separately.
\begin{theorem}\label{thm:posddrtran:main}
    Assume $\lim_{x\to\infty}\dtran(x) < \infty$.
    Assume $\ddrtran(x) > 0$ for all $x\in\Rpp$.
    Assume $\xi \in \Rpp$ exists with $\Eof{\ol Ym^\xi} < \infty$.
    Then
    \begin{equation}
        \EOf{\ol m{m_n}^2} = \mo O\brOf{\frac1n}
        \eqfs
    \end{equation}
\end{theorem}
\begin{remark}
    We need a minimal polynomial moment condition in the form of $\Eof{\ol Ym^\xi}$ for an arbitrarily small $\xi > 0$. This is a weak moment condition, but it excludes distributions with $\Eof{\log(\ol Ym + 1)} = \infty$.
\end{remark}
Next, we examine the rate of convergence for the Fr\'echet median, i.e., the $\tran$-Fr\'echet mean with $\tran(x) = x$.
Since $\ddrtran(x) = 0$, the standard variance inequality \cref{thm:infdtr:vi} is not useful and must be replaced by \cref{thm:median:vi}, whose lower bound involves an integral over $\mc Q$ excluding the bow tie region $\bowtieset(m, q, w)$ (see \cref{def:bowtie}). 

Some events appearing in \cref{thm:median:main} below and its proof, such as the one in \eqref{eq:thm:median:main:bowtie:sample}, involve an existential quantifier or an infimum over $q, p \in \ballclosed(m, R)$ and may fail to be measurable in general. To avoid distracting from the main arguments, we assume that all such sets and functions are measurable.
\begin{theorem}\label{thm:median:main}
    Let $\tran(x)=x$ so that $m$ is a Fr\'echet median. 
    Assume $\xi \in \Rpp$ exists with $\Eof{\ol Ym^\xi} < \infty$.
    Let $r\in\Rpp$ such that $\PrOf{\ol Ym > r} < \frac1{27}$. Set $R:=6r$.
    Assume there are $\ell \in \N$ and $w\in(0,1]$ such that
    \begin{align}
        \label{eq:thm:median:main:bowtie:pop}
        \sup_{p \in \ballclosed(m, R)} \PrOf{Y \in \bowtieset(m, p, w)} &< 1 \qquad \text{and}\\
        \label{eq:thm:median:main:bowtie:sample}
        \PrOf{\exists q,p \in \ballclosed(m, R) \colon Y_1, \dots, Y_\ell \in \bowtieset(q, p, w) \cup \ballclosed(m, R)\compl} &< 1
        \eqfs
    \end{align}
    Then
    \begin{equation}
        \EOf{\ol m{m_n}^2} = \mo O\brOf{\frac1n}
        \eqfs
    \end{equation}
\end{theorem}
A Hadamard manifold is a (finite dimensional) Riemannian manifold that is complete, simply connected, and everywhere of nonpositive sectional curvature.
\begin{corollary}\label{cor:median:notonageodesic}
    Let $\tran(x)=x$ so that $m$ is a Fr\'echet median.
    Let $\mc Q$ be a Hilbert space or a Hadamard manifold.
    Assume $\xi \in \Rpp$ exists with $\Eof{\ol Ym^\xi} < \infty$.
    Assume $Y$ is not concentrated on a geodesic, i.e.,
    \begin{equation}
        \PrOf{Y \in \gamma(\R)} < 1
    \end{equation}
    for every geodesic $\gamma\colon \R \to \mc Q$.
    Then
    \begin{equation}
        \EOf{\ol m{m_n}^2} = \mo O\brOf{\frac1n}
        \eqfs
    \end{equation}
\end{corollary}
\begin{remark}\mbox{}
    \begin{enumerate}[label=(\roman*)]
        \item 
        The bow tie set $\bowtieset(m, q, w)$ is the set of all points on geodesics that intersect $m$ or $q$ at an angle $\alpha_0$ or less, where $\alpha_0$ depends on $w$. See \cite[Remark 6.16]{varinequ} and \cref{fig:bowtie}. In Hilbert spaces, if we set the widening to zero ($w = 0$) then $\alpha_0 = 0$, and $\bowtieset(m, q, w) = \geodft{m}{q}(\R)$, i.e., the bow tie between $m$ and $q$ is the line through $m$ and $q$.
        \item 
        The condition \eqref{eq:thm:median:main:bowtie:pop} roughly translates to $Y$ not being concentrated on a bow tie. For the Fr\'echet median in Hilbert spaces (spatial/geometric median), a typical assumption for convergence results is that $Y$ is not concentrated on a line \cite[Theorem 3.3]{Chakraborty2014}. If this condition holds, we can find a widening $w>0$ small enough such that $Y$ is also not concentrated on any bow tie $\bowtieset(m, p, w)$ with $p \in \ballclosed(m, R)$ (see \cref{cor:median:notonageodesic}, where this argument is also extended to Hadamard manifolds). The restriction to a bounded set, $p \in \ballclosed(m, R)$, is needed as otherwise we could always find a $p$ far enough from $m$ and $Y$ such that the geodesic from $Y$ to $p$ intersects the geodesic between $p$ and $m$ at an arbitrarily small angle.
        \item 
        The condition \eqref{eq:thm:median:main:bowtie:sample} is a sample version of the requirement that $Y$ is not concentrated on a bow tie. In Hilbert spaces and Hadamard manifolds (\cref{cor:median:notonageodesic}), if $Y$ is not concentrated on a line/geodesic, then the probability that the iid sample $Y_1,Y_2,Y_3$ lies on a line is smaller than $1$. Furthermore, we can find $w>0$ small enough so that this statement can be extended from lines to bow ties $\bowtieset(q, p, w)$ with knots $q,p$ in a bounded region $\ballclosed(m, R)$. Thus, we can choose $\ell = 3$ in these spaces.
    \end{enumerate}    
\end{remark}

\section{Fast and Optimal Rates}\label{sec:optimal}

In this section, we first demonstrate that, for certain distributions, $\alpha$-Fr\'echet means with $\alpha<2$ may converge at rates faster than the parametric rate. We then prove that the parametric rates derived in the previous sections nevertheless remain optimal in any nondegenerate Hadamard space for a natural class of probability distributions.

\subsection{Fast Rates via Algorithm Stability}\label{ssec:fast}
So far, we have used variance inequalities (VIs) that are of order $\ol qm^2$ for points $q$ close to the $\tran$-Fr\'echet mean $m$. This yields the classical parametric rate of convergence. If $Y$ has large small ball probabilities near $m$, the VI can exhibit steeper growth. For example, in the extreme case of $\Prof{Y=m} = 1$, we have $\Eof{\ol Yq^\alpha - \ol Ym^\alpha} = \ol qm^\alpha$ for $\alpha \in \Rpp$. If a VI with steeper-than-squared growth holds for $q$ close to $m$, we obtain rates of convergence faster than parametric.

\begin{theorem}\label{thm:fast}
    Let $\alpha \in (1,2]$ and $\tran(x)=x^\alpha$.
    Assume $\Eof{\ol{Y}o^\alpha}<\infty$.
    Assume there are $\epsilon,b\in\Rpp$ and $\beta \in [\alpha, 2]$ such that 
    \begin{equation}\label{eq:fast:condition}
        \forall x\in (0,\epsilon]\colon \PrOf{\ol Ym \leq x} \geq b x^{\beta-\alpha}
        \eqfs
    \end{equation}
    Then, there is $C \in \Rpp$ such that 
    \begin{equation}\label{eq:fast:result}
        \EOf{\min\brOf{\ol m{m_n}^\beta, \ol m{m_n}^{\alpha}}}
        \leq
        C n^{-1}
    \end{equation}
    for all $n\in\N$.
\end{theorem}
For random variables $X_n$ with values in $\Rp$ and $a_n\in\Rpp$, we write
$X_n = \Op(a_n)$ if and only if
$\lim_{t\to\infty}\limsup_{n\to\infty}\Prof{X_n > t\, a_n} = 0$.
\begin{remark}
	The result \eqref{eq:fast:result} implies
	\begin{equation}
		\ol m{m_n} = \Op\brOf{n^{-\frac1\beta}}
	\end{equation}
	which is faster than the parametric rate $\Op(n^{-1/2})$ if $\beta < 2$. If $\ol Ym$ has a density, condition \eqref{eq:fast:condition} implies that the density goes to $\infty$ at $0$.
\end{remark}
Similar results can be obtained for other transformations $\tran\in\setcciz$. We do not extend this discussion further, as even these faster rates are sub-optimal for distributions with large small ball probabilities near $m$ \eqref{eq:fast:condition}: In \cref{thm:fast:entropy} we show rates of order $\Op(n^{-\frac1{2(\beta-1)}})$ for finite dimensional Hadamard spaces. 

Next, let us explain where the proof of \cref{thm:fast} loses tightness: We use the VI not only when relating the excess risk to the risk on $\ol m{m_n}$, but also when obtaining a bound on $\ol{m_n}{m_n^i}$, where $m_n^i$ is the $\alpha$-Fr\'echet mean of a sample where the $i$-th entry is replaced by an independent copy. In the second case, we still apply the (potentially suboptimal) quadratic VI; as here, we apply the VI to empirical distributions, we do not directly obtain steepness from condition \eqref{eq:fast:condition}. 
\subsection{Fast Rates via Chaining and Peeling}
So far, we have used the algorithm stability proof technique as in \cite{Escande2024,brunel2025}. Prior to these publications a proof technique based on \textit{chaining} \cite{vaart96, talagrand21} and \textit{peeling} (also called \textit{slicing}) \cite{geer00} was commonly used to obtain convergence rates for Fr\'echet means \cite{Petersen2019,schoetz19,Ahidar2020}. A downside of this approach is that it typically requires an \textit{entropy} upper bound such as a bound on the \textit{entropy integral}
\begin{equation}\label{eq:entropy:inte}
    J(B) = \int_0^{\infty} \!\!\sqrt{\log(N(B,r))} \,\,\dl r
    \eqcm
\end{equation}
where $N(B,r)$ is the covering number of the set $B \subset \mc Q$ with balls of radius $r$, i.e., 
\begin{equation}
    N(B,r) := \min\setByEle{k \in \N}{\exists q_1,\dots, q_k \in \mc Q\colon B \subset \bigcup_{\ell=1}^k \ballclosed(q_\ell, r)}
    \eqfs
\end{equation}
Requiring $J(B) < \infty$ for all balls $B$ of sufficiently small radius effectively requires the space $\mc Q$ to be finite dimensional (it is not fulfilled in infinite dimensional Hilbert spaces). 

We state here a result following the entropy approach in \cite{schoetz19} and using the assumption that $Y$ has large small ball probabilities near $m$. This allows us to obtain even faster rates than what we obtained using the stability approach. The rates shown here for the power Fr\'echet mean, \cref{cor:fast:entropy:power}, are optimal as can be seen from \cref{thm:distri:alpha} below. 

To ensure $\limsup_{\diam(B) \to 0}J(B)/\diam(B)<\infty$, we assume
\begin{equation}\label{eq:entropy}
    \exists s\in(0,2)\colon  
    \limsup_{R\to0} \ \sup_{q\in\mc Q} \ \sup_{r \in (0,R)} \  r^s R^{-s} \log\brOf{N(\ballclosed(q,R),r)} \ < \ \infty
    \eqfs
\end{equation}
This condition is fulfilled, e.g., in Euclidean spaces and the finite dimensional hyperbolic spaces.
\begin{theorem}\label{thm:fast:entropy}
    Let $\tran \in \setcciz$.
    Assume the entropy condition \eqref{eq:entropy}. 
    Assume $\sigma_{(\dtran)^2} := \Eof{\dtran(\ol Ym)^2} < \infty$.
    Use either of the following settings:
    \begin{enumerate}[label=(\roman*)]
        \item
        Set $\beta := 2$.
        Set $x_0 := \inf\setByEleInText{x\in\Rpp}{\ddrtran(x)=0}$.
        Assume $\Prof{\ol Ym < x_0} > 0$.
        \item 
        Let $\beta\in(1,2)$. Assume
        \begin{equation}\label{eq:smallball}
            \liminf_{t\to 0} \, t^{2-\beta} \ddrtran(2t) \PrOf{\ol Ym \leq t} > 0
            \eqfs
        \end{equation}
    \end{enumerate}
    Then
    \begin{equation}
        \ol{m}{m_n} = \Op\brOf{n^{-\frac{1}{2(\beta-1)}}}
        \eqfs
    \end{equation}
\end{theorem}
\begin{corollary}\label{cor:fast:entropy:power}
    Let $\alpha \in (1, 2]$ and set $\tran(x) = x^\alpha$.
    Assume the entropy condition \eqref{eq:entropy}. 
    Assume $\sigma_{2\alpha-2} := \Eof{\ol Ym^{2\alpha-2}} < \infty$.
    Set $\beta := 2$ or let $\beta\in[\alpha,2)$ and assume
    \begin{equation}\label{eq:smallball:alpha}
        \liminf_{t\to 0}  \, t^{\alpha-\beta} \PrOf{\ol Ym \leq t} > 0
        \eqfs
    \end{equation}
    Then
    \begin{equation}
        \ol{m}{m_n} = \Op\brOf{n^{-\frac{1}{2(\beta-1)}}}
        \eqfs
    \end{equation}
\end{corollary}
\begin{remark}
    \begin{enumerate}[label=(\roman*)]
        \item 
        The condition $\PrOf{\ol Ym < x_0} > 0$ (case $\beta=2$ of \cref{thm:fast:entropy}) ensures that the $\tran$-Fr\'echet mean is unique and the VI yields at least a quadratic growth. It is trivially fulfilled for all transformations with $\ddrtran(x) >0$ for all $x\in\Rpp$ such as $\tran(x)=x^\alpha$ with $\alpha\in (1,2]$. But it excludes the Fr\'echet median. For the case $\beta\in(1,2)$, \eqref{eq:smallball} implies $\PrOf{\ol Ym < x_0} > 0$.
        \item 
        The $\Op$-result gives us neither finite-sample bounds nor a bound in expectation. Following the proof as in \cite{schoetz19}, it is possible to obtain explicit tail bounds under stronger entropy conditions. Those do translate to finite sample bounds in expectation, but they tend to be less tight than our main results based on algorithm stability.
        \item 
        As the theorem does not show a result in expectation, the moment condition can be reduced to $\Eof{\dtran(\ol Yq)^2} < \infty$. The $\tran$-Fr\'echet mean is well defined even if $\Eof{\tran(\ol Yq)} = \infty$; we only require $\Eof{\dtran(\ol Yq)} < \infty$ for this, see \cref{ssec:tfm}.
        \item 
        Aside from not being fulfilled or not being efficient in infinite dimensional spaces, if the goal is to obtain rates in expectation or tail bounds, it is not enough to bound $J(B)$ in \eqref{eq:entropy:inte} for small sets $B$ as obtained by \eqref{eq:entropy}, but for all sets, see \cite{schoetz19}. This can weaken the results in negatively curved spaces: In the hyperbolic $k$-space $\mathbb H^k$ with $k\geq2$, we have
        $J(\ballclosed(q,R)) = \frac23 \sqrt{k-1}\, R^{\frac32} (1 + \mo o(1))$ as $R \to \infty$, which would lead to worse tail bounds following \cite{schoetz19}.
        \item 
        Under a suitable lower bound for the small ball probability $\Prof{\ol Ym \leq t}$ and assuming $\ddrtran(t) \xrightarrow{t\to0}\infty$ fast enough, we obtain a faster rate of convergence than the parametric rate and faster than we have proven in \cref{thm:fast} using the algorithm stability approach as can be seen in \cref{cor:fast:entropy:power}.
    \end{enumerate}
\end{remark}
\subsection{Asymptotic Distribution on the Real Line}
In this section, we study the asymptotic distribution of $\tran$-Fr\'echet means on the real line, which allows us to lower-bound the convergence rates in general Hadamard spaces in the subsequent sections.

For a sequence $(Z_n)_{n\in\N}$ of random variables and a distribution or another random variable $D$, we denote convergence in distribution using $Z_n \xrsquigarrow{n\to\infty} D$.
Let $\tran\in\setcciz$.
Let $X,X_1,X_2,\dots$ be independent and identically distributed $\R$-valued random variables.
Assume $\Eof{\dtran(|X|)}<\infty$. 
Let $m \in \argmin_{x\in\R} \Eof{\tran(|X-x|) - \tran(|X|)}$ and $m_n \in \argmin_{x\in\R}\sum_{i=1}^n \tran(|X_i-x|)$ a measurable selection.

\begin{theorem}\label{thm:distri:tran}
    Assume $\dtran(0) = 0$.
    Set $x_0 := \inf\setByEleInText{x\in\Rpp}{\ddrtran(x)=0}$.
    Assume $\Prof{|X-m| < x_0} > 0$.
    Assume $\sigma_{(\dtran)^2} := \Eof{\dtran(|X-m|)^{2}} < \infty$.
    Assume
    \begin{equation}\label{eq:thm:distri:tran:ddrtran}
        \sigma_{\ddrtran} := \Eof{\ddrtran(|X-m|)} < \infty
        \quad\text{and}\quad
        \frac1s \int_0^s \EOf{\ddrtran(|X-m-t|)} \dl t \xrightarrow{s\to0} \sigma_{\ddrtran}
        \eqcm
    \end{equation}
    where we use the convention $\int_0^s = -\int_s^0$ for $s < 0$.
    Then
    \begin{equation}
        \sqrt n\,(m_n-m)
        \xrsquigarrow{n\to\infty}
        \mathcal N\brOf{
            0,
            \frac{\sigma_{(\dtran)^2}}{\sigma_{\ddrtran}^2}
        }
        \eqfs
    \end{equation}
\end{theorem}
\begin{remark}
    \begin{enumerate}[label=(\roman*)]
        \item The condition $\dtran(0)=0$ excludes the median (and other $L^1$-type losses), but is fulfilled for the power Fr\'echet means with $\alpha>1$ and for the (pseudo-)Huber loss. The complementary case $\dtran(0)>0$ requires a somewhat different technical treatment, for which we refer to the classical asymptotic theory of sample medians \cite{Vaart1998, Knight1998}.
        \item The condition $\Prof{|X-m| < x_0} > 0$ ensures uniqueness of the $\tran$-Fr\'echet mean. If $\ddrtran(x)>0$ for all $x\in\Rpp$, we have $x_0=\infty$ and it is trivially fulfilled.
        \item The condition $\sigma_{(\dtran)^2} < \infty$ is a tail bound that ensures at least the parametric convergence rate. If $\lim_{x\to\infty}\dtran(x) < \infty$ such as for the Huber loss, it is trivially fulfilled.
        \item The condition $\sigma_{\ddrtran} < \infty$ ensures that the small ball probabilities near $m$ are not too large so that the convergence rate is not faster than parametric. We further require, via \eqref{eq:thm:distri:tran:ddrtran}, that this second derivative moment can be approached in Ces\`aro sense. A sufficient condition for \eqref{eq:thm:distri:tran:ddrtran} is $\EOf{\ddrtran(|X-m-s|)} \xrightarrow{s\to0} \sigma_{\ddrtran}$. If $X$ has a locally bounded density at $m$ and $\ddrtran$ is locally integrable at 0, both conditions are fulfilled.
    \end{enumerate}
\end{remark}
The next theorem discusses the case of power Fr\'echet means, i.e., the case $\tran(x)=x^\alpha$. The first part follows from \cref{thm:distri:tran}. The second part shows fast rates under large small ball probabilities near $m$.
\begin{theorem}\label{thm:distri:alpha}
    Let $\alpha\in(1,2]$ and set $\tran(x)=x^\alpha$.
    Assume
    \begin{equation}
        \sigma_{2\alpha-2}:=\Eof{|X-m|^{2\alpha-2}}<\infty
        \eqfs
    \end{equation}
    \begin{enumerate}[label=(\roman*)]
        \item
        Assume there is $\beta>2$ such that
        \begin{equation}\label{eq:thm:distri:alpha:slow}
            \lim_{t\searrow0} t^{\alpha-\beta}\,\Prof{|X-m|\le t}=0
            \eqfs
        \end{equation}
        Then $\sigma_{\alpha-2} := \Eof{|X-m|^{\alpha-2}}<\infty$ and
        \begin{equation}
            \sqrt n\,(m_n-m)
            \xrsquigarrow{n\to\infty}
            \mathcal N\brOf{
                0,
                (\alpha-1)^{-2}\,\sigma_{\alpha-2}^{-2}\,\sigma_{2\alpha-2}
            }
            \eqfs
        \end{equation}
        \item\label{thm:distri:alpha:fast}
        Assume there are $\beta\in(\alpha,2)$ and $b\in\Rpp$ such that
        \begin{equation}\label{eq:distri:alpha:tail}
            \frac{\PrOf{0\leq X-m\le t}}{b\,t^{\beta-\alpha}}
            \xrightarrow{t\searrow0}1
            \qquad\text{and}\qquad
            \frac{\PrOf{0\geq X-m\ge-t}}{b\,t^{\beta-\alpha}}
            \xrightarrow{t\searrow0}1
            \eqfs
        \end{equation}
        Then
        \begin{equation}
            n^{\frac1{2(\beta-1)}}\,(m_n-m)
            \xrsquigarrow{n\to\infty}
            C_{\alpha,\beta,b}\,\sign(W)\,|W|^{\frac1{\beta-1}}
            \eqcm
        \end{equation}
        where $W\sim\mathcal N(0,\sigma_{2\alpha-2})$ and
        $C_{\alpha,\beta,b}\in\Rpp$ depends only on $\alpha,\beta,b$.
    \end{enumerate}
\end{theorem}
If we assume $\tran(x) = cx^\alpha(1+\mo o(1))$ for $x\to0$, we could extend \cref{thm:distri:tran} to a part with fast rates as in \cref{thm:distri:alpha}. As this requires $\tran(x)$ to look like $x^\alpha$ close to zero, this does not seem to add much new insight and we keep the asymptotic distribution under fast rates for the canonical case only.

\subsection{Optimality}

The image of a geodesic in $\mc Q$ is convex and isometric to an interval. Furthermore, transformed Fr\'echet means of distributions that are supported within a closed and convex set also lie in that set \cite[Proposition 5.2]{varinequ}. Thus, lower bounds obtained in $\R$ carry over to general Hadamard spaces. In particular, the Portmanteau theorem together with \cref{thm:distri:tran} and \cref{thm:distri:alpha} yields the following corollaries.

\begin{corollary}\label{cor:lower:tran}
    Assume that $Y$ is concentrated on a unit-speed geodesic $\gamma$.
    Assume $\dtran(0) = 0$.
    Set $x_0 := \inf\setByEleInText{x\in\Rpp}{\ddrtran(x)=0}$.
    Assume $\Prof{\ol Ym < x_0} > 0$.
    Assume $\sigma_{(\dtran)^2} := \Eof{\dtran(\ol Ym)^{2}} < \infty$.
    Assume
    \begin{equation}
        \sigma_{\ddrtran} := \Eof{\ddrtran(\ol Ym)} < \infty
        \quad\text{and}\quad
        \frac1s \int_0^s \EOf{\ddrtran(|\gamma^{-1}(Y)-\gamma^{-1}(m)-t|)} \dl t \xrightarrow{s\to0} \sigma_{\ddrtran}
        \eqcm
    \end{equation}
    where we use the convention $\int_0^s = -\int_s^0$ for $s < 0$.
    Let $g \colon \Rp \to \Rp$ be lower semi-continuous.
    Then
    \begin{equation}
        \liminf_{n\to\infty} \EOf{g\brOf{\sqrt{n}\, \ol m{m_n}}} \geq \EOf{g(|Z|)}
        \eqcm
    \end{equation}
    where $Z \sim \mathcal N(0, \sigma_{(\dtran)^2}\sigma_{\ddrtran}^{-2})$.
\end{corollary}
\begin{corollary}\label{cor:lower:alpha}
    Assume that $Y$ is concentrated on a unit-speed geodesic $\gamma$.
    Let $\alpha\in(1,2]$ and set $\tran(x)=x^\alpha$.
    Assume
    \begin{equation}
        \sigma_{2\alpha-2}:=\Eof{\ol Ym^{2\alpha-2}}<\infty
        \eqfs
    \end{equation}
    Let $g \colon \Rp \to \Rp$ be lower semi-continuous.
    \begin{enumerate}[label=(\roman*)]
        \item
        Assume there is $\beta>2$ such that
        \begin{equation}
            \lim_{t\searrow0} t^{\alpha-\beta}\,\Prof{\ol Ym\le t}=0
            \eqfs
        \end{equation}
        Then $\sigma_{\alpha-2} := \Eof{\ol Ym^{\alpha-2}}<\infty$ and
        \begin{equation}
            \liminf_{n\to\infty} \EOf{g\brOf{\sqrt{n}\, \ol m{m_n}}} \geq \EOf{g(|Z|)}
            \eqcm
        \end{equation}
        where $Z \sim \mathcal N(0, (\alpha-1)^{-2} \sigma_{\alpha-2}^{-2} \sigma_{2\alpha-2})$.
        \item
        Assume there are $\beta\in(\alpha,2)$ and $b\in\Rpp$ such that
        \begin{equation}
            \frac{\PrOf{0\leq\gamma^{-1}(Y)-\gamma^{-1}(m)\leq t}}{b t^{\beta-\alpha}}
            \xrightarrow{t\searrow 0}
            1
            \ \text{and}\ 
            \frac{\PrOf{0 \geq \gamma^{-1}(Y)-\gamma^{-1}(m) \geq -t}}{b t^{\beta-\alpha}}
            \xrightarrow{t\searrow 0}
            1
            \eqfs
        \end{equation}
        Then
        \begin{equation}
            \liminf_{n\to\infty} \EOf{g\brOf{n^{\frac1{2(\beta-1)}}\, \ol m{m_n}}} \geq \EOf{g\brOf{C_{\alpha,\beta,b} |W|^{\frac1{\beta-1}}}}
            \eqcm
        \end{equation}
        where $W\sim\mathcal N(0,\sigma_{2\alpha-2})$ and $C_{\alpha,\beta,b} \in\Rpp$ is a constant depending only on $\alpha,\beta,b$.
    \end{enumerate} 
\end{corollary}
Next, we apply \cref{cor:lower:alpha} to a two-point distribution and do explicit calculations for a three point distribution to investigate the optimality of the upper bound in \cref{cor:power:stdloss:sharp}.
\begin{proposition}\label{prop:optimality}
    Let $\alpha \in (1,2)$ and $\tran(x)=x^\alpha$. Set $\phi := \frac{2-\alpha}{\alpha-1}$ and
    \begin{equation}
        \nu_\alpha := \frac1{\alpha(\alpha-1)}-\frac12 \in\Rpp
        \eqfs
    \end{equation}
    Let $h\in\Rpp$ and let $\gamma\colon[-h,h]\to\mc Q$ be a unit-speed geodesic; such $h$ and $\gamma$ exist whenever $\mc Q$ contains more than one element.
    For $\rho\in(0,1]$, let $Y$ be distributed according to
    \begin{equation}
        P^\rho := \frac\rho2\brOf{\delta_{\gamma(-h)}+\delta_{\gamma(h)}}+(1-\rho)\,\delta_{\gamma(0)}
        \eqfs
    \end{equation}
    Then $m = \gamma(0)$ and $\sigma_\varphi = \rho\,h^\varphi$ for all $\varphi\in\Rpp$, so that the first and the last summand of \eqref{eq:power:stdloss:sharp} are
    \begin{equation}
        L_1(n) = \sigma_{\alpha-1}^{\phi}\, \sigma_{2\alpha-2}^{\frac12}\, n^{-\frac12} = \rho^{\phi+\frac12}\, h\, n^{-\frac12}
        \eqcm\qquad
        L_2(n) = \sigma_\alpha^{\frac1\alpha}\, n^{-\frac{1}{\alpha(\alpha-1)}} =  \rho^{\frac1\alpha}\, h\, n^{-\frac1{\alpha(\alpha-1)}}
    \end{equation}
    with $\frac{L_1(n)}{L_2(n)} = \br{n\rho}^{\nu_\alpha}$.
    \begin{enumerate}[label=(\roman*)]
        \item \label{prop:optimality:first}
        \emph{(The first summand is attained.)}
        Let $\rho=1$. Then $\sigma_{\alpha-2}^{-1} = \sigma_{\alpha-1}^{\phi} = h^{2-\alpha}$ and
        \begin{equation}
            \liminf_{n\to\infty} \frac{\EOf{\ol m{m_n}^\alpha}^{\frac1\alpha}}{L_1(n)}
            \geq
            \frac34
            \eqcm\qquad\text{while}\qquad
            \frac{L_2(n)}{L_1(n)} = n^{-\nu_\alpha} \xrightarrow{n\to\infty} 0
            \eqfs
        \end{equation}
        \item \label{prop:optimality:last}
        \emph{(The last summand is attained.)}
        Let $n\geq2$ and $\rho = \rho_n = n^{-2}$. Then
        \begin{equation}
            \EOf{\ol m{m_n}^\alpha}^{\frac1\alpha}
            \geq
            \frac18\, L_2(n)
            \eqcm\qquad\text{while}\qquad
            \frac{L_1(n)}{L_2(n)} = n^{-\nu_\alpha} \xrightarrow{n\to\infty} 0
            \eqfs
        \end{equation}
    \end{enumerate}
\end{proposition}
\begin{remark}
    \begin{enumerate}[label=(\roman*)]
        \item
        \Cref{prop:optimality} shows that terms of the form $\sigma_{\alpha-1}^{\phi} \sigma_{2\alpha-2}^{\frac12} n^{-\frac12}$ (or at least $\sigma_{\alpha-2}^{-1} \sigma_{2\alpha-2}^{\frac12} n^{-\frac12}$) and $\sigma_\alpha^{\frac1\alpha} n^{-\frac{1}{\alpha(\alpha-1)}}$ are required in an upper bound of the $L^\alpha$-risk. This matches two out of three terms in \cref{cor:power:stdloss:sharp}. It is not clear whether the term, $\sigma_{\alpha-1}^{\frac\phi2} \sigma_{\alpha}^{\frac12} n^{-\frac{1}{2(\alpha-1)}}$, in \cref{cor:power:stdloss:sharp} is necessary.
        \item 
        While the rate and the dominating moment of order $(2\alpha-2)$ are optimal, the leading term of \cref{cor:power:stdloss:sharp} involves the moment $\sigma_{\alpha-1}$ of positive order $(\alpha-1)$, whereas the lower bound of \cref{cor:lower:alpha} involves the inverse moment $\sigma_{\alpha-2}^{-1}$ of negative order $(\alpha-2)$. This discrepancy is precisely the one introduced by Jensen's inequality in the bound
        \begin{equation}
            \sigma_{\alpha-2}^{-1}
            \leq
            \sigma_{\alpha-1}^{\frac{2-\alpha}{\alpha-1}}
            \eqfs
        \end{equation}
        For $\alpha<2$, this inequality is strict unless $\ol Ym$ is almost surely constant, as it is for the distribution used in \cref{prop:optimality}\,\ref{prop:optimality:first}. A bound in terms of $\sigma_{\alpha-2}^{-1}$ would thus be strictly stronger for every other distribution; whether such a bound holds, at least for distributions with $\sigma_{\alpha-2}<\infty$, is an open question.
    \end{enumerate}
\end{remark}

\begin{remark}
    The lower bounds in \cref{cor:lower:tran,cor:lower:alpha} above show that the parametric rate in general, and the
    fast rate $n^{-\frac{1}{2(\beta-1)}}$ under large small ball probabilities
    near $m$, are optimal in every Hadamard space with more than one element,
    provided the bound is required to hold uniformly over a suitably rich class of distributions.
    For an individual Hadamard space together with an individual distribution,
    faster rates are of course still possible. \cref{ex:tripod} below provides
    such a case: on the tripod, for distributions that are symmetric about the
    branch point and have vanishing small ball probabilities near $m$, the
    $\alpha$-Fr\'echet mean converges faster than at the parametric rate.
    
    This behavior should not be considered typical. It is caused by the
    stickiness phenomenon \cite{Hotz2013,Huckemann2015}: the branch point
    attracts the empirical $\alpha$-Fr\'echet means so strongly that
    $\Prof{m_n = m} \xrightarrow{n\to\infty} 1$. However, stickiness by itself does
    not always yield super-parametric rates, as \cite{Hotz2013} show the
    parametric rate for the $2$-Fr\'echet mean on open books in a sticky setting.
    Moreover, in Hadamard manifolds (including spaces with a negative curvature upper bound), central limit theorems at the parametric
    rate are available for the $2$-Fr\'echet mean for a general class of
    distributions with finite second moment
    \cite{Bhattacharya2005,Bhattacharya2017}. 
    Thus, in many settings, the parametric rate cannot be improved for
    $\alpha=2$. It remains an open question whether in these settings the parametric rate under small and the fast rate under large small
    ball probabilities near $m$ are similarly universally optimal for
    $\alpha$-Fr\'echet means with $\alpha\in(1,2)$.
\end{remark}

\begin{example}[Stickiness of transformed Fr\'echet means on a tripod]
    \label{ex:tripod}
    Let $\mc Q = (\{1,2,3\}\times[0,\infty))/\sim$ be the tripod obtained by gluing three copies of $\Rp$ at their origins, where $(j,0)\sim(k,0)$ for all $j,k$. We write $o$ for the common origin (the branch point), $(j,r)$ for the point at distance $r$ on leg $j$, and equip $\mc Q$ with the metric
    \begin{equation}
        d\brOf{(j,r),(k,s)}
        =
        \begin{cases}
            |r-s|, & j=k,\\
            r+s,   & j\neq k ,
        \end{cases}
    \end{equation}
    so that $(\mc Q,d)$ is a Hadamard space.
    
    Let $\alpha\in(1,2]$ and set $\tran(x)=x^\alpha$.
    Let $Y=(L,R)$, where $L$ is uniform on $\{1,2,3\}$, $R$ is an $\Rp$-valued random variable with $\Eof{R^{\alpha-1}}<\infty$, and $L,R$ are independent.
    Then $m = o$ because of symmetry.
    
    For positive sequences $(a_n)_{n\in\N}$ and $(b_n)_{n\in\N}$ we write
    $a_n=\Theta(b_n)$ if there are constants $0<c\leq C<\infty$, independent of $n$, such
    that $c b_n\leq a_n\leq C b_n$ for all sufficiently large $n$; the constants are
    allowed to depend on $\alpha$ and the distribution of $R$.
    
    \begin{enumerate}[label=(\roman*)]
        \item\emph{(Point masses.)} 
        Let $R = 1$ almost surely. Then, $\ol{m}{m_n} \leq 1$ almost surely, and, with $B_n\sim\mathrm{Bin}(n,1/3)$,
        \begin{equation}
            \PrOf{m_n\neq m} = 3 \PrOf{B_n > \frac n2}\eqcm
            \qquad\text{where}\qquad
            \PrOf{B_n > \frac n2} = \Theta\brOf{n^{-\frac12}\br{\frac89}^{\frac n2}}
            \eqfs
        \end{equation}
        Moreover,
        \begin{equation}
            \EOf{\ol{m}{m_n}^{\alpha}}
            =
            \Theta\brOf{n^{-\alpha-\frac12}\br{\frac89}^{\frac n2}}
            \eqfs
        \end{equation}        
        Thus, the convergence rate in $L^\alpha$-norm is exponential.
        \item\emph{(Pareto legs.)}
        Let $\lambda > \alpha$.
        Let $\Prof{R\geq r}=r^{-\lambda}$ for $r\geq1$ and $\Prof{R < 1} = 0$.
        Then $\Eof{\ol Ym^\alpha} < \infty$ and
        \begin{equation}
            \Prof{m_n\neq m} = \Theta\brOf{n^{1-\frac{\lambda}{\alpha-1}}}
            \eqfs
        \end{equation}
        Moreover,
        \begin{equation}
            \Eof{\ol{m}{m_n}^{\alpha}} = \Theta\brOf{n^{1-\frac{\lambda}{\alpha-1}}}
            \eqfs
        \end{equation}
        Thus, the convergence rate in $L^\alpha$-norm is $\mo o(n^{-\frac{1}{\alpha(\alpha-1)}})$, which is faster than parametric.
    \end{enumerate}
\end{example}

\printbibliography
%

\clearpage

\pagenumbering{arabic} 
\setcounter{page}{1}

\vspace*{2cm}
\begin{center}
	{\LARGE \textbf{Supplement to "Transformed Fr\'echet Means for Robust Estimation in Hadamard Spaces"}}\\[1em]  
	{\large Christof Sch\"otz}  
\end{center}
\vspace{2em}

\renewcommand{\theequation}{S\arabic{equation}}
\renewcommand{\thefigure}{S\arabic{figure}}
\renewcommand{\thetable}{S\arabic{table}}
\renewcommand{\thesection}{S\arabic{section}}
\renewcommand{\thesubsection}{\thesection.\arabic{subsection}}
\renewcommand{\thetheorem}{\thesection.\arabic{theorem}}
\renewcommand{\thepage}{S\arabic{page}}

\renewcommand{\theHequation}{S\arabic{equation}}
\renewcommand{\theHfigure}{S\arabic{figure}}
\renewcommand{\theHtable}{S\arabic{table}}
\renewcommand{\theHsection}{S\arabic{section}}
\renewcommand{\theHsubsection}{\thesection.\arabic{subsection}}
\renewcommand{\theHtheorem}{\thesection.\arabic{theorem}}
\def\theHpage{S\arabic{page}}

\setcounter{equation}{0}
\setcounter{figure}{0}
\setcounter{table}{0}
\setcounter{section}{0}
\setcounter{subsection}{0}
\setcounter{theorem}{0}

\section{Elementary Properties of Power Functions}\label{sec:powfun}
\begin{lemma}\label{lmm:power:subadd}
    Let $x_1, x_2 \in\Rp$ and $\varphi\in\R$. 
    \begin{enumerate}[label=(\roman*)]
        \item Assume $\varphi \geq 0$. 
        Then
        \begin{equation}
            (x_1 + x_2)^\varphi \leq 2^{\max(0, \varphi-1)}(x_1^\varphi + x_2^\varphi)
        \end{equation}
        with the convention $0^0:=1$.
        \item Assume $\varphi \leq 0$ and $x_1, x_2 >0$.
        Then
        \begin{equation}
            (x_1 + x_2)^\varphi \leq 2^{\varphi-1}(x_1^\varphi + x_2^\varphi)
            \eqfs
        \end{equation}
    \end{enumerate}
\end{lemma}
\begin{proof}
    The assertions follow directly for $\varphi=0$. If
    $\varphi\in(0,1]$, the function $x\mapsto x^\varphi$ is
    subadditive on $\Rp$, and hence
    \begin{equation}
        (x_1+x_2)^\varphi
        \leq
        x_1^\varphi+x_2^\varphi
        \eqfs
    \end{equation}
    If $\varphi>1$, convexity of $x\mapsto x^\varphi$ and Jensen's
    inequality give
    \begin{equation}
        \br{\frac{x_1+x_2}{2}}^\varphi
        \leq
        \frac{x_1^\varphi+x_2^\varphi}{2},
    \end{equation}
    which proves \textup{(i)}. If $\varphi<0$, the same argument applies
    on $(0,\infty)$ and proves \textup{(ii)}.
\end{proof}
\begin{lemma}\label{lmm:power:ana:diff}
    Let $x_1, x_2 \in\Rp$ and $\varphi\in[1, \infty)$. Assume $x_1 \neq x_2$.
    Use the convention $0^0:=1$.
    Then
    \begin{equation}
        \min\brOf{1, \frac\varphi2}
        \leq \frac{\absof{x_1^\varphi - x_2^\varphi}}{\absof{x_1 - x_2}\br{x_1^{\varphi-1} + x_2^{\varphi-1}}} \leq 
        \max\brOf{1, \frac\varphi2}
    \end{equation}
    and
    \begin{equation}
        \min\brOf{1, \frac\varphi{2^{\varphi-1}}}
        \leq \frac{\absof{x_1^\varphi - x_2^\varphi}}{\absof{x_1 - x_2}\br{x_1 + x_2}^{\varphi-1}} \leq 
        \max\brOf{1, \frac\varphi{2^{\varphi-1}}}
        \eqfs
    \end{equation}
\end{lemma}
\begin{proof}
    By symmetry, we may assume $x_1>x_2$. Set
    $p:=\varphi-1$ and $t:=x_2/x_1\in[0,1)$.     
    For the first quotient, write
    \begin{equation}
        Q_1
        :=
        \frac{x_1^\varphi-x_2^\varphi}
        {(x_1-x_2)(x_1^p+x_2^p)}
        =
        \frac{1-t^{p+1}}{(1-t)(1+t^p)}
        =
        \frac{\varphi}{1+t^p}
        \frac{1}{1-t}\int_t^1 s^p\,\mathrm{d}s
        \eqfs
    \end{equation}
    By the Hermite--Hadamard inequality, applied to $s\mapsto s^p$,
    \begin{equation}
        Q_1
        \begin{cases}
            \geq \varphi/2, & 0\leq p\leq1,\\
            \leq \varphi/2, & p\geq1.
        \end{cases}
    \end{equation}
    Moreover,
    \begin{equation}
        Q_1-1
        =
        \frac{t-t^p}{(1-t)(1+t^p)}
        \begin{cases}
            \leq0, & 0\leq p\leq1,\\
            \geq0, & p\geq1.
        \end{cases}
    \end{equation}
    Since $p=\varphi-1$, these two estimates prove
    \begin{equation}
        \min\brOf{1,\frac{\varphi}{2}}
        \leq Q_1\leq
        \max\brOf{1,\frac{\varphi}{2}}
        \eqfs
    \end{equation}
    
    For the second quotient, put
    \begin{equation}
        u:=\frac{t}{1+t},
        \qquad
        v:=\frac{1}{1+t},
        \qquad
        h:=\frac{v-u}{2}.
    \end{equation}
    Then $u=1/2-h$, $v=1/2+h$, and
    \begin{equation}
        Q_2
        :=
        \frac{x_1^\varphi-x_2^\varphi}
        {(x_1-x_2)(x_1+x_2)^p}
        =
        \frac{\varphi}{v-u}\int_u^v s^p\,\mathrm{d}s
        =
        \varphi G_p(h),
    \end{equation}
    where
    \begin{equation}
        G_p(h)
        :=
        \frac{1}{2h}
        \int_{\frac12-h}^{\frac12+h}s^p\,\mathrm{d}s
        \eqfs
    \end{equation}
    The Hermite--Hadamard inequality shows that $G_p$ is nonincreasing
    for $0\leq p\leq1$ and nondecreasing for $p\geq1$. Since
    \begin{equation}
        \lim_{h\searrow0}G_p(h)=2^{-p},
        \qquad
        G_p\brOf{\frac12}
        =
        \int_0^1s^p\,\mathrm{d}s
        =
        \frac{1}{p+1}
        =
        \frac{1}{\varphi},
    \end{equation}
    it follows that
    \begin{equation}
        \min\brOf{\frac{1}{\varphi},\frac{1}{2^p}}
        \leq G_p(h)\leq
        \max\brOf{\frac{1}{\varphi},\frac{1}{2^p}}
        \eqfs
    \end{equation}
    Multiplication by $\varphi$ and the identity $p=\varphi-1$ give
    \begin{equation}
        \min\brOf{1,\frac{\varphi}{2^{\varphi-1}}}
        \leq Q_2\leq
        \max\brOf{1,\frac{\varphi}{2^{\varphi-1}}}
        \eqfs
    \end{equation}
    This proves both assertions.
\end{proof}
\begin{lemma}\label{lmm:power:ana:farvi}
    Let $x_1,x_2\in\Rp$ and $\alpha\in[1,2]$, and use the convention
    $0^0:=1$. Then:
    \begin{enumerate}[label=(\roman*)]
        \item
        \begin{equation}\label{eq:power:dsubadd}
            (x_1+x_2)^{\alpha-1}
            \leq
            x_1^{\alpha-1}+x_2^{\alpha-1}
            \leq
            2^{2-\alpha}(x_1+x_2)^{\alpha-1}
            \eqcm
        \end{equation}
        
        \item
        \begin{equation}\label{eq:power:farvi}
            x_1^\alpha+x_2^\alpha
            \leq
            \absof{x_1-x_2}^{\alpha}
            +
            2^{2-\alpha}\alpha
            \min\brOf{x_1,x_2}
            \max\brOf{x_1,x_2}^{\alpha-1}
            \eqcm
        \end{equation}
        
        \item
        \begin{equation}\label{eq:power:diff:alpha:sum}
            \frac{\alpha}{2}
            \absof{x_1-x_2}
            \br{x_1^{\alpha-1}+x_2^{\alpha-1}}
            \le
            \absof{x_1^\alpha-x_2^\alpha}
            \le
            \absof{x_1-x_2}
            \br{x_1^{\alpha-1}+x_2^{\alpha-1}}
            \eqcm
        \end{equation}
        
        \item
            \begin{equation}\label{eq:power:diff:alpha}
            \absof{x_1-x_2}
            (x_1+x_2)^{\alpha-1}
            \le
            \absof{x_1^\alpha-x_2^\alpha}
            \le
            2^{1-\alpha}\alpha
            \absof{x_1-x_2}
            (x_1+x_2)^{\alpha-1}
            \eqfs
        \end{equation}
    \end{enumerate}
\end{lemma}
\begin{proof}
    \begin{enumerate}[label=(\roman*)]
        \item
        Since $\alpha-1\in[0,1]$, the first inequality in
        \eqref{eq:power:dsubadd} follows from
        \cref{lmm:power:subadd}. The second is Jensen's inequality.
        \item 
        To prove \eqref{eq:power:farvi}, set
        $b:=\min\brOf{x_1,x_2}$ and $B:=\max\brOf{x_1,x_2}$. Then
        $\absof{x_1-x_2}=B-b$. By \eqref{eq:power:dsubadd}, for every
        $s\in[0,b]$,
        \begin{equation}
            s^{\alpha-1}+(B-s)^{\alpha-1}
            \leq
            2^{2-\alpha}B^{\alpha-1}.
        \end{equation}
        Consequently,
        \begin{align}
            x_1^\alpha+x_2^\alpha-\absof{x_1-x_2}^{\alpha}
            &=
            B^\alpha+b^\alpha-(B-b)^\alpha \\
            &=
            \alpha\int_0^b
            \br{s^{\alpha-1}+(B-s)^{\alpha-1}}
            \,\mathrm{d}s \\
            &\leq
            2^{2-\alpha}\alpha bB^{\alpha-1},
        \end{align}
        which proves \eqref{eq:power:farvi}.
        \item 
        Equation \eqref{eq:power:diff:alpha:sum} follows directly from
        \cref{lmm:power:ana:diff}, because
        \begin{equation}
            \min\brOf{1,\frac{\alpha}{2}}
            =
            \frac{\alpha}{2},
            \qquad
            \max\brOf{1,\frac{\alpha}{2}}
            =
            1
        \end{equation}
        for $\alpha\in[1,2]$.
        \item 
        Similarly, \eqref{eq:power:diff:alpha} follows directly from
        \cref{lmm:power:ana:diff}, because
        \begin{equation}
            \min\brOf{1,\frac{\alpha}{2^{\alpha-1}}}
            =1,
            \qquad
            \max\brOf{1,\frac{\alpha}{2^{\alpha-1}}}
            =
            \frac{\alpha}{2^{\alpha-1}}
        \end{equation}
        for $\alpha\in[1,2]$.
    \end{enumerate}
\end{proof}

\section{Elementary Properties of Nondecreasing Convex Functions with Concave Derivative}\label{sec:tran}

Let $\tran\in\setcciz$ as defined in \cref{def:tran}.

\begin{lemma}[{\cite[Lemma 3]{quadruple}}]\label{lmm:dtran:subadd}
    For $x_1, x_2\in\Rp$, we have 
    \begin{equation}
        \dtran(x_1 + x_2) \leq \dtran(x_1) + \dtran(x_2) \leq 2\dtran\brOf{\frac{x_1 + x_2}2}
    \end{equation}
\end{lemma}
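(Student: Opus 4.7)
The plan is to derive both inequalities directly from the two defining properties of $\dtran$ on $\Rp$: nonnegativity (since $\tran\in\setcciz$ is nondecreasing, so $\dtran\geq 0$, and in particular $\dtran(0)\geq 0$) and concavity (part of the definition of $\setcciz$).

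For the right-hand inequality $\dtran(x_1)+\dtran(x_2)\leq 2\dtran\bigl(\tfrac{x_1+x_2}{2}\bigr)$, I would simply invoke midpoint concavity of $\dtran$: dividing by $2$, this is the statement
\begin{equation*}
    \tfrac{1}{2}\dtran(x_1)+\tfrac{1}{2}\dtran(x_2)\leq \dtran\!\brOf{\tfrac{1}{2}x_1+\tfrac{1}{2}x_2}\eqcm
\end{equation*}
which is exactly Jensen's inequality applied to the concave function $\dtran$ on $\Rp$. (If $x_1=x_2=0$ we take the conventional value $\dtran(0)=\lim_{x\searrow 0}\dtran(x)$ from \cref{def:tran}; by \cref{lmm:tran:continuous} this makes $\dtran$ continuous up to $0$, so Jensen extends.)

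For the left-hand inequality $\dtran(x_1+x_2)\leq \dtran(x_1)+\dtran(x_2)$, the standard trick is to exploit concavity of $\dtran$ together with $\dtran(0)\geq 0$. Assume without loss of generality $x_1+x_2>0$ (the case $x_1=x_2=0$ is trivial). Set $s:=x_1+x_2$ and $t_i:=x_i/s\in[0,1]$, so $t_1+t_2=1$. Since $\dtran$ is concave on $\Rp$ and $x_i = t_i\cdot s + (1-t_i)\cdot 0$,
\begin{equation*}
    \dtran(x_i) \geq t_i\, \dtran(s) + (1-t_i)\,\dtran(0) \geq t_i\,\dtran(s)\eqfs
\end{equation*}
Summing over $i=1,2$ and using $t_1+t_2=1$ yields $\dtran(x_1)+\dtran(x_2)\geq \dtran(s)=\dtran(x_1+x_2)$, which is the desired bound.

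There is no real obstacle here; both inequalities are one-line consequences of concavity of $\dtran$ (the first combined with $\dtran(0)\geq 0$). The only point requiring a moment of care is the behavior at $0$, which is handled by the extension of $\dtran$ to $\Rp$ built into \cref{def:tran} and the continuity of $\dtran$ guaranteed by \cref{lmm:tran:continuous}. The lemma is also already stated in the literature as \cite[Lemma 3]{quadruple}, so one could alternatively just cite it.
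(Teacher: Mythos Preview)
Your proof is correct and is exactly the standard argument: subadditivity from concavity plus $\dtran(0)\geq 0$, and the right-hand bound from midpoint concavity. The paper itself does not give a proof of this lemma; it simply cites \cite[Lemma~3]{quadruple}, so there is no discrepancy to discuss.
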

\begin{lemma}[{\cite[Lemma 3]{quadruple}}]\label{lmm:dtran:factor}
    For $x,a\in\Rp$, we have 
    \begin{align}
        \dtran(ax) \geq a\dtran(x) &\text{if } a\leq1 \eqcm\\
        \dtran(ax) \leq a\dtran(x) &\text{if } a\geq1 \eqfs
    \end{align}
\end{lemma}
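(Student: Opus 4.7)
The plan is to exploit the concavity of $\dtran$ together with the fact that $\dtran(0) \geq 0$. The latter holds because, by \cref{def:tran}, $\dtran$ is nonnegative and nondecreasing on $\Rp$ (indeed, $\dtran(0) = \lim_{x\searrow 0}\dtran(x) \geq 0$).

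For the first inequality with $a \leq 1$, I would write $ax = a \cdot x + (1-a)\cdot 0$ and invoke concavity of $\dtran$ on $\Rp$ to obtain
\begin{equation}
    \dtran(ax) \geq a\,\dtran(x) + (1-a)\,\dtran(0) \geq a\,\dtran(x)\eqcm
\end{equation}
where the second step uses $\dtran(0) \geq 0$. This handles the case $a \in [0,1]$ uniformly in $x \in \Rp$ (including $x = 0$, where both sides equal $\dtran(0)$).

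For the second inequality with $a \geq 1$, I would reduce to the first. If $x = 0$, both sides are $a\,\dtran(0) \geq \dtran(0)$, which is fine since $a \geq 1$ and $\dtran(0) \geq 0$. For $x > 0$, set $\tilde a := 1/a \in (0,1]$ and $\tilde x := ax \in \Rpp$. Applying the already established inequality to $(\tilde a, \tilde x)$ gives
\begin{equation}
    \dtran(x) = \dtran(\tilde a \tilde x) \geq \tilde a\, \dtran(\tilde x) = \frac{1}{a}\dtran(ax)\eqcm
\end{equation}
which rearranges to $\dtran(ax) \leq a\,\dtran(x)$, as required.

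There is essentially no obstacle here: once $\dtran(0) \geq 0$ is recorded, both inequalities are immediate consequences of the concavity of $\dtran$, and the $a \geq 1$ case follows from the $a \leq 1$ case by a reciprocal substitution. The only minor care needed is the trivial verification at $x = 0$ in the second case.
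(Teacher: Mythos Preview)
Your proof is correct; the paper does not supply its own argument for this lemma but merely cites \cite[Lemma 3]{quadruple}, and your concavity-plus-nonnegativity approach is the standard one. One trivial slip: in the parenthetical about $x=0$ with $a\leq 1$, the two sides are $\dtran(0)$ and $a\,\dtran(0)$, which need not be equal, but the desired inequality $\dtran(0)\geq a\,\dtran(0)$ still holds since $a\leq 1$ and $\dtran(0)\geq 0$ (and is in any case already covered by your main concavity step).
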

\begin{lemma}[{\cite[Lemma 2]{quadruple}}]\label{lmm:tran:diff}
    For $x_1, x_2\in\Rp$, we have 
    \begin{equation}
        \absof{x_1 - x_2}\frac{\dtran(x_1)+\dtran(x_2)}2 \leq \absof{\tran(x_1) - \tran(x_2)} \leq \absof{x_1 - x_2}\dtran\brOf{\frac{x_1+x_2}2}
        \eqfs
    \end{equation}    
\end{lemma}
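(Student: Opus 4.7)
My plan is to reduce the inequality to a statement about integrating the concave function $\dtran$ over an interval, and then invoke the trapezoidal and midpoint inequalities for concave integrands.

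By symmetry of both sides in $x_1, x_2$ and since $\tran$ is nondecreasing (so that $|\tran(x_1) - \tran(x_2)|$ equals $\tran(x_1) - \tran(x_2)$ under the assumption $x_2 \leq x_1$), it suffices to treat the case $0 \leq x_2 \leq x_1$. The first step is to write
\begin{equation}
    \tran(x_1) - \tran(x_2) = \int_{x_2}^{x_1} \dtran(t) \, \mathrm{d}t\eqfs
\end{equation}
This is the fundamental theorem of calculus on $(0, \infty)$, and it extends to $x_2 = 0$ because $\dtran$ is continuous and nondecreasing on $\Rp$ (with $\dtran(0) = \lim_{x\searrow 0}\dtran(x)$ as stated in \cref{lmm:tran:continuous}), and $\tran$ is continuous on $\Rp$ with $\tran\rd(0) = \dtran(0)$.

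The second step is to apply the standard pair of inequalities for the integral of a concave function $f$ on $[a,b]$:
\begin{equation}
    \frac{b-a}{2}\br{f(a) + f(b)} \leq \int_a^b f(t)\,\mathrm{d}t \leq (b-a) f\brOf{\frac{a+b}{2}}\eqfs
\end{equation}
The lower bound holds because by concavity the chord from $(a,f(a))$ to $(b,f(b))$ lies below the graph, so its integral (the trapezoid area) is dominated by the integral of $f$. The upper bound follows from the existence of a supporting line $\ell$ to $f$ at the midpoint $\frac{a+b}{2}$, satisfying $\ell(t) \geq f(t)$ for all $t\in [a,b]$ and $\ell\brOf{\frac{a+b}{2}} = f\brOf{\frac{a+b}{2}}$; integrating this line over $[a,b]$ yields the claim. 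Applying both bounds to $f = \dtran$ on $[x_2, x_1]$ gives exactly the two stated inequalities.

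The only subtlety is ensuring that $\dtran$ admits a supporting line at the midpoint even in the absence of classical differentiability at that point. This is standard for concave functions on the interior of their domain: left and right derivatives exist and sandwich a valid supergradient, which suffices to define a supporting affine majorant. Since $\frac{x_1+x_2}{2} > 0$ whenever $x_1 + x_2 > 0$ (the case $x_1 = x_2 = 0$ is trivial, both sides vanishing), this is the only issue worth noting, and it is handled by this general fact about concave functions. No further technical obstacles arise.
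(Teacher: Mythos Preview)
Your proof is correct. The argument via the fundamental theorem of calculus combined with the Hermite--Hadamard inequalities for the concave integrand $\dtran$ is the natural route, and you have handled the boundary case $x_2 = 0$ and the nondifferentiability of $\dtran$ at the midpoint with appropriate care. Note that the paper does not give its own proof of this lemma; it is quoted from \cite[Lemma~2]{quadruple}, so there is no in-paper argument to compare against.
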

\begin{lemma}[{\cite[Lemma 2]{quadruple}}]\label{lmm:tran:sqr}
    The function $x\mapsto \tran(x)/x^2$ is nonincreasing. Furthermore, for $x_1, x_2\in\Rpp$, we have 
    \begin{equation}
        \absof{\tran(x_1) - \tran(x_2)} \leq \absOf{x_1 - x_2} \br{\frac{\tran(x_1)}{x_1} + \frac{\tran(x_2)}{x_2}}
        \eqfs
    \end{equation}    
\end{lemma}
\begin{proof}
    The derivative of $g(x) := \tran(x)/x^2$ is
    \begin{equation}
        g\pr(x) = \frac{x\dtran(x) - 2\tran(x)}{x^3}
        \eqfs
    \end{equation}
    As $\dtran$ is concave, we have
    \begin{equation}
        \frac{x}{2} \br{\dtran(x)+\dtran(0)} \leq \int_0^x \dtran(z) \dl z = \tran(x)
        \eqfs
    \end{equation}
    Hence, $g\pr(x) \leq 0$, and $g$ is nonincreasing.
    
    Now to the second part of the lemma: Without loss of generality, assume $x_1 > x_2 > 0$. Then, because of the first part of the lemma,
    \begin{equation}
        0
        \geq
        x_1 x_2 \br{\frac{\tran(x_1)}{x_1^2} - \frac{\tran(x_2)}{x_2^2}}
        = 
        \br{\tran(x_1) - \tran(x_2)} - (x_1-x_2) \br{\frac{\tran(x_1)}{x_1} + \frac{\tran(x_2)}{x_2}}
        \eqcm
    \end{equation}
    which shows the claim.    
\end{proof}

\begin{lemma}\label{lmm:tran:factor}
    Let $a,x\in\Rp$.
    \begin{enumerate}[label=(\roman*)]
        \item If $a\in[0,1]$, then
        \begin{equation}\label{eq:tran:factor:small}
            a^2\tran(x)
            \leq
            \tran(ax)
            \leq
            a\tran(x)
            \eqfs
        \end{equation}
        
        \item If $a\geq1$, then
        \begin{equation}\label{eq:tran:factor:large}
            a\tran(x)
            \leq
            \tran(ax)
            \leq
            a^2\tran(x)
            \eqfs
        \end{equation}
        
        \item For $x_1,x_2\in\Rp$, set
        \begin{equation}
            b:=\min\brOf{x_1,x_2},
            \qquad
            B:=\max\brOf{x_1,x_2}.
        \end{equation}
        If $B>0$, then
        \begin{equation}\label{eq:tran:diff:tran}
            \frac{B-b}{B}\tran(B)
            \leq
            \absof{\tran(x_1)-\tran(x_2)}
            \leq
            \frac{B^2-b^2}{B^2}\tran(B)
            \eqfs
        \end{equation}
    \end{enumerate}
\end{lemma}
\begin{proof}
    By convexity and $\tran(0)=0$, the function
    $x\mapsto\tran(x)/x$ is nondecreasing on $\Rpp$. By
    \cref{lmm:tran:sqr}, the function
    $x\mapsto\tran(x)/x^2$ is nonincreasing on $\Rpp$.
    
    Let first $a\in(0,1]$ and $x>0$. The two monotonicity properties
    yield
    \begin{equation}
        \frac{\tran(ax)}{ax}
        \leq
        \frac{\tran(x)}x
        \qquad\text{and}\qquad
        \frac{\tran(ax)}{a^2x^2}
        \geq
        \frac{\tran(x)}{x^2},
    \end{equation}
    and hence
    \begin{equation}
        a^2\tran(x)
        \leq
        \tran(ax)
        \leq
        a\tran(x)
        \eqfs
    \end{equation}
    This proves \eqref{eq:tran:factor:small}. The cases $a=0$ or
    $x=0$ follow from $\tran(0)=0$.
    
    Let now $a\geq1$ and $x>0$. The same monotonicity properties give
    \begin{equation}
        \frac{\tran(ax)}{ax}
        \geq
        \frac{\tran(x)}x
        \qquad\text{and}\qquad
        \frac{\tran(ax)}{a^2x^2}
        \leq
        \frac{\tran(x)}{x^2},
    \end{equation}
    which proves \eqref{eq:tran:factor:large}.
    
    Finally, assume $B>0$. Applying part~\textup{(i)} with $a=b/B$ and $x=B$ gives
    \begin{equation}
        \frac{b^2}{B^2}\tran(B)
        \leq
        \tran(b)
        \leq
        \frac{b}{B}\tran(B).
    \end{equation}
    Since $\tran$ is nondecreasing, we have
    \begin{equation}
        \absof{\tran(x_1)-\tran(x_2)}
        =
        \tran(B)-\tran(b).
    \end{equation}
    Subtracting the preceding bounds from $\tran(B)$ proves
    \eqref{eq:tran:diff:tran}.
\end{proof}
\begin{lemma}\label{lmm:tran:tran}
	Let $x \in \Rp$. Then
	\begin{equation}
		\frac x2 \dtran(x)
		\leq
		\tran(x)
		\leq 
		x\dtran\brOf{\frac{x}2}
		\leq 
		4\tran\brOf{\frac{x}2}
	\quad\text{and}\quad
		x \dtran(2x)
		\leq
		\tran(2x)
		\leq 
		2x\dtran\brOf{x}
		\leq 
		4\tran\brOf{x}
		\eqfs
	\end{equation}
\end{lemma}
\begin{proof}
	\cref{lmm:tran:diff} implies
	\begin{equation}
		\frac{x-y}{2}\br{\dtran(x)+\dtran(y)} \leq \tran(x)-\tran(y) \leq (x-y) \dtran\brOf{\frac{x+y}2}
	\end{equation}
	for $x \geq y \geq 0$.
	Setting $y=0$ and using $\tran(0) = 0$ as well as $\dtran(0) \geq 0$, we obtain
	\begin{equation}
		\frac{x}{2}\dtran(x) \leq \tran(x) \leq x \dtran\brOf{\frac{x}2}
	\end{equation}
	for all $x\geq 0$. Applying this inequality twice yields
	\begin{equation}
		\frac12x\dtran(x) \leq \tran(x) \leq x \dtran\brOf{\frac{x}2} = 4 \frac12\frac{x}{2} \dtran\brOf{\frac{x}2}  \leq 4 \tran\brOf{\frac{x}2}
		\eqfs
        \qedhere
	\end{equation}
\end{proof}
\begin{lemma}\label{lmm:tran:add}
    For $x,y\in\Rp$, we have
    \begin{equation}\label{eq:tran:add}
        \tran(x)+\tran(y)
        \leq
        \tran(x+y)
        \leq
        2\tran(x)+2\tran(y)
        \eqfs
    \end{equation}
\end{lemma}
\begin{proof}
    The assertion is immediate if $x+y=0$. Otherwise, part~\textup{(i)}
    of \cref{lmm:tran:factor} gives
    \begin{equation}
        \tran(x)
        =
        \tran\brOf{\frac{x}{x+y}(x+y)}
        \leq
        \frac{x}{x+y}\tran(x+y)
    \end{equation}
    and, analogously,
    \begin{equation}
        \tran(y)
        \leq
        \frac{y}{x+y}\tran(x+y).
    \end{equation}
    Adding proves the lower bound.
    
    By convexity and \eqref{eq:tran:factor:large},
    \begin{align}
        \tran(x+y)
        &=
        \tran\brOf{2\frac{x+y}{2}}\\
        &\leq
        4\tran\brOf{\frac{x+y}{2}}\\
        &\leq
        2\tran(x)+2\tran(y),
    \end{align}
    which proves the upper bound.
\end{proof}
\begin{lemma}\label{lmm:tran:sqrddtran}
	 Let $x, x_0 \in \Rpp$ with $x_0 \leq x$.
	 Then
	 \begin{equation}
	 	\frac12 x^2 \ddrtran(x) 
	 	\leq 
	 	\tran(x) 
	 	\leq 
	 	\tran(x_0)
	 	+
	 	\dtran(x_0) (x- x_0)
	 	+
	 	\frac12 \ddrtran(x_0)(x - x_0)^2
	 	\eqfs
	 \end{equation}
\end{lemma}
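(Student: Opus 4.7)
The plan is to prove both inequalities by writing $\tran$ as an integral of $\dtran$ and exploiting the concavity of $\dtran$ — equivalently, the monotonicity of $\ddrtran$ guaranteed by \cref{lmm:tran:continuous}(iii). The whole argument reduces to Taylor-with-remainder-style comparisons for a concave derivative.

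For the lower bound, I would start from $\tran(x) = \int_0^x \dtran(s)\,\dl s$, which holds because $\tran \in \setcciz$ gives $\tran(0) = 0$. Expanding $\dtran(s) = \dtran(0) + \int_0^s \ddrtran(u)\,\dl u$ and switching the order of integration yields
\begin{equation}
\tran(x) = \dtran(0)\, x + \int_0^x (x - u)\, \ddrtran(u)\,\dl u.
\end{equation}
Since $\ddrtran$ is nonincreasing, $\ddrtran(u) \geq \ddrtran(x)$ for $u \in [0, x]$, so the integral is bounded below by $\ddrtran(x) \int_0^x (x-u)\,\dl u = \tfrac12 x^2 \ddrtran(x)$. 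Combined with $\dtran(0) \geq 0$ (since $\tran$ is nondecreasing), this yields the lower bound.

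For the upper bound, I would use concavity of $\dtran$ directly. Since $\dtran$ is concave and $\ddrtran(x_0)$ is its right derivative at $x_0$, the tangent-line inequality $\dtran(s) \leq \dtran(x_0) + \ddrtran(x_0)(s - x_0)$ holds for all $s \geq x_0$. Integrating over $s \in [x_0, x]$ gives
\begin{equation}
\tran(x) - \tran(x_0) = \int_{x_0}^x \dtran(s)\,\dl s \leq \dtran(x_0)(x - x_0) + \tfrac12 \ddrtran(x_0)(x - x_0)^2,
\end{equation}
which is exactly the claimed upper bound.

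The main obstacle is not geometric but bookkeeping: justifying the identity $\dtran(s) = \dtran(0) + \int_0^s \ddrtran(u)\,\dl u$ when $\ddrtran$ is only a right derivative. This is resolved by noting that a concave function is locally absolutely continuous on the interior of its domain, so it agrees with the integral of its right derivative, while the value at $0$ is pinned down by the existence of $\dtran(0) = \lim_{s\searrow 0}\dtran(s)$ from \cref{lmm:tran:continuous}(ii). With this representation in hand, both inequalities reduce to the two elementary facts that $\ddrtran$ is nonincreasing (for the lower bound) and that a concave function lies below its tangent line (for the upper bound).
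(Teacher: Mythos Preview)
Your proof is correct and follows essentially the same strategy as the paper's: both arguments rest on integrating $\dtran$ and exploiting that $\ddrtran$ is nonincreasing. The only cosmetic difference is that the paper writes the double integral $\int_{x_0}^x\int_{x_0}^t \ddrtran(s)\,\dl s\,\dl t$ starting from $x_0>0$ and then lets $x_0\searrow 0$ for the lower bound (thereby sidestepping the integrability issue at $0$ that you handle explicitly), while your upper bound via the concave tangent-line inequality is equivalent to the paper's bound $\ddrtran(s)\leq\ddrtran(x_0)$ inside the double integral.
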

\begin{proof}
    The case $x_0=x$ in the upper bound is trivial. Let therefore
    $0<x_0<x$. Since $\dtran$ is concave, it is locally Lipschitz, and hence
    absolutely continuous, on compact subintervals of $\Rpp$. Moreover, its
    right derivative $\ddrtran$ is nonincreasing and agrees a.e. with the
    Lebesgue derivative of $\dtran$. Thus,
    \begin{equation}
        \tran(x)
        =
        \tran(x_0)
        +
        \dtran(x_0)(x-x_0)
        +
        \int_{x_0}^x \int_{x_0}^t \ddrtran(s)\,\dl s\,\dl t .
    \end{equation}
    
    Since $\tran(x_0)\geq0$, $\dtran(x_0)\geq0$, and $\ddrtran$ is
    nonincreasing, we have $\ddrtran(s)\geq\ddrtran(x)$ for
    $x_0\leq s\leq t\leq x$. Hence
    \begin{equation}
        \tran(x)
        \geq
        \ddrtran(x)
        \int_{x_0}^x\int_{x_0}^t 1\,\dl s\,\dl t
        =
        \frac12 (x-x_0)^2 \ddrtran(x).
    \end{equation}
    Letting $x_0\downarrow0$ gives
    \begin{equation}
        \tran(x)\geq \frac12 x^2\ddrtran(x).
    \end{equation}
    
    On the other hand, since $\ddrtran$ is nonincreasing,
    $\ddrtran(s)\leq\ddrtran(x_0)$ for $x_0\leq s\leq t\leq x$. Therefore
    \begin{align}
        \tran(x)
        &\leq
        \tran(x_0)
        +
        \dtran(x_0)(x-x_0)
        +
        \ddrtran(x_0)
        \int_{x_0}^x\int_{x_0}^t 1\,\dl s\,\dl t
        \\
        &=
        \tran(x_0)
        +
        \dtran(x_0)(x-x_0)
        +
        \frac12 \ddrtran(x_0)(x-x_0)^2 .
    \end{align}
\end{proof}

\section{Auxiliary Results for Nonstandard Risks}
\begin{lemma}\label{lmm:loss:toalpha}
    Let $\alpha\in(1,2]$ and $b\in\Rpp$. Set $\phi:=\frac{2-\alpha}{\alpha-1}$ and
    \begin{equation}
        \psi\colon\Rp\to\Rp, x\mapsto x^2(b+x^{\alpha-1})^{-\phi}
        \eqfs
    \end{equation}
    \begin{enumerate}[label=(\roman*)]
        \item
        Let $X$ be a nonnegative random variable. 
        Then
        \begin{equation}
            \EOf{\min\brOf{X^\alpha, X^2}} \leq (1+b)^\phi \EOf{\psi(X)}
        \end{equation}
        and
        \begin{equation}
            \EOf{X^\alpha} \leq b^{\phi\alpha/2} \EOf{\psi(X)}^{\alpha/2} \br{1+b^{-\alpha/2}\EOf{\psi(X)}^{(\alpha-1)/2}}^{\phi}
            \eqfs
        \end{equation}
        \item
        Let $(X_n)_{n\in\N}$ be a sequence of nonnegative random variables.
        Assume $\Eof{\psi(X_n)} = \mo o(1)$.
        Then
        \begin{equation}
            \EOf{X_n^\alpha}^{\frac{2}{\alpha}}
            \leq 
            b^{\phi} \Eof{\psi(X_n)} \br{1+\mo o(1)}
            \eqfs
        \end{equation}
    \end{enumerate}
\end{lemma}
\begin{proof}
    For $\alpha = 2$ we have $\phi = 0$ and $\psi(x) = x^2$, so all three
    assertions hold with equality. Assume $\alpha\in(1,2)$.
    For the first inequality of (i), we use the pointwise inequality
    \begin{equation}
        \min\brOf{x^\alpha, x^2} \leq (1+b)^\phi \psi(x)
        \eqfs
    \end{equation}
    Now we show the second inequality of (i). For $x\in\Rpp$, a direct calculation shows
    \begin{equation}
        \psi\pr(x) = x(b+x^{\alpha-1})^{-\phi-1}\br{2b+\alpha x^{\alpha-1}}>0\eqfs
    \end{equation}
    As $\psi(x) \xrightarrow{x\to0} 0$ and $\psi(x) \xrightarrow{x\to\infty} \infty$, $\psi$ is a strictly increasing bijection with inverse function $\psi^{-1}$. As $b > 0$, $\psi \in \mc C^\infty(\Rpp)$. As $\psi\pr(x)>0$ for all $x\in\Rpp$, by the inverse function theorem $\psi^{-1} \in \mc C^\infty(\Rpp)$. Set $g:=(\psi^{-1})^\alpha$. Both $\psi^{-1},g$ extend continuously by $\psi^{-1}(0)=g(0)=0$.
    
    Set $r := \Eof{\psi(X)}$.
    Below, we show that $g$ is concave as $g\prr < 0$. Then we use Jensen's inequality to show
    \begin{equation}
        \EOf{X^\alpha} = \EOf{g(\psi(X))} \leq g(\EOf{\psi(X)}) = g(r)
        \eqfs 
    \end{equation}
    Finally, we show an upper bound $g(r) \leq b^{\phi\alpha/2} r^{\alpha/2} \br{1+b^{-\alpha/2}r^{(\alpha-1)/2}}^{\phi}$ to conclude the proof.
    
    \underline{\smash{Concavity of $g$:}} Differentiating $g(\psi(x))=x^\alpha$ yields
    \begin{equation}
        g\pr(\psi(x))
        =
        \alpha h(x)
        \eqcm\quad
        h(x)
        :=
        \frac{x^{\alpha-1}}{\psi\pr(x)}
        =
        \frac{x^{\alpha-2}(b+x^{\alpha-1})^{\phi+1}}{2b+\alpha x^{\alpha-1}}
        \eqfs
    \end{equation}
    Differentiating $g\pr(\psi(x)) = \alpha h(x)$ yields
    \begin{equation}
        g\prr(\psi(x))\psi\pr(x)
        =
        \alpha h\pr(x)
        \eqfs
    \end{equation}
    Since $\psi$ is an increasing bijection, $g''<0$ on $\Rpp$ iff $h'<0$ on $\Rpp$.
    Using $(\phi+1)(\alpha-1) = 1$, we obtain
    \begin{equation}
        \frac{\dl}{\dl x}\br{\log(h(x))}
        =
        \frac{\alpha-2}{x}+\frac{x^{\alpha-2}}{b+x^{\alpha-1}}-\frac{\alpha(\alpha-1)x^{\alpha-2}}{2b+\alpha x^{\alpha-1}}
        \eqfs
    \end{equation}
    Hence, writing $t:=x^{\alpha-1}$ a direct calculation shows
    \begin{equation}
        \frac{x h\pr(x)}{h(x)}
        =
        x \frac{\dl}{\dl x}\br{\log(h(x))}
        =
        \frac{(\alpha-2) b (2b+t)}{(b+t)(2b+\alpha t)}
        \eqfs
    \end{equation}
    As $\alpha<2$ the right-hand side is negative. Furthermore, for all $x\in\Rpp$, we have $x, h(x) > 0$. Hence $h\pr <0$ and therefore $g\prr < 0$ on $\Rpp$. Thus, $g$ is concave.
    
    \underline{\smash{Conclusion.}} 
    With $r = \EOf{\psi(X)}$, Jensen's inequality yields $\Eof{X^\alpha} = \Eof{g(\psi(X))} \leq g(r)$ (trivial for $r = \infty$, so assume $r < \infty$). 
    Set $z := z(r) := \psi^{-1}(r)$, so $g(r)=z^\alpha$. Using $(\alpha-1)\phi=2-\alpha$,
    we get
    \begin{equation}
        r
        =
        \psi(\psi^{-1}(r))
        =
        z^2(b + z^{\alpha-1})^{-\phi}
        =
        z^{\alpha}\br{b z^{-(\alpha-1)} + 1}^{-\phi}
        \eqfs
    \end{equation}
    Hence,
    \begin{equation}
        g(r) = 
        z^\alpha
        =
        r\br{1+b z^{-(\alpha-1)}}^{\phi}
        \eqfs
    \end{equation}
    We have $z^2 \geq r b^\phi$ as $r=z^2(b+z^{\alpha-1})^{-\phi}\leq z^2 b^{-\phi}$ and $\phi\geq0$.
    Hence, using $1-\tfrac{(\alpha-1)\phi}{2}=\tfrac\alpha2$, we obtain
    \begin{equation}
        g(r)
        \leq 
        r\br{1+b\br{b^{\phi}r}^{-\frac{\alpha-1}2}}^{\phi}
        =
        r\br{1+b^{\alpha/2}r^{-(\alpha-1)/2}}^{\phi}
        \eqfs
    \end{equation}
    Pulling $b^{\alpha/2}r^{-(\alpha-1)/2}$ out of the bracket, we arrive at
    \begin{equation}
        \EOf{X^\alpha}
        \leq 
        g(r)
        \leq 
        b^{\phi\alpha/2}\,r^{\alpha/2}\br{1+b^{-\alpha/2}r^{(\alpha-1)/2}}^{\phi}
        \eqfs
    \end{equation}
    Part (ii) follows directly from part (i).
\end{proof}

\begin{lemma}\label{lmm:loss:totran}
    Let $\tran\in\setcciz$.
    Let $a,b,\delta\in\Rpp$.
    Set
    \begin{equation}
        \psi(x) := b x^2\indOf{x\leq\delta}
        +
        a\tran(x)\indOf{x>\delta}
        \eqfs
    \end{equation}
    \begin{enumerate}[label=(\roman*)]
        \item
        Let $X$ be a nonnegative random variable.
        Then
        \begin{equation}
            \EOf{\min\brOf{X^2,\tran(X)}}
            \leq
            \br{\frac{1}{a} + \frac{1}{b}} \EOf{\psi(X)}
        \end{equation}
        and
        \begin{equation}
            \EOf{\tran(X)}
            \leq
            \tran\brOf{\sqrt{\frac{\EOf{\psi(X)}}{b}}}
            +
            \frac{\EOf{\psi(X)}}{a}
            \eqfs
        \end{equation}
        \item
        Let $(X_n)_{n\in\N}$ be a sequence of nonnegative random variables.
        Assume $\Eof{\psi(X_n)} = \mo o(1)$.
        Then, for $n\in\N$ large enough,
        \begin{equation}
            \tran^{-1}\brOf{\EOf{\tran(X_n)}}^2
            \leq
            \frac{\Eof{\psi(X_n)}}{b}\br{1 + \delta \sqrt{\frac{b}{a\tran(\delta)}}}^2
            \eqfs
        \end{equation}   
    \end{enumerate}
\end{lemma}

\begin{proof}
    As $\tran$ is convex, strictly increasing with $\tran(0)=0$ and $\tran(x)\to\infty$, $\tran^{-1}$ is well-defined on $[0,\infty)$.
    Set $r:=\Eof{\psi(X)}$. Then
    \begin{equation}
        \EOf{X^2\indOf{X\leq\delta}}\leq\frac{r}{b}
        \qquad\text{and}\qquad
        \EOf{\tran(X)\indOf{X>\delta}}\leq\frac{r}{a}
        \eqfs
    \end{equation}
    
    \emph{First display of (i).}
    On $\{X>\delta\}$, $\min\brOf{X^2,\tran(X)}\leq\tran(X)$; on
    $\{X\leq\delta\}$, $\min\brOf{X^2,\tran(X)}\leq X^2$. Therefore, 
    \begin{equation}
        \EOf{\min\brOf{X^2,\tran(X)}}
        \leq \EOf{\tran(X)\indOf{X>\delta}}
        + \EOf{X^2\indOf{X\leq\delta}}
        \leq \frac{r}{a}+\frac{r}{b}
        \eqfs
    \end{equation}
    
    \emph{Second display of (i).}
    Set $\trans(x):=\tran(\sqrt x)$. Then $\trans$ is concave:
    We have 
    \begin{equation}
        \dtrans(x)
        =
        \frac{\dtran(\sqrt x)}{2\sqrt x}
        \qquad\text{for }x>0\eqfs
    \end{equation}
    Since $\dtran$ is concave and nonnegative, the map
    $x\mapsto \dtran(x)/x$ is nonincreasing on $(0,\infty)$.
    As, additionally, $x\mapsto\sqrt x$ is increasing, $\dtrans$ is nonincreasing. Therefore, $\trans$ is
    concave on $\Rpp$, and hence on $\Rp$ by continuity.
    
    As $\trans$ is concave and $\trans(0) = 0$, $Z\geq0$ and any event $S$ satisfy the sub-probability Jensen inequality
    $\Eof{\trans(Z)\indOf{S}}\leq \trans\brOf{\EOf{Z\indOf{S}}}$: with $p:=\Prof{S}\in(0,1]$,
    conditional Jensen gives $\Eof{\trans(Z)\indOf S}\leq p\,\trans\brOf{\EOf{Z\indOf S}/p}$,
    and concavity with $\trans(0)=0$ yields $\trans(\lambda x)\leq\lambda \trans(x)$ for
    $\lambda\geq1$, applied with $\lambda=1/p$. Taking $Z=X^2$ and
    $S=\{X\leq\delta\}$, we obtain
    \begin{equation}
        \EOf{\tran(X)\indOf{X\leq\delta}}
        =\EOf{\trans(X^2)\indOf{X\leq\delta}}
        \leq \trans\brOf{\EOf{X^2\indOf{X\leq\delta}}}
        \leq \tran\brOf{\sqrt{r/b}}
        \eqfs
    \end{equation}
    Adding $\Eof{\tran(X)\indOf{X>\delta}}\leq\frac{r}{a}$ gives the second display.
    
    \emph{(ii).}
    Apply (i) with $r_n:=\Eof{\psi(X_n)}$. Since $\tran^{-1}$ is concave with
    $\tran^{-1}(0)=0$, it is subadditive, so
    \begin{equation}
        \tran^{-1}\brOf{\EOf{\tran(X_n)}}
        \leq \tran^{-1}\brOf{\tran\brOf{\sqrt{r_n/b}}+\tfrac{r_n}{a}}
        \leq \sqrt{r_n/b}+\tran^{-1}\brOf{\tfrac{r_n}{a}}
        \eqfs
    \end{equation}
    We have
    \begin{equation}
        \tran^{-1}\brOf{\frac{r_n}{a}}
        =\sqrt{\trans^{-1}\brOf{\frac{r_n}{a}}}
        =
        \sqrt{\frac{r_n}{b}} \sqrt{\frac{b}{a} \frac{\trans^{-1}\brOf{\frac{r_n}{a}} }{\frac{r_n}{a}}}
        \eqfs
    \end{equation}
    Hence,
    \begin{equation}
        \tran^{-1}\brOf{\EOf{\tran(X_n)}}
        \leq
        \sqrt{r_n/b} \br{1 + \sqrt{\frac{b}{a} \frac{\trans^{-1}\brOf{\frac{r_n}{a}} }{\frac{r_n}{a}}}}
        \eqfs
    \end{equation}    
    The map $z\mapsto \trans^{-1}(z)/z$ is nondecreasing as $\trans^{-1}$ is convex and $\trans^{-1}(0)=0$.
    As $r_n=\mo o(1)$ and $a \tran(\delta)>0$ is fixed, $r_n/a\leq\tran(\delta)$ for all large $n$. 
    Thus,
    \begin{equation}
        \tran^{-1}\brOf{\EOf{\tran(X_n)}}
        \leq
        \sqrt{r_n/b} \br{1 + \sqrt{\frac{b}{a} \frac{\trans^{-1}\brOf{\tran(\delta)} }{\tran(\delta)}}}
        \leq
        \sqrt{r_n/b} \br{1 + \delta \sqrt{\frac{b}{a\tran(\delta)}}}
        \eqfs
    \end{equation}    
\end{proof}

\section{Algorithm Stability}\label{sec:algostabi}
The convergence rate proofs in this article rely on algorithmic stability. The initial steps are closely related to the arguments from \cite{Escande2024} for M-estimators and \cite{brunel2025} for the $2$-Fr\'echet mean. These steps extend to $\tran$-Fr\'echet means using the quadruple inequality \cref{thm:infdtr:qi} and the variance inequality \cref{thm:infdtr:vi}. Because the lower bound in this variance inequality depends on the underlying distribution, additional arguments are needed to establish convergence rates for $\tran$-Fr\'echet means. The arguments derived from the classical line of reasoning are presented here; additional techniques that lead to our main results appear in later sections.

Let $Y_1\pr, \dots, Y_n\pr$ be an additional independent set of iid copies of $Y$.
Denote the samples with $i$-th position replaced as $Y_j^i := Y_j$ if $i \neq j$ and $Y_i^i := Y_i\pr$.
Let $m_n^i$ be the sample Fr\'echet mean of the sample set $Y_1^i, \dots, Y_n^i$, 
\begin{equation}
    m_n^i \in \argmin_{q\in\mc Q} \sum_{j=1}^n \tran(\ol {Y_j^i}q)
    \eqfs
\end{equation}
That is, $m_n^i$ is the $\tran$-Fr\'echet mean of the sample where the $i$-th entry is replaced by an independent copy.
\begin{proposition}\label{prp:stabi}
	Use the convention $0^{-1} = \infty$.
	Then
	\begin{equation}
		\EOf{\tran(\ol Y{m_n}) - \tran(\ol Ym)}
        \leq
        \frac{16}{n^2} \sum_{i=1}^n \EOf{\dtran(\ol {Y_i}m)^2 H_i^{-1}}
		\eqcm
	\end{equation}
	where
	\begin{equation}
		H_i := \frac1n \sum_{j=1}^n \ddrtran\brOf{\ol {Y_j}{m_n} + \ol {m_n}{m_n^i}}
		\eqfs
	\end{equation}
\end{proposition}
Before proving \cref{prp:stabi}, observe that in the special case $\tran(x) = x^2$, we have $\dtran(x) = 2x$ and $\ddrtran(x) = 2$. Hence, \cref{prp:stabi} yields
\begin{equation}
	\EOf{\ol m{m_n}^2} \leq 32 n^{-1} \EOf{\ol {Y}m^2}
	\eqcm
\end{equation}
cf.\ \cite[Theorem 3]{brunel2025}. In the general case, however, the $\ddrtran$-terms make the results unsatisfactory at this stage. A large portion of the proofs in this paper are devoted to addressing these terms to obtain clean results from \cref{prp:stabi} for general $\tran \in \setcciz$.

For the proof of \cref{prp:stabi}, first define the \emph{double excess risk} as
\begin{equation}
	V_n := \EOf{\tran(\ol Y{m_n}) - \tran(\ol Ym)} + \EOf{\frac1n \sum_{i=1}^n \br{\tran(\ol {Y_i}{m}) - \tran(\ol {Y_i}{m_n})} }
	\eqfs
\end{equation}
Recall that $Y, Y_1, \dots, Y_n$ are iid, so that $Y$ is independent of $m_n$.
\begin{lemma}\label{lmm:variancebound}
	We have
	\begin{equation}
		V_n \leq \frac1n \sum_{i=1}^n \EOf{\ol {m_n}{m_n^i}\, \dtran(\ol {Y_i}{Y_i^i})}
		\eqfs
	\end{equation}
\end{lemma}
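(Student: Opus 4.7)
The plan is to prove the lemma by a standard algorithmic-stability symmetrization combined with a single application of the quadruple inequality from \cref{thm:infdtr:qi}.

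First, I would rewrite $V_n$ in a form involving only the replacement estimators $m_n^i$. The term $\E[\tran(\ol{Y_i}{m})]$ equals $\E[\tran(\ol Y m)]$ for every $i$ by identical distribution, so the second and third expectations in the definition of $V_n$ cancel on average: $\E[\tran(\ol Y m)] = \frac{1}{n}\sum_{i=1}^n \E[\tran(\ol{Y_i}{m})]$. For the first term, since $Y$ is independent of $(Y_1,\ldots,Y_n)$ and has the same law as $Y_i'$, we get $\E[\tran(\ol Y{m_n})] = \E[\tran(\ol{Y_i'}{m_n})]$. Now I swap $Y_i \leftrightarrow Y_i'$ inside this expectation: under the swap, the sample defining $m_n$ becomes exactly the sample defining $m_n^i$, so by the change-of-variable for integration against a product measure, $\E[\tran(\ol{Y_i'}{m_n})] = \E[\tran(\ol{Y_i}{m_n^i})]$. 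Averaging over $i$ yields
\begin{equation}
    V_n = \frac{1}{n}\sum_{i=1}^n \EOf{\tran(\ol{Y_i}{m_n^i}) - \tran(\ol{Y_i}{m_n})}\eqfs
\end{equation}

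Next, I symmetrize. The same swap argument applied to the integrand shows that $\E[\tran(\ol{Y_i}{m_n^i}) - \tran(\ol{Y_i}{m_n})]$ equals $\E[\tran(\ol{Y_i'}{m_n}) - \tran(\ol{Y_i'}{m_n^i})]$, so adding gives
\begin{equation}
    2 V_n = \frac{1}{n}\sum_{i=1}^n \EOf{\tran(\ol{Y_i}{m_n^i}) - \tran(\ol{Y_i}{m_n}) + \tran(\ol{Y_i'}{m_n}) - \tran(\ol{Y_i'}{m_n^i})}\eqfs
\end{equation}
The integrand is now in exactly the form of the quadruple-inequality left-hand side in \cref{thm:infdtr:qi} with $y=Y_i$, $z=Y_i'$, $q=m_n^i$, $p=m_n$, and is therefore bounded by $2\,\ol{m_n^i}{m_n}\,\dtran(\ol{Y_i}{Y_i'})$. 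Dividing by $2$ and recognizing $Y_i' = Y_i^i$ yields the claimed inequality.

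The argument does not require any further ingredient: no variance inequality, no convexity of $\mc Q$ beyond what is already used to define $m_n,m_n^i$, and no integrability beyond the standing assumption of \cref{ssec:prelim:setup}. The only delicate point is making sure the swap is legitimate — one must verify that under the exchange $Y_i \leftrightarrow Y_i'$ inside the expectation, the argmin $m_n$ (which is only defined up to a measurable selection from a possibly non-singleton set) transforms into a valid selection of $m_n^i$. This is handled by taking the same measurable selection rule for both, which by the exchangeability of $(Y_i, Y_i')$ causes the two expectations to agree. Once this is set up, the rest is a single line of the quadruple inequality.
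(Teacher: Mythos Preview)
Your proof is correct and follows essentially the same approach as the paper: rewrite $V_n$ via the swap $(Y_i,m_n,m_n^i)\leftrightarrow(Y_i',m_n^i,m_n)$ as an average of $\E[\tran(\ol{Y_i}{m_n^i})-\tran(\ol{Y_i}{m_n})]$, symmetrize using that same distributional identity, and apply the quadruple inequality \cref{thm:infdtr:qi} once. The paper differs only cosmetically in that it applies the quadruple inequality first and then invokes the equality-in-law of $(Y_i,m_n,m_n^i)$ and $(Y_i^i,m_n^i,m_n)$ to identify the two halves, whereas you add the two halves first and then bound the integrand; your explicit remark about consistent measurable selections is a point the paper leaves implicit.
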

\begin{proof}
	As $Y$ has the same distribution as $Y_i$ and $(Y, m_n)$ has the same distribution as $(Y_i, m_n^i)$, we have
	\begin{equation}\label{eq:infdtr:var:simple1}
		V_n 
		=
		\EOf{\tran(\ol Y{m_n}) - \frac1n \sum_{i=1}^n \tran(\ol {Y_i}{m_n})} 
		=
		\frac1n \sum_{i=1}^n \EOf{\tran(\ol {Y_i}{m_n^i}) - \tran(\ol {Y_i}{m_n})} 
		\eqfs
	\end{equation}
	By the quadruple inequality, \cref{thm:infdtr:qi}, we have 
	\begin{equation}
		\br{\tran(\ol {Y_i}{m_n^i}) - \tran(\ol {Y_i}{m_n})}
		+ 
		\br{\tran(\ol {Y_i^i}{m_n}) - \tran(\ol {Y_i^i}{m_n^i})}
		\leq
		2 \, \ol {m_n}{m_n^i}\, \dtran(\ol {Y_i}{Y_i^i})
		\eqfs
	\end{equation}
	As $(Y_i, m_n, m_n^i)$ has the same distribution as $(Y_i^i, m_n^i, m_n)$, we obtain
	\begin{equation}\label{eq:infdtr:var:distri}
		2 \Eof{\tran(\ol {Y_i}{m_n^i}) - \tran(\ol {Y_i}{m_n})} \leq 2 \EOf{\ol {m_n}{m_n^i} \,\dtran(\ol {Y_i}{Y_i^i})}
		\eqfs
	\end{equation}
	Combining \eqref{eq:infdtr:var:simple1} and \eqref{eq:infdtr:var:distri} yields the desired result.
\end{proof}
\begin{lemma}\label{lmm:mnmnibound}\mbox{}
	We have
	\begin{equation}
		\ol {m_n}{m_n^i} \, \tilde H_i \leq \frac4n \dtran(\ol {Y_i}{Y_i^i})
		\eqfs
	\end{equation}
	where 
	\begin{equation}\label{eq:lmm:mnmnibound:hi:tran}
		\tilde H_i := \frac1n \sum_{j=1}^n \br{\ddrtran\brOf{\ol {Y_j}{m_n} + \ol {m_n}{m_n^i}} + \ddrtran\brOf{\ol {Y_j^i}{m_n^i} + \ol {m_n}{m_n^i}}}
		\eqfs
	\end{equation}
\end{lemma}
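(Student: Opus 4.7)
The statement is the key stability estimate that controls the distance between the empirical Fréchet means $m_n$ and $m_n^i$ (built from data sets differing in a single observation) in terms of the change $Y_i \to Y_i^i$. The natural idea is to use the two characterizations we have: $m_n$ minimizes $\frac1n\sum_j \tran(\ol{Y_j}{\cdot})$, and $m_n^i$ minimizes $\frac1n\sum_j \tran(\ol{Y_j^i}{\cdot})$. Apply the variance inequality (\cref{thm:infdtr:vi}) to each of these empirical objectives, the first with comparison point $q=m_n^i$, the second with $q=m_n$. This yields
\begin{align}
\frac1n\sum_{j=1}^n\br{\tran(\ol{Y_j}{m_n^i})-\tran(\ol{Y_j}{m_n})}
&\geq \tfrac12\,\ol{m_n}{m_n^i}^2\cdot\frac1n\sum_{j=1}^n \ddrtran\brOf{\ol{Y_j}{m_n}+\ol{m_n}{m_n^i}}\eqcm\\
\frac1n\sum_{j=1}^n\br{\tran(\ol{Y_j^i}{m_n})-\tran(\ol{Y_j^i}{m_n^i})}
&\geq \tfrac12\,\ol{m_n}{m_n^i}^2\cdot\frac1n\sum_{j=1}^n \ddrtran\brOf{\ol{Y_j^i}{m_n^i}+\ol{m_n}{m_n^i}}\eqfs
\end{align}
Adding these two bounds gives $\frac12 \ol{m_n}{m_n^i}^2 \tilde H_i$ on the right-hand side, exactly the quantity we wish to see.

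The plan then is to show that the left-hand side of the sum is at most $\frac{2}{n}\,\ol{m_n}{m_n^i}\,\dtran(\ol{Y_i}{Y_i^i})$. For $j\neq i$ we have $Y_j^i=Y_j$, so the summand
\begin{equation}
\br{\tran(\ol{Y_j}{m_n^i})-\tran(\ol{Y_j}{m_n})}+\br{\tran(\ol{Y_j^i}{m_n})-\tran(\ol{Y_j^i}{m_n^i})}
\end{equation}
is identically zero. Only the index $j=i$ survives, and its contribution
$
\tran(\ol{Y_i}{m_n^i})-\tran(\ol{Y_i}{m_n})-\tran(\ol{Y_i^i}{m_n^i})+\tran(\ol{Y_i^i}{m_n})
$
is precisely the left-hand side of the quadruple inequality (\cref{thm:infdtr:qi}) applied with $y=Y_i$, $z=Y_i^i$, $q=m_n^i$, $p=m_n$. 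That inequality bounds it by $2\,\ol{m_n}{m_n^i}\,\dtran(\ol{Y_i}{Y_i^i})$. Dividing by $n$ and combining with the summed variance inequalities gives
\begin{equation}
\tfrac12\,\ol{m_n}{m_n^i}^2\,\tilde H_i \leq \tfrac{2}{n}\,\ol{m_n}{m_n^i}\,\dtran(\ol{Y_i}{Y_i^i})\eqcm
\end{equation}
and dividing by $\tfrac12\ol{m_n}{m_n^i}$ (trivially true if $m_n=m_n^i$) yields the claim.

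The only real subtlety, and the step I expect to verify most carefully, is the bookkeeping that the two sides of the symmetrized difference cancel for every $j\neq i$ and that the quadruple inequality is invoked with the correct assignment of the four points; once this is set up, the rest is a direct combination of the two ingredients, and no moment assumptions or further structure of $\tran$ beyond $\tran\in\setcciz$ are needed.
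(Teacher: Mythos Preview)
Your proof is correct and follows essentially the same approach as the paper: apply the variance inequality \cref{thm:infdtr:vi} to each empirical objective at the other's minimizer, add, observe that all $j\neq i$ terms cancel since $Y_j^i=Y_j$, and bound the surviving $j=i$ term via the quadruple inequality \cref{thm:infdtr:qi}. The bookkeeping you flag as the only subtlety is exactly right, and your assignment of $(y,z,q,p)=(Y_i,Y_i^i,m_n^i,m_n)$ in the quadruple inequality is correct.
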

\begin{proof}
	The variance inequality \cref{thm:infdtr:vi} applied to the empirical distributions yields, for $q\in\mc Q$,
	\begin{align}
		\frac12 \ol q{m_n}^2 \frac1n \sum_{j=1}^n \ddrtran\brOf{\ol {Y_j}{m_n} + \ol q{m_n}}
		&\leq
		\frac1n \sum_{j=1}^n \br{\tran(\ol {Y_j}q) - \tran(\ol {Y_j}{m_n})} 
		\eqcm
		\\
		\frac12 \ol q{m_n^i}^2 \frac1n \sum_{j=1}^n \ddrtran\brOf{\ol {Y_j^i}{m_n^i} + \ol q{m_n^i}}
		&\leq
		\frac1n \sum_{j=1}^n \br{\tran(\ol {Y_j^i}q) - \tran(\ol {Y_j^i}{m_n^i})} 
		\eqfs
	\end{align}
	Thus, plugging in $q = m_n^i$ and $q = m_n$ respectively, adding the two inequalities, and noting $Y_j^i = Y_j$ for $i\neq j$, yields
	\begin{align}
		\frac{1}2 \ol {m_n}{m_n^i}^2 \tilde H_i 
		&\leq 
		\frac1n \sum_{j=1}^n \br{
			\tran(\ol {Y_j}{m_n^i}) -
			\tran(\ol {Y_j}{m_n}) +
			\tran(\ol {Y_j^i}{m_n}) -
			\tran(\ol {Y_j^i}{m_n^i})
		}  
		\\&=
		\frac1n \br{
			\tran(\ol {Y_i}{m_n^i}) -
			\tran(\ol {Y_i}{m_n}) +
			\tran(\ol {Y_i^i}{m_n}) -
			\tran(\ol {Y_i^i}{m_n^i})
		}  
		\eqfs
	\end{align}
	Hence, the quadruple inequality, \cref{thm:infdtr:qi}, implies
	\begin{equation}
		\frac{1}2 \ol {m_n}{m_n^i}^2 \tilde H_i 
		\leq
		2
		\frac1n \,
		\ol {m_n}{m_n^i}\,
		\dtran(\ol {Y_i}{Y_i^i})
		\eqfs
	\end{equation}
	Rearranging the terms yields the desired result.
\end{proof}
\begin{proof}[Proof of \cref{prp:stabi}]
	Combining \cref{lmm:variancebound} and \cref{lmm:mnmnibound}, we obtain
	\begin{equation}
		V_n \leq \frac{4}{n^2}\sum_{i=1}^n \EOf{\dtran(\ol {Y_i}{Y_i^i})^2 \tilde H_i^{-1}}
	\end{equation}
	with $\tilde H_i$ given in \eqref{eq:lmm:mnmnibound:hi:tran}.
	With \cref{lmm:dtran:subadd} and the triangle inequality we get
	\begin{equation}
		\dtran(\ol {Y_i}{Y_i^i})^2 \leq 2 \dtran(\ol {Y_i}{m})^2 + 2\dtran(\ol {Y_i^i}m)^2
		\eqfs
	\end{equation}
	As $(\tilde H_i, Y_i)$ has the same distribution as $(\tilde H_i, Y_i^i)$, this yields
	\begin{equation}
		\EOf{\dtran(\ol {Y_i}{Y_i^i})^2 \tilde H_i^{-1}} \leq 4 \EOf{\dtran(\ol {Y_i}{m})^2 \tilde H_i^{-1}}
		\eqfs
	\end{equation} 
	Furthermore, $\tilde H_i \geq H_i$. Thus,
	\begin{equation}\label{eq:prp:stabi:upper}
		V_n \leq \frac{16}{n^2}\sum_{i=1}^n \EOf{\dtran(\ol {Y_i}{m})^2 H_i^{-1}}
		\eqfs
	\end{equation}
	By the minimizing property of $m_n$,
	\begin{equation}\label{eq:prp:stabi:vi}
		V_n 
		\geq 
		\EOf{\tran(\ol Y{m_n}) - \tran(\ol Ym)}
		\eqfs
	\end{equation}
	Now \eqref{eq:prp:stabi:upper} and \eqref{eq:prp:stabi:vi} together show the desired result.
\end{proof}

\section{Chernoff Bounds}
For reference, we state the versions of the well-known Chernoff bound used
throughout. Here and below, $\log$ denotes the natural logarithm and
\begin{equation}\label{eq:kl}
    \kullback(t, p)
    := t \log\brOf{\frac{t}{p}} + (1-t) \log\brOf{\frac{1-t}{1-p}}
    \eqcm
    \qquad t, p \in [0,1]
    \eqcm
\end{equation}
denotes the binary relative entropy, with the conventions $0 \log 0 = 0$ and
$\kullback(t,p) = +\infty$ if $t \notin \{0,1\}$ and $p \in \{0,1\}$. Note that
$\kullback(t,p) \geq 0$ with equality iff $t = p$, and that
$\kullback(t,p) = \kullback(1-t,1-p)$.
\begin{proposition}[Chernoff--Hoeffding Bound]\label{prp:chernoff:addi}
    Let $X_1, \dots, X_n$ be independent random variables with values in $\{0,1\}$
    and let $\rho := \frac1n \sum_{i=1}^n \EOf{X_i}$.
    Then, for all $t \in [0,1]$,
    \begin{align}
        \text{if $t \geq \rho$}\colon\quad
        \PrOf{\frac1n \sum_{i=1}^n X_i \geq t} &\leq \exp\brOf{-n \kullback(t, \rho)}
        \eqcm\\
        \text{if $t \leq \rho$}\colon\quad
        \PrOf{\frac1n \sum_{i=1}^n X_i \leq t} &\leq \exp\brOf{-n \kullback(t, \rho)}
        \eqfs
    \end{align}
\end{proposition}
\begin{proof}
    Write $S_n := \sum_{i=1}^n X_i$ and let $t \geq \rho$. For every $\lambda \geq 0$,
    Markov's inequality applied to $e^{\lambda S_n}$, independence, and
    $\EOf{e^{\lambda X_i}} = 1 - p_i + p_i e^{\lambda}$ with $p_i := \Eof{X_i}$ give
    \begin{equation}
        \PrOf{S_n \geq t n}
        \leq
        e^{-\lambda t n} \prod_{i=1}^n \br{1 - p_i + p_i e^{\lambda}}
        \leq
        \brOf{e^{-\lambda t} \br{1 - \rho + \rho e^{\lambda}}}^n
        \eqcm
    \end{equation}
    where the second step is the arithmetic--geometric mean inequality (the map
    $p \mapsto \log(1-p+pe^{\lambda})$ being concave, the product is maximal at
    $p_i \equiv \rho$). Choosing
    $\lambda := \log\brOf{\frac{t(1-\rho)}{\rho(1-t)}} \geq 0$ yields
    $1 - \rho + \rho e^{\lambda} = \frac{1-\rho}{1-t}$ and hence
    \begin{equation}
        e^{-\lambda t} \br{1 - \rho + \rho e^{\lambda}}
        =
        \frac{1-\rho}{1-t} \brOf{\frac{\rho(1-t)}{t(1-\rho)}}^{t}
        =
        \exp\brOf{-\kullback(t,\rho)}
        \eqfs
    \end{equation}
    The case $t \leq \rho$ follows by applying the above to $1 - X_1, \dots, 1-X_n$
    together with $\kullback(1-t,1-\rho) = \kullback(t,\rho)$.
\end{proof}
The relative entropy is the exact large-deviation rate, but it is often
convenient to relax it. The following ``Poisson form'' isolates the dependence
on the relative deviation $\eta = t/\rho$.
\begin{corollary}[Poisson Form]\label{cor:chernoff:poisson}
    Let $\psi(\eta) := \eta \log \eta - \eta + 1 \geq 0$ for $\eta \geq 0$
    (with $\psi(0) = 1$). Then, for all $\rho \in (0,1)$ and
    $\eta \in \br{0, \frac1\rho}$,
    \begin{equation}\label{eq:kl:poisson}
        \kullback(\eta\rho, \rho) \geq \rho\, \psi(\eta)
        \eqcm
    \end{equation}
    and consequently
    $\exp\brOf{-n \kullback(\eta\rho,\rho)} \leq \exp\brOf{-\rho n\, \psi(\eta)}$.
    Moreover,
    \begin{equation}\label{eq:psi:quadratic}
        \psi(\eta) \geq \frac{(1-\eta)^2}{2} \ \ \text{ for $\eta \in [0,1]$}
        \eqcm
        \qquad
        \psi(\eta) \geq \frac{3(\eta-1)^2}{2(\eta+2)} \ \ \text{ for $\eta \geq 1$}
        \eqfs
    \end{equation}
\end{corollary}
\begin{proof}
    By $u \log\brOf{\frac uv} \geq u - v$ for $u, v > 0$, applied with
    $u = 1 - \eta\rho$ and $v = 1 - \rho$,
    \begin{equation}
        \kullback(\eta\rho,\rho)
        =
        \eta\rho \log(\eta) + (1-\eta\rho) \log\brOf{\frac{1-\eta\rho}{1-\rho}}
        \geq
        \eta\rho \log(\eta) + \rho \br{1 - \eta}
        =
        \rho\, \psi(\eta)
        \eqfs
    \end{equation}
    The bounds in \eqref{eq:psi:quadratic} are elementary: both sides vanish at
    $\eta = 1$ together with their first derivatives, and the claim follows by
    comparing second derivatives.
\end{proof}
\begin{proposition}[Multiplicative Chernoff Bound]\label{prp:chernoff:multi}
    Let $X_1, \dots, X_n$ be independent random variables with values in
    $\{0, 1\}$ and $p := \frac1n \sum_{i=1}^n \EOf{X_i}$.
    Then, for any $\delta \in [0,1]$,
    \begin{equation}
        \PrOf{\frac1n \sum_{i=1}^n X_i \leq (1 - \delta)p}
        \leq
        \exp\brOf{-\frac{\delta^2 p n}{2}}
        \eqcm
    \end{equation}
    and, for any $\delta \geq 0$ with $(1+\delta)p \leq 1$,
    \begin{equation}
        \PrOf{\frac1n \sum_{i=1}^n X_i \geq (1 + \delta)p}
        \leq
        \exp\brOf{-\frac{3 \delta^2 p n}{2(\delta+3)}}
        \leq
        \exp\brOf{-\frac{\delta^2 p n}{2 + \delta}}
        \eqfs
    \end{equation}
\end{proposition}
\begin{proof}
    Combine \cref{prp:chernoff:addi} with \cref{cor:chernoff:poisson}, taking
    $\eta = 1 \mp \delta$ in \eqref{eq:psi:quadratic}.
\end{proof}
\begin{lemma}\label{lmm:tail:prob}
    Let $A_1, \dots, A_n$ be independent events with the same probability
    $\rho = \Prof{A_k} \in (0,1)$. Set the rate of occurrence of such events as
    \begin{equation}
        \rho_n := \frac1n \sum_{i=1}^n \ind_{A_i}
        \eqfs
    \end{equation}
    Let $\eta \in [0,1]$. Then
    \begin{equation}\label{eq:tail:lower}
        \PrOf{\rho_n \leq \eta\rho}
        \leq
        \exp\brOf{-n \kullback(\eta\rho, \rho)}
        \leq
        \exp\brOf{-\rho n \br{\eta\log\eta - \eta + 1}}
        \leq
        \exp\brOf{-\frac{\rho n (1-\eta)^2}{2}}
        \eqfs
    \end{equation}
    Let $\eta \in \br{1, \frac1\rho}$. Then
    \begin{equation}\label{eq:tail:upper}
        \PrOf{\rho_n \geq \eta \rho}
        \leq
        \exp\brOf{-n \kullback(\eta\rho, \rho)}
        \leq
        \exp\brOf{-\rho n \br{\eta\log\eta - \eta + 1}}
        \leq
        \exp\brOf{-\frac{3 \rho n (\eta-1)^2}{2(\eta+2)}}
        \eqfs
    \end{equation}
\end{lemma}
\begin{proof}
    Direct consequence of \cref{prp:chernoff:addi} and
    \cref{cor:chernoff:poisson}, applied with $t = \eta\rho$.
\end{proof}
\begin{remark}[Tightness]\label{rmk:tail:tight}
    The exponents in \cref{lmm:tail:prob} are optimal. At the extreme deviations
    they are attained exactly: $\kullback(0,\rho) = \log\frac{1}{1-\rho}$ and
    $\kullback(1,\rho) = \log\frac1\rho$ reproduce
    $\PrOf{\rho_n = 0} = (1-\rho)^n$ and $\PrOf{\rho_n = 1} = \rho^n$. In general,
    Stirling's formula gives the matching lower bound
    $\PrOf{\rho_n = t} \geq \frac{1}{n+1} \exp\brOf{-n \kullback(t,\rho)}$ for
    $t n \in \{0, \dots, n\}$, so only the polynomial prefactor can be improved.
\end{remark}
\section{Proofs of Section: Preliminaries}\label{sec:proof:prelim}

\subsection{Basic Setup}

\begin{lemma}\label{lmm:convsepa}
    Let $(\mc Q, d)$ be a Hadamard space. Let $\mc Y \subset \mc Q$ be separable. Then the closed convex hull of $\mc Y$, $\overline{\ms{conv}}(\mc Y)$, is separable.
\end{lemma}

\begin{proof}
    Geodesics in $\mc Q$ are unique and depend continuously on their endpoints
    \cite[Proposition 2.3]{sturm03}; for $x,y\in\mc Q$ write
    $\gamma_{x,y}\colon[0,1]\to\mc Q$ for the constant-speed geodesic with
    $\gamma_{x,y}(0)=x$ and $\gamma_{x,y}(1)=y$, so that for each fixed
    $t\in[0,1]$ the map $(x,y)\mapsto\gamma_{x,y}(t)$ is continuous. The closed
    convex hull $\overline{\ms{conv}}(\mc Y)$ is the smallest closed convex subset
    of $\mc Q$ containing $\mc Y$ \cite[Remark 2.7]{sturm03}; recall that a set
    $C\subset\mc Q$ is convex if and only if $\gamma_{x,y}([0,1])\subset C$
    for all $x,y\in C$.
    
    Since $\mc Y$ is separable, fix a countable dense set $D\subset\mc Y$, and
    let $T:=\Q\cap[0,1]$, a countable dense subset of $[0,1]$. Define
    countable sets $C_n\subset\mc Q$ recursively by
    \[
        C_0:=D,\qquad
        C_{n+1}:=\bigl\{\,\gamma_{x,y}(t): x,y\in C_n,\ t\in T\,\bigr\},
    \]
    and put $C:=\bigcup_{n\ge0}C_n$. Each $C_n$ is countable, hence so is $C$.
    Taking $y=x$ gives $\gamma_{x,x}(t)=x$, so $C_n\subset C_{n+1}$ for every
    $n$; the family is therefore nondecreasing, and any finite subset of $C$
    lies in a single $C_n$. Set $K:=\overline{C}$. Then $K$ is closed and
    separable, and $\mc Y\subset\overline D\subset K$.
    
    Next we show that $K$ is convex. Let $u,v\in K$ and choose $u_k,v_k\in C$ with
    $u_k\xrightarrow{k\to\infty} u$ and $v_k\xrightarrow{k\to\infty} v$. 
    For each $k$, since $C=\bigcup_{n\in\Nn}C_n$ and the family $(C_n)_n$ is increasing, there exists $n_k$ such that $u_k,v_k\in C_{n_k}$.
    Hence $\gamma_{u_k,v_k}(t)\in C_{n_k+1}\subset C$ for every $t\in T$.
    Fix $t\in T$. By continuity of $(x,y)\mapsto\gamma_{x,y}(t)$ we have
    $\gamma_{u_k,v_k}(t)\xrightarrow{k\to\infty}\gamma_{u,v}(t)$, so $\gamma_{u,v}(t)\in\overline C=K$.
    Thus $\gamma_{u,v}(t)\in K$ for all $t\in T$; since $t\mapsto\gamma_{u,v}(t)$
    is continuous, $T$ is dense in $[0,1]$, and $K$ is closed, it follows that
    $\gamma_{u,v}([0,1])\subset K$. Hence $K$ is convex.
    
    Now we show $K=\overline{\ms{conv}}(\mc Y)$. As $K$ is closed, convex, and contains
    $\mc Y$, minimality gives $\overline{\ms{conv}}(\mc Y)\subset K$. Conversely,
    $C_0=D\subset\mc Y\subset\overline{\ms{conv}}(\mc Y)$, and if
    $C_n\subset\overline{\ms{conv}}(\mc Y)$ then, by convexity of
    $\overline{\ms{conv}}(\mc Y)$, every geodesic $\gamma_{x,y}$ with $x,y\in C_n$
    stays in $\overline{\ms{conv}}(\mc Y)$, whence
    $C_{n+1}\subset\overline{\ms{conv}}(\mc Y)$. By induction
    $C\subset\overline{\ms{conv}}(\mc Y)$, and taking closures gives
    $K\subset\overline{\ms{conv}}(\mc Y)$.
    
    Therefore $\overline{\ms{conv}}(\mc Y)=K=\overline C$ admits the countable
    dense subset $C$ and is separable.
\end{proof}

\subsection{Variance Inequality}\label{sec:viconv}

We use the setup described in \cref{ssec:prelim:setup}. We prove \cref{thm:infdtr:vi} based on the results in \cite{varinequ}.

\subsubsection{Preliminaries}

By \cite[Proposition 5.2]{varinequ} the variance functional 
\begin{equation}
    F\colon\mc Q \to \Rp, q\mapsto \Eof{\tran(\ol Yq)-\tran(\ol Ym)}
\end{equation}
is convex and we have 
\begin{equation}
    v(t)=\inf_{q\in \mc Q\setminus \ballopen(m,t)} F(q)
    \eqfs
\end{equation}
As $F(q) \geq 0$, we have $v(t)\geq 0$. As $F(m) = 0$, we have $v(0) = 0$. As $\mc Q\setminus \ballopen(m,t)$ decreases with increasing $t$, $v(t)$ is nondecreasing.

Next, we show that $t\mapsto v(t)/t$ is nondecreasing.
Let $0<t_1<t_2$. For $q\in\mc Q$ with $\ol qm\ge t_2$, let
$\gamma_{m\to q}:[0,\ol qm]\to\mc Q$ be the unit-speed geodesic from $m$ to $q$.
By convexity of $F$,
\begin{equation}
    F(\gamma_{m\to q}(t_1))
    \leq
    \frac{t_1}{t_2}F(\gamma_{m\to q}(t_2))
    \eqfs
\end{equation}
Taking infima over all such $q$ yields
\begin{equation}
    \frac{v(t_1)}{t_1} \leq \inf_{q\in \mc Q\setminus \ballopen(m, t_2)} \frac{F(\gamma_{m\to q}(t_1))}{t_1} \leq \frac{v(t_2)}{t_2}
    \eqfs
\end{equation}
Hence, $t\mapsto v(t)/t$ is nondecreasing.

For the lower bounds on $v(t)$, we first reduce the infimum over $\mc Q\setminus\ballopen(m,t)$ to an infimum over $q\in\mc Q$ with $\ol qm = t$:
Let $t\in\Rpp$ and let $q\in\mc Q\setminus\ballopen(m,t)$.
Set $s:=\ol qm\ge t$ and let
$\gamma:[0,s]\to\mc Q$
be the unit-speed geodesic from $m$ to $q$.
By convexity of $F$,
\begin{equation}
    F(\gamma(t))
    \le
    \frac{t}{s}F(q)
    \eqfs
\end{equation}
As $s\ge t$, this implies
\begin{equation}
    F(q)\ge F(\gamma(t))
\end{equation}
and therefore
\begin{equation}
    v(t)
    =
    \inf_{q\in\mc Q\setminus\ballopen(m,t)}F(q)
    =
    \inf_{q\in\mc Q:\,\ol qm=t}F(q)
    \eqfs
\end{equation}

\subsubsection{Part \ref{thm:infdtr:vi:default}}

Next we show the first variance inequality given in \cref{thm:infdtr:vi},
\begin{equation}\label{eq:vi:first}
    v(t) \geq \frac12 t^2 \EOf{\ddrtran\brOf{\ol Ym + t}}
    \eqfs
\end{equation}
By \cite[Theorem 5.4]{varinequ}, for every $q\in\mc Q$ with $\ol qm=t$,
\begin{equation}
    F(q)
    \ge
    \frac12 t^2
    \EOf{\ddrtran\brOf{\max(\ol Ym,t)}}
    \eqfs
\end{equation}
Hence
\begin{equation}
    v(t)
    \ge
    \frac12 t^2
    \EOf{\ddrtran\brOf{\max(\ol Ym,t)}}
    \eqfs
\end{equation}
As $\ddrtran$ is nonincreasing, we have $\ddrtran(\max(\ol Ym, t)) \geq \ddrtran(\ol Ym + t)$, which shows \eqref{eq:vi:first}.

\subsubsection{Part \ref{thm:infdtr:vi:convex}}

We now want to show that we can obtain a better bound under the assumption of $\ddrtran$ being convex:
\begin{equation}
    v(t) \geq \frac12 t^2 \EOf{\ddrtran\brOf{\ol Ym + \frac13 t}}
    \eqfs
\end{equation}
We adapt the proof of \cite[Theorem 5.4]{varinequ}, including prerequisite results shown in the same paper, and inject the convexity of $\ddrtran$ via Jensen's inequality at a suitable place.

The arguments in \cite{varinequ} are based on the fact that the distance functions to points $q\in\mc Q$, $t \mapsto \ol q\gamma(t)$, where $\gamma\colon\R\to\mc Q$ is a geodesic, are $\mc G$-convex, where 
\begin{equation}
    \mc G := \setByEle{t\mapsto \sqrt{(t-t_0)^2+h^2}}{t_0 \in \R, h\geq 0}
\end{equation}
and $\mc G$-convexity is defined as follows.
\begin{definition}\label{def:Fconvex}
    Let $I\subset\R$. Let $\mc G$ be a set of functions $I \to \R$.
    A function $f\colon I\to\R$ is called \emph{$\mc G$-convex} if and only if for every $t_0\in I$ there is a function $g\in\mc G$ such that $g(t_0) = f(t_0)$ and $g(t) \leq f(t)$ for all $t\in I$. In this case, the function $g$ is called \emph{$\mc G$-tangent} of $f$ at $t_0$.
\end{definition}
The technical basis for \cite[Theorem 5.4]{varinequ} is \cite[Theorem 4.17 (ii)]{varinequ}, which states the following:

Let $I\subset \R$ be convex with $0\in I$.
Let $f\colon I\to\R$ be $\mc G$-convex.
Let $\tran\in\setcc$.
Then, for all $t\in I$, $t\neq0$,
\begin{equation}
    (\tran \circ f)(t) - (\tran \circ f)(0) \geq t (\tran \circ f)\rd(0) + \frac12 t^2 \ddrtran\brOf{\max\brOf{f(0), f(t)}}
    \eqfs
\end{equation}
From the proof of \cite[Theorem 4.17 (ii)]{varinequ}, we extract an intermediate result as a Lemma:
\begin{lemma}\label{lmm:semitaylor}
    Let $I\subset \R$ be convex with $0\in I$.
    Let $f\colon I\to\R$ be $\mc G$-convex.
    Let $\tran\in\setcc$.
    Then, for all $t\in I$, $t\neq0$,
    \begin{equation}\label{eq:semitaylor:tran}
        (\tran \circ f)(t) - (\tran \circ f)(0) \geq t (\tran \circ f)\rd(0) + \int_{0}^t (t-r) \ddrtran(f(r)) \dl r
        \eqfs
    \end{equation}
\end{lemma}
\begin{proof}
    If $t<0$, we can replace $f$ by $t\mapsto f(-t)$. Thus, we can assume $t>0$ without loss of generality.
    As $\tran$ and $f$ are convex and $\tran$ is nondecreasing, $\tran \circ f$ is convex. Thus, $\tran \circ f$ is twice differentiable Lebesgue almost everywhere by Alexandrov's theorem (cf.\ \cite[Lemma A.2]{varinequ}). Let $B\subset I$ be the set of points where $\tran \circ f$ is not twice differentiable.
    As $\tran \circ f$ is convex on $I$, it is Lipschitz on $[0, t]$ and, hence, absolutely continuous on $[0, t]$. Thus, by the fundamental theorem of calculus for Lebesgue integrals \cite[Lemma A.1]{varinequ},
    \begin{equation}
        (\tran \circ f)(t) - (\tran \circ f)(0) = \int_0^t (\tran \circ f)\rd(s) \dl s
        \eqfs
    \end{equation}
    Furthermore, as $(\tran \circ f)\rd$ is nondecreasing, by fundamental theorem of calculus for Lebesgue integrals \cite[Lemma A.1]{varinequ},
    \begin{equation}
        (\tran \circ f)\rd(s) - (\tran \circ f)\rd(0) \geq \int_{[0,s]\setminus B} (\tran \circ f)\prr(r) \dl r
        \eqfs
    \end{equation}
    For $r \in I\setminus B \subset \mathring I$, by \cite[Lemma 4.16]{varinequ},
    \begin{equation}
        (\tran \circ f)\prr(r) \geq \ddrtran(f(r))
        \eqfs
    \end{equation}
    Hence,
    \begin{align}
        (\tran \circ f)(t) - (\tran \circ f)(0)
        &\geq
        \int_{[0,t]} \br{  (\tran \circ f)\rd(0) + \int_{[0,s]\setminus B} \ddrtran(f(r)) \dl r } \dl s
        \\&=
        t (\tran \circ f)\rd(0) + \int_{[0,t]} \int_{[0,s]} \ddrtran(f(r)) \dl r  \dl s
        \\&=
        t (\tran \circ f)\rd(0) + \int_{0}^t (t-r) \ddrtran(f(r)) \dl r
        \eqfs
    \end{align}
\end{proof}
Next we follow the proof of \cite[Theorem 5.4]{varinequ} but use this lemma instead of \cite[Theorem 4.17 (ii)]{varinequ}.
\begin{theorem}\label{thm:extended:varinequ}
    Let $\tran\in\setcciz$.
    Assume $(\mc Q, d)$ is Hadamard.
    Assume $\Eof{\dtran(\ol Yo)} < \infty$ for an arbitrary point $o \in\mc Q$.
    Let $m \in\argmin_{q\in\mc Q}\Eof{\tran(\ol Yq) - \tran(\ol Yo)}$. Let $q\in\mc Q\setminus\cb{m}$.
    Let $\gamma \colon [0, 1] \to \mc Q$ be the constant-speed geodesic from $\gamma(0) = m$ to $\gamma(1) =q$. 
    Then
    \begin{equation}
        \EOf{\tran(\ol Yq) - \tran(\ol Ym)} \geq \ol qm^2 \EOf{\int_{0}^{1} \ddrtran(\ol Y\gamma(s)) (1-s) \dl s}
        \eqfs
    \end{equation}
\end{theorem}
\begin{proof}
    Let $\tilde\gamma \colon [0, \ol qm] \to \mc Q$ be the unit-speed geodesic from $\tilde\gamma(0) = m$ to $\tilde\gamma(\ol qm) =q$, i.e., $\tilde\gamma(s\,\ol qm) = \gamma(s)$ for $s\in [0,1]$. 
    For $t\in[0, \ol qm]$ and $y\in \mc Q$, define $V_y(t) := \tran(\ol y{\tilde\gamma}(t)) - \tran(\ol y{\tilde\gamma}(0))$. Thus, $V_y(0) = 0$.
    By \cref{lmm:semitaylor},
    \begin{equation}\label{eq:taylorraw}
        V_y(t)
        \geq
        t  V_y\rd(0) + \int_{0}^t (t-r) \ddrtran(\ol y{\tilde\gamma}(r)) \dl r
        \eqfs
    \end{equation}
    Following the proof of \cite[Theorem 5.4]{varinequ} now identically, we obtain
    \begin{equation}
        \EOf{V_Y(t)} \geq \EOf{\int_{0}^t (t-r) \ddrtran(\ol Y{\tilde\gamma}(r)) \dl r}
        \eqfs
    \end{equation}
    Plugging in $t = \ol qm$ and substituting $r=s\,\ol qm$ yields
    \begin{align}
        \EOf{\tran(\ol Yq) - \tran(\ol Ym)} 
        &\geq 
        \EOf{\int_{0}^{\ol qm} (\ol qm-r) \ddrtran(\ol Y{\tilde\gamma}(r)) \dl r}
        \\&=
        \ol qm^2\EOf{\int_{0}^{1} (1-s) \ddrtran(\ol Y{\gamma}(s)) \dl s}
        \eqfs
    \end{align}
\end{proof}
\begin{corollary}
    Let $\tran\in\setcciz$.
    Assume $(\mc Q, d)$ is Hadamard.
    Assume $\Eof{\dtran(\ol Yo)} < \infty$ for an arbitrary point $o \in\mc Q$.
    Let $m \in\argmin_{q\in\mc Q}\Eof{\tran(\ol Yq) - \tran(\ol Yo)}$. Let $q\in\mc Q\setminus\cb{m}$.
    Assume that $\ddrtran$ is convex.
    Then
    \begin{equation}
        \EOf{\tran(\ol Yq) - \tran(\ol Ym)} \geq \frac12 \ol qm^2 \EOf{\ddrtran\brOf{\ol Ym + \frac13\, \ol qm}}
        \eqfs
    \end{equation}
\end{corollary}
\begin{proof}
    From \cref{thm:extended:varinequ}, we have
    \begin{equation}
        \EOf{\tran(\ol Yq) - \tran(\ol Ym)} \geq \ol qm^2 \EOf{\int_{0}^{1} \ddrtran(\ol Y\gamma(s)) (1-s) \dl s}
        \eqcm
    \end{equation}
    where $\gamma \colon [0, 1] \to \mc Q$ is the constant-speed geodesic from $\gamma(0) = m$ to $\gamma(1) =q$. 
    As $\ol Y{\gamma}(s)$ is $\ol qm$-Lipschitz and $\ddrtran$ is nonincreasing, we have $\ddrtran(\ol Y\gamma(s)) \geq \ddrtran(\ol Ym + s\,\ol qm)$. Furthermore, as $\int_0^1 2(1-s)\dl s = 1$, if $\ddrtran$ is convex, then
    \begin{align}
        \EOf{\int_{0}^{1} (1-s) \ddrtran(\ol Ym + s\,\ol qm) \dl s}
        &\geq
        \frac12\EOf{\ddrtran\brOf{\int_{0}^{1} 2 (1-s) \br{\ol Ym + s\,\ol qm} \dl s}}
        \\&=
        \frac12\EOf{\ddrtran\brOf{\ol Ym + \frac13 \ol qm}}
        \eqfs
    \end{align}
\end{proof}
Using $v(t) = \inf_{q\in\mc Q:\,\ol qm=t}F(q)$, this corollary shows part \ref{thm:infdtr:vi:convex} of \cref{thm:infdtr:vi}, namely
 \begin{equation}
    v(t) \geq \frac12 t^2 \EOf{\ddrtran\brOf{\ol Ym + \frac13 t}}
    \eqfs
\end{equation}
\subsubsection{Part \ref{thm:infdtr:vi:quantile}}
\begin{lemma}\label{lmm:infdtr:vi:near}
    Let $r\in\Rpp$ and set $\rho := \Prof{\ol Ym\leq r}$.
    For all $q\in\mc Q$, we have
    \begin{equation}
        \EOf{\tran(\ol Y{q}) - \tran(\ol Ym)}
        \geq
        \frac\rho2\, \ol qm^2 \ddrtran(r +  \ol qm)
        \eqfs
    \end{equation}
\end{lemma}
\begin{proof}
    The variance inequality \cref{thm:infdtr:vi} states
    \begin{equation}
        \EOf{\tran(\ol Y{q}) - \tran(\ol Ym)}
        \geq
        \frac12
        \ol qm^2
        \EOf{\ddrtran(\ol Ym +  \ol qm)}
        \eqfs
    \end{equation}
    We need to find a suitable lower bound on the expectation in the last term. As $\ddrtran$ is nonincreasing, we have
    \begin{align}
        \EOf{\ddrtran(\ol Ym +  \ol qm)}
        &\geq
        \EOf{\ddrtran(\ol Ym +  \ol qm)\ind_{[0,r]}(\ol Ym)}
        \\&\geq
        \rho \ddrtran(r +  \ol qm)
        \eqfs
    \end{align}
    Thus,
    \begin{equation}
        \EOf{\tran(\ol Y{q}) - \tran(\ol Ym)}
        \geq
        \frac\rho2  \ol qm^2
        \ddrtran(r +  \ol qm)
        \eqfs
        \qedhere
    \end{equation}
\end{proof}
Using $v(t) = \inf_{\ol qm = t} F(q)$, \cref{lmm:infdtr:vi:near} implies part \ref{thm:infdtr:vi:quantile} of \cref{thm:infdtr:vi}.
\subsubsection{Part \ref{thm:infdtr:vi:infty}}

\cite[Theorem 3.5]{varinequ} shows part \ref{thm:infdtr:vi:infty} of \cref{thm:infdtr:vi}, that is
\begin{equation}
    \lim_{t\to\infty} \frac{v(t)}{\tran(t)} = 1
    \eqfs
\end{equation}

\subsubsection{Part \ref{thm:infdtr:vi:combined}}

We first discuss the \emph{near} part, i.e., small $t$. Using $v(t) = \inf_{\ol qm = t} F(q)$, \cref{lmm:infdtr:vi:near} implies
\begin{equation}
    v(t) \geq \frac\rho2 t^2 \ddrtran\brOf{r + t}
\end{equation}
for $r\in\Rpp$ and $\rho =\Prof{\ol Ym\leq r}$.
Next we prove a result for the \emph{far} part, i.e., $t$ large.
\begin{lemma}\label{lmm:infdtr:vi:far}
    Let $r\in\Rpp$ and set $\rho := \Prof{\ol Ym\leq r}$. 
    Let $t_0\in\Rpp$.
    Assume $\rho > 0$ and $\ddrtran(r+t_0) > 0$. Set
    \begin{equation}
        a := \inf_{t\geq t_0}\frac{v(t)}{\tran(t)}
        \eqfs
    \end{equation}
    Then $a>0$. In particular $v(t) \geq a \tran(t) > 0$ for all $t\in [t_0, \infty)$.
\end{lemma}
\begin{proof}
    By \cref{lmm:infdtr:vi:near}, we have $v(t_0) \geq \frac{\rho}2 t_0^2 \ddrtran(r+t_0)$. Set $b_0 := v(t_0)/t_0$.
    Since, $v(t)/t$ is nondecreasing, we have $v(t) \geq b_0 t$ for all $t\geq t_0$.
    If $\diam(\mc Q) = \infty$, by \ref{thm:infdtr:vi:infty}, there is $T \geq t_0$ such that $v(t) \geq \tran(t)/2$ for all $t\geq T$. The existence of such a $T$ is trivial for $\diam(\mc Q) < \infty$.
    For $t\in [t_0, T]$, we have
    \begin{equation}
        \frac{v(t)}{\tran(t)} \geq \frac{b_0 t}{\tran(t)} \geq \frac{b_0 t}{\tran(T)}
        \eqfs
    \end{equation}
    Hence,
    \begin{equation}
        a \geq \min\brOf{\frac{b_0 t_0}{\tran(T)}, \frac12} > 0
    \end{equation}
    and $v(t) \geq a\tran(t)$ for all $t\geq t_0$.
\end{proof}

Together, \cref{lmm:infdtr:vi:near} and \cref{lmm:infdtr:vi:far} applied with $r = t_0$ and using the bound $\ddrtran(r+t) \geq \ddrtran(2t_0)$ for $t \leq t_0$ yield \ref{thm:infdtr:vi:combined} of \cref{thm:infdtr:vi}.

\subsubsection{Convexity Counter Example}

\begin{example}\label{exa:v:notconvex}
    Let $\mc Q=\R$ and let $\tran(r)=r^{3/2}$. Define
    \begin{equation}
        F(x)
        =
        \frac16 |x-1|^{3/2}
        +
        \frac13 |x-16|^{3/2}
        +
        \frac12 |x+9|^{3/2}
        \eqfs
    \end{equation}
    Since $x\mapsto |x-a|^{3/2}$ is strictly convex, $F$ is strictly
    convex. Moreover,
    \begin{equation}
        F'(0)
        =
        -\frac32
        \biggl(
        \frac16\sqrt1
        +
        \frac13\sqrt{16}
        -
        \frac12\sqrt9
        \biggr)
        =
        0
        \eqfs
    \end{equation}
    Therefore $m=0$ is the unique minimizer of $F$.
    Define
    \begin{equation}
        v(t)
        =
        \inf_{|x|\ge t}
        \bigl(F(x)-F(0)\bigr),
        \qquad t\ge0
        \eqfs
    \end{equation}
    Since $F$ is convex and minimized at $0$,
    \begin{equation}
        v(t)
        =
        \min(F(t),F(-t))-F(0)
        \eqfs
    \end{equation}
    A direct computation yields
    \begin{align}
        v(9)&=F(-9)-F(0)
        =
        \frac{20+5\sqrt{10}}{3}
        \eqcm
        \\
        v(11)&=F(11)-F(0)
        =
        \frac{5\sqrt{10}+65\sqrt5-105}{3}
        \eqcm
        \\
        v(13)&=F(13)-F(0)
        =
        5\sqrt3+11\sqrt{22}-35
        \eqfs
    \end{align}
    Thus,
    \begin{align}
        v(11)-\frac{v(9)+v(13)}2
        &=
        \frac{
            5\sqrt{10}
            +130\sqrt5
            -15\sqrt3
            -33\sqrt{22}
            -125
        }{6}
        \\
        &\approx 0.123
        >
        0
        \eqfs
    \end{align}
    Hence,
    \begin{equation}
        v(11)
        >
        \frac{v(9)+v(13)}2
        \eqcm
    \end{equation}
    so $v$ is not convex.
    
    The non-convexity arises because $v(t)=\min(F(t),F(-t))-F(0)$ is the minimum of two convex functions. The active branch switches between $t=9$ and $t=13$, creating a kink with decreasing slope and thus violating convexity.
\end{example}

\section{Proofs of Section: Unbounded Influence}

\subsection{Power Fr\'echet Means}\label{sec:powerproof}
In this section we prove \cref{thm:power:main}. Throughout this section, assume the setting and conditions of \cref{ssec:prelim:setup} and \cref{thm:power:main}. Furthermore, let $Y_1\pr, \dots, Y_n\pr$ be an additional independent set of iid copies of $Y$.
Denote the samples with $i$-th position replaced as $Y_j^i := Y_j$ if $i \neq j$ and $Y_i^i := Y_i\pr$.
Let $m_n^i$ be the sample $\alpha$-Fr\'echet mean of the sample set $Y_1^i, \dots, Y_n^i$, 
\begin{equation}
    m_n^i \in \argmin_{q\in\mc Q} \sum_{j=1}^n \ol {Y_j^i}q^\alpha
    \eqfs
\end{equation}

Define the \emph{double excess risk} as
\begin{equation}
    V_n := \EOf{\ol Y{m_n}^\alpha - \ol Ym^\alpha} + \EOf{\frac1n \sum_{i=1}^n \br{\ol {Y_i}{m}^\alpha - \ol {Y_i}{m_n}^\alpha} }
    \eqfs
\end{equation}
Recall that $Y, Y_1, \dots, Y_n$ are iid, so that $Y$ is independent of $m_n$.
\begin{lemma}\label{lmm:power:variancebound}
	We have
    \begin{equation}
        V_n \leq \frac{2^{1-\alpha} \alpha}n \sum_{i=1}^n \EOf{\ol {m_n}{m_n^i}\, \ol {Y_i}{Y_i^i}^{\alpha-1}}
        \eqfs
    \end{equation}
\end{lemma}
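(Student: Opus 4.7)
The plan is to follow the proof of the general-transformation version, Lemma \ref{lmm:variancebound}, essentially verbatim, substituting the sharper power-function quadruple inequality (Proposition \ref{thm:power:qi}) for the generic one (Proposition \ref{thm:infdtr:qi}). The improvement of the constant from $2$ to $2^{2-\alpha}\alpha$ in the quadruple inequality is exactly what produces the factor $2^{1-\alpha}\alpha$ in the final bound after the symmetrization step halves it.

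Concretely, the first step is to rewrite $V_n$ using exchangeability. Since $Y$ and $Y_i$ are identically distributed and since swapping the roles of $Y_i$ with $Y_i'$ exchanges the samples $(Y_1,\dots,Y_n)$ and $(Y_1^i,\dots,Y_n^i)$, the pairs $(Y,m_n)$ and $(Y_i,m_n^i)$ have the same joint law. This yields the identity
\begin{equation}
V_n = \frac{1}{n}\sum_{i=1}^n \EOf{\ol{Y_i}{m_n^i}^\alpha - \ol{Y_i}{m_n}^\alpha}\eqfs
\end{equation}
The second step applies Proposition \ref{thm:power:qi} pointwise with $y=Y_i$, $z=Y_i^i$, $q=m_n^i$, $p=m_n$, giving
\begin{equation}
\bigl(\ol{Y_i}{m_n^i}^\alpha - \ol{Y_i}{m_n}^\alpha\bigr) + \bigl(\ol{Y_i^i}{m_n}^\alpha - \ol{Y_i^i}{m_n^i}^\alpha\bigr) \leq 2^{2-\alpha}\alpha\, \ol{m_n}{m_n^i}\, \ol{Y_i}{Y_i^i}^{\alpha-1}\eqfs
\end{equation}

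For the third step, I would invoke the distributional equality $(Y_i,m_n,m_n^i)\stackrel{d}{=}(Y_i^i,m_n^i,m_n)$, which follows from the permutation that swaps the index-$i$ sample point with its primed copy. This equality makes the two bracketed terms on the left have equal expectation, so taking expectation and dividing by $2$ gives
\begin{equation}
\EOf{\ol{Y_i}{m_n^i}^\alpha - \ol{Y_i}{m_n}^\alpha} \leq 2^{1-\alpha}\alpha\, \EOf{\ol{m_n}{m_n^i}\, \ol{Y_i}{Y_i^i}^{\alpha-1}}\eqfs
\end{equation}
Summing over $i$ and dividing by $n$, together with the identity from the first step, yields the claim. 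There is no genuine obstacle; the only item worth double-checking is the exchangeability argument in step three, but it is the same permutation argument used already for Lemma \ref{lmm:variancebound} and requires no modification in the power-function setting.
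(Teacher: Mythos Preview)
Your proposal is correct and follows essentially the same approach as the paper's own proof: rewrite $V_n$ via the exchangeability identity, apply the power-function quadruple inequality (Proposition~\ref{thm:power:qi}) with $y=Y_i$, $z=Y_i^i$, $q=m_n^i$, $p=m_n$, and then symmetrize using $(Y_i,m_n,m_n^i)\stackrel{d}{=}(Y_i^i,m_n^i,m_n)$ to halve the constant.
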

\begin{proof}
    As $Y$ has the same distribution as $Y_i$ and $(Y, m_n)$ has the same distribution as $(Y_i, m_n^i)$, we have
    \begin{equation}\label{eq:power:var:simple1}
        V_n 
        =
        \EOf{\ol Y{m_n}^\alpha - \frac1n \sum_{i=1}^n \ol {Y_i}{m_n}^\alpha} 
        =
        \frac1n \sum_{i=1}^n \EOf{\ol {Y_i}{m_n^i}^\alpha - \ol {Y_i}{m_n}^\alpha} 
        \eqfs
    \end{equation}
    By the quadruple inequality, \cref{thm:power:qi}, we have 
    \begin{equation}
        \br{\ol {Y_i}{m_n^i}^\alpha - \ol {Y_i}{m_n}^\alpha}
        + 
        \br{\ol {Y_i^i}{m_n}^\alpha - \ol {Y_i^i}{m_n^i}^\alpha}
        \leq
        2^{2-\alpha} \alpha \,\ol {m_n}{m_n^i} \,\ol {Y_i}{Y_i^i}^{\alpha-1}
        \eqfs
    \end{equation}
    As $(Y_i, m_n, m_n^i)$ has the same distribution as $(Y_i^i, m_n^i, m_n)$, we obtain
    \begin{equation}\label{eq:power:var:distri}
        2 \EOf{\ol {Y_i}{m_n^i}^\alpha - \ol {Y_i}{m_n}^\alpha} \leq 2^{2-\alpha} \alpha \EOf{\ol {m_n}{m_n^i} \,\ol {Y_i}{Y_i^i}^{\alpha-1}}
        \eqfs
    \end{equation}
    Taking \eqref{eq:power:var:simple1} and \eqref{eq:power:var:distri} together yields
    \begin{equation}
        V_n \leq 2^{1-\alpha} \alpha \frac1n \sum_{i=1}^n \EOf{\ol {m_n}{m_n^i} \,\ol {Y_i}{Y_i^i}^{\alpha-1}}
        \eqfs
		\qedhere
    \end{equation}
\end{proof}
Define
\begin{equation}
    \tilde H_i := \frac1n \sum_{j=1}^n \br{\br{\ol {Y_j}{m_n} + \frac13\ol {m_n}{m_n^i}}^{\alpha-2} + \br{\ol {Y_j^i}{m_n^i} + \frac13\ol{m_n}{m_n^i}}^{\alpha-2}}
    \eqfs
\end{equation}
\begin{lemma}\label{lmm:power:mnmnibound}
    We have
    \begin{equation}
        \ol {m_n}{m_n^i} \tilde H_i \leq \frac{2^{3-\alpha}}{\alpha-1}\frac1n \ol {Y_i}{Y_i^i}^{\alpha-1}
        \eqfs
    \end{equation}
\end{lemma}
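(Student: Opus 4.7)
The plan is to follow the proof of the general analogue \cref{lmm:mnmnibound} almost verbatim, but replace every use of the general quadruple inequality (\cref{thm:infdtr:qi}) with the sharper power-specific version (\cref{thm:power:qi}), and specialize the variance inequality (\cref{thm:infdtr:vi}) to $\tran(x)=x^\alpha$, where $\dtran(x)=\alpha x^{\alpha-1}$ and $\ddrtran(x)=\alpha(\alpha-1)x^{\alpha-2}$. This is what produces the improved factor $2^{3-\alpha}/(\alpha-1)$ in place of the $4/(\alpha-1)$ one would obtain from a direct specialization of \cref{lmm:mnmnibound}.

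Concretely, first I apply the variance inequality \cref{thm:infdtr:vi} to the two empirical distributions $\frac1n\sum_j \delta_{Y_j}$ (with minimizer $m_n$) and $\frac1n\sum_j \delta_{Y_j^i}$ (with minimizer $m_n^i$) and test against $q=m_n^i$ and $q=m_n$ respectively. Specialized to the power transformation, these read
\begin{align}
    \frac{\alpha(\alpha-1)}2\,\ol{m_n^i}{m_n}^2\,\frac1n\sum_{j=1}^n\br{\ol{Y_j}{m_n}+\ol{m_n}{m_n^i}}^{\alpha-2}
    &\leq \frac1n\sum_{j=1}^n\br{\ol{Y_j}{m_n^i}^\alpha-\ol{Y_j}{m_n}^\alpha}\eqcm\\
    \frac{\alpha(\alpha-1)}2\,\ol{m_n^i}{m_n}^2\,\frac1n\sum_{j=1}^n\br{\ol{Y_j^i}{m_n^i}+\ol{m_n}{m_n^i}}^{\alpha-2}
    &\leq \frac1n\sum_{j=1}^n\br{\ol{Y_j^i}{m_n}^\alpha-\ol{Y_j^i}{m_n^i}^\alpha}\eqfs
\end{align}
Adding the two inequalities produces $\frac{\alpha(\alpha-1)}2\,\ol{m_n}{m_n^i}^2\,\tilde H_i$ on the left, and on the right the sum telescopes over $j\neq i$ because $Y_j^i=Y_j$ for $j\neq i$, leaving only the $j=i$ contribution
\begin{equation}
    \frac1n\br{\ol{Y_i}{m_n^i}^\alpha-\ol{Y_i}{m_n}^\alpha+\ol{Y_i^i}{m_n}^\alpha-\ol{Y_i^i}{m_n^i}^\alpha}\eqfs
\end{equation}

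Next I bound this remaining four-term expression by the power quadruple inequality \cref{thm:power:qi} applied with $y=Y_i$, $z=Y_i^i$, $q=m_n^i$, $p=m_n$, giving
\begin{equation}
    \ol{Y_i}{m_n^i}^\alpha-\ol{Y_i}{m_n}^\alpha+\ol{Y_i^i}{m_n}^\alpha-\ol{Y_i^i}{m_n^i}^\alpha
    \leq 2^{2-\alpha}\alpha\,\ol{m_n}{m_n^i}\,\ol{Y_i}{Y_i^i}^{\alpha-1}\eqfs
\end{equation}
Combining with the previous inequality yields
\begin{equation}
    \frac{\alpha(\alpha-1)}2\,\ol{m_n}{m_n^i}^2\,\tilde H_i \leq \frac{2^{2-\alpha}\alpha}n\,\ol{m_n}{m_n^i}\,\ol{Y_i}{Y_i^i}^{\alpha-1}\eqfs
\end{equation}
Dividing both sides by $\tfrac{\alpha(\alpha-1)}2\,\ol{m_n}{m_n^i}$ (the case $m_n=m_n^i$ is trivial, both sides vanish) gives exactly $\ol{m_n}{m_n^i}\tilde H_i\leq \frac{2^{3-\alpha}}{n(\alpha-1)}\,\ol{Y_i}{Y_i^i}^{\alpha-1}$, as claimed.

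There is no substantial obstacle here since both ingredients, the power quadruple inequality and the variance inequality, are already available as cited results; the only thing to watch is keeping track of constants, in particular that using \cref{thm:power:qi} with its optimal constant $2^{2-\alpha}\alpha$ (rather than the generic $2$) is what sharpens $4$ into $2^{3-\alpha}$ in the final bound.
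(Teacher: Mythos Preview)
Your proof is correct and follows essentially the same approach as the paper's own proof: apply the power-specialized variance inequality to both empirical distributions, add, exploit $Y_j^i=Y_j$ for $j\neq i$, and finish with the power quadruple inequality \cref{thm:power:qi}. The only cosmetic difference is that the paper applies the quadruple inequality to every $j$ and then observes the $j\neq i$ terms vanish, whereas you cancel first and apply the quadruple inequality once; mathematically these are identical.
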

\begin{proof}
    The variance inequality \cref{thm:infdtr:vi} applied to $\tran(x)=x^\alpha$ with convex second derivative $\ddtran(x) = \alpha(\alpha-1)x^{\alpha-2}$ on the empirical distributions yields, for $q\in\mc Q$,
    \begin{align}
        \frac{\alpha(\alpha-1)}2 \ol q{m_n}^2 \frac1n \sum_{j=1}^n \br{\ol {Y_j}{m_n} + \frac13\ol q{m_n}}^{\alpha-2}
        &\leq
        \frac1n \sum_{j=1}^n \br{\ol {Y_j}q^{\alpha}-\ol {Y_j}{m_n}^{\alpha}} 
        \eqcm
        \\
        \frac{\alpha(\alpha-1)}2 \ol q{m_n^i}^2 \frac1n \sum_{j=1}^n \br{\ol {Y_j^i}{m_n^i} + \frac13\ol q{m_n^i}}^{\alpha-2}
        &\leq
        \frac1n \sum_{j=1}^n \br{\ol {Y_j^i}q^{\alpha}-\ol {Y_j^i}{m_n^i}^{\alpha}} 
        \eqfs
    \end{align}
    Thus, plugging in $q = m_n^i$ and $q = m_n$ respectively, adding the two inequalities, and using the quadruple inequality, \cref{thm:power:qi}, we get
    \begin{align}
        \frac{\alpha(\alpha-1)}2 \ol {m_n}{m_n^i}^2 \tilde H_i 
        &\leq 
        \frac1n \sum_{j=1}^n \br{
            \ol {Y_j}{m_n^i}^{\alpha} -
            \ol {Y_j}{m_n}^{\alpha} +
            \ol {Y_j^i}{m_n}^{\alpha} -
            \ol {Y_j^i}{m_n^i}^{\alpha}
        }  
        \\&\leq
        2^{2-\alpha}\alpha
        \frac1n \sum_{j=1}^n
        \ol {m_n}{m_n^i}
        \,
        \ol {Y_j}{Y_j^i}^{\alpha-1}
        \eqfs
    \end{align}
    As $Y_j^i = Y_j$ for $i\neq j$, we obtain
    \begin{equation}
        \frac{\alpha(\alpha-1)}2 \ol {m_n}{m_n^i}^2 \tilde H_i 
        \leq
        2^{2-\alpha}\alpha
        \frac1n \,
        \ol {m_n}{m_n^i}\,
        \ol {Y_i}{Y_i^i}^{\alpha-1}
        \eqfs
    \end{equation}
    Rearranging the terms yields the desired result.
\end{proof}
\begin{notation}\label{def:power:moments:empirical}
	For $\varphi\in\Rp$, define
	\begin{align}
		\hat \sigma_{\varphi} &:= \frac1n\sum_{j=1}^n\ol {Y_j}m^\varphi\eqcm
		&
		\hat \sigma_{\varphi}^i &:= \frac1n\sum_{j=1}^n\ol {Y_j^i}m^\varphi
	\end{align}
	with the convention $0^0:= 1$.
\end{notation}
\begin{lemma}\label{lmm:power:rough}
    We have
    \begin{equation}
        \ol m{m_n}^{\alpha -1}
        \leq
        2^{2-\alpha} \alpha \br{
            2 \sigma_{\alpha-1} + \hat \sigma_{\alpha-1}
        }
        \eqfs
    \end{equation}
\end{lemma}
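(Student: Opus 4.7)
The plan is to mimic the proof of the general bound \cref{lmm:mmn:dtran:bound}, but with the sharper constants available for power functions via \cref{thm:power:qi}, and then use the concavity-based subadditivity of $x \mapsto x^{\alpha-1}$ from \cref{lmm:power:ana:farvi}(i) to split distances $\ol{Y}{Y_i}^{\alpha-1}$ into terms involving $\ol{Y}{m}^{\alpha-1}$ and $\ol{Y_i}{m}^{\alpha-1}$.

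First I would apply \cref{thm:power:qi} with $z=q$ to obtain the one-sided bound
\begin{equation}
    \ol yp^\alpha - \ol yq^\alpha \geq \ol qp^\alpha - 2^{2-\alpha}\alpha\, \ol qp\, \ol yq^{\alpha-1}\eqcm
\end{equation}
and specialize it to $y=Y$, $q=m$, $p=m_n$, taking conditional expectation given $m_n$ to get
\begin{equation}
    \EOf{\ol Y{m_n}^\alpha - \ol Ym^\alpha \mid m_n} \geq \ol m{m_n}^\alpha - 2^{2-\alpha}\alpha\, \ol m{m_n}\, \sigma_{\alpha-1}\eqfs
\end{equation}
Second, by the minimizing property of $m_n$, the empirical excess risk at $m$ is nonnegative, so adding a nonnegative quantity gives
\begin{equation}
    \ol m{m_n}^\alpha - 2^{2-\alpha}\alpha\, \ol m{m_n}\, \sigma_{\alpha-1} \leq \EOf{\ol Y{m_n}^\alpha - \ol Ym^\alpha \mid m_n} + \frac1n\sum_{i=1}^n \br{\ol {Y_i}m^\alpha - \ol {Y_i}{m_n}^\alpha}\eqfs
\end{equation}

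Third, I would bound the right-hand side by applying \cref{thm:power:qi} with $y=Y$, $z=Y_i$, $q=m_n$, $p=m$, which yields
\begin{equation}
    \ol Y{m_n}^\alpha - \ol Ym^\alpha - \ol{Y_i}{m_n}^\alpha + \ol{Y_i}m^\alpha \leq 2^{2-\alpha}\alpha\, \ol m{m_n}\, \ol Y{Y_i}^{\alpha-1}\eqcm
\end{equation}
then taking conditional expectation over $Y$ given $(Y_i,m_n)$ and averaging over $i$. Combining, dividing both sides by $\ol m{m_n}$ (the case $m_n = m$ being trivial), yields
\begin{equation}
    \ol m{m_n}^{\alpha-1} \leq 2^{2-\alpha}\alpha\br{\sigma_{\alpha-1} + \frac1n\sum_{i=1}^n \EOf{\ol Y{Y_i}^{\alpha-1} \mid Y_i}}\eqfs
\end{equation}

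Finally, using the triangle inequality together with $(a+b)^{\alpha-1} \leq a^{\alpha-1}+b^{\alpha-1}$ from \cref{lmm:power:ana:farvi}(i), we get $\ol Y{Y_i}^{\alpha-1} \leq \ol Ym^{\alpha-1} + \ol{Y_i}m^{\alpha-1}$, so $\EOf{\ol Y{Y_i}^{\alpha-1}\mid Y_i} \leq \sigma_{\alpha-1} + \ol{Y_i}m^{\alpha-1}$, and averaging over $i$ produces $\sigma_{\alpha-1} + \hat\sigma_{\alpha-1}$. Substituting gives the claimed bound. There is no real obstacle here; the only subtle point is verifying that the constant $2^{2-\alpha}\alpha$ in \cref{thm:power:qi} is tight enough for both invocations and is preserved through the subadditivity step, which it is.
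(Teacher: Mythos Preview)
Your proposal is correct and follows essentially the same route as the paper: apply \cref{thm:power:qi} with $z=q$ to lower-bound $\EOf{\ol Y{m_n}^\alpha-\ol Ym^\alpha\mid m_n}$, add the nonnegative empirical excess, bound the combined double excess by \cref{thm:power:qi} again, divide through by $\ol m{m_n}$, and finish with the subadditivity $(a+b)^{\alpha-1}\le a^{\alpha-1}+b^{\alpha-1}$. The steps, the lemmas invoked, and the resulting constants all match.
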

\begin{proof}
    Let $y,q,p\in\mc Q$. The quadruple inequality, \cref{thm:power:qi}, applied with $z:=q$ yields
    \begin{equation}
        \ol yp^\alpha - \ol yq^\alpha
        \geq 
        \ol qp^\alpha - 2^{2-\alpha} \alpha \,\ol qp\, \ol yq^{\alpha-1}
        \eqfs
    \end{equation}
    In particular, we have
    \begin{equation}
        \EOf{\ol Y{m_n}^\alpha - \ol Ym^\alpha | Y_1, \dots, Y_n} \geq 
        \ol m{m_n}^\alpha - 2^{2-\alpha} \alpha \,\ol m{m_n}\, \EOf{\ol Ym^{\alpha-1}}
        \eqcm
    \end{equation}
    where we recall that $Y, Y_1, \dots, Y_n$ are iid, $m_n$ is $\sigma(Y_1, \dots, Y_n)$-measurable and, hence, $m_n$ and $Y$ are independent.
    By the minimizing property of $m_n$ we also have
    \begin{equation}
        \frac1n\sum_{i=1}^n\br{\ol {Y_i}{m}^\alpha - \ol {Y_i}{m_n}^\alpha} \geq 0
        \eqfs
    \end{equation}
    Putting the last two inequalities together and using quadruple inequality, \cref{thm:power:qi}, we get
    \begin{align}
        \ol m{m_n}^\alpha - 2^{2-\alpha} \alpha \,\ol m{m_n}\, \EOf{\ol Ym^{\alpha-1}}
        &\leq 
        \EOf{\ol Y{m_n}^\alpha - \ol Ym^\alpha | Y_1, \dots, Y_n} + 
        \frac1n\sum_{i=1}^n\br{\ol {Y_i}{m}^\alpha - \ol {Y_i}{m_n}^\alpha}
        \\&\leq
        2^{2-\alpha} \alpha \, \ol m{m_n} \frac1n \sum_{i=1}^n \EOf{\ol Y{Y_i}^{\alpha-1} | Y_i}
        \eqfs
    \end{align}
    Rearranging the terms yields
    \begin{equation}
        \ol m{m_n}^{\alpha -1}
        \leq
        2^{2-\alpha} \alpha \br{
            \frac1n \sum_{i=1}^n \EOf{\ol Y{Y_i}^{\alpha-1} | Y_i}
            +
            \EOf{\ol Ym^{\alpha-1}}
        }
        \eqfs
    \end{equation}
    As $\alpha-1\in(0,1]$, we have $\ol Y{Y_i}^{\alpha-1} \leq \ol Y{m}^{\alpha-1} + \ol {Y_i}m^{\alpha-1}$, which concludes the proof.
\end{proof}
\begin{lemma}\label{lmm:power:iid:weights}
    Set $\phi := \frac{2-\alpha}{\alpha-1}$ and
    \begin{align}
        w_1 &:= \br{\br{\frac43}^{\alpha-1}  + \br{\frac13}^{\alpha-1}} 2^{3-\alpha} \alpha
        \eqcm\\ 
        w_2 &:= 1 + \br{\frac43}^{\alpha-1} 2^{2-\alpha} \alpha
        \eqcm\\
        w_3 &:= \br{\frac13}^{\alpha-1} 2^{2-\alpha} \alpha
        \eqcm
    \end{align}
    as well as $w := w_1+w_2+w_3$.
	\begin{enumerate}[label=(\roman*)]
		\item
		If $\phi \leq 1$, then
		\begin{align}
			&\EOf{\ol {Y_i}{Y_i^i}^{2\alpha-2} \tilde H_i^{-1}}
			\\&\leq
			2^{2\alpha-3} \br{w_1^{\phi}+w_2^{\phi}+w_3^{\phi}} \sigma_{2\alpha-2} (\sigma_{\alpha-1})^{\phi}
			+
			2^{2\alpha-4}  \br{w_2^{\phi}+w_3^{\phi}} n^{-\phi} \sigma_\alpha 
            \eqfs
		\end{align}
		\item 
		If $\phi\geq 1$, then
		\begin{align}
			&\EOf{
				\ol {Y_i}{Y_i^i}^{2\alpha-2}
				\tilde H_i^{-1}
			}
			\\&\leq
			w_1 w^{\phi-1} \sigma_{2\alpha-2} (\sigma_{\alpha-1})^\phi
			\\&\quad +
			\frac12 (w_2+w_3) w^{\phi-1} \br{
				2^{\phi-1} n^{-\phi} \sigma_{\alpha} + \br{1+2^{\phi-1}} \sigma_{2\alpha-2} \EOf{(\hat\sigma_{\alpha-1})^{\phi}}
			}
            \eqfs
		\end{align}
	\end{enumerate}
\end{lemma}
\begin{proof}
    We use the convexity of $t\mapsto t^{-\phi}$ with Jensen's inequality and $s^{-1} + t^{-1} \geq 4(s+t)^{-1}$ for $s,t\in\Rpp$ to obtain
	\begin{align}
		\tilde H_i^{-1}
		&=
		\br{\frac1n \sum_{j=1}^n \br{(\ol{Y_j}{m_n} + \frac13\ol{m_n}{m_n^i})^{-\phi(\alpha-1)} + (\ol{Y_j^i}{m_n^i} + \frac13\ol{m_n}{m_n^i})^{-\phi(\alpha-1)}}}^{-1}
		\\&\leq
		\br{\br{\frac1n \sum_{j=1}^n (\ol{Y_j}{m_n} + \frac13\ol{m_n}{m_n^i})^{\alpha-1}}^{-\phi} + \br{\frac1n \sum_{j=1}^n (\ol{Y_j^i}{m_n^i} + \frac13\ol{m_n}{m_n^i})^{\alpha-1}}^{-\phi}}^{-1}
		\\&\leq
		\frac14\br{\br{\frac1n \sum_{j=1}^n (\ol{Y_j}{m_n} + \frac13\ol{m_n}{m_n^i})^{\alpha-1}}^{\phi} + \br{\frac1n \sum_{j=1}^n (\ol{Y_j^i}{m_n^i} + \frac13\ol{m_n}{m_n^i})^{\alpha-1}}^{\phi}}
        \eqfs
	\end{align}
	Because of the symmetry $(\mo Y, m_n, \mo Y^i, m_n^i) \stackrel{d}{=} (\mo Y^i, m_n^i, \mo Y, m_n)$, we have
	\begin{align}
		\EOf{
			\ol {Y_i}{Y_i^i}^{2\alpha-2}
			\tilde H_i^{-1}
		}
		\leq 
		\frac12\EOf{
			\ol {Y_i}{Y_i^i}^{2\alpha-2}
			\br{\frac1n \sum_{j=1}^n (\ol{Y_j}{m_n} + \frac13\ol{m_n}{m_n^i})^{\alpha-1}}^{\phi}
		}
        \eqfs
	\end{align}
    We apply the triangle inequality together with the subadditivity of $t \mapsto t^{\alpha-1}$ as well as \cref{lmm:power:rough} for the bound 
	\begin{align}
		&\frac1n \sum_{j=1}^n (\ol{Y_j}{m_n} + \frac13\ol{m_n}{m_n^i})^{\alpha-1}
		\\&\leq
		\frac1n \sum_{j=1}^n \ol{Y_j}{m}^{\alpha-1} + \br{\frac43\, \ol{m}{m_n}}^{\alpha-1} + \br{\frac13\ol{m}{m_n^i}}^{\alpha-1}
		\\&=
		\hat\sigma_{\alpha-1} + \br{\frac43}^{\alpha-1} \ol{m}{m_n}^{\alpha-1} + \br{\frac13}^{\alpha-1}\ol{m}{m_n^i}^{\alpha-1}
		\\&\leq
		\hat\sigma_{\alpha-1} +\br{\frac43}^{\alpha-1} 2^{2-\alpha} \alpha \br{
				2 \sigma_{\alpha-1} + \hat \sigma_{\alpha-1}
			} + \br{\frac13}^{\alpha-1} 2^{2-\alpha} \alpha \br{
				2 \sigma_{\alpha-1} + \hat \sigma^i_{\alpha-1}
		}
		\\&\leq
		w_1 \sigma_{\alpha-1}
		+
		w_2 \hat\sigma_{\alpha-1}
		+
		w_3 \hat\sigma^i_{\alpha-1}
		\eqfs
	\end{align}
	\begin{enumerate}[label=(\roman*)]
		\item
	If $0 \leq \phi \leq 1$, then $t\mapsto t^\phi$ is subadditive and
	\begin{equation}
		\EOf{\ol {Y_i}{Y_i^i}^{2\alpha-2} \tilde H_i^{-1}}
		\leq
		\frac12\EOf{
			\ol {Y_i}{Y_i^i}^{2\alpha-2}
			\br{
				w_1^{\phi} (\sigma_{\alpha-1})^{\phi}
				+
				w_2^{\phi} (\hat\sigma_{\alpha-1})^{\phi}
				+
				w_3^{\phi} (\hat\sigma^i_{\alpha-1})^{\phi}
			}
		}
		\eqfs
	\end{equation}
	As $\phi\leq 1$, we have $\alpha \geq \frac32$. Hence, $2\alpha-2\geq 1$. Therefore, $(s+t)^{2\alpha-2} \leq 2^{2\alpha-3} (s^{2\alpha-2} + t^{2\alpha-2})$ for all $s,t\in\Rp$, see \cref{lmm:power:subadd}. Hence,
	\begin{align}
		&\EOf{\ol {Y_i}{Y_i^i}^{2\alpha-2} \tilde H_i^{-1}}
		\\&\leq
		2^{2\alpha-4} \EOf{
				\ol {Y_i}{m}^{2\alpha-2}
				\br{
					2 w_1^{\phi} (\sigma_{\alpha-1})^{\phi}
					+
					\br{w_2^{\phi}+w_3^{\phi}} (\hat\sigma_{\alpha-1})^{\phi}
					+
					\br{w_2^{\phi}+w_3^{\phi}} (\hat\sigma^i_{\alpha-1})^{\phi}
				}
			}
		\eqfs
	\end{align}
	Using $(\hat\sigma_{\alpha-1})^{\phi} \leq n^{-\phi} \ol{Y_i}m^{2-\alpha} + (\hat\sigma^i_{\alpha-1})^{\phi}$, we obtain
	\begin{align}
		\EOf{
			\ol {Y_i}{m}^{2\alpha-2}
			(\hat\sigma_{\alpha-1})^{\phi}
		}
		\leq
		n^{-\phi} \sigma_\alpha 
		+
		\sigma_{2\alpha-2}\EOf{(\hat\sigma_{\alpha-1})^{\phi}}
        \eqfs
	\end{align}
	Hence, using $\EOf{\br{\hat\sigma_{\alpha-1}}^{\phi}} \leq \EOf{\hat\sigma_{\alpha-1}}^{\phi} =  \sigma_{\alpha-1}^{\phi}$, we obtain
	\begin{align}
		&\EOf{\ol {Y_i}{Y_i^i}^{2\alpha-2} \tilde H_i^{-1}}
		\\&\leq
		2^{2\alpha-4} 
		\br{
			2 w_1^{\phi} \sigma_{2\alpha-2} (\sigma_{\alpha-1})^{\phi}
			+
			2 \br{w_2^{\phi}+w_3^{\phi}} \sigma_{2\alpha-2}\EOf{(\hat\sigma_{\alpha-1})^{\phi}}
			+
			\br{w_2^{\phi}+w_3^{\phi}} n^{-\phi} \sigma_\alpha 
		}
		\\&\leq
		2^{2\alpha-3} \br{w_1^{\phi}+w_2^{\phi}+w_3^{\phi}} \sigma_{2\alpha-2} (\sigma_{\alpha-1})^{\phi}
		+
		2^{2\alpha-4}  \br{w_2^{\phi}+w_3^{\phi}} n^{-\phi} \sigma_\alpha 
		\eqfs
	\end{align}
	\item 
	If $\phi\geq1$, set $w := w_1+w_2+w_3$. Then, using Jensen's inequality,
	\begin{align}
		&\EOf{
			\ol {Y_i}{Y_i^i}^{2\alpha-2}
			\tilde H_i^{-1}
		}
		\\&\leq
		\frac{w^\phi}2
		\EOf{
			\ol {Y_i}{Y_i^i}^{2\alpha-2}
			\br{
				\frac{w_1}{w} \sigma_{\alpha-1}
				+
				\frac{w_2}{w} \hat\sigma_{\alpha-1}
				+
				\frac{w_3}{w} \hat\sigma^i_{\alpha-1}
			}^{\phi}
		}
		\\&\leq
		\frac{1}2
		\EOf{
			\ol {Y_i}{Y_i^i}^{2\alpha-2}
			\br{
				w_1 w^{\phi-1} (\sigma_{\alpha-1})^{\phi}
				+
				w_2 w^{\phi-1} (\hat\sigma_{\alpha-1})^{\phi}
				+
				w_3 w^{\phi-1} (\hat\sigma^i_{\alpha-1})^{\phi}
			}
		}
        \eqfs
	\end{align}	
	As $\phi\geq 1$, we have $1 < \alpha \leq \frac32$. Hence, $0< 2\alpha-2\leq 1$. Therefore, $(s+t)^{2\alpha-2} \leq s^{2\alpha-2} + t^{2\alpha-2}$ for all $s,t\geq 0$.
	Hence,
		\begin{align}
		&\EOf{
			\ol {Y_i}{Y_i^i}^{2\alpha-2}
			\tilde H_i^{-1}
		}
		\\&\leq
		\EOf{
			\ol {Y_i}{m}^{2\alpha-2}
			\br{
				w_1 w^{\phi-1} (\sigma_{\alpha-1})^{\phi}
				+
				\frac12(w_2 + w_3) w^{\phi-1} (\hat\sigma_{\alpha-1})^{\phi}
				+
				\frac12(w_2 + w_3) w^{\phi-1} (\hat\sigma^i_{\alpha-1})^{\phi}
			}
		}
        \eqfs
	\end{align}	
	Using 
	\begin{align}
		(\hat\sigma_{\alpha-1})^{\phi} 
		&\leq
		\br{\frac1n \ol{Y_i}m^{\alpha-1}+\hat\sigma^i_{\alpha-1}}^\phi
		\\&\leq
		2^{\phi-1} n^{-\phi} \ol{Y_i}m^{2-\alpha} + 2^{\phi-1} (\hat\sigma^i_{\alpha-1})^{\phi}
        \eqcm
	\end{align}
	we obtain
	\begin{equation}
		\EOf{
			\ol {Y_i}{m}^{2\alpha-2}
			(\hat\sigma_{\alpha-1})^{\phi}
		}
		\leq
		2^{\phi-1} n^{-\phi} \sigma_{\alpha} + 2^{\phi-1} \sigma_{2\alpha-2} \EOf{(\hat\sigma_{\alpha-1})^{\phi}}
        \eqfs
	\end{equation}
	Hence,
    \begin{align}
        &\EOf{
            \ol {Y_i}{Y_i^i}^{2\alpha-2}
            \tilde H_i^{-1}
        }
        \\&\leq
        w_1 w^{\phi-1} \sigma_{2\alpha-2} (\sigma_{\alpha-1})^\phi
        \\&\quad +
        \frac12 (w_2+w_3) w^{\phi-1} \br{
            2^{\phi-1} n^{-\phi} \sigma_{\alpha} + \br{1+2^{\phi-1}} \sigma_{2\alpha-2} \EOf{(\hat\sigma_{\alpha-1})^{\phi}}
        }
        \eqfs
    \end{align}
	\end{enumerate}
\end{proof}
For later reference, we state the Rosenthal inequality.
\begin{lemma}\label{lmm:rosenthal}
    Let $X_1, \dots, X_n$ be real-valued, independent, centered random variables. 
    Let $\varphi \geq 2$.
    Then we have the Rosenthal inequality \cite{Rosenthal1970,Johnson1985}
    \begin{equation}
        \EOf{\absOf{\sum_{i=1}^n X_i}^\varphi} \leq R_\varphi \max\brOf{ \sum_{i=1}^n\EOf{|X_i|^\varphi},\ \br{\sum_{i=1}^n \EOf{X_i^2}}^{\frac \varphi2}}
        \eqcm
    \end{equation}
    where the constant $R_\varphi$ depends only on $\varphi$ and, by \cite[Theorem 4.1, Corollary 2.6 and the remark on p.~247]{Johnson1985}, may be taken equal to
    \begin{equation}\label{eq:rosenthal-const}
        R_\varphi := \min\brOf{\br{\frac{14.7 \varphi}{\mathrm{Log}(\varphi)}}^{\varphi},\, \br{\frac{2\varphi}{\sqrt{\mathrm{Log}(\varphi)}}}^{\varphi}}
    \end{equation}
    with $\mathrm{Log}(\varphi) := \max(1, \log(\varphi))$ and $\log$ is the natural logarithm.
\end{lemma}
\begin{lemma}\label{lmm:power:iid:empmom}
	\mbox{}
	\begin{enumerate}[label=(\roman*)]
		\item
		If $\varphi \in [1,2]$, then
		\begin{equation}
			\EOf{\br{\hat\sigma_{\alpha-1}}^{\varphi}} 
			\leq
			2^{\varphi-1} \sigma_{\alpha-1}^{\varphi} + 2^{\varphi-1} n^{-\frac{\varphi}{2}} \sigma_{2\alpha-2}^{\frac{\varphi}{2}} 
		\end{equation}
		\item
		If $\varphi \geq 2$, then
        \begin{equation}
            \EOf{\br{\hat\sigma_{\alpha-1}}^{\varphi}}
            \leq 
            2^{\varphi-1} \sigma_{\alpha-1}^{\varphi} + 2^{\varphi-1} R_\varphi \max\brOf{ n^{1-\varphi} \sigma_{\varphi(\alpha-1)},\ n^{-\frac{\varphi}{2}} \sigma_{2\alpha-2}^{\frac{\varphi}{2}}}
            \eqcm
        \end{equation}
        where $R_\varphi$ is the Rosenthal constant from \cref{lmm:rosenthal}.
	\end{enumerate}
\end{lemma}
\begin{proof}
	Define $X_j := \ol{Y_j}{m}^{\alpha-1} - \Eof{\ol{Y_j}{m}^{\alpha-1}}$. Then $X_1, \dots, X_n$ are iid centered real-valued random variables.
	Define $\overline{X}_n := \frac1n \sum_{j=1}^n X_j$.
	Then, using $\Eof{\ol{Y_j}{m}^{\alpha-1}} = \sigma_{\alpha-1}$, we have
	\begin{align}
		\EOf{\br{\hat\sigma_{\alpha-1}}^{\varphi}}
		&=
		\EOf{\br{\sigma_{\alpha-1} + \overline{X}_n}^{\varphi}}
		\\&\leq 
		2^{\varphi-1} \sigma_{\alpha-1}^{\varphi} + 2^{\varphi-1} \EOf{\absOf{\overline{X}_n}^{\varphi}}
	\end{align}
	\begin{enumerate}[label=(\roman*)]
			\item
			If $1 \leq \varphi \leq 2$, then
			\begin{equation}
				\EOf{\absOf{\sum_{j=1}^n X_j}^\varphi}
				\leq
				\EOf{{\absOf{\sum_{j=1}^nX_j}^2}}^{\frac{\varphi}{2}}
				\leq
				n^{\frac{\varphi}{2}} \EOf{X_1^2}^{\frac{\varphi}{2}}
			\end{equation}
			and
			\begin{equation}
				\EOf{X_1^2}
				= 
				\VOf{\ol{Y_1}{m}^{\alpha-1}}
				=
				\sigma_{2\alpha-2} - (\sigma_{\alpha-1})^2
                \leq
                \sigma_{2\alpha-2}
				\eqfs
			\end{equation}
			Hence,
			\begin{equation}
				\EOf{\absOf{\overline{X}_n}^{\varphi}}
				\leq
                n^{-\frac{\varphi}{2}} \sigma_{2\alpha-2}^{\frac{\varphi}{2}}
				\eqfs
			\end{equation}
			In total, we obtain
			\begin{equation}
				\EOf{\br{\hat\sigma_{\alpha-1}}^{\varphi}} 
				\leq 
				2^{\varphi-1} \sigma_{\alpha-1}^{\varphi} + 2^{\varphi-1} n^{-\frac{\varphi}{2}} \sigma_{2\alpha-2}^{\frac{\varphi}{2}}
				\eqfs
			\end{equation}
		\item
			As $X_j$ are iid and centered and $\varphi \geq 2$, we have by Rosenthal's inequality, \cref{lmm:rosenthal},
            \begin{equation}
                \EOf{\absOf{\overline{X}_n}^\varphi} \leq R_\varphi \max\brOf{ n^{1-\varphi} \EOf{|X_1|^\varphi},\ n^{-\frac{\varphi}{2}} \EOf{X_1^2}^{\frac{\varphi}{2}}}
            \end{equation}
			If $Z \geq 0$ and $p\geq 2$ and $\Eof{Z^p} <\infty$ and $\mu := \Eof{Z}$, then
			\begin{equation}
				\EOf{Z^p} \geq \EOf{|Z-\mu|^p} + \mu^p
				\eqfs
			\end{equation}
			Hence,
			\begin{equation}
				\EOf{|X_1|^\varphi}
				=
				\EOf{\absOf{\ol{Y_1}{m}^{\alpha-1} - \EOf{\ol{Y_1}{m}^{\alpha-1}}}^\varphi}
				\leq
				\sigma_{\varphi(\alpha-1)}
				\eqfs
			\end{equation}
			Thus
			\begin{equation}
				\EOf{\br{\hat\sigma_{\alpha-1}}^{\varphi}}
				\leq 
				2^{\varphi-1} \sigma_{\alpha-1}^{\varphi} + 2^{\varphi-1} R_\varphi \max\brOf{ n^{1-\varphi} \sigma_{\varphi(\alpha-1)},\ n^{-\frac{\varphi}{2}} \sigma_{2\alpha-2}^{\frac{\varphi}{2}}}
				\eqfs
			\end{equation}
	\end{enumerate}
\end{proof}
\begin{lemma}\label{lmm:power:vi}
    Set $\phi := \frac{2-\alpha}{\alpha-1}$.
	For all $q\in\mc Q$, we have 
	\begin{equation}
		\frac12 \alpha (\alpha-1) \ol{q}{m}^2 \br{\sigma_{\alpha-1} + 3^{1-\alpha}\ol{q}{m}^{\alpha-1}}^{-\phi} \leq \EOf{\ol Yq^\alpha - \ol Y{m}^\alpha}
	\end{equation}
    and, in particular,
    \begin{equation}
        \frac12 \alpha (\alpha-1) \ol{q}{m}^2 \br{\sigma_{\alpha-1} + \ol{q}{m}^{\alpha-1}}^{-\phi} \leq \EOf{\ol Yq^\alpha - \ol Y{m}^\alpha}
        \eqfs
    \end{equation}
\end{lemma}
\begin{proof}
	\Cref{thm:infdtr:vi} \ref{thm:infdtr:vi:convex} applied to $\tran(x) = x^\alpha$ shows
	\begin{equation}
		\frac12 \alpha (\alpha-1) \ol{q}{m}^2 \EOf{\br{\ol Y{m}+\frac13\ol q{m}}^{\alpha-2}}  \leq \EOf{\ol Yq^\alpha - \ol Y{m}^\alpha}
		\eqfs
	\end{equation}
	Set $\phi := \frac{2-\alpha}{\alpha-1}$. Then using Jensen's inequality and subadditivity for $x\mapsto x^{\alpha-1}$ yields
	\begin{align}
		\EOf{\br{\ol Y{m} + 3^{-1}\ol q{m}}^{\alpha-2}}
		&\geq 
		\EOf{\br{\ol Y{m} + 3^{-1}\ol{q}{m}}^{\alpha-1}}^{-\phi}
		\\&\geq
		\br{\sigma_{\alpha-1} + 3^{1-\alpha}\ol{q}{m}^{\alpha-1}}^{-\phi}
		\eqfs
	\end{align}
	Hence,
	\begin{equation}
		\frac12 \alpha (\alpha-1) \ol{q}{m}^2 \br{\sigma_{\alpha-1} + 3^{1-\alpha}\ol{q}{m}^{\alpha-1}}^{-\phi} \leq \EOf{\ol Yq^\alpha - \ol Y{m}^\alpha}
		\eqfs
	\end{equation}
\end{proof}

\begin{lemma}\label{lmm:power:momentcomp}
    Let $\alpha\in(1,2]$ and $\phi := \frac{2-\alpha}{\alpha-1}$. Assume $\sigma_\alpha<\infty$. Then
    \begin{enumerate}[label=(\roman*)]
        \item \label{lmm:power:momentcomp:interpol}
        $\sigma_{2\alpha-2} \leq \sigma_{\alpha-1}^{2-\alpha}\,\sigma_{\alpha}^{\alpha-1}$
        and, equivalently,
        $\sigma_{2\alpha-2}^{1+\phi} \leq \sigma_{\alpha-1}^{\phi}\,\sigma_{\alpha}$;
        \item \label{lmm:power:momentcomp:chord}
        if $\phi\geq2$, then
        $\sigma_{2-\alpha}^{\phi}\,\sigma_{2\alpha-2}^{\phi-1} \leq \sigma_{\alpha-1}^{\phi}\,\sigma_{\alpha}^{\phi-1}$.
    \end{enumerate}
\end{lemma}
\begin{proof}
    Set $X := \ol Ym$ and $L(\varphi) := \log \EOf{X^\varphi}$ for $\varphi\in[0,\alpha]$.
    All moments occurring below have order in $[0,\alpha]$ and are finite, as $X^\varphi \leq 1 + X^\alpha$ for $\varphi\in[0,\alpha]$.
    If $\sigma_\varphi = 0$ for some $\varphi\in(0,\alpha]$, then $X=0$ almost surely, all asserted inequalities read $0\leq0$, and there is nothing to show. Hence, assume $\sigma_\varphi\in\Rpp$ for all $\varphi \in (0,\alpha]$, so that $L$ is well-defined and, by H\"older's inequality, convex.
    
    \ref{lmm:power:momentcomp:interpol}:
    With the weights $\lambda := 2-\alpha \in[0,1)$ and $1-\lambda = \alpha-1$, we have
    $\lambda (\alpha-1) + (1-\lambda) \alpha = (\alpha-1)(2-\alpha) + (\alpha-1)\alpha = 2\alpha-2$.
    Thus, convexity of $L$ yields $\sigma_{2\alpha-2} \leq \sigma_{\alpha-1}^{2-\alpha}\sigma_{\alpha}^{\alpha-1}$.
    Raising this to the power $\frac1{\alpha-1} = 1+\phi$ gives the second formulation.
    
    \ref{lmm:power:momentcomp:chord}:
    Note that $\phi\geq2$ is equivalent to $\alpha\leq\frac43$. Hence,
    $\alpha - 1 \leq 2\alpha-2 \leq 2-\alpha \leq \alpha$ with $3-2\alpha>0$ and $2-\alpha>0$.
    As $L$ is convex, its difference quotients are nondecreasing in both endpoints, so that
    \begin{equation}
        \frac{L(2-\alpha) - L(\alpha-1)}{3-2\alpha}
        \leq
        \frac{L(\alpha) - L(2\alpha-2)}{2-\alpha}
        \eqfs
    \end{equation}
    Multiplying by $\frac{(2-\alpha)(3-2\alpha)}{\alpha-1} \in \Rpp$ and using
    $\phi = \frac{2-\alpha}{\alpha-1}$ and $\phi - 1 = \frac{3-2\alpha}{\alpha-1}$ gives
    \begin{equation}
        \phi\br{L(2-\alpha) - L(\alpha-1)} \leq (\phi-1)\br{L(\alpha) - L(2\alpha-2)}
        \eqcm
    \end{equation}
    which is the assertion after exponentiating.
\end{proof}
\begin{lemma}\label{lmm:power:absorbS}
    Let $\alpha\in(1,2)$ and $\phi := \frac{2-\alpha}{\alpha-1} \in \Rpp$. Assume $\sigma_\alpha<\infty$ and set
    \begin{equation}
        B_1 := \sigma_{\alpha-1}^{\phi}\, \sigma_{2\alpha-2}
        \eqcm\qquad
        B_2 := n^{-\phi}\, \sigma_\alpha
        \eqfs
    \end{equation}
    Then, for all $n\in\N$,
    \begin{enumerate}[label=(\roman*)]
        \item \label{lmm:power:absorbS:var}
        $n^{-\frac\phi2}\, \sigma_{2\alpha-2}^{1+\frac\phi2} \leq \max(B_1, B_2)$;
        \item \label{lmm:power:absorbS:ros}
        if $\phi\geq2$, then $n^{1-\phi}\, \sigma_{2\alpha-2}\, \sigma_{2-\alpha} \leq \max(B_1, B_2)$.
    \end{enumerate}
\end{lemma}
\begin{proof}
    All moments occurring below are of order in $[0,\alpha]$ and hence finite.
    
    \ref{lmm:power:absorbS:var}:
    Assume $n^{-\frac\phi2}\sigma_{2\alpha-2}^{1+\frac\phi2} > B_1$, as otherwise there is nothing to show.
    In particular $\sigma_{2\alpha-2}\in\Rpp$.
    Dividing by $\sigma_{2\alpha-2}$ and raising to the power $\frac2\phi$, the assumption is equivalent to $\sigma_{2\alpha-2}\,n^{-1} > \sigma_{\alpha-1}^{2}$, whence $\sigma_{\alpha-1}^{-\phi} \geq \br{\sigma_{2\alpha-2}n^{-1}}^{-\frac\phi2}$.
    Together with \cref{lmm:power:momentcomp}\,\ref{lmm:power:momentcomp:interpol} in the form $\sigma_\alpha \geq \sigma_{2\alpha-2}^{1+\phi}\sigma_{\alpha-1}^{-\phi}$, we obtain
    \begin{equation}
        B_2
        \geq
        n^{-\phi}\, \sigma_{2\alpha-2}^{1+\phi}\, \br{\sigma_{2\alpha-2}n^{-1}}^{-\frac\phi2}
        =
        n^{-\frac\phi2}\, \sigma_{2\alpha-2}^{1+\frac\phi2}
        \eqfs
    \end{equation}
    
    \ref{lmm:power:absorbS:ros}:
    Assume $n^{1-\phi}\sigma_{2\alpha-2}\sigma_{2-\alpha} > B_2$, as otherwise there is nothing to show.
    Multiplying by $n^{\phi}$, this is equivalent to $n^{-1} < \sigma_{2\alpha-2}\sigma_{2-\alpha}\sigma_\alpha^{-1}$; in particular $\sigma_\alpha\in\Rpp$.
    As $\phi-1>0$, this and \cref{lmm:power:momentcomp}\,\ref{lmm:power:momentcomp:chord} give
    \begin{equation}
        n^{1-\phi}\, \sigma_{2\alpha-2}\, \sigma_{2-\alpha}
        =
        \sigma_{2\alpha-2}\, \sigma_{2-\alpha} \br{n^{-1}}^{\phi-1}
        \leq
        \sigma_{2\alpha-2} \cdot \frac{\sigma_{2-\alpha}^{\phi}\, \sigma_{2\alpha-2}^{\phi-1}}{\sigma_\alpha^{\phi-1}}
        \leq
        \sigma_{\alpha-1}^{\phi}\, \sigma_{2\alpha-2}
        =
        B_1
        \eqfs
        \qedhere
    \end{equation}
\end{proof}

\begin{proof}[Proof of \cref{thm:power:main}]
    Set $\phi := \frac{2-\alpha}{\alpha-1}$ and
    \begin{equation}
        B_1 := \sigma_{\alpha-1}^{\phi}\, \sigma_{2\alpha-2}
        \eqcm\qquad
        B_2 := n^{-\phi}\, \sigma_\alpha
        \eqcm
    \end{equation}
    so that the assertion reads $\EOf{\ol{m}{m_n}^2 (\sigma_{\alpha-1} + \ol{m}{m_n}^{\alpha-1})^{-\phi}} \leq C_\alpha n^{-1}\br{B_1+B_2}$.
    Note that $\Eof{\ol Yo^\alpha}<\infty$ implies $\sigma_\alpha<\infty$.
    Using \cref{lmm:power:vi}, the minimizing property of $m_n$, \cref{lmm:power:variancebound}, and \cref{lmm:power:mnmnibound}, we have
    \begin{align}
        \frac12 \alpha (\alpha-1) \EOf{\ol{m}{m_n}^2 \br{\sigma_{\alpha-1} + \ol{m}{m_n}^{\alpha-1}}^{-\phi}}
        &\leq
        \EOf{\ol Y{m_n}^\alpha - \ol Y{m}^\alpha}
        \\&\leq 
        V_n
        \\&\leq 
        \frac{2^{1-\alpha} \alpha}n \sum_{i=1}^n \EOf{\ol {m_n}{m_n^i}\, \ol {Y_i}{Y_i^i}^{\alpha-1}}
        \\&\leq 
        \frac{2^{4-2\alpha} \alpha}{(\alpha-1)n^2} \sum_{i=1}^n \EOf{\ol {Y_i}{Y_i^i}^{2\alpha-2}  \tilde H_i^{-1}}
        \eqfs
    \end{align}
    Furthermore, by \cref{lmm:power:iid:weights},
    \begin{equation}
        \EOf{\ol {Y_i}{Y_i^i}^{2\alpha-2}  \tilde H_i^{-1}} \leq \begin{cases}
            C_1 B_1
            +
            C_2 B_2 
            & \text{for }
            \phi \leq 1,
            \\
            C_3 B_1
            +
            C_4 B_2
            +
            C_5 \sigma_{2\alpha-2} \EOf{(\hat\sigma_{\alpha-1})^{\phi}}
            & \text{for }
            \phi \geq 1,
        \end{cases}
    \end{equation}
    where we use the definitions of $w_1,w_2,w_3,w$ in \cref{lmm:power:iid:weights} and set
    \begin{align}
        C_1 &:= 2^{2\alpha-3} \br{w_1^{\phi}+w_2^{\phi}+w_3^{\phi}}\eqcm\\ 
        C_2 &:= 2^{2\alpha-4} \br{w_2^{\phi}+w_3^{\phi}}\eqcm\\
        C_3 &:= w_1 w^{\phi-1}\eqcm\\
        C_4 &:= \frac12 (w_2+w_3) w^{\phi-1} 2^{\phi-1}\eqcm\\
        C_5 &:= \frac12 (w_2+w_3) w^{\phi-1} \br{1+2^{\phi-1}}\eqfs
    \end{align}
    Now we apply \cref{lmm:power:iid:empmom} with $\varphi = \phi$, where we use $\phi(\alpha-1) = 2-\alpha$, and obtain
    \begin{align}
        &\EOf{\ol {Y_i}{Y_i^i}^{2\alpha-2}  \tilde H_i^{-1}} 
        \\&\leq 
        \begin{cases}
            C_1 B_1
            +
            C_2 B_2 
            & \text{for }
            \phi \leq 1\eqcm
            \\
            C_6 B_1
            +
            C_4 B_2
            +
            C_7 n^{-\frac\phi2} \sigma_{2\alpha-2}^{1 + \frac\phi2}
            & \text{for }
            \phi \in [1,2]\eqcm
            \\
            C_6 B_1
            +
            C_4 B_2
            +
            C_8 n^{-\frac\phi2} \sigma_{2\alpha-2}^{1 + \frac\phi2}
            +
            C_8 n^{1-\phi} \sigma_{2\alpha-2} \sigma_{2-\alpha}
            & \text{for }
            \phi \geq 2\eqcm
        \end{cases}
    \end{align}
    where
    \begin{align}
        C_6 &:= C_3 + 2^{\phi-1}C_5\eqcm\\ 
        C_7 &:= 2^{\phi-1}C_5\eqcm\\
        C_8 &:= 2^{\phi-1}C_5 R_\phi\eqcm
    \end{align}
    and $R_\phi$ is the Rosenthal constant from \cref{lmm:rosenthal}.
    
    It remains to remove the case distinction.
    For $\phi\geq1$ we have $\alpha<2$ and $\phi\in\Rpp$, so that \cref{lmm:power:absorbS} applies and bounds each of the terms carrying the constants $C_7$ and $C_8$ by $\max(B_1, B_2) \leq B_1 + B_2$.
    Hence, in all three cases,
    \begin{equation}
        \EOf{\ol {Y_i}{Y_i^i}^{2\alpha-2}  \tilde H_i^{-1}} 
        \leq
        D_1 B_1 + D_2 B_2
        \eqcm
    \end{equation}
    where
    \begin{align}\label{eq:powerproof:final:d}
        \br{D_1, D_2} := \begin{cases}
            \br{C_1,\ C_2}
            & \text{for }
            \phi \leq 1
            \eqcm
            \\
            \br{C_6+C_7,\ C_4+C_7}
            & \text{for }
            \phi \in [1,2]
            \eqcm
            \\
            \br{C_6+2C_8,\ C_4+2C_8}
            & \text{for }
            \phi \geq 2
            \eqfs
        \end{cases}
    \end{align}
    As the summands $\Eof{\ol {Y_i}{Y_i^i}^{2\alpha-2}  \tilde H_i^{-1}}$ do not depend on $i$, we conclude
    \begin{equation}\label{eq:powerproof:final}
        \EOf{\ol{m}{m_n}^2 \br{\sigma_{\alpha-1} + \ol{m}{m_n}^{\alpha-1}}^{-\phi}}
        \leq
        \frac{2^{5-2\alpha}}{(\alpha-1)^2\,n} \br{D_1 B_1 + D_2 B_2}
        \eqcm
    \end{equation}
    which is the assertion with $C_\alpha := \frac{2^{5-2\alpha}}{(\alpha-1)^2} \max(D_1, D_2)$.
\end{proof}
\begin{remark}\label{rem:power:consts}
    \Cref{eq:powerproof:final} is a slightly sharper, two-constant version of \cref{thm:power:main}: it weights the two summands separately. The constants therein can be specified as follows. With $\phi = \frac{2-\alpha}{\alpha-1}$,
    \begin{equation}
        w_1 = \br{\br{\frac43}^{\alpha-1}  + \br{\frac13}^{\alpha-1}} 2^{3-\alpha} \alpha
        \eqcm\quad
        w_2 = 1 + \br{\frac43}^{\alpha-1} 2^{2-\alpha} \alpha
        \eqcm\quad
        w_3 = \br{\frac13}^{\alpha-1} 2^{2-\alpha} \alpha
        \eqcm
    \end{equation}
    $w = w_1+w_2+w_3$, and
    \begin{equation}
        R_\phi = \min\brOf{\br{\frac{14.7 \phi}{\mathrm{Log}(\phi)}}^{\phi},\, \br{\frac{2\phi}{\sqrt{\mathrm{Log}(\phi)}}}^{\phi}}
        \eqcm
        \qquad 
        \mathrm{Log}(\phi) = \max(1, \log(\phi))
        \eqcm
    \end{equation}
    the constants of \eqref{eq:powerproof:final:d} are
    \begin{align}
        \br{D_1, D_2}
        &=
        \br{2^{2\alpha-3}\br{w_1^{\phi}+w_2^{\phi}+w_3^{\phi}},\ 
            2^{2\alpha-4}\br{w_2^{\phi}+w_3^{\phi}}}
        && \text{for } \alpha \in [\tfrac32, 2]
        \eqcm\\
        \br{D_1, D_2}
        &=
        \br{w_1 + 2 \kappa_\phi,\ 2^{\phi-2}\br{w_2+w_3} + \kappa_\phi} w^{\phi-1}
        && \text{for } \alpha \in [\tfrac43, \tfrac32)
        \eqcm\\
        \br{D_1, D_2}
        &=
        \br{w_1 + \kappa_\phi + 2R_\phi\kappa_\phi,\ 2^{\phi-2}\br{w_2+w_3} + 2R_\phi\kappa_\phi} w^{\phi-1}
        && \text{for } \alpha \in (1, \tfrac43)
        \eqcm
    \end{align}
    where $\kappa_\phi := \br{2^{\phi-2}+2^{2\phi-3}}\br{w_2+w_3}$.
    Numerically, for all $\alpha \in [\frac32, 2]$, we have $\phi \in [0, 1]$ and
    \begin{equation}
        \frac{2^{5-2\alpha}}{(\alpha-1)^2} \leq 16
        \eqcm\qquad
        D_1 \leq 13
        \eqcm\qquad
        D_2 \leq 3
        \eqcm
    \end{equation}
    so that \cref{thm:power:main} holds with $C_\alpha \leq 208$.
    For all $\alpha \in [\frac43, \frac32]$, we have $\phi \in [1, 2]$ and
    \begin{equation}
        \frac{2^{5-2\alpha}}{(\alpha-1)^2} \leq 46
        \eqcm\qquad
        D_1 \leq 452
        \eqcm\qquad
        D_2 \leq 239
        \eqcm
    \end{equation}
    so that \cref{thm:power:main} holds with $C_\alpha \leq 20792$.
    For $\alpha \in (1, \frac43]$, we have $\phi \geq 2$, and $D_1$, $D_2$, as well as the prefactor $(\alpha-1)^{-2}$, are unbounded as $\alpha \searrow 1$, owing to the factors $w^{\phi-1}$, $2^{2\phi-3}$, and $R_\phi$ that diverge as $\phi \to \infty$.
    
    Furthermore, note that for stating \cref{thm:power:main}, we chose to apply the cleaner but weaker version of \cref{lmm:power:vi}, i.e., we use a factor $1$ in the loss instead of $3^{1-\alpha}$. In particular, the upper bound given in \cref{thm:power:main} is also true for
    \begin{equation}
        \EOf{\ol{m}{m_n}^2 \br{\sigma_{\alpha-1} + 3^{1-\alpha}\ol{m}{m_n}^{\alpha-1}}^{-\phi}}
        \eqfs
    \end{equation}
\end{remark}
\begin{proof}[Proof of \cref{cor:power:stdloss:sharp}]
     Denote by $c_\alpha\in\Rpp$ a constant depending only on $\alpha$, which may change from line to line.
     Abbreviate the three terms on the right-hand side of \eqref{eq:power:stdloss:sharp} as $L_1$, $T$, and $L_2$, respectively.
     We may assume $\sigma_{\alpha-1}\in\Rpp$: if $\sigma_{\alpha-1} = 0$, then $Y = m$ almost surely, whence $m_n = m$ almost surely and the left-hand side of \eqref{eq:power:stdloss:sharp} vanishes.
     Furthermore, $\Eof{\ol Yo^\alpha}<\infty$ implies $\sigma_\varphi<\infty$ for all $\varphi\in[0,\alpha]$.
     Set
     \begin{equation}
         r := \EOf{\ol{m}{m_n}^2 \br{\sigma_{\alpha-1} + \ol{m}{m_n}^{\alpha-1}}^{-\phi}}
         \eqcm\quad
         A_1 := \sigma_{\alpha-1}^{\phi}\,\sigma_{2\alpha-2}\,n^{-1}
         \eqcm\quad
         A_2 := \sigma_\alpha\, n^{-1-\phi}
         \eqcm
     \end{equation}
     as well as $\theta := \sigma_{\alpha-1}^{2+\phi} = \sigma_{\alpha-1}^{\frac{\alpha}{\alpha-1}}$.
     With this notation, \cref{thm:power:main} states
     \begin{equation}\label{eq:cor:sharp:main}
         r \leq c_\alpha \br{A_1 + A_2}
         \eqfs
     \end{equation}
     
     \emph{Step 1: Conversion of the loss.}
     \cref{lmm:loss:toalpha}\,(i) with $b = \sigma_{\alpha-1}$, raised to the power $\frac1\alpha$, together with $(1+x)^{\frac\phi\alpha} \leq c_\alpha(1+x^{\frac\phi\alpha})$ for $x\in\Rp$ and the identities $\frac\phi2 - \frac\alpha2\cdot\frac\phi\alpha = 0$ and $\frac12 + \frac{\alpha-1}{2}\cdot\frac\phi\alpha = \frac1\alpha$, which hold by $(\alpha-1)\phi = 2-\alpha$, gives
     \begin{equation}\label{eq:cor:sharp:toalpha}
         \EOf{\ol{m}{m_n}^\alpha}^{\frac1\alpha}
         \leq
         c_\alpha\, \Phi(r)
         \eqcm\qquad\text{where}\qquad
         \Phi \colon \Rp\to\Rp,\ P \mapsto \sigma_{\alpha-1}^{\frac\phi2} P^{\frac12} + P^{\frac1\alpha}
         \eqfs
     \end{equation}
     The map $\Phi$ is nondecreasing and subadditive, as $t\mapsto t^{\frac12}$ and $t\mapsto t^{\frac1\alpha}$ are.
     Hence, \eqref{eq:cor:sharp:main} and \eqref{eq:cor:sharp:toalpha} yield
     \begin{equation}\label{eq:cor:sharp:split}
         \EOf{\ol{m}{m_n}^\alpha}^{\frac1\alpha}
         \leq
         c_\alpha \br{\Phi(A_1) + \Phi(A_2)}
         \eqcm
     \end{equation}
     and, using $1+\phi = \frac1{\alpha-1}$, we compute
     \begin{equation}\label{eq:cor:sharp:phiA}
         \Phi(A_1) = L_1 + A_1^{\frac1\alpha}
         \eqcm\qquad
         \Phi(A_2) = T + L_2
         \eqfs
     \end{equation}
     It remains to show that $A_1^{\frac1\alpha}$ is dominated by $L_1$ and $L_2$.
     
     \emph{Step 2: $A_1^{\frac1\alpha} \leq L_1 + L_2$.}
     The exponent identities $\phi + 2 - \alpha = \phi + (\alpha-1)\phi = \alpha\phi = (2+\phi)(2-\alpha)$ and $(1+\phi)(\alpha-1) = 1$ together with \cref{lmm:power:momentcomp}\,\ref{lmm:power:momentcomp:interpol} give
     \begin{equation}
         A_1
         \leq
         \sigma_{\alpha-1}^{\phi + 2-\alpha}\, \sigma_\alpha^{\alpha-1}\, n^{-1}
         =
         \sigma_{\alpha-1}^{(2+\phi)(2-\alpha)} \br{\sigma_\alpha n^{-1-\phi}}^{\alpha-1}
         =
         \theta^{2-\alpha}\, A_2^{\alpha-1}
         \leq
         \max(\theta, A_2)
         \eqcm
     \end{equation}
     where the last step is the weighted arithmetic-geometric mean inequality with the weights $2-\alpha$ and $\alpha-1$, which are nonnegative and sum to $1$.
     If $A_1 \leq A_2$, then $A_1^{\frac1\alpha} \leq A_2^{\frac1\alpha} = L_2$.
     Otherwise, $A_1 \leq \theta$, and, as $\theta^{\frac{2-\alpha}{2\alpha}} = \sigma_{\alpha-1}^{\frac{\alpha}{\alpha-1}\cdot\frac{2-\alpha}{2\alpha}} = \sigma_{\alpha-1}^{\frac\phi2}$ and $\frac1\alpha = \frac12 + \frac{2-\alpha}{2\alpha}$, we obtain
     \begin{equation}
         A_1^{\frac1\alpha}
         =
         A_1^{\frac12}\, A_1^{\frac{2-\alpha}{2\alpha}}
         \leq
         \sigma_{\alpha-1}^{\frac\phi2}\, A_1^{\frac12}
         =
         L_1
         \eqfs
     \end{equation}
     
     \emph{Step 3: Conclusion.}
     Combining \eqref{eq:cor:sharp:split}, \eqref{eq:cor:sharp:phiA}, and Step 2, we arrive at
     \begin{equation}
         \EOf{\ol{m}{m_n}^\alpha}^{\frac1\alpha}
         \leq
         c_\alpha \br{L_1 + \br{L_1 + L_2} + T + L_2}
         \eqcm
     \end{equation}
     which is the assertion.
 \end{proof}

\begin{proof}[Proof of \cref{cor:power:threehalfs}]
    For $\alpha=\frac32$ we have
    \begin{equation}
        \phi = \frac{2-\alpha}{\alpha-1} = 1
        \eqcm\qquad
        \sigma_{\alpha-1} = \sigma_{\frac12}
        \eqcm\qquad
        \sigma_{2\alpha-2} = \sigma_{1}
        \eqcm\qquad
        \sigma_{\alpha} = \sigma_{\frac32}
        \eqcm
    \end{equation}
    as well as $\frac1{2(\alpha-1)} = 1$ and $\frac1{\alpha(\alpha-1)} = \frac43$, so that the three terms of \eqref{eq:power:stdloss:sharp} read
    \begin{equation}
        L_1 := \sigma_{\frac12}\, \sigma_1^{\frac12}\, n^{-\frac12}
        \eqcm\qquad
        T := \sigma_{\frac12}^{\frac12}\, \sigma_{\frac32}^{\frac12}\, n^{-1}
        \eqcm\qquad
        L_2 := \sigma_{\frac32}^{\frac23}\, n^{-\frac43}
        \eqfs
    \end{equation}
    If $\sigma_{\frac12} = 0$, then $Y = m$ almost surely, hence $m_n = m$ almost surely
    and both claims are trivial. Thus, assume $\sigma_{\frac12}\in\Rpp$; moreover, $\sigma_\varphi<\infty$ for all $\varphi\in[0,\frac32]$.
    
    \emph{Step 1: The bound in the nonstandard loss.}
    As $\phi \leq 1$, \eqref{eq:powerproof:final} and \eqref{eq:powerproof:final:d} yield
    \begin{equation}
        r
        :=
        \EOf{\ol{m}{m_n}^2 \br{\sigma_{\frac12} + \ol{m}{m_n}^{\frac12}}^{-1}}
        \leq
        \frac{2^{5-2\alpha}}{(\alpha-1)^2 n}
        \br{C_1\, \sigma_{\frac12}\, \sigma_1 + C_2\, n^{-1} \sigma_{\frac32}}
        \eqfs
    \end{equation}
    By \cref{rem:power:consts} with $\phi = 1$ and $\alpha = \frac32$, we have
    \begin{equation}
        \frac{2^{5-2\alpha}}{(\alpha-1)^2} = \frac{4}{\br{\frac12}^2} = 16
        \eqcm\qquad
        C_1 = w_1+w_2+w_3
        \eqcm\qquad
        C_2 = \frac12\br{w_2+w_3}
        \eqcm
    \end{equation}
    and, using $\br{\frac43}^{\frac12} + \br{\frac13}^{\frac12} = \frac{3}{\sqrt3} = \sqrt3$,
    \begin{equation}
        w_1 = \sqrt3 \cdot 2^{\frac32} \cdot \frac32 = 3\sqrt6
        \eqcm\qquad
        w_2 = 1 + \br{\frac43}^{\frac12} 2^{\frac12} \frac32 = 1 + \sqrt6
        \eqcm\qquad
        w_3 = \br{\frac13}^{\frac12} 2^{\frac12} \frac32 = \frac{\sqrt6}{2}
        \eqfs
    \end{equation}
    Hence $16\,C_1 = 16 + 72\sqrt6 \leq 193$ and $16\,C_2 = 8 + 12\sqrt6 \leq 38$, which is the first claim,
    \begin{equation}\label{eq:cor:threehalfs:r}
        r
        \leq
        \hat A_1 + \hat A_2
        \eqcm\qquad\text{where}\qquad
        \hat A_1 := \frac{193\, \sigma_1\, \sigma_{\frac12}}{n}
        \eqcm\quad
        \hat A_2 := \frac{38\, \sigma_{\frac32}}{n^2}
        \eqfs
    \end{equation}
    
    \emph{Step 2: The bound in $L^\alpha$-norm.}
    We apply the second inequality of \cref{lmm:loss:toalpha}\,(i) with $b = \sigma_{\frac12}$,
    $X = \ol{m}{m_n}$, and $\phi = 1$, $\frac\alpha2 = \frac34$, $\frac{\alpha-1}2 = \frac14$:
    \begin{equation}
        \EOf{\ol{m}{m_n}^{\frac32}}
        \leq
        \sigma_{\frac12}^{\frac34} r^{\frac34}
        \br{1 + \sigma_{\frac12}^{-\frac34} r^{\frac14}}
        =
        \sigma_{\frac12}^{\frac34} r^{\frac34} + r
        \eqfs
    \end{equation}
    Taking the power $\frac23$ and using subadditivity of $t\mapsto t^{\frac23}$, we obtain
    $\EOf{\ol{m}{m_n}^{\frac32}}^{\frac23} \leq \sigma_{\frac12}^{\frac12} r^{\frac12} + r^{\frac23}$.
    Plugging in \eqref{eq:cor:threehalfs:r} and using subadditivity of $t\mapsto t^{\frac12}$
    and of $t\mapsto t^{\frac23}$ once more, we get
    \begin{equation}\label{eq:cor:threehalfs:four}
        \EOf{\ol{m}{m_n}^{\frac32}}^{\frac23}
        \leq
        \sqrt{193}\, L_1
        +
        \sqrt{38}\, T
        +
        193^{\frac23} A_1^{\frac23}
        +
        38^{\frac23}\, L_2
        \eqcm\qquad\text{where}\qquad
        A_1 := \frac{\sigma_{\frac12}\,\sigma_1}{n}
        \eqfs
    \end{equation}
    
    \emph{Step 3: Absorption of the cross term.}
    We claim
    \begin{equation}\label{eq:cor:threehalfs:gm}
        A_1^{\frac23} \leq L_1^{\frac45}\, L_2^{\frac15}
        \eqfs
    \end{equation}
    Indeed, the powers of $n$ on both sides agree, as $\frac45\cdot\frac12 + \frac15\cdot\frac43 = \frac25+\frac4{15} = \frac23$, so that \eqref{eq:cor:threehalfs:gm} is equivalent to
    $\sigma_{\frac12}^{\frac23} \sigma_1^{\frac23} \leq \sigma_{\frac12}^{\frac45}\sigma_1^{\frac25}\sigma_{\frac32}^{\frac2{15}}$
    and hence, after division, to
    $\sigma_1^{\frac4{15}} \leq \br{\sigma_{\frac12}\,\sigma_{\frac32}}^{\frac2{15}}$,
    i.e., to $\sigma_1^2 \leq \sigma_{\frac12}\,\sigma_{\frac32}$.
    The latter is \cref{lmm:power:momentcomp}\,\ref{lmm:power:momentcomp:interpol} for $\alpha=\frac32$, and it is here simply the Cauchy--Schwarz inequality,
    \begin{equation}
        \sigma_1
        =
        \EOf{\ol Ym^{\frac14}\, \ol Ym^{\frac34}}
        \leq
        \EOf{\ol Ym^{\frac12}}^{\frac12} \EOf{\ol Ym^{\frac32}}^{\frac12}
        \eqfs
    \end{equation}
    By \eqref{eq:cor:threehalfs:gm} and the weighted arithmetic-geometric mean inequality with the weights $\frac45$ and $\frac15$, we obtain
    $A_1^{\frac23} \leq \frac45 L_1 + \frac15 L_2$.
    
    \emph{Step 4: Conclusion.}
    Inserting Step 3 into \eqref{eq:cor:threehalfs:four} gives
    \begin{equation}
        \EOf{\ol{m}{m_n}^{\frac32}}^{\frac23}
        \leq
        \br{\sqrt{193} + \tfrac45\, 193^{\frac23}} L_1
        +
        \sqrt{38}\, T
        +
        \br{38^{\frac23} + \tfrac15\, 193^{\frac23}} L_2
        \eqcm
    \end{equation}
    and the second claim follows from
    $\sqrt{193} \leq 13.9$, $193^{\frac23}\leq 33.4$, $\sqrt{38}\leq 6.17$, $38^{\frac23}\leq 11.31$, which give
    $\sqrt{193} + \frac45 193^{\frac23} \leq 40.62$, and
    $38^{\frac23} + \frac15 193^{\frac23} \leq 17.99$.
\end{proof}
\subsection{General Transformations}
In this section we prove \cref{thm:infdtr:improved}. We use the algorithm stability approach. The general first steps in this approach are executed in \cref{sec:algostabi}, see \cref{prp:stabi}. Here we detail the additional arguments required to prove \cref{thm:infdtr:improved}.
Throughout this section, assume the setting and conditions of \cref{ssec:prelim:setup} and \cref{thm:infdtr:improved}. Furthermore, let $Y_1\pr, \dots, Y_n\pr$ be an additional independent set of iid copies of $Y$.
Denote the samples with $i$-th position replaced as $Y_j^i := Y_j$ if $i \neq j$ and $Y_i^i := Y_i\pr$.
Let $m_n^i$ be the sample $\tran$-Fr\'echet mean of the sample set $Y_1^i, \dots, Y_n^i$, 
\begin{equation}
    m_n^i \in \argmin_{q\in\mc Q} \sum_{j=1}^n \tran(\ol {Y_j^i}q)
    \eqfs
\end{equation}

\begin{notation}
    Let $f\colon \Rp \to \Rp$ be a measurable function. 
    Define
    \begin{equation}
        \hat\sigma_{f}^i := \frac1n \sum_{j=1}^n f(\ol {Y_j^i}m)
        \eqfs
    \end{equation}
\end{notation}
The first lemma shows that a moment bound of $(\dtran)^2 / \ddrtran$ is at least as strong as a $\tran$-moment bound.
\begin{lemma}\label{lmm:fractranmoment}
    Assume $\Eof{\dtran(\ol Yo)^2 / \ddrtran(\ol Yo)} < \infty$.
    Then $\Eof{\tran(\ol Yq)} < \infty$	for all $q\in\mc Q$.
\end{lemma}
\begin{proof}
    By \cref{lmm:tran:tran} and \cref{lmm:tran:sqrddtran}, we have for all $x \in \Rpp$
    \begin{equation}
        \tran(x) \leq x \dtran(x)
        \qquad\text{and}\qquad
        \frac12 x^2 \ddrtran(x) \leq \tran(x)
        \eqfs
    \end{equation}
    Thus,
    \begin{equation}\label{eq:tran:dtran2invddtran}
        \frac{\dtran(x)^2}{\ddrtran(x)} 
        \geq 
        \frac{\br{\frac{\tran(x)}x}^2}{\frac{2\tran(x)}{x^2}} = \frac12 \tran(x)
        \eqfs
    \end{equation}
    By \cref{lmm:tran:tran} and the triangle inequality, we have $\tran(\ol yq) \leq 2\tran(\ol yo) + 2\tran(\ol qo)$ for all $q,y\in\mc Q$.
    Thus, we obtain
    \begin{equation}
        \EOf{\tran(\ol Yq)} \leq 4\EOf{\frac{\dtran(\ol Yo)^2}{\ddrtran(\ol Yo)}} + 2\tran(\ol qo)
        \eqfs
        \qedhere
    \end{equation}
\end{proof}
The proof of \cref{thm:infdtr:improved} relies on \cref{lmm:infdtr:technical}, whose proof in turn uses the rough error bound for $\ol{m}{m_n}$ established in \cref{lmm:mmn:dtran:bound}. Combining the latter with \cref{prp:stabi} yields the proof of \cref{thm:infdtr:improved}.
\begin{lemma}\label{lmm:mmn:dtran:bound}
    We have
    \begin{equation}
        \dtran(\ol m{m_n})
        \leq
        8 \sigma_{\dtran} + 4 \hat\sigma_{\dtran}
        \eqfs
    \end{equation}
\end{lemma}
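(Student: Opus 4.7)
The plan is to first obtain an estimate of the form $\tran(\ol m{m_n}) \leq 2\, \ol m{m_n}\, \hat\sigma_{\dtran}$ via a single application of the quadruple inequality, and then convert this into the desired bound on $\dtran(\ol m{m_n})$ using a concavity argument.

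For the first step, I would apply the quadruple inequality \cref{thm:infdtr:qi} with the choice $y = Y_i$, $z = q = m$, $p = m_n$. Since $\ol mm = 0$ and $\tran(0) = 0$ (because $\tran \in \setcciz$), this collapses to
\[
\tran(\ol{Y_i}m) - \tran(\ol{Y_i}{m_n}) + \tran(\ol m{m_n}) \leq 2\, \ol m{m_n}\, \dtran(\ol{Y_i}m).
\]
Summing over $i = 1, \dots, n$ and invoking the minimizing property of $m_n$, the nonnegative contribution $\sum_i [\tran(\ol{Y_i}m) - \tran(\ol{Y_i}{m_n})]$ may be dropped from the left-hand side, leaving $\tran(\ol m{m_n}) \leq 2\, \ol m{m_n}\, \hat\sigma_{\dtran}$.

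For the second step, I would use the elementary inequality $\tran(x) \geq (x/2)\, \dtran(x)$, valid for all $x \in \Rp$. This follows from concavity of $\dtran$ on $\Rp$ combined with $\dtran(0) \geq 0$ (a consequence of $\tran$ being nondecreasing): the chord bound gives $\dtran(t) \geq (t/x)\, \dtran(x)$ for $t \in [0, x]$, and integrating yields the claim. Dividing the previous inequality by $\ol m{m_n}/2$ when $\ol m{m_n} > 0$ produces $\dtran(\ol m{m_n}) \leq 4\, \hat\sigma_{\dtran}$; the degenerate case $\ol m{m_n} = 0$ is handled by monotonicity of $\dtran$, which gives $\dtran(0) \leq \dtran(\ol{Y_i}m)$ for each $i$ and hence $\dtran(0) \leq \hat\sigma_{\dtran}$. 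Either way, the resulting bound is dominated by $8\sigma_{\dtran} + 4\hat\sigma_{\dtran}$.

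The proof presents no substantive obstacle; the delicate points are choosing the quadruple substitution that zeroes out one term via $\tran(0) = 0$ and recognizing the concavity-based lower bound $\tran(x) \geq (x/2)\,\dtran(x)$. The additive term $8\sigma_{\dtran}$ in the stated lemma is in fact not needed for the argument and can be absorbed trivially to match the form used later.
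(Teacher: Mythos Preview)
Your proof is correct and in fact sharper than the paper's: you obtain $\dtran(\ol m{m_n}) \leq 4\hat\sigma_{\dtran}$, which immediately implies the stated bound. The key difference is that the paper does not apply the quadruple inequality directly to the empirical sum. Instead, it first lower-bounds the conditional population excess $\EOf{\tran(\ol Y{m_n}) - \tran(\ol Ym)\mid m_n}$ via the quadruple inequality with $z=q=m$, then adds the nonnegative empirical excess $\frac1n\sum_i[\tran(\ol{Y_i}m)-\tran(\ol{Y_i}{m_n})]$, and finally upper-bounds this combined ``double excess'' by a second application of the quadruple inequality (with $y=Y$, $z=Y_i$), which is where the cross term $\EOf{\dtran(\ol Y{Y_i})\mid Y_i}\leq \sigma_{\dtran}+\dtran(\ol{Y_i}m)$ enters and produces the extra $8\sigma_{\dtran}$. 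Your single-shot substitution $y=Y_i$, $z=q=m$, $p=m_n$ collapses two of the four terms at once (via $\tran(0)=0$) and bypasses the population detour entirely. Both routes end with the same $\tran(x)\geq \tfrac{x}{2}\dtran(x)$ conversion. The paper's form is what is quoted downstream, but your tighter bound would serve equally well there after writing $\hat\sigma_{\dtran}\leq \sigma_{\dtran}+|\hat\sigma_{\dtran}-\sigma_{\dtran}|$.
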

\begin{proof}
    Let $y,q,p\in\mc Q$. The quadruple inequality, \cref{thm:infdtr:qi}, applied with $z := q$ yields
    \begin{equation}
        \tran(\ol yp) - \tran(\ol yq)
        \geq
        \tran(\ol qp) - 2 \,\ol qp\, \dtran(\ol yq)
        \eqfs
    \end{equation}
    In particular, we have
    \begin{equation}
        \EOf{\tran(\ol Y{m_n}) - \tran(\ol Ym) | Y_1,\dots,Y_n} \geq 
        \tran(\ol m{m_n}) - 2 \,\ol m{m_n}\, \EOf{\dtran(\ol Ym)}
        \eqcm
    \end{equation}
    where we recall that $Y, Y_1, \dots, Y_n$ are iid, $m_n$ is $\sigma(Y_1, \dots, Y_n)$-measurable and, hence, $m_n$ and $Y$ are independent.
    By the minimizing property of $m_n$ we also have
    \begin{equation}
        \frac1n\sum_{i=1}^n\br{\tran(\ol {Y_i}{m}) - \tran(\ol {Y_i}{m_n})} \geq 0
        \eqfs
    \end{equation}
    Using the last two inequalities and the quadruple inequality, \cref{thm:infdtr:qi}, we get
    \begin{align}
        &\tran(\ol m{m_n}) - 2 \,\ol m{m_n}\, \EOf{\dtran(\ol Ym)}
        \\&\leq 
        \EOf{\tran(\ol Y{m_n}) - \tran(\ol Ym) | Y_1,\dots,Y_n} + 
        \frac1n\sum_{i=1}^n\br{\tran(\ol {Y_i}{m}) - \tran(\ol {Y_i}{m_n})}
        \\&\leq
        2 \, \ol m{m_n} \frac1n \sum_{i=1}^n \EOf{\dtran(\ol Y{Y_i}) | Y_i}
        \eqfs
    \end{align}
    Rearranging the terms yields
    \begin{equation}
        \frac{\tran(\ol m{m_n})}{\ol m{m_n}}
        \leq
        2\frac1n \sum_{i=1}^n \EOf{\dtran(\ol Y{Y_i}) | Y_i}
        +
        2\EOf{\dtran(\ol Ym)}
        \eqfs
    \end{equation}
    Using $x \dtran(x) \leq 2 \tran(x)$ (\cref{lmm:tran:tran}) and $\dtran(\ol Y{Y_i}) \leq \dtran(\ol Y{m}) + \dtran(\ol {Y_i}m)$ (\cref{lmm:dtran:subadd} and triangle inequality) concludes the proof.
\end{proof}
Recall
\begin{equation}
    H_i = \frac1n \sum_{j=1}^n \ddrtran(\ol{Y_j}{m_n}+\ol{m_n}{m_n^i})
    \eqfs
\end{equation}
\begin{lemma}\label{lmm:infdtr:technical}
    Use the setting of \cref{thm:infdtr:improved}.
    Then
    \begin{equation}
        \EOf{\dtran(\ol{Y_i}m)^2 H_i^{-1}}
        \leq
        \min\brOf{V_{n,1} + 4 \sigma_{(\dtran)^{2}} S_{n,1}\eqcm \ \frac{4 \sigma_{(\dtran)^2}}{\ddrtran(4r_0)}
            +
            b_n}
        \eqfs
    \end{equation}
\end{lemma}
\begin{proof}
    For $r,\eta\in\Rpp$, define the following events
    \begin{align}
        A = A_{r,\eta,n} &:= \cb{\frac1n \sum_{j=1}^n \ind_{[0, r]}(\ol {Y_j}{m}) \geq \eta}
        \eqcm
        &
        B = B_{r,n} &:= \cb{\ol m{m_n} \leq r}
        \eqcm
        \\
        B^i = B^i_{r,n} &:= \cb{\ol m{m_n^i} \leq r}
        \eqcm
        &
        \Omega_i &:= A \cap B \cap B^i
        \eqfs
    \end{align}
    We split the expectation of our target term on $\Omega_i$,
    \begin{equation}\label{eq:technical:omegasplit}
        \EOf{\dtran(\ol{Y_i}m)^2 H_i^{-1}}
        \leq
        \EOf{\dtran(\ol{Y_i}m)^2 H_i^{-1} \indOf{\Omega_i}}
        +
        \EOf{\dtran(\ol{Y_i}m)^2 H_i^{-1} \indOf{\Omega_i\compl}}
        \eqfs
    \end{equation}
    By the triangle inequality and $\ddrtran$ nonincreasing, we have
    \begin{equation}
        \ddrtran(\ol{Y_j}{m_n}+\ol{m_n}{m_n^i}) \geq \ddrtran(\ol{Y_j}{m}+2\ol{m}{m_n}+\ol{m}{m_n^i})
        \eqfs
    \end{equation}
    Thus, for the first term on the right-hand side of \eqref{eq:technical:omegasplit}, we obtain
    \begin{equation}
        \EOf{\dtran(\ol{Y_i}m)^2 H_i^{-1} \indOf{\Omega_i}}
        \leq
        \EOf{\dtran(\ol{Y_i}m)^2 \br{\eta \ddrtran(4r)}^{-1} \indOf{\Omega_i}}
        \leq
        \frac{\sigma_{(\dtran)^2}}{\eta \ddrtran(4r)}
        \eqfs
    \end{equation}
    For the second term on the right-hand side of \eqref{eq:technical:omegasplit}, we use H\"older's inequality with $\frac1p + \frac1q = 1$,
    \begin{equation}\label{eq:technical:hoelder}
        \EOf{\dtran(\ol{Y_i}m)^2 H_i^{-1} \indOf{\Omega_i\compl}}
        \leq
        \EOf{\dtran(\ol{Y_i}m)^{2p} H_i^{-p}}^{\frac1p} \PrOf{\Omega_i\compl}^{\frac1q}
        \eqfs
    \end{equation}
    First, we aim to find an upper bound on the expectation term on the right-hand side of \eqref{eq:technical:hoelder}.
    The functions, $\invdtran$, $x\mapsto 1/\ddrtran(x)$, and $x\mapsto x^p$ are nondecreasing on $\Rp$, where we set $1/\ddrtran(0) := 0$.
    We will make use of the following property of nondecreasing functions $f \colon \Rp\to\Rp$: Let $\ell\in\N, x_1, \dots, x_\ell, w_1, \dots, w_\ell \in \Rp$. Set $W := \sum_{k=1}^\ell w_k$. Then
    \begin{equation}
        f\brOf{\sum_{k=1}^\ell w_k x_k} 
        \leq 
        f\brOf{W \max_{k=1,\dots,\ell} x_k}
        = 
        \max_{k=1,\dots,\ell} f\brOf{W x_k}
        \leq 
        \sum_{k=1}^\ell	f\brOf{W x_k}
        \eqfs
    \end{equation}
    Recall $g(x) = \ddrtran(7 x)^{-1}$ and note that $x\mapsto \ddrtran(x)^{-p}$ is nondecreasing. We obtain, using Jensen's inequality for the convex function $x\mapsto x^{-p}$,
    \begin{align}
        H_i^{-p}
        &\leq
        \frac1n \sum_{j=1}^n \ddrtran(\ol{Y_j}{m_n}+\ol{m_n}{m_n^i})^{-p}
        \\&\leq
        \frac1n \sum_{j=1}^n \ddrtran\brOf{1\cdot\ol{Y_j}{m}+ 4\cdot \frac12 \ol m{m_n} + 2\cdot \frac12 \ol m{m_n^i}}^{-p}
        \\&\leq
        \frac1n \sum_{j=1}^n g(\ol{Y_j}{m})^p + g\brOf{\frac12\ol{m}{m_n}}^p + g\brOf{\frac12\ol{m}{m_n^i}}^p
        \eqfs
    \end{align}
    By \cref{lmm:mmn:dtran:bound} with $\invdtran$ nondecreasing, we have
    \begin{equation}
        \ol m{m_n}
        \leq
        \invdtran\brOf{8 \sigma_{\dtran} + 4 \hat \sigma_{\dtran}} 
        \leq
        \invdtran\brOf{12 \sigma_{\dtran}} + \invdtran\brOf{12 \hat \sigma_{\dtran}} 
        \eqfs
    \end{equation}
    Recall $h(x) = g(\invdtran(12 x))$ and note that $x\mapsto g(x)^p$ is nondecreasing. We obtain
    \begin{equation}\label{eq:technical:Hisplit}
        H_i^{-p}
        \leq
        \frac1n \sum_{j=1}^n g(\ol{Y_j}{m})^p +
        2 h(\sigma_{\dtran})^p +
        h(\hat\sigma_{\dtran})^p
        +
        h(\hat\sigma^i_{\dtran})^p
        \eqfs
    \end{equation}
    We split our target upper bound into four terms using \eqref{eq:technical:Hisplit},
    \begin{equation}\label{eq:technical:Hisplit:T}
        \EOf{\dtran(\ol{Y_i}m)^{2p} H_i^{-p}} \leq T_1 + T_2 + T_3 + T_4
    \end{equation}
    with $T_1, \dots, T_4$ defined and bounded below.
    Recall
    \begin{equation}
            S_{n,p} = \max\brOf{
                \sigma_{g^p}, 
                2 h(\sigma_{\dtran})^p,
                \EOf{ h(2\hat\sigma_{\dtran})^p}
            }
            \eqfs
    \end{equation}
    First, distinguishing between $j=i$ and $j\neq i$ yields
    \begin{align}
        T_1 
        := 
        \EOf{\dtran(\ol{Y_i}m)^{2p} \frac1n \sum_{j=1}^n g(\ol{Y_j}{m})^p} 
        \leq
        \frac1n  \EOf{\dtran(\ol{Y}m)^{2p} g(\ol{Y}{m})^p} + \sigma_{(\dtran)^{2p}} S_{n,p}
        \eqfs
    \end{align}
    Second, as $\sigma_{(\dtran)^{2p}}$ is a constant, we have
    \begin{equation}
        T_2 
        := 
        2\EOf{\dtran(\ol{Y_i}m)^{2p} h(\sigma_{\dtran})^p} 
        =
        2\EOf{\dtran(\ol{Y}m)^{2p}} h(\sigma_{\dtran})^p
        \leq
        \sigma_{(\dtran)^{2p}} S_{n,p}
        \eqfs
    \end{equation}
    Third, we again distinguish between $j=i$ and $j\neq i$ for $\hat\sigma_{\dtran} = \frac1n\sum_{j=1}^n \dtran(\ol {Y_j}m)$ and use that $h$ is nondecreasing to obtain
    \begin{align}
        T_3 
        &:= 
        \EOf{\dtran(\ol{Y_i}m)^{2p} h(\hat\sigma_{\dtran})^p} 
        \\& \leq
        \EOf{\dtran(\ol{Y_i}m)^{2p} h(n^{-1} \dtran(\ol{Y_i}m)+\hat\sigma^i_{\dtran})^p} 
        \\& \leq
        \EOf{\dtran(\ol{Y_i}m)^{2p} \br{h(2n^{-1} \dtran(\ol{Y_i}m))^p+h(2\hat\sigma^i_{\dtran})^p} }
        \\&\leq
        \EOf{\dtran(\ol{Y}m)^{2p} h(2n^{-1} \dtran(\ol{Y}m))^p} + \sigma_{(\dtran)^{2p}} S_{n,p}
        \eqfs
    \end{align}
    For the final term, we recognize that $Y_i$ is independent of $\hat\sigma^i_{\dtran}$ and obtain
    \begin{equation}
        T_4 
        := 
        \EOf{\dtran(\ol{Y_i}m)^{2p} h(\hat\sigma^i_{\dtran})^p} 
        \leq
        \sigma_{(\dtran)^{2p}} S_{n,p}
        \eqfs
    \end{equation}
    Plugging the upper bounds on these four terms into \eqref{eq:technical:Hisplit:T}, we obtain
    \begin{equation}\label{eq:technical:Hipbound}
        \EOf{\dtran(\ol{Y_i}m)^{2p} H_i^{-p}} \leq V_{n,p} + 4 \sigma_{(\dtran)^{2p}} S_{n,p}
    \end{equation}
    recalling
    \begin{equation}
        V_{n,p} = \frac1n  \EOf{\dtran(\ol{Y}m)^{2p} g(\ol{Y}{m})^p}  + 	\EOf{\dtran(\ol{Y}m)^{2p} h(2n^{-1} \dtran(\ol{Y}m))^p}\eqfs
    \end{equation}
    Next, we aim to find an upper bound on the probability term on the right-hand side of \eqref{eq:technical:hoelder}.
    Assume $r \geq \chi \in  \median(\ol Ym)$ and $\eta \leq \frac14$. Then the classical Chernoff bound (\cref{prp:chernoff:multi}) yields
    \begin{equation}
        \PrOf{A\compl} \leq \exp\brOf{-\frac{n}{16}}
        \eqfs
    \end{equation}
    For the bound on the probability of $B\compl$ and $(B^i)\compl$, we first use \cref{lmm:mmn:dtran:bound} with $\invdtran$ nondecreasing and obtain
    \begin{align}
        \ol m{m_n}
        &\leq
        \invdtran\brOf{8 \sigma_{\dtran} + 4 \hat \sigma_{\dtran}} 
        \\&\leq
        \invdtran\brOf{12 \sigma_{\dtran} + 4 \absOf{ \hat \sigma_{\dtran}- \sigma_{\dtran} }} 
        \\&\leq
        \invdtran\brOf{16 \sigma_{\dtran}} + \invdtran\brOf{16 \absOf{ \hat \sigma_{\dtran}- \sigma_{\dtran} }} 
        \eqfs
    \end{align}
    Next, by Chebyshev's inequality,
    \begin{equation}
        \PrOf{\absOf{\hat \sigma_{\dtran}- \sigma_{\dtran} } > \sigma_{\dtran}} \leq \frac{\sigma_{(\dtran)^2} - \sigma_{\dtran}^2}{n \sigma_{\dtran}^2}
        \eqfs
    \end{equation}
    If $r\geq 2 \invdtran\brOf{16 \sigma_{\dtran}}$, we now have
    \begin{align}
        \PrOf{(B^i)\compl} 
        =
        \PrOf{B\compl}
        &= 
        \PrOf{\ol m{m_n} > r} 
        \\&\leq 
        \PrOf{\invdtran\brOf{16 \absOf{ \hat \sigma_{\dtran}- \sigma_{\dtran}}} > \invdtran\brOf{16 \sigma_{\dtran}}}
        \\&\leq 
        \PrOf{\absOf{ \hat \sigma_{\dtran}- \sigma_{\dtran}} > \sigma_{\dtran}}
        \\&\leq
        \frac1n \br{\frac{\sigma_{(\dtran)^2}}{\sigma_{\dtran}^2} -1}
        \eqfs
    \end{align}
    With this, we obtain
    \begin{equation}
        \PrOf{\Omega_i\compl}
        \leq 
        \PrOf{A\compl} + 2\PrOf{B\compl}
        \leq 
        \exp\brOf{-\frac{n}{16}} +  \frac2n \br{\frac{\sigma_{(\dtran)^2}}{\sigma_{\dtran}^2}-1}
        =:
        u_n
        \eqfs
    \end{equation}
    
    Finally, putting everything together and recalling $r_0 = \max\brOf{\chi, 2 \invdtran\brOf{16 \sigma_{\dtran}}}$, we obtain
    \begin{align}
        \EOf{\dtran(\ol{Y_i}m)^2 H_i^{-1}}
        &\leq
        \frac{4 \sigma_{(\dtran)^2}}{\ddrtran(4r_0)}
        +
        \br{V_{n,p} + 4 \sigma_{(\dtran)^{2p}} S_{n,p}}^{\frac1p} u_n^{\frac1q}
        \eqfs
    \end{align}
    Furthermore, from \eqref{eq:technical:Hipbound} with $p = 1$, we obtain
    \begin{equation}
        \EOf{\dtran(\ol{Y_i}m)^2 H_i^{-1}}
        \leq 
        V_{n,1} + 4 \sigma_{(\dtran)^{2}} S_{n,1}
        \eqfs
        \qedhere
    \end{equation}
\end{proof}
\begin{proof}[Proof of \cref{thm:infdtr:improved}]
    By \cref{prp:stabi} and \cref{lmm:infdtr:technical}, we have
    \begin{align}
        \EOf{\tran(\ol Y{m_n}) - \tran(\ol Ym)}
        &\leq
        \frac{16}{n^2} \sum_{i=1}^n \EOf{\dtran(\ol {Y_i}m)^2 H_i^{-1}}
        \\&\leq
        \frac{16}{n} \min\brOf{V_{n,1} + 4 \sigma_{(\dtran)^{2}} S_{n,1}, \frac{4 \sigma_{(\dtran)^2}}{\ddrtran(4r_0)}
            +
            b_n}
        \eqfs
        &\qedhere
    \end{align}
\end{proof}
\begin{proof}[Proof of \cref{cor:infdtr:improved}]
    We first record two observations used repeatedly below. As $\invdtran(12\dtran(x)) \geq x$ and $g$ is nondecreasing, we have $g \leq h \circ \dtran$. Furthermore, $h$ is nondecreasing with $h(\delta)\in\Rpp$ for $\delta\in\Rpp$ large enough, so that, for $p\geq1$, distinguishing $\dtran(\ol Ym) \lessgtr \delta$ yields
    \begin{equation}\label{eq:cor:infdtr:unif}
        \max\brOf{\sigma_{(\dtran)^{2p}},\, \sigma_{g^p}}
        \leq
        \max\brOf{\delta^{2p}, h(\delta)^p} + \br{1 + h(\delta)^{-p}} \EOf{\dtran(\ol{Y}m)^{2p}\, h(\dtran(\ol{Y}m))^{p}}
    \end{equation}
    as well as, using $h(2n^{-1}\dtran(\ol Ym)) \leq h(\dtran(\ol Ym))$ for $n\geq2$,
    \begin{equation}\label{eq:cor:infdtr:Vn}
        V_{n,p} \leq 2\, \EOf{\dtran(\ol{Y}m)^{2p}\, h(\dtran(\ol{Y}m))^{p}}
        \qquad\text{for all } n \geq 2
        \eqfs
    \end{equation}
    \begin{enumerate}[label=(\roman*)]
        \item
        Using the extended definition of $\invdtran$, we have $\lim_{x\to0}\invdtran(x) = 0$.
        As we assume $\EOf{\dtran(\ol{Y}m)^{2} h(\dtran(\ol{Y}m))} < \infty$, dominated convergence yields
        \begin{align}
            \limsup_{n\to\infty}\EOf{\dtran(\ol{Y}m)^{2} h(2n^{-1} \dtran(\ol{Y}m))}
            &\leq
            \EOf{\dtran(\ol{Y}m)^{2} \limsup_{n\to\infty} h(2n^{-1} \dtran(\ol{Y}m))}
            \\&\leq
            \EOf{\dtran(\ol{Y}m)^{2} \ddrtran(0)^{-1}}
            \eqcm
        \end{align}
        where $\ddrtran(0)^{-1} = \lim_{x\searrow0}\ddrtran(x)^{-1}$.
        Thus, if $\lim_{x\searrow0}\ddrtran(x) = \infty$, we have $\lim_{n\to\infty} V_{n,1}= 0$ and \cref{thm:infdtr:improved} combined with \cref{thm:infdtr:vi} \ref{thm:infdtr:vi:quantile} yields the claim.
        \item
        Apply \cref{thm:infdtr:improved} with $p := 1+\epsilon$ and $q := \frac{1+\epsilon}\epsilon$. Note that
        \begin{equation}
            \lim_{n\to\infty}\br{\exp\brOf{-\frac{n}{16}} +  \frac2n \br{\frac{\sigma_{(\dtran)^2}}{\sigma_{\dtran}^2}-1}}^{\frac1q} = 0
            \eqfs
        \end{equation}
        Furthermore, by \eqref{eq:cor:infdtr:unif}, \eqref{eq:cor:infdtr:Vn}, and the assumptions, we have $\sup_{n\geq2}\br{V_{n,p} + 4 \sigma_{(\dtran)^{2p}} S_{n,p}} < \infty$. Thus, $\lim_{n\to\infty} b_n = 0$. Finally, \cref{thm:infdtr:vi} \ref{thm:infdtr:vi:quantile} transforms the excess risk bound of \cref{thm:infdtr:improved} to the bound on $\Eof{\ol m{m_n}^2  \ddrtran(\chi + \ol m{m_n})}$.
        \item
        First, we show $\EOf{\tran(\ol Y{m_n}) - \tran(\ol Ym)} = \mo O(n^{-1})$ using \cref{thm:infdtr:improved}.
        Under the conditions of \ref{cor:infdtr:improved:low}, this follows from $\lim_{n\to\infty}b_n = 0$ as shown in the proof of \ref{cor:infdtr:improved:low}.
        Under the conditions of \ref{cor:infdtr:improved:simple}, the bounds \eqref{eq:cor:infdtr:unif} and \eqref{eq:cor:infdtr:Vn} with $p=1$, together with $h(\sigma_{\dtran}) < \infty$ and $\sup_{n\in\N}\Eof{h(2\hat\sigma_{\dtran})} < \infty$, yield $\sup_{n\geq2}\br{V_{n,1} + 4 \sigma_{(\dtran)^{2}} S_{n,1}} < \infty$, so that \cref{thm:infdtr:improved} again gives the claim.
        
        As $\ddrtran > 0$ and $\Prof{\ol Ym \leq t_0} > 0$ for $t_0$ large enough, \cref{thm:infdtr:vi} \ref{thm:infdtr:vi:combined} bounds the excess risk from below by $\Eof{\psi(\ol m{m_n})}$ with $\psi$ as in \cref{lmm:loss:totran}. Thus, \cref{lmm:loss:totran} yields $\tran^{-1}\brOf{\Eof{\tran(\ol m{m_n})}} = \mo O(n^{-\frac12})$. As $\tran$ is convex, Jensen's inequality implies $\Eof{\ol m{m_n}} \leq \tran^{-1}\brOf{\Eof{\tran(\ol m{m_n})}}$, which concludes the proof.
    \end{enumerate}
\end{proof}
\begin{proof}[Proof of \cref{exa:logp1}]
    The formulas for $\dtran, \invdtran, \ddtran, g, h$ follow by elementary calculus; note $h(\dtran(x)) = 7(1+x)^{12}-6$.
    If $\ol Ym = 0$ almost surely, the claim is trivial. Otherwise $\sigma_{\dtran}, \sigma_{(\dtran)^2} \in \Rpp$.
    Set $M := \Eof{(1+\ol Ym)^{25}} < \infty$, $\epsilon := \frac1{24}$, and $\nu := 1+\epsilon = \frac{25}{24}$.
    
    We verify the moment conditions of \cref{cor:infdtr:improved} \ref{cor:infdtr:improved:low}:
    As $\log(1+x)^{2\nu} \leq c\br{1+x}^{\frac{25}2}$, we have
    $\dtran(x)^{2\nu} h(\dtran(x))^{\nu} \leq c\br{1+x}^{25}$ and $g(x)^\nu \leq c\br{1+x}^{25}$, so the corresponding moments are bounded by $cM$.
    Moreover, $h(2\hat\sigma_{\dtran})^\nu \leq 7^\nu \exp\brOf{25\hat\sigma_{\dtran}} = 7^\nu \prod_{j=1}^n (1+\ol{Y_j}m)^{\frac{25}n}$, so independence and Lyapunov's inequality give, uniformly in $n$,
    \begin{equation}
        \EOf{h(2\hat\sigma_{\dtran})^{\nu}}
        \leq
        7^{\nu}\, \EOf{(1+\ol Ym)^{\frac{25}{n}}}^{n}
        \leq
        7^{\nu} M
        \eqfs
    \end{equation}
    Hence, \cref{cor:infdtr:improved} \ref{cor:infdtr:improved:low} yields, for $n$ large enough (absorbing the $\mo o(1)$-term into the positive main term),
    \begin{equation}
        \EOf{\frac{\ol m{m_n}^2}{1+\chi+\ol m{m_n}}}
        \leq
        \frac{512}{n}\, \sigma_{(\dtran)^2} \br{1+4r_0}
        \eqcm
    \end{equation}
    where $\chi\in\median(\ol Ym)$ and $r_0 = \max\brOf{\chi,\, 2\br{\exp(16\sigma_{\dtran})-1}}$.
    As $\Prof{\ol Ym\geq\chi}\geq\frac12$ implies $1+\chi \leq \exp(2\sigma_{\dtran})$, we obtain $1+4r_0 \leq 12\exp(16\sigma_{\dtran})$.
    Finally, distinguishing $x\lessgtr1$ shows $\min(x^2,x) \leq (2+\chi)\frac{x^2}{1+\chi+x}$ for $x\in\Rp$, and $2+\chi \leq 2\exp(2\sigma_{\dtran})$ concludes the proof.
\end{proof}
\subsection{Influence Lower Bound}
In this section, we prove \cref{thm:power:lower:lin}. We start with a technical lemma \cref{lmm:lower:helper}, which helps us to prove our main work horse, \cref{lmm:general:lower}. The latter lemma allows us to prove the two theorems.
\begin{lemma}\label{lmm:lower:helper}
    Let $\epsilon,a\in (0,1)$, $s,r \in \Rp$.
    Assume $as \geq r$.
    Then
    \begin{align}
        &\epsilon \br{\tran(s-r) - \tran((1-a)s)}
        - (1-\epsilon) (as + r) \br{\dtran\brOf{\frac{as}2} + \dtran\brOf{\frac{r}2}}
        \\&\geq
        \frac{\epsilon}2 \br{as-r} \dtran(s) 
        -
        4 as \dtran\brOf{\frac{as}2}
        \eqfs
    \end{align}
\end{lemma}
\begin{proof}
    The condition $as \geq r$ implies $s-r \geq (1-a) s$. Hence, by \cref{lmm:tran:diff},
    \begin{align}
        \tran(s-r) - \tran((1-a)s) 
        &\geq
        ((s-r)-(1-a)s) \frac{\dtran(s-r)+\dtran((1-a)s)}{2}
        \\&\geq 
        \frac{as-r}2 \br{\dtran(s)-\dtran(r)}
    \end{align}
    using the subadditivity of $\dtran$ and $\dtran((1-a)s)\geq0$.
    As $as \geq r$, we have $as+r \leq 2as$ and $\dtran\brOf{\frac{as}2} + \dtran\brOf{\frac{r}2} \leq 2 \dtran\brOf{\frac{as}2}$. Thus, 
    \begin{align}
        &\epsilon \br{\tran(s-r) - \tran((1-a)s)}
        - (1-\epsilon) (as + r) \br{\dtran\brOf{\frac{as}2} + \dtran\brOf{\frac{r}2}}
        \\&\geq
        \epsilon \frac{as-r}2 \br{\dtran(s)-\dtran(r)}
        - 4 (1-\epsilon) as \dtran\brOf{\frac{as}2}
        \\&\geq
        \epsilon \frac{as}2 \dtran(s)
        -
        \epsilon \frac{r}2 \dtran(s)
        -
        4 as \dtran\brOf{\frac{as}2}
        \eqfs
    \end{align}
    using $\frac{r}2 \dtran(r) \geq 0$, $\dtran(r) \leq  2\dtran\brOf{r/2} \leq 2\dtran\brOf{as/2}$, and $\epsilon + 4(1-\epsilon) \leq 4$.
\end{proof}
\begin{lemma}\label{lmm:general:lower}
    Let $P$ be the distribution of $Y$.
    Let $\epsilon\in(0,1)$ and $q\in\mc Q$.
    Set $\tilde P := (1-\epsilon)P + \epsilon \delta_q$. 
    Let $\tilde Y \sim \tilde P$. 
    Let $\tilde m$ be the $\tran$-Fr\'echet mean of $\tilde Y$.
    Assume $\lim_{x\to\infty} \dtran(x) = \infty$.
    \begin{enumerate}[label=(\roman*)]
        \item
        Assume 
        \begin{equation}
            \dtran(\ol qm) \geq \frac{32}{\epsilon} \EOf{\dtran(\ol {Y}m)}
            \eqfs
        \end{equation}
        Then,
        \begin{equation}
            \ol m{\tilde m} 
            \geq
            \frac12 \invdtran\brOf{\frac{\epsilon \dtran(\ol qm)}{32}}
            \eqfs
        \end{equation}
        \item 
        Let $\alpha \in (1, 2]$ and set $\tran(x) = x^\alpha$.
        Assume 
        \begin{equation}
            \ol qm\geq \br{\frac{16 \sigma_{\alpha-1}}{\epsilon}} ^{\frac{1}{\alpha-1}}
            \eqfs
        \end{equation}
        Then
        \begin{equation}
            \ol m{\tilde m}
            \geq 2^{-\frac{\alpha+4}{\alpha-1}}
            \epsilon^{\frac{1}{\alpha-1}} \ol qm
            \eqfs
        \end{equation}
    \end{enumerate} 
\end{lemma}
\begin{proof}
    \begin{enumerate}[label=(\roman*)]
        \item
        Let $p$ be a point on the geodesic between $m$ and $q$.
        Let $r\in\Rpp$.
        Let $z\in \ballclosed(m,r)$. Then we have
        \begin{equation}\label{eq:breakdown:first}
            \EOf{\tran(\ol {\tilde Y}z) - \tran(\ol {\tilde Y}p)}
            =
            (1-\epsilon)\EOf{\tran(\ol {Y}z) - \tran(\ol {Y}p)}
            +
            \epsilon\br{\tran(\ol qz) - \tran(\ol qp)}
            \eqfs
        \end{equation}
        For the first term on the right-hand side of \eqref{eq:breakdown:first}, we can use \cref{lmm:tran:diff}, \cref{lmm:dtran:subadd}, and the triangle inequality to obtain
        \begin{align}
            \EOf{\tran(\ol {Y}z) - \tran(\ol {Y}p)}
            &\geq
            - \ol pz\, \EOf{\dtran\brOf{\frac{\ol {Y}z + \ol {Y}p}2}}
            \\&\geq
            - \br{\ol pm + \ol mz}\br{\EOf{\dtran\brOf{\ol {Y}m}} + \dtran\brOf{\frac{\ol pm}2}+ \dtran\brOf{\frac{\ol mz}2}}
            \\&\geq
            - \br{as + r}\br{\EOf{\dtran\brOf{\ol {Y}m}} + \dtran\brOf{\frac{as}2}+ \dtran\brOf{\frac{r}2}}
        \end{align}
        with $s := \ol qm$ and $a := \ol pm/s$. 
        Now consider the second term on the right-hand side of \eqref{eq:breakdown:first}:
        As $p$ is on the geodesic between $m$ and $q$ we have $\ol qp = \ol qm - \ol pm$. Hence,
        \begin{equation}
            \tran(\ol qz) - \tran(\ol qp) \geq\tran(s-r)  -\tran((1-a)s) 
            \eqfs
        \end{equation}
        Thus, \eqref{eq:breakdown:first} becomes
        \begin{align}
            &\EOf{\tran(\ol {\tilde Y}z) - \tran(\ol {\tilde Y}p)}
            \\&\geq
            \epsilon \br{\tran(s-r) - \tran((1-a)s)}
            - (1-\epsilon) \br{as + r}\br{\EOf{\dtran\brOf{\ol {Y}m}} + \dtran\brOf{\frac{as}2}+ \dtran\brOf{\frac{r}2}}
            \eqfs
        \end{align}
        By \cref{lmm:lower:helper}, if $as \geq r$, we obtain 
        \begin{equation}\label{eq:lmm:general:lower:pre3bounds}
            \EOf{\tran(\ol {\tilde Y}z) - \tran(\ol {\tilde Y}p)}
            \geq
            \frac{\epsilon}2 \br{as-r} \dtran(s) 
            -
            4 as \dtran\brOf{\frac{as}2}
            - 2as \EOf{\dtran\brOf{\ol {Y}m}}
            \eqfs
        \end{equation}
        Assume the following three bounds
        \begin{enumerate}[label=(\alph*)]
            \item $\frac14as \geq r$,
            \item $\frac\epsilon8 \dtran(s) \geq 2 \Eof{\dtran(\ol {Y}m)}$,
            \item $ \frac{\epsilon}8 \dtran(s) \geq 4 \dtran\brOf{\frac{as}{2}}$.
        \end{enumerate}
        Then \eqref{eq:lmm:general:lower:pre3bounds} yields
        \begin{equation}
            \EOf{\tran(\ol {\tilde Y}z) - \tran(\ol {\tilde Y}p)}
            \geq
            \frac{\epsilon}8 as \dtran(s) > 0
            \eqfs
        \end{equation}
        Set 
        \begin{equation}
            s_0 := \invdtran\brOf{\frac{32}{\epsilon} \EOf{\dtran(\ol {Y}m)}}
            \eqcm\qquad
            a_0(s) := \frac2s \invdtran\brOf{\frac{\epsilon \dtran(s)}{32}}
        \end{equation}
        for $s \geq s_0$. Note that $\invdtran$ is only the true inverse of $\dtran$ on $[\dtran(0), \infty)$. This is accounted for, as the arguments of $\invdtran$ in the definition of $s_0$ and $a_0$ are at least $\dtran(0)$. 
        Furthermore, by \cref{lmm:dtran:factor},
        \begin{equation}
            a_0(s) \leq \frac2s \invdtran\brOf{\frac{\epsilon \dtran(s/2)}{16}} \leq \frac2s\invdtran\brOf{\dtran(s/2)} \leq 1
            \eqfs
        \end{equation}
        Thus, we can choose $p$, so that $a = a_0(s)$. Then, if $s \geq s_0$, (b) and (c) are fulfilled.
        Hence, we can set
        \begin{equation}
            r = r(s) = \frac14 a_0(s) s = \frac12 \invdtran\brOf{\frac{\epsilon \dtran(s)}{32}}
        \end{equation}
        to obtain 
        \begin{equation}
            \EOf{\tran(\ol {\tilde Y}z) - \tran(\ol {\tilde Y}p)} > 0
        \end{equation}
        for all $z \in \ballclosed(m, r)$. 
        Hence, for the $\tran$-Fr\'echet mean $\tilde m$ of $\tilde Y$, we have $\ol m{\tilde m} \geq r$.
        \item
        In the case $\tran(x)=x^\alpha$ with $\alpha\in(1,2]$, we have
        \begin{equation}
            \dtran(x) = \alpha x^{\alpha-1}
            \qquad\text{and}\qquad
            \invdtran(x) = \br{\frac{x}\alpha}^{\frac1{\alpha-1}}
            \eqfs
        \end{equation}
        As $\dtran(0)=0$, we can choose 
        \begin{equation}
            s_0 := \invdtran\brOf{\frac{16}{\epsilon} \EOf{\dtran(\ol {Y}m)}}
        \end{equation}
        in the proof of the first part.
        Hence,
        \begin{equation}
            r(s) 
            =
            \frac12 \invdtran\brOf{\frac{\epsilon \dtran(s)}{32}}
            =
            \frac12 \br{\frac{\epsilon s^{\alpha-1}}{32}}^{\frac{1}{\alpha-1}}
            =
            2^{-\br{1+\frac{5}{\alpha-1}}} \epsilon^{\frac{1}{\alpha-1}} s
            \eqfs
        \end{equation}
    \end{enumerate}
\end{proof}
\begin{proof}[Proof of \cref{thm:power:lower:lin}]
    \begin{enumerate}[label=(\roman*)]
        \item 
        We apply \cref{lmm:general:lower} to $P = \frac{1}{n-1}\sum_{i=2}^n \delta_{y_i}$, $\epsilon = \frac1n$, $q=y_1$.
        \item 
        Denote the sample $\alpha$-Fr\'echet mean of $Y_2, \dots, Y_n$ as $m_{2:n}$.
        By \cref{lmm:general:lower} there is a threshold $S(Y_2,\dots,Y_n)$ such that
        \begin{equation}
            \EOf{\ol {m_{n}}{m_{2:n}}^\xi | Y_2,\dots,Y_n} 
            \geq 
            \EOf{\br{c_\alpha n^{-\frac{1}{\alpha-1}} \ol {Y_1}{m_{2:n}}}^\xi \indOf{\ol {Y_1}{m_{2:n}} \geq S(Y_2,\dots,Y_n)}| Y_2,\dots,Y_n} 
            =
            \infty
        \end{equation}
        as $\Eof{\ol {Y}o^\xi} = \infty$ implies $\Eof{\ol {Y_1}q^\xi \indOf{\ol {Y_1}q \geq s}} = \infty$ for every $q\in\mc Q, s\in \R$, and $Y_1$ is independent of $Y_2, \dots, Y_n$.
        Hence, 
        \begin{align}
            \EOf{\ol {m_n}q^\xi} 
            &\geq 
            \EOf{\max\brOf{0, c_\xi \ol {m_{n}}{m_{2:n}}^\xi - \ol {m_{2:n}}{q}^\xi}} 
            \\ &\geq 
            \EOf{\max\brOf{0, c_\xi \EOf{\ol {m_{n}}{m_{2:n}}^\xi | Y_2,\dots,Y_n} - \ol {m_{2:n}}{q}^\xi}}
            \\ &= 
            \infty
            \eqfs
        \end{align}
        Thus, $\EOf{\ol {m_n}q^\xi}  = \infty$.
    \end{enumerate}
\end{proof}

\section{Proofs of Section: Bounded Influence}

\subsection{Breakdown Point and Exponential Tail Bounds}
Hadamard spaces have unique projections to closed and convex sets.
\begin{proposition}[{\cite[Proposition 2.6]{sturm03}}]\label{prp:projection}
    Let $\mc B \subset \mc Q$ be a convex and closed set. For every $q \in \mc Q$, there is a $p \in \mc B$ such that
    \begin{equation}
        \forall y\in\mc B\colon\ \ol yq^2 \geq \ol yp^2 + \ol qp^2
        \eqfs 
    \end{equation}
    Furthermore,
    \begin{equation}
        \ol qp = \inf_{y \in \mc B} \,\ol qy =: d(q, \mc B)
        \eqfs
    \end{equation}
    We call $p$ the \emph{projection} of $q$ onto $\mc B$.
\end{proposition}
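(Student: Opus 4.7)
The plan is to follow the standard CAT(0) argument: first produce the projection via a midpoint/Cauchy argument exploiting the defining NPC inequality, then derive the sharper $\ol yq^2 \geq \ol yp^2 + \ol qp^2$ bound from the CAT(0) convexity of $t \mapsto \ol q{\gamma(t)}^2$ along geodesics in $\mc B$.

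First I would prove existence. Set $d_0 := \inf_{y \in \mc B} \ol yq$, and pick a minimizing sequence $p_n \in \mc B$ with $\ol{p_n}q \to d_0$. For any $n,k$ let $m_{nk}$ denote the midpoint of $p_n$ and $p_k$, which lies in $\mc B$ since $\mc B$ is convex (here I use that $\mc Q$ is a unique geodesic space, so midpoints are well-defined and $\mc B$ is closed under taking them). Applying the defining NPC inequality of the Hadamard space $\mc Q$ at the point $q$ gives
\begin{equation}
\ol q{m_{nk}}^2 \;\leq\; \tfrac12 \ol q{p_n}^2 + \tfrac12 \ol q{p_k}^2 - \tfrac14 \ol{p_n}{p_k}^2.
\end{equation}
Since $\ol q{m_{nk}} \geq d_0$, this yields $\tfrac14 \ol{p_n}{p_k}^2 \leq \tfrac12 \ol q{p_n}^2 + \tfrac12 \ol q{p_k}^2 - d_0^2 \to 0$, so $(p_n)$ is Cauchy. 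Completeness of $\mc Q$ and closedness of $\mc B$ give a limit $p \in \mc B$, and continuity of $d(q,\cdot)$ gives $\ol qp = d_0$, proving the second display.

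Next I would derive the orthogonality-type inequality. Fix any $y \in \mc B$, and let $\gamma \colon [0,1] \to \mc Q$ be the constant-speed geodesic from $p$ to $y$; by convexity of $\mc B$ we have $\gamma(t) \in \mc B$ for all $t$. The CAT(0) property, applied to $q$ and the geodesic $\gamma$, gives the standard convexity inequality
\begin{equation}
\ol q{\gamma(t)}^2 \;\leq\; (1-t)\,\ol qp^2 + t\,\ol qy^2 - t(1-t)\,\ol py^2 \qquad \text{for all } t \in [0,1].
\end{equation}
This is a direct consequence of the NPC midpoint inequality iterated dyadically, or equivalently of the Reshetnyak comparison: subdivide $[0,1]$ dyadically and apply the midpoint inequality at each step, passing to the limit by continuity. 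Since $p$ minimizes $\ol q\cdot$ on $\mc B$ and $\gamma(t) \in \mc B$, we have $\ol qp^2 \leq \ol q{\gamma(t)}^2$, so
\begin{equation}
\ol qp^2 \;\leq\; (1-t)\,\ol qp^2 + t\,\ol qy^2 - t(1-t)\,\ol py^2,
\end{equation}
i.e.\ $t\,\ol qp^2 \leq t\,\ol qy^2 - t(1-t)\,\ol py^2$. Dividing by $t > 0$ and letting $t \searrow 0$ gives $\ol qp^2 \leq \ol qy^2 - \ol py^2$, which is the claimed inequality.

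The only nontrivial step is the quadratic convexity bound for $t \mapsto \ol q{\gamma(t)}^2$, which is exactly the defining geometric property of a Hadamard space (the NPC inequality is the $t = 1/2$ case, and iteration plus continuity extends it to all $t \in [0,1]$); everything else is completeness, closedness, and continuity of the metric. I expect no real obstacle, but the key care point is ensuring that the midpoint lemma is iterated correctly to obtain the full one-parameter CAT(0) inequality before taking the derivative at $t = 0$.
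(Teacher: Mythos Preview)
The paper does not give its own proof of this proposition; it simply quotes it from \cite[Proposition 2.6]{sturm03}. Your argument is correct and is exactly the standard CAT(0) proof (minimizing sequence plus NPC midpoint inequality for existence, then the one-parameter convexity inequality $\ol q{\gamma(t)}^2 \leq (1-t)\ol qp^2 + t\ol qy^2 - t(1-t)\ol py^2$ combined with optimality of $p$ and $t\searrow 0$ for the obtuse-angle inequality), so there is nothing to compare.
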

\begin{lemma}\label{lmm:finiteD:convex:lower}
    Assume $\lim_{x\to\infty}\dtran(x) =: D < \infty$.
    Let $\mc B \subset \mc Q$ be a convex and closed set with diameter $\diam(\mc B) \leq \delta \in\Rp$. Set $\rho := \Prof{Y \in \mc B}$.
    Then, for all $q \in \mc Q$, its projection $p$ onto $\mc B$ fulfills
    \begin{equation}
        \EOf{\tran(\ol Yq) - \tran(\ol Yp)} \geq \rho \br{\tran\brOf{\sqrt{\ol qp^2 + \delta^2}} + D (\ol qp - \delta)} - D\, \ol qp
        \eqfs
    \end{equation}
\end{lemma}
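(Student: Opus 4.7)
The plan is to split the expectation according to whether $Y\in\mc B$ or $Y\notin\mc B$ and to bound the two contributions separately. For $Y\in\mc B$, since $p$ is the projection of $q$ onto $\mc B$, \cref{prp:projection} gives $\ol Yq^2\geq \ol Yp^2+\ol qp^2$, so monotonicity of $\tran$ yields $\tran(\ol Yq)\geq \tran\brOf{\sqrt{\ol Yp^2+\ol qp^2}}$. Moreover $\ol Yp\leq \delta$ because both $Y$ and $p$ lie in a set of diameter $\delta$.

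The key trick will be to trade $\ol Yp$ for $\delta$ inside $\tran$ by studying the auxiliary function $\phi\colon\Rp\to\R$, $\phi(x):=\tran\brOf{\sqrt{x^2+\ol qp^2}}-Dx$. I will show $\phi$ is nonincreasing: its derivative equals
\begin{equation}
\phi'(x)=\frac{x}{\sqrt{x^2+\ol qp^2}}\,\dtran\brOf{\sqrt{x^2+\ol qp^2}}-D,
\end{equation}
and this is $\leq 0$ because $\dtran$ is nondecreasing with $\limsup_{x\to\infty}\dtran(x)=D$, hence $\dtran\leq D$ everywhere, while $x/\sqrt{x^2+\ol qp^2}\leq 1$. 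Applying $\phi(\ol Yp)\geq \phi(\delta)$ gives $\tran\brOf{\sqrt{\ol Yp^2+\ol qp^2}}\geq \tran\brOf{\sqrt{\delta^2+\ol qp^2}}-D(\delta-\ol Yp)$. Combining this with the upper bound $\tran(\ol Yp)\leq D\,\ol Yp$ (which follows from $\tran(0)=0$ and $\dtran\leq D$) eliminates the $\ol Yp$ term entirely and yields, for $Y\in\mc B$,
\begin{equation}
\tran(\ol Yq)-\tran(\ol Yp)\geq \tran\brOf{\sqrt{\delta^2+\ol qp^2}}-D\delta.
\end{equation}

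For $Y\notin\mc B$, I will use the Lipschitz bound from \cref{lmm:tran:diff} together with $\dtran\leq D$ to get $\absof{\tran(\ol Yq)-\tran(\ol Yp)}\leq D\absof{\ol Yq-\ol Yp}\leq D\,\ol qp$, hence $\tran(\ol Yq)-\tran(\ol Yp)\geq -D\,\ol qp$. Taking expectations of the two cases with weights $\rho$ and $1-\rho$,
\begin{equation}
\EOf{\tran(\ol Yq)-\tran(\ol Yp)}\geq \rho\ab{\tran\brOf{\sqrt{\delta^2+\ol qp^2}}-D\delta}-(1-\rho)D\,\ol qp,
\end{equation}
which rearranges to exactly the claimed lower bound. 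The only nontrivial step is guessing the right monotone auxiliary function $\phi$; once its monotonicity is verified, everything else is bookkeeping with the projection property, the Lipschitz bound, and the triangle inequality.
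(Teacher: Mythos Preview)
Your proof is correct and follows essentially the same approach as the paper: split the expectation on $\{Y\in\mc B\}$ versus $\{Y\notin\mc B\}$, use the projection inequality plus a monotone auxiliary function on the first part, and the $D$-Lipschitz bound on the second. The only cosmetic difference is the choice of auxiliary function: the paper uses $f(x)=\tran(\sqrt{x^2+a})-\tran(x)$ and shows $f'\leq 0$ via \cref{lmm:dtran:factor} (concavity of $\dtran$), whereas you use $\phi(x)=\tran(\sqrt{x^2+\ol qp^2})-Dx$ and only need $\dtran\leq D$; both yield the identical pointwise bound $\tran(\ol Yq)-\tran(\ol Yp)\geq \tran(\sqrt{\delta^2+\ol qp^2})-D\delta$ on $\mc B$ and hence the same final inequality.
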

\begin{proof}[Proof of \cref{lmm:finiteD:convex:lower}]
    Let  $q \in \mc Q$. By \cref{prp:projection}, the \textit{projection} $p$ of $q$ onto $\mc B$ fulfills $p \in \mc B$ and $\ol yq^2 \geq \ol yp^2 + \ol qp^2$ for all $y\in \mc B$.
    Let $f\colon\Rp \to \R, x \mapsto \tran(\sqrt{x^2 + a}) - \tran(x)$ with $a \geq 0$.
    Then 
    \begin{align}
        f\pr(x) 
        &= 
        \frac{x}{\sqrt{x^2+a}} \dtran\brOf{\sqrt{x^2 + a}} - \dtran(x)
        \\&\leq
        \dtran\brOf{ \frac{x}{\sqrt{x^2+a}} \sqrt{x^2 + a}} - \dtran(x)
        \\&=
        0
        \eqcm
    \end{align}
    where we used $\frac{x}{\sqrt{x^2+a}} \in [0,1]$ and \cref{lmm:dtran:factor}.
    Thus, $f$ is nonincreasing.
    Together with $\tran(x_1) - \tran(x_2) \leq D \absof{x_1 - x_2}$ and $\tran(x) \leq D x$ for $x, x_1, x_2 \in\Rp$ (\cref{lmm:tran:diff} and \cref{lmm:tran:tran}), we obtain
    \begin{align}
        &\EOf{\tran(\ol Yq) - \tran(\ol Yp)} 
        \\&\geq 
        \EOf{\br{\tran\brOf{\sqrt{\ol Yp^2 + \ol qp^2}} - \tran(\ol Yp)} \ind_{\mc B}(Y)} 
        +
        \EOf{\br{\tran(\ol Yq) - \tran(\ol Yp)} \ind_{\mc Q \setminus \mc B}(Y)} 
        \\&\geq 
        \br{\tran\brOf{\sqrt{\delta^2 + \ol qp^2}} - \tran(\delta)} \rho
        -
        D \,\ol qp\, (1-\rho)
        \\&\geq 
        \rho \br{\tran\brOf{\sqrt{\delta^2 + \ol qp^2}} + D (\ol qp - \delta)} - D\, \ol qp
        \eqfs
    \end{align}
\end{proof}
\begin{proof}[Proof of \cref{thm:finiteD:convex}]
    Let $q\in\mc Q$ and $p \in \mc B$ its projection onto $\mc B$. Note that $\tran(R) \geq \lambda D R$ implies $\tran(x) \geq \lambda D x$ for all $x\geq R$. Assume that $\ol qp^2 + \delta^2 \geq R^2$. Then, by \cref{lmm:finiteD:convex:lower}, 
    \begin{align}
        \EOf{\tran(\ol Yq) - \tran(\ol Yp)} 
        &\geq 
        \rho \br{\tran\brOf{\sqrt{\ol qp^2 + \delta^2}} + D (\ol qp - \delta)} - D\, \ol qp
        \\&\geq 
        D \br{\rho \br{\lambda\sqrt{\ol qp^2 + \delta^2} + (\ol qp - \delta)} - \ol qp}
        \\&= 
        D \rho \br{\lambda \sqrt{\ol qp^2 + \delta^2} - \frac{1-\rho}{\rho}  \ol qp - \delta}
        \eqfs
    \end{align}
    Let $a := \frac{1-\rho}{\rho}$. Define
    \begin{equation}
        f(x) = \lambda \sqrt{x^2 + \delta^2} - a x - \delta
        \eqfs
    \end{equation}
    Thus,
    \begin{equation}\label{eq:finiteD:convex:basicvi}
        \EOf{\tran(\ol Yq) - \tran(\ol Yp)}  \geq D \rho f(\ol qp)
        \eqfs
    \end{equation}
    We have
    \begin{align}
        f\pr(x) = \lambda \frac{x}{\sqrt{x^2 + \delta^2}} - a
        \eqcm
        \qquad\text{and}\qquad
        f\prr(x) = \lambda \frac{\delta^2}{(x^2+\delta^2)^{\frac32}}
        \eqfs
    \end{align}
    We assume $\rho > \frac1{1+\lambda}$ and thus $\lambda > a$.
    If $\delta = 0$, $f$ is linear and $f(x)>0$ for all $x > 0 = x_0$. If $\delta > 0$, then $f$ is strictly convex. If $x_0$ is the largest $x\geq0$ with $f(x) = 0$, then $f$ is positive for all values larger than $x_0$. So, we calculate
    \begin{align}
        &\lambda \sqrt{x^2 + \delta^2} - a x - \delta = 0
        \\\Leftrightarrow\quad &
        \lambda^2 x^2 + \lambda^2 \delta^2 = a^2 x^2 + \delta^2 + 2 a \delta x
        \\\Leftrightarrow\quad &
        (\lambda^2 - a^2) x^2 - 2 a \delta x - (1-\lambda^2) \delta^2= 0
        \\\Leftrightarrow\quad &
        x = \frac{2 a \delta \pm \sqrt{(2 a \delta)^2 + 4 (\lambda^2 - a^2) (1-\lambda^2) \delta^2}}{2 (\lambda^2 - a^2)}
        \eqfs
    \end{align}
    The larger root is
    \begin{align}
        x_0 = \frac{2 a \delta + \sqrt{(2 a \delta)^2 + 4 (\lambda^2 - a^2) (1-\lambda^2) \delta^2}}{2 (\lambda^2 - a^2)}
        =
        \delta \frac{a + \lambda \sqrt{1 -\lambda^2 + a^2}}{\lambda^2 - a^2}
        \eqfs
    \end{align}
    Thus, for all $x  > x_0$, we have $f(x) > 0$.
    Applying this to \eqref{eq:finiteD:convex:basicvi} yields
    \begin{equation}
        \EOf{\tran(\ol Yq) - \tran(\ol Yp)} > 0 
    \end{equation}
    for all $q\in\mc Q$ that fulfill $\ol qp > x_0$ and $\ol qp^2 + \delta^2 \geq R^2$.
    Hence, $q$ is not a $\tran$-Fr\'echet mean of $Y$. In other words, for the projection $p_m$ of $m$ onto $\mc B$, we must have
    \begin{equation}
        \ol m{p_m} \leq x_0 \qquad\text{or}\qquad \ol m{p_m}^2 + \delta^2 < R^2
        \eqfs
    \end{equation}
    Hence,
    \begin{equation}
        \ol m{p_m}^2 \leq \max\brOf{x_0^2, R^2 - \delta^2}
        \eqfs
    \end{equation}
    We finish the proof by noting that the projection fulfills
    \begin{equation}
        \ol m{p_m} = \inf_{y \in \mc B} \ol ym = d(m, \mc B)
        \eqcm
    \end{equation}
    see \cref{prp:projection}.
\end{proof}
\begin{proof}[Proof of \cref{thm:breakdown}]
    \begin{enumerate}[label=(\roman*)]
        \item
        Set $D := \lim_{x\to\infty}\dtran(x) < \infty$.
        Let $\epsilon\in(0,\frac12)$.
        Let $P \in \mc P_0(\mc Q)$.
        Let $\zeta \in (0,1)$.
        Let $o\in\mc Q$ be an arbitrary reference point. We can choose $r$ large enough so that 
        \begin{equation}
            P(\ballclosed(o, r)) \geq  1 - \zeta =: \rho
            \eqfs
        \end{equation}
        Let $\tilde P \in \mc P_0(\mc Q)$ be an $\epsilon$-contamination of $P$ with $\tilde P = \check P + \mu$ as in \cref{def:breakdown}.
        We have
        \begin{equation}
            \tilde P(\ballclosed(o, r)) \geq P(\ballclosed(o, r)) - \mu(\mc Q) \geq 1 - \zeta - \epsilon =: \tilde\rho
            \eqfs
        \end{equation}
        As $\epsilon < \frac12$, we can make $\zeta$ small enough so that $\tilde\rho > \frac12$.
        Let $m \in M(P)$ be a $\tran$-Fr\'echet mean of $P$.
        Let $\tilde m \in M(\tilde P)$ be a $\tran$-Fr\'echet mean of $\tilde P$.
        We apply \cref{thm:finiteD:convex} by choosing $\lambda\in(0,1)$ large enough so that $\tilde\rho > \frac{1}{1+\lambda}$ and obtain
        \begin{equation}
            d(m, \ballclosed(o, r)) \leq K \qquad\text{and}\qquad d(\tilde m, \ballclosed(o, r)) \leq \tilde K
        \end{equation}
        for finite radii $K, \tilde K \in \Rpp$ that depend only on $\rho,\tilde\rho,\lambda,\tran$, and $r$. In particular, they do not depend on the specific contamination.
        Thus, 
        \begin{equation}
            \sup\setByEle{\delta(M(P), M(\tilde P))}{\tilde P \in \mc P_0(\mc Q) \text{ is } \epsilon\text{-contamination of } P} \leq 2r + K + \tilde K
            \eqfs
        \end{equation}
        Thus, $\varepsilon(P, \delta, \mc P_0(\mc Q), M) \geq \epsilon$ for all $\epsilon < \frac12$. Hence,
        \begin{equation}
            \varepsilon(P, \delta, \mc P_0(\mc Q), M) \geq \frac12
            \eqfs
        \end{equation}
        Equality follows easily by considering $\epsilon > \frac12$ and a sequence of $\epsilon$-contaminations $\tilde P_k$ where the contamination part $\mu_k$ is a point mass at $q_k$ and $(q_k)\subset \mc Q$ is a sequence with $\lim_{k\to\infty} \ol o{q_k} = \infty$.
        \item
        Let $P \in \mc P_{\dtran}(\mc Q)$.
        Fix $\epsilon\in(0,1)$.
        Let $q\in\mc Q$ and let $Q$ be the measure with $Q(\{q\})=1$. Let $\tilde P = (1-\epsilon)P + \epsilon Q$. Then $\tilde P$ is an $\epsilon$-contamination of $P$. Let $Y \sim P$ and $\tilde Y \sim \tilde P$. Let $m$ be the $\tran$-Fr\'echet mean of $Y$ and $\tilde m$ of $\tilde Y$. 
        Set
        \begin{equation}
           r(s) := \frac12 \invdtran\brOf{\frac{\epsilon \dtran(s)}{32}}
        \end{equation}
        where $\invdtran$ is the generalized inverse function of $\dtran$.
        Then \cref{lmm:general:lower} shows $\ol m{\tilde m} \geq r(s)$.
        For $\dtran(x) \xrightarrow{x\to\infty} \infty$, we have $r(s)\xrightarrow{s\to\infty} \infty$.
        As $\diam(\mc Q) = \infty$, we can realize $s = \ol qm \to \infty$. Hence, we can find a sequence of $\epsilon$-contaminations $\tilde P_k$ so that $\lim_{k\to\infty}\delta(M(P), M(\tilde P_k)) = \infty$. As the choice of $\epsilon\in(0,1)$ was arbitrary, we obtain 
        \begin{equation}
            \varepsilon(P, \delta, \mc P_{\dtran}(\mc Q), M) = 0
            \eqfs
            \qedhere
        \end{equation}
    \end{enumerate}
\end{proof}
\begin{proof}[Proof of \cref{thm:tailbound}]
    Set
    \begin{equation}
        \rho_n := \frac1n \sum_{i=1}^n \ind_{[0,r]}(\ol {Y_i}{m})
        \eqfs
    \end{equation}
    Then, by the Chernoff bound in form of \cref{lmm:tail:prob}, for $\eta\in(0,1]$,
    \begin{equation}
        \PrOf{\rho_n \leq \eta \rho}
        \leq
        \exp\brOf{-n \kullback(\eta\rho, \rho)}
        \eqfs
    \end{equation}
    Set $a := \frac{1-\eta\rho}{\eta\rho}$ and $x_0 := \frac{2r}{\lambda - a}$, which is
    positive since $\lambda - a = \frac{(\lambda+1)\eta\rho - 1}{\eta\rho} > 0$ by assumption.
    By applying \cref{thm:finiteD:convex} to the empirical distribution, on the event that
    $\rho_n \geq \eta \rho$, we obtain
    \begin{equation}
        d\brOf{m_n, \ballclosed(m, r)}^2 \leq \max\brOf{x_0^2, R^2 - 4r^2}
        \eqfs
    \end{equation}
    Thus, if $r\geq \frac12 R$, we have $\ol m{m_n} - r \leq x_0$. One can easily calculate
    \begin{align}
        x_0 + r = \br{\frac{2 + \lambda - a}{\lambda - a}} r
        = \br{\frac{\br{3 + \lambda}\eta\rho - 1}{\br{1 + \lambda}\eta\rho - 1}} r
    \end{align}
    to finish the proof.
\end{proof}
\begin{proof}[Proof of \cref{cor:tailbound}]
    Use \cref{thm:tailbound} with $\lambda = \frac9{10}$ and $\eta := \frac{2}{3\rho}$.
    This is admissible: $\rho > \frac23$ gives $\eta \in [\frac23, 1)$, and
    $\eta\rho = \frac23$ yields $(\lambda+1)\eta\rho = \frac{19}{15} > 1$. The radius factor is
    \begin{equation}
        \frac{(\lambda + 3)\eta\rho - 1}{(\lambda + 1)\eta\rho - 1}
        =
        \frac{\frac{39}{10}\cdot\frac23 - 1}{\frac{19}{10}\cdot\frac23 - 1}
        =
        \frac{8/5}{4/15}
        =
        6
        \eqfs
    \end{equation}
    It remains to evaluate the exponent. By the definition of $\kullback$,
    \begin{equation}
        \exp\brOf{-\kullback\brOf{\tfrac23, \rho}}
        =
        \br{\frac{\rho}{2/3}}^{\frac23} \br{\frac{1-\rho}{1/3}}^{\frac13}
        =
        \br{\frac{27}{4} (1-\rho) \rho^2}^{\frac13}
        \eqfs
    \end{equation}
    The last inequality in the statement of the corollary follows from $\rho^2 \leq 1$.
\end{proof}
\begin{proof}[Proof of \cref{thm:median:tailbound}]
    Set
    \begin{equation}
        \rho_n := \frac1n \sum_{i=1}^n \ind_{[0,r]}(\ol {Y_i}{m})
        \eqfs
    \end{equation}
    By the Chernoff bound in form of \cref{lmm:tail:prob}, we have, for $\eta\in(0,1]$,
    \begin{equation}\label{eq:median:tailbound:chernoff}
        \PrOf{\rho_n \leq \eta \rho} \leq \exp\brOf{-n \kullback(\eta\rho, \rho)}
        \eqfs
    \end{equation}
    The map
    \begin{equation}
        g \colon \br{\tfrac12, 1} \to \Rp
        \eqcm\qquad
        g(t) := \frac{2t(1-t)}{2t-1}
        \eqcm
    \end{equation}
    is decreasing, since $g\pr(t) = -\br{(2t-1)^2+1}(2t-1)^{-2} < 0$.
    Now consider the event $\cb{\rho_n \geq \eta\rho}$. The set $\mc B := \ballclosed(m, r)$ is closed and convex with $\diam(\mc B) \leq 2r$, and it carries empirical mass $\rho_n \geq \eta\rho > \frac12$. Hence, \cref{cor:finiteD:convex} applied to the empirical distribution $\frac1n\sum_{i=1}^n \delta_{Y_i}$, together with the monotonicity of $g$, yields
    \begin{equation}
        d\brOf{m_n, \mc B} \leq 2r g(\rho_n) \leq 2r g(\eta\rho) = 4 r \eta\rho \frac{1 - \eta\rho}{2\eta\rho - 1}
        \eqfs
    \end{equation}
    As $\ol m{m_n} \leq r + d(m_n, \mc B)$, we obtain
    \begin{equation}
        \ol m{m_n}
        \leq
        \br{1 + \frac{4\eta\rho(1-\eta\rho)}{2\eta\rho-1}} r
        =
        \br{\frac{6\eta\rho - 1 - 4\eta^2\rho^2}{2\eta\rho - 1}} r
    \end{equation}
    on the event $\cb{\rho_n \geq \eta\rho}$. Thus, the event in the claim is contained in $\cb{\rho_n < \eta\rho}$ and \eqref{eq:median:tailbound:chernoff} finishes the proof.
\end{proof}
\begin{proof}[Proof of \cref{cor:median:tailbound}]
    Use \cref{thm:median:tailbound} with $\eta := \frac{2}{3\rho}$.
    This is admissible: $\rho > \frac23$ gives $\eta \in [\frac23, 1)$, and $\eta\rho = \frac23$ yields $2\eta\rho = \frac43 > 1$. The radius factor is
    \begin{equation}
        \frac{6\eta\rho - 1 - 4\eta^2\rho^2}{2\eta\rho-1}
        =
        \frac{4 - 1 - \frac{16}9}{\frac13}
        =
        \frac{11}{3}
        \eqfs
    \end{equation}
    For the exponent, the definition of $\kullback$ gives
    \begin{equation}
        \exp\brOf{-\kullback\brOf{\tfrac23, \rho}}
        =
        \br{\frac{\rho}{2/3}}^{\frac23} \br{\frac{1-\rho}{1/3}}^{\frac13}
        =
        \br{\frac{27}{4} (1-\rho) \rho^2}^{\frac13}
        \eqfs
    \end{equation}
    The last inequality in the statement of the corollary follows from $\rho^2 \leq 1$.
\end{proof}
\subsection{Convergence Rates}
Set the double excess risk as
\begin{equation}
    V_n := \EOf{\tran(\ol Y{m_n}) - \tran(\ol Ym)} + \EOf{\frac1n \sum_{i=1}^n \br{\tran(\ol {Y_i}{m}) - \tran(\ol {Y_i}{m_n})} }
    \eqfs
\end{equation}
Recall that $Y, Y_1, \dots, Y_n$ are iid, so that $Y$ is independent of $m_n$.
\begin{proposition}\label{prp:posddrtran:Vn}
    Assume $\lim_{x\to\infty}\dtran(x) < \infty$.
    Assume $\ddrtran(x) > 0$ for all $x\in\Rpp$.
    Assume $\xi \in \Rpp$ exists with $\Eof{\ol Ym^\xi} < \infty$.
    Then there are $n_0 \in \N$ and $C \in \Rpp$ depending only on $\tran$, $\xi$, and the distribution of $Y$ such that
    \begin{equation}
        V_n \leq Cn^{-1}
    \end{equation}
    for all $n \geq n_0$.
\end{proposition}
\begin{lemma}\label{lmm:posddrtran:expect}
    Let $\zeta \in \Rpp$.
    Assume $\lim_{x\to\infty}\dtran(x) < \infty$.
    Assume $\xi \in \Rpp$ exists with $\Eof{\ol Ym^\xi} < \infty$.
    Assume $n_0 \geq 3\zeta\xi^{-1} + \max(1,\xi^{-1})$.
    Then there are constants $r_0, C_1, C_2\in\Rpp$ depending only on $\tran$, $\xi$, $\zeta$, and the distribution of $Y$ such that
    \begin{equation}
        \EOf{\ol m{m_n}^\zeta \indOfOf{[r_0, \infty)}{\ol m{m_n}}} \leq C_1 \exp\brOf{- C_2 n}
    \end{equation}
    for all $n \geq n_0$.
    
    In particular, there is a constant $C_3\in\Rpp$ depending only on $\tran$, $\xi$, $\zeta$, and the distribution of $Y$ such that
    \begin{equation}
        \EOf{\ol m{m_n}^\zeta} \leq C_3
    \end{equation}
    for all $n \geq n_0$.
\end{lemma}
\begin{proof}
    Let $R \in \Rpp$ as in \cref{cor:tailbound}.
    By Markov's inequality,
    \begin{equation}
        \PrOf{\ol Ym > \frac t6}
        =
        \PrOf{\ol Ym^\xi > \br{\frac t6}^\xi}
        \leq
        \frac{6^\xi\Eof{\ol Ym^\xi}}{t^\xi}
        \eqcm
    \end{equation}
    so that, with
    \begin{equation}
        \tilde C := 2 \cdot 6^{\frac{\xi}3} \Eof{\ol Ym^\xi}^{\frac13}
        \eqcm
        \qquad
        r_1 := \max\br{3R,\ \br{2\tilde C}^{\frac3\xi}}
        \eqcm
        \qquad
        r_0 := 2 r_1
        \eqcm
    \end{equation}
    we have $2\PrOf{\ol Ym > \frac t6}^{\frac13} \leq \tilde C t^{-\frac\xi3} \leq \frac12$ for all
    $t \geq r_1$. In particular $\PrOf{\ol Ym > \frac t6} \leq \frac1{64}$, so
    \cref{cor:tailbound} applies with $r = \frac t6 \geq \frac R2$ and yields
    \begin{equation}\label{eq:tail:t}
        \PrOf{\ol m{m_n} > t}
        \leq
        \br{2\PrOf{\ol Ym > \frac t6}^{\frac13}}^n
        \leq
        \br{\tilde C t^{-\frac{\xi}3}}^n
    \end{equation}
    for all $t \geq r_1$. Set $C' := \tilde C r_1^{-\frac{\xi}3} \leq \frac12$.
    Note that $\tilde C$, and hence $r_1$, $r_0$, and $C'$, depend only on $R$, $\xi$, and the
    distribution of $Y$; in particular they do not depend on $\zeta$.
    
    Since $\ol m{m_n} \geq 0$, the layer-cake formula gives, for any $\zeta \in \Rpp$,
    \begin{equation}
        \EOf{\ol m{m_n}^\zeta \indOfOf{[r_0,\infty)}{\ol m{m_n}}}
        =
        r_0^\zeta \PrOf{\ol m{m_n} \geq r_0}
        +
        \int_{r_0}^\infty \zeta t^{\zeta-1} \PrOf{\ol m{m_n} > t}\,\dl t
        \eqfs
    \end{equation}
    Since $r_0 > r_1$, the first term is controlled by \eqref{eq:tail:t} at $t = r_1$,
    \begin{equation}
        \PrOf{\ol m{m_n} \geq r_0}
        \leq
        \PrOf{\ol m{m_n} > r_1}
        \leq
        \br{\tilde C r_1^{-\frac\xi3}}^n
        =
        {C'}^n
        \eqcm
    \end{equation}
    and the second by \eqref{eq:tail:t} on $[r_0,\infty)$. As
    $n_0 \geq 3\zeta\xi^{-1} + \max(1,\xi^{-1})$, we have $\zeta - \frac{\xi n}3 < 0$
    for $n \geq n_0$, so the integral converges and, using
    $\tilde C r_0^{-\frac\xi3} \leq \tilde C r_1^{-\frac\xi3} = C'$,
    \begin{align}
        \int_{r_0}^\infty \zeta t^{\zeta-1} \PrOf{\ol m{m_n} > t}\,\dl t
        &\leq
        \zeta \tilde C^n \int_{r_0}^\infty t^{\zeta-1-\frac{\xi n}3}\,\dl t
        =
        \frac{3\zeta}{\xi n - 3\zeta} r_0^{\zeta} \br{\tilde C r_0^{-\frac\xi3}}^n
        \leq
        \frac{3\zeta}{\xi n - 3\zeta} r_0^\zeta {C'}^n
        \eqfs
    \end{align}
    Combining the two terms, for all $n \geq n_0$,
    \begin{equation}
        \EOf{\ol m{m_n}^\zeta \indOfOf{[r_0,\infty)}{\ol m{m_n}}}
        \leq
        \br{1 + \frac{3\zeta}{\xi n - 3\zeta}} r_0^\zeta {C'}^n
        \leq
        \br{1 + 3\zeta} r_0^\zeta {C'}^n
        \leq
        \br{1 + 3\zeta} r_0^\zeta 2^{-n}
        \eqfs
    \end{equation}
    Here we used that $n_0 \geq 3\zeta\xi^{-1} + \max(1,\xi^{-1})$ yields
    $\xi n - 3\zeta \geq \xi n_0 - 3\zeta \geq \max(\xi,1) \geq 1$, hence
    $\frac{3\zeta}{\xi n - 3\zeta} \leq 3\zeta$, and that $C' \leq \frac12$.
    This is the claim with $C_1 := (1+3\zeta) r_0^\zeta$ and $C_2 := \ln 2$.
\end{proof}
\begin{lemma}\label{lmm:posddrtran:event}
    Assume $\lim_{x\to\infty}\dtran(x) < \infty$.
    Assume $\xi \in \Rpp$ exists with $\Eof{\ol Ym^\xi} < \infty$. 
    For $r\in \Rpp$, $\eta \in [0,1]$, and $n \in \N$, define the events
    \begin{align}
        A = A_{r,\eta,n} &:= \cb{\frac1n \sum_{j=1}^n \ind_{[0, r]}(\ol {Y_j}{m}) \geq \eta}\eqcm
        &
        A^i = A^i_{r,\eta,n} &:= \cb{\frac1n \sum_{j=1}^n \ind_{[0, r]}(\ol {Y_j^i}{m}) \geq \eta}\eqcm
        \\
        B = B_{r,n} &:= \cb{\ol m{m_n} \leq r}\eqcm
        &
        B^i = B^i_{r,n} &:= \cb{\ol m{m_n^i} \leq r}\eqfs
    \end{align}
    Let $\eta_0\in[0,1)$.
    Then there are $r_0\in\Rpp$ and $n_0\in\N$ large enough with the following property:
    For all $r \geq r_0, n\geq n_0$, we have
    \begin{equation}
        \PrOf{(A_{r,\eta_0,n} \cap B_{r,n} \cap A^i_{r,\eta_0,n} \cap B^i_{r,n})\compl} \leq \exp(-cn)
        \eqcm
    \end{equation}
    where $c\in\Rpp$ does not depend on $n$.
\end{lemma}
\begin{proof}
    Set
    \begin{equation}
        \Omega^i := A \cap B \cap A^i \cap B^i\eqfs
    \end{equation}
    For the probability of the complement of $\Omega^i$, we use
    \begin{align}
        \PrOf{(\Omega^i)\compl}
        &\leq
        \PrOf{A\compl} +  \PrOf{B\compl} + \PrOf{(A^i)\compl} + \PrOf{(B^i)\compl}
        \\&=
        2\PrOf{A\compl} + 2\PrOf{B\compl}
        \eqfs
    \end{align}
    Set $M := \Eof{\ol Ym^\xi} < \infty$ and $q_r := \PrOf{\ol Ym > r}$, so that
    $q_r \leq M r^{-\xi}$ by Markov's inequality. In particular $q_r < \frac13$ for all
    $r$ large enough, so \cref{cor:tailbound} applies and yields $r_1 \in \Rpp$ with
    \begin{equation}
        \PrOf{B\compl} = \PrOf{\ol m{m_n} > r}
        \leq
        \br{2 q_r^{\frac13}}^n
        \leq
        \br{C_1 r^{-\frac{\xi}3}}^n
        \eqcm
        \qquad
        C_1 := 2 M^{\frac13}
        \eqcm
    \end{equation}
    for all $r \geq r_1$.
    For event $A$, set $\rho_r := \PrOf{\ol {Y}{m} \leq r}$ and $\rho_0 := \frac{1+\eta_0}2$.
    As $\lim_{r\to\infty}\rho_r = 1$ and $\eta_0 < 1$, there is $r_2 \in \Rpp$ with
    $\rho_r \geq \rho_0 > \eta_0$ for all $r \geq r_2$.
    Since $\rho \mapsto \kullback(\eta_0, \rho)$ is increasing on $[\eta_0, 1]$,
    \cref{prp:chernoff:addi} applied with $t = \eta_0$ yields
    \begin{equation}
        \PrOf{A\compl}
        =
        \PrOf{\frac1n \sum_{j=1}^n \ind_{[0, r]}(\ol {Y_j}{m}) < \eta_0}
        \leq
        \exp\brOf{-n \kullback(\eta_0, \rho_r)}
        \leq
        \exp(-c_2 n)
        \eqcm
    \end{equation}
    where $c_2 := \kullback\brOf{\eta_0, \frac{1+\eta_0}2} > 0$ depends only on $\eta_0$.
    Finally, choose $r_0 \geq \max\brOf{r_1, r_2, (2C_1)^{3/\xi}}$, so that
    $C_1 r^{-\xi/3} \leq \frac12$ for all $r \geq r_0$. Then
    \begin{equation}
        \PrOf{(\Omega^i)\compl}
        \leq
        4 \exp(-c' n)
        \eqcm
        \qquad
        c' := \min\brOf{\log 2, c_2}
        \eqcm
    \end{equation}
    and hence $\PrOf{(\Omega^i)\compl} \leq \exp(-cn)$ with $c := \frac12 c'$ for all
    $n \geq n_0 := \lceil 2\log(4)/c' \rceil$ and $r \geq r_0$.
\end{proof}
\begin{proof}[Proof of \cref{prp:posddrtran:Vn}]
    \cref{lmm:variancebound} together with $\dtran(x) \leq D$ shows
    \begin{equation}
        V_n \leq \frac{D}n \sum_{i=1}^n \EOf{\ol {m_n}{m_n^i}}
        \eqfs
    \end{equation}
    Set
    \begin{equation}
        \tilde H_i := \frac1n \sum_{j=1}^n \br{\ddrtran(\ol {Y_j}{m_n} + \ol {m_n}{m_n^{i}}) + \ddrtran(\ol {Y_j^{i}}{m_n^{i}} + \ol {m_n}{m_n^{i}})}
        \eqfs
    \end{equation}
    Then \cref{lmm:mnmnibound} implies
    \begin{equation}
        \ol {m_n}{m_n^{i}} \leq \frac {4 D}n \tilde H_i^{-1}
        \eqfs
    \end{equation}
    We now need to find a suitable bound on
    \begin{align}
        \tilde H_i^{-1} 
        &= 
        \br{\frac1n \sum_{j=1}^n \br{\ddrtran(\ol {Y_j}{m_n} + \ol {m_n}{m_n^{i}}) + \ddrtran(\ol {Y_j^{i}}{m_n^{i}} + \ol {m_n^{i}}{m_n})}}^{-1}
        \\&\leq
        \br{\frac1n \sum_{j=1}^n \br{\ddrtran(\ol {Y_j}{m} + 2\,\ol m{m_n} + \ol {m}{m_n^{i}}) + \ddrtran(\ol {Y_j^{i}}{m} +  \ol m{m_n} + 2\,\ol {m}{m_n^{i}})}}^{-1}
        \eqfs
    \end{align}
    Let $r\in \Rpp$ and $\eta \in (0,1)$. Define the events
    \begin{align}
        A &:= \cb{\frac1n \sum_{j=1}^n \ind_{[0, r]}(\ol {Y_j}{m}) \geq \eta}\eqcm
        &
        A^i &:= \cb{\frac1n \sum_{j=1}^n \ind_{[0, r]}(\ol {Y_j^i}{m}) \geq \eta}\eqcm
        \\
        B &:= \cb{\ol m{m_n} \leq r}\eqcm
        &
        B^i &:= \cb{\ol m{m_n^i} \leq r}
    \end{align}
    and
    \begin{equation}
        \Omega^i := A \cap B \cap A^i \cap B^i\eqfs
    \end{equation}
    On $\Omega^i$, we have
    \begin{equation}
        \tilde H_i^{-1} \leq \br{2\eta\ddrtran(4r)}^{-1}
        \eqfs
    \end{equation}
    We split $V_n$ on $\Omega^i$ as follows
    \begin{equation}
        V_n
        \leq 
        \frac D{n}\sum_{i=1}^n \EOf{\ol{m_n}{m_n^{i}}}
        =
        \frac D{n}\sum_{i=1}^n \EOf{\ol{m_n}{m_n^{i}} \ind_{\Omega^i}}
        +
        \frac D{n}\sum_{i=1}^n \EOf{\ol{m_n}{m_n^{i}} \ind_{(\Omega^i)\compl}}
        \eqfs
    \end{equation}
    For the first term, we have already shown
    \begin{equation}
        \EOf{\ol{m_n}{m_n^{i}} \ind_{\Omega^i}}
        \leq 
        \frac{4D}{n}\EOf{\tilde H_i^{-1} \ind_{\Omega^i}}
        \leq 
        \frac{2D}{\eta\ddrtran(4r)n}
        \eqfs
    \end{equation}
    Hence, 
    \begin{equation}
        \frac D{n}\sum_{i=1}^n \EOf{\ol{m_n}{m_n^{i}} \ind_{\Omega^i}} \leq \frac{2D^2}{\eta\ddrtran(4r)n}
        \eqfs
    \end{equation}
    For the second term, we use the triangle inequality and Cauchy--Schwarz and obtain
    \begin{align}
        \EOf{\ol{m_n}{m_n^{i}} \ind_{(\Omega^i)\compl}}
        &\leq 
        \EOf{\ol{m}{m_n} \ind_{(\Omega^i)\compl}}
        +
        \EOf{\ol{m}{m_n^{i}} \ind_{(\Omega^i)\compl}}
        \\&=
        2 \EOf{\ol{m}{m_n} \ind_{(\Omega^i)\compl}}
        \\&\leq 
        2 \br{\EOf{\ol{m}{m_n}^2} \PrOf{(\Omega^i)\compl}}^{\frac12}
        \eqfs
    \end{align}
    To finish the proof, we need to show that $\EOf{\ol{m}{m_n}^2}$ can be bounded by a constant $\tilde C\in\Rpp$ and the probability decreases exponentially in $n$, i.e., $\PrOf{(\Omega^i)\compl} \leq \exp(-cn)$ with $c\in\Rpp$.
    This is proven in \cref{lmm:posddrtran:expect,lmm:posddrtran:event}.
    
    Putting everything together, we get, for $r\in\Rpp$ and $n\in\N$ large enough and a fixed $\eta\in(0,1)$ (chosen to satisfy the conditions of \cref{lmm:posddrtran:expect,lmm:posddrtran:event}),
    \begin{equation}
        V_n
        \leq 
        \frac{2D^2}{\eta\ddrtran(4r)n}
        +
        2 D \br{\tilde C \exp(-cn)}^{\frac12}
        \eqfs
    \end{equation}
    Thus, there is $C \in \Rpp$ such that
    \begin{equation}
        V_n \leq Cn^{-1}
    \end{equation}
    for all $n \geq n_0$.
\end{proof}
\begin{proof}[Proof of \cref{thm:posddrtran:main}]
    By the minimizing property of $m_n$, we have
    \begin{equation}
        V_n
        \geq 
        \EOf{\tran(\ol Y{m_n})-\tran(\ol Ym)}
        \eqfs
    \end{equation}
    Let $r \in \Rpp$ with $\Prof{\ol Ym \leq r}>0$. We combine \cref{thm:infdtr:vi} \ref{thm:infdtr:vi:quantile} using $\ddrtran(2r) >0$ with \cref{prp:posddrtran:Vn} to obtain the following: There are $n_0\in\N$, $C_1 \in\Rpp$ depending only on $\tran$, $r$, $\xi$, and the distribution of $Y$, such that
    \begin{equation}
        \EOf{\ol m{m_n}^2 \indOfOf{[0, r]}{\ol m{m_n}}} \leq C_1 n^{-1}
    \end{equation}
    for all $n \geq n_0$. Moreover, \cref{lmm:posddrtran:expect} shows that there are $r_0, C_2\in\Rpp$ depending only on $\tran$, $\xi$, and the distribution of $Y$, such that
    \begin{equation}
        \EOf{\ol m{m_n}^2 \indOfOf{(r_0, \infty)}{\ol m{m_n}}} = \mo O(\exp(-C_2 n))
        \eqfs
    \end{equation}
    Thus, if $r$ is chosen so that $r\geq r_0$, we obtain 
    \begin{equation}
        \EOf{\ol m{m_n}^2} = \mo O\brOf{\frac1n}
        \eqfs
    \end{equation}
\end{proof}
To prove the \cref{thm:median:main}, we first need some additional lower bounds on $\EOf{\ol Yq - \ol Ym}$, i.e., variance inequalities. 
\begin{lemma}\label{lmm:median:vi:specific}
    Let $\tran(x)=x$ so that $m$ is a Fr\'echet median. 
    \begin{enumerate}[label=(\roman*)]
        \item
        Let $r \in \Rpp$ such that $\PrOf{\ol Ym > r} < \frac1{27}$. 
        Set $R := 6 r$.
        Then, for all $q \in \ballclosed(m, R)\compl$, we have
        \begin{equation}
            \EOf{\ol Yq - \ol Ym}  \geq \frac35 \ol qm
            \eqfs
        \end{equation}
        \item
        Let $R \in \Rpp$ and $w \in [0,1]$. Set
        \begin{equation}
            \rho := \inf_{p \in \ballclosed(m, R)} \PrOf{Y \in \bowtiesetcompl(m, p, w)}
            \eqfs
        \end{equation}
        Let $\tilde\chi  \in \Rpp$ such that $\PrOf{\ol Ym \leq \tilde\chi} \geq 1 - \frac12\rho$. Then, for all  $q \in \ballclosed(m, R)$,
        \begin{equation}
            \EOf{\ol Yq - \ol Ym} 
            \geq 
            \frac{\rho w^2}{4(\tilde\chi + R)}  \,\ol qm^2
            \eqfs
        \end{equation}
    \end{enumerate}
\end{lemma}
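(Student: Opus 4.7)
The plan is to treat the two parts separately, since they rely on different mechanisms.

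For part (i), I would exploit that $\tran(x)=x$ makes $q \mapsto \ol Yq$ $1$-Lipschitz. The reverse triangle inequality yields $\ol Yq - \ol Ym \geq \ol qm - 2\ol Ym$, so on $\{\ol Ym \leq r\}$ we have $\ol Yq - \ol Ym \geq \ol qm - 2r$, while the trivial bound $\ol Yq - \ol Ym \geq -\ol qm$ always holds. Setting $\rho_r := \PrOf{\ol Ym > r}$ and splitting the expectation on $\{\ol Ym \leq r\}$ and its complement gives
\begin{equation*}
    \EOf{\ol Yq - \ol Ym} \geq (\ol qm - 2r)(1 - \rho_r) - \ol qm\, \rho_r = \ol qm(1 - 2\rho_r) - 2r(1 - \rho_r).
\end{equation*}
For $q \in \ball(m, R)\compl$ with $R = 6r$ we have $2r \leq \tfrac{1}{3}\ol qm$, so the right-hand side is bounded below by $\tfrac{1}{3}\ol qm(2 - 5\rho_r)$. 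Plugging in $\rho_r \leq \tfrac{1}{27}$ yields $\EOf{\ol Yq - \ol Ym} \geq \tfrac{49}{81}\ol qm \geq \tfrac{3}{5}\ol qm$, as claimed.

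For part (ii), the starting point is \cref{thm:median:vi}, which gives
\begin{equation*}
    \EOf{\ol Yq - \ol Ym} \geq \tfrac{1}{2} w^2 \ol qm^2\, \EOf{(\ol Ym + \ol qm)^{-1}\, \indOf{\bowtiesetcompl(m, q, w)}(Y)}.
\end{equation*}
My plan is to restrict the inner expectation to the event $A := \{\ol Ym \leq \tilde\chi\} \cap \{Y \in \bowtiesetcompl(m, q, w)\}$. Since $q \in \ball(m, R)$, on $A$ we have $\ol Ym + \ol qm \leq \tilde\chi + R$, which gives the pointwise bound $(\ol Ym + \ol qm)^{-1} \geq (\tilde\chi + R)^{-1}$. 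For the probability of $A$, the definition of $\rho$ together with $q \in \ball(m, R)$ gives $\PrOf{Y \in \bowtiesetcompl(m, q, w)} \geq \rho$, and the hypothesis on $\tilde\chi$ gives $\PrOf{\ol Ym > \tilde\chi} \leq \rho/2$; a one-line union bound then yields $\PrOf{A} \geq \rho/2$, and substituting into the displayed bound delivers the claim.

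Neither step presents a serious technical obstacle: part (i) is a $1$-Lipschitz plus tail computation, while part (ii) is essentially a clean application of the bowtie variance inequality already established in \cref{thm:median:vi}. The only delicacy is the constant bookkeeping — in particular verifying that the $\tfrac{1}{27}$ tail threshold in part (i) is sharp enough to beat the $\tfrac{3}{5}$ constant on the right-hand side (indeed $\tfrac{49}{81} > \tfrac{3}{5}$), and allocating the factor of two in part (ii) between the bowtie probability and the $\tilde\chi$-tail.
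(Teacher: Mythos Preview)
Your proposal is correct and follows essentially the same approach as the paper's proof: for part (i) the paper also splits on $\{\ol Ym \leq r\}$ versus its complement using the reverse triangle inequality and arrives at the same $\tfrac{49}{81}$ constant, and for part (ii) the paper likewise invokes \cref{thm:median:vi}, restricts to $\{\ol Ym \leq \tilde\chi\}$, and uses the same union-bound to obtain $\rho/2$ for the intersection probability.
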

\begin{proof}
    \begin{enumerate}[label=(\roman*)]
        \item
        Let $r \in \Rpp$ and $\rho := \PrOf{\ol Ym \leq r}$. Then
        \begin{align}
            \EOf{\ol Yq - \ol Ym} 
            &= 
            \EOf{\br{\ol Yq - \ol Ym}\ind_{[0, r]}(\ol Ym)} 
            +
            \EOf{\br{\ol Yq - \ol Ym}\ind_{(r, \infty)}(\ol Ym)} 
            \\&\geq
            \EOf{\br{\ol qm - 2\ol Ym}\ind_{[0, r]}(\ol Ym)} 
            -
            \EOf{\ol qm\ind_{(r, \infty)}(\ol Ym)} 
            \\&\geq
            \br{\ol qm - 2r}\rho 
            -
            \ol qm\, (1-\rho)
            \\&=
            2\rho(\ol qm - r) - \ol qm
            \eqfs
        \end{align}
        As $\rho \geq \frac{26}{27}$, if $\ol qm \geq r$, we have
        \begin{equation}
            \EOf{\ol Yq - \ol Ym} \geq \frac{25}{27} \ol qm - \frac{52}{27} r
            \eqfs
        \end{equation}
        Thus, if $q \in \ballclosed(m, 6 r)\compl$, then
        \begin{equation}
            \EOf{\ol Yq - \ol Ym}  \geq \frac{6\cdot 25-52}{6 \cdot 27} \ol qm \geq \frac35 \ol qm
            \eqfs
        \end{equation}
        \item
        If $q \in \ballclosed(m, R)$ and $\delta \in \Rpp$, then \cref{thm:median:vi} implies
        \begin{align}
            &\EOf{\ol Yq - \ol Ym} 
            \\&\geq 
            \frac12 w^2  \,\ol qm^2\, \EOf{(\ol Ym + \ol qm)^{-1} \indOf{\bowtiesetcompl(m, q, w)}(Y) \ind_{[0,\delta]}(\ol Ym)}
            \\&\geq 
            \frac12 w^2 \,\ol qm^2\, \br{\delta+ \ol qm}^{-1} \PrOf{\cb{Y \in \bowtiesetcompl(m, q, w)}\cap \cb{\ol Ym \leq \delta}}
            \\&\geq 
            \inf_{p \in \ballclosed(m, R)}\frac12 w^2 \,\ol qm^2\, \br{\delta+ \ol qm}^{-1} \PrOf{\cb{Y \in \bowtiesetcompl(m, p, w)}\cap \cb{\ol Ym \leq \delta}}
            \\&\geq 
            \frac12 w^2 \,\ol qm^2\, \br{\delta+ \ol qm}^{-1} \br{\inf_{p \in \ballclosed(m, R)} \PrOf{Y \in \bowtiesetcompl(m, p, w)} + \PrOf{\ol Ym \leq \delta} -1}
            \eqfs
        \end{align}
        Choosing $\delta = \tilde\chi$ and using the definition of $\rho$ and $\tilde\chi$, we obtain, for all  $q \in \ballclosed(m, R)$,
        \begin{equation}
            \EOf{\ol Yq - \ol Ym} 
            \geq 
            \frac14\rho w^2 \,\ol qm^2\, \br{\tilde\chi+ \ol qm}^{-1}
            \geq 
            \frac{\rho w^2}{4(\tilde\chi+ R)}  \,\ol qm^2
            \eqfs
            \qedhere
        \end{equation}
    \end{enumerate}
\end{proof}
For the proof of \cref{thm:median:main}, we show an upper bound on the double excess risk,
\begin{equation}
    V_n := \EOf{\ol Y{m_n} - \ol Ym} + \EOf{\frac1n \sum_{i=1}^n \br{\ol {Y_i}{m} - \ol {Y_i}{m_n}} }
    \eqfs
\end{equation}
Recall that $Y, Y_1, \dots, Y_n$ are iid, so that $Y$ is independent of $m_n$.
\begin{proposition}\label{prp:median:Vn}
    Let $\tran(x)=x$ so that $m$ is a Fr\'echet median. 
    Assume $\xi \in \Rpp$ exists with $\Eof{\ol Ym^\xi} < \infty$.
    Let $r\in\Rpp$ such that $\PrOf{\ol Ym > r} < \frac1{27}$.
    Assume there are $\ell \in \N$ and $w\in(0,1]$ such that
    \begin{equation}
        \PrOf{\exists q,p \in \ballclosed(m, 6r) \colon Y_1, \dots, Y_\ell \in \bowtieset(q, p, w) \cup \ballclosed(m, 6r)\compl} < 1
        \eqfs
    \end{equation}
    Then there are $n_0 \in \N$ and $C \in \Rpp$, such that $V_n \leq C n^{-1}$ for all $n \geq n_0$.
\end{proposition}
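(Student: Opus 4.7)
My plan is to closely mirror the proof of \cref{prp:posddrtran:Vn}, substituting the bow-tie variance inequality \cref{thm:median:vi} for the one based on $\ddrtran$. Since $\dtran \equiv 1$ for the median, \cref{lmm:variancebound} immediately gives
\begin{equation}
    V_n \leq \frac{1}{n}\sum_{i=1}^n \EOf{\ol{m_n}{m_n^i}},
\end{equation}
so the task reduces to bounding $\EOf{\ol{m_n}{m_n^i}}$. For the median analog of \cref{lmm:mnmnibound}, I would apply \cref{thm:median:vi} to the empirical distribution at $m_n$ with comparison point $q = m_n^i$, and the symmetric version to $m_n^i$ with $q = m_n$; adding the two inequalities and cancelling the $j \neq i$ terms on the left yields
\begin{equation}
    \frac{w^2}{2}\, \ol{m_n}{m_n^i}^2 \, \tilde H_i^{\median} \leq \frac{1}{n}\br{\ol{Y_i}{m_n^i} - \ol{Y_i}{m_n} + \ol{Y_i^i}{m_n} - \ol{Y_i^i}{m_n^i}} \leq \frac{2}{n}\,\ol{m_n}{m_n^i},
\end{equation}
where the last step is the quadruple inequality \cref{thm:infdtr:qi} specialized to $\dtran \equiv 1$, and
\begin{equation}
    \tilde H_i^{\median} := \frac{1}{n}\sum_{j=1}^n \br{\frac{\ind_{\bowtiesetcompl(m_n, m_n^i, w)}(Y_j)}{\ol{Y_j}{m_n} + \ol{m_n}{m_n^i}} + \frac{\ind_{\bowtiesetcompl(m_n^i, m_n, w)}(Y_j^i)}{\ol{Y_j^i}{m_n^i} + \ol{m_n^i}{m_n}}}.
\end{equation}
Hence $\ol{m_n}{m_n^i} \leq 4/(nw^2\, \tilde H_i^{\median})$ on the event $\{\tilde H_i^{\median} > 0\}$.

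The key step is to lower-bound $\tilde H_i^{\median}$ uniformly on a high-probability event. Set $R := 6r$ and define $\Omega^i := \{m_n, m_n^i \in \ball(m, R)\} \cap G$, where $G$ is constructed as follows: partition $\{1,\dots,n\}$ into $K := \lfloor n/\ell \rfloor$ disjoint blocks of length $\ell$ and let $F_k$ be the event that for every $q,p \in \ball(m, R)$ some $j$ in block $k$ satisfies $Y_j \in \bowtiesetcompl(q,p,w) \cap \ball(m, R)$. By condition \eqref{eq:thm:median:main:bowtie:sample} and the iid structure of the sample, the $F_k$ are iid with probability $p_0 > 0$, and the Chernoff bound (\cref{prp:chernoff:multi}) yields $\PrOf{\#\{k : F_k\} < p_0 K/2} \leq \exp(-c n)$; take $G$ to be the complement of this rare event. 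On $\Omega^i$, for each $F_k$-good block not containing the index $i$, the witness $Y_j$ lies in $\bowtiesetcompl(m_n, m_n^i, w) \cap \ball(m, R)$, and the triangle inequality gives $\ol{Y_j}{m_n} + \ol{m_n}{m_n^i} \leq 4R$; summing over the $\Theta(n)$ such blocks yields $\tilde H_i^{\median} \geq p_0/(C_0 \ell R)$ for a numerical $C_0$ and $n$ large enough. Combined with \cref{cor:median:tailbound} applied via the hypothesis $\PrOf{\ol Ym > r} < 1/27$, which controls $\PrOf{m_n \notin \ball(m, R)}$ and $\PrOf{m_n^i \notin \ball(m, R)}$ exponentially, this gives $\PrOf{(\Omega^i)\compl} \leq \exp(-c' n)$ and $\ol{m_n}{m_n^i}\,\ind_{\Omega^i} \leq C_1/n$ for a constant $C_1$ depending only on $r, w, p_0, \ell$.

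It remains to handle the complementary event. Cauchy--Schwarz and the triangle inequality $\ol{m_n}{m_n^i}^2 \leq 2\ol{m}{m_n}^2 + 2\ol{m}{m_n^i}^2$ (with $m_n$ and $m_n^i$ identically distributed) give $\EOf{\ol{m_n}{m_n^i}\ind_{(\Omega^i)\compl}} \leq 2\EOf{\ol{m}{m_n}^2}^{1/2} \PrOf{(\Omega^i)\compl}^{1/2}$. A uniform-in-$n$ bound $\EOf{\ol{m}{m_n}^2} \leq C_2$ for $n \geq n_0$ follows by the same scheme as \cref{lmm:posddrtran:expect}, using \cref{cor:median:tailbound} in place of \cref{cor:tailbound} together with Markov's inequality applied to the finite $\gamma$-moment of $\ol Ym$. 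Combining the good- and bad-event contributions yields $V_n \leq Cn^{-1}$. The principal obstacle is carrying out step two uniformly in the random pair $(m_n, m_n^i)$: \cref{thm:median:vi} is only a pointwise statement, and its empirical analog must produce a lower bound on $\tilde H_i^{\median}$ that survives the fluctuations of $m_n$ and $m_n^i$. The saving grace is that \eqref{eq:thm:median:main:bowtie:sample} is stated so that its complement supplies a uniform witness over every $q, p \in \ball(m, R)$, which is exactly what makes the block-partition Chernoff argument go through.
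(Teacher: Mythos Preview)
Your proposal is correct and follows essentially the same route as the paper. The paper also starts from \cref{lmm:variancebound}, applies \cref{thm:median:vi} to the empirical distributions to obtain $\ol{m_n}{m_n^i} \leq \tfrac{4}{w^2 n} H_i^{-1}$ (with a one-sided $H_i$ rather than your symmetric $\tilde H_i^{\median}$, but this is immaterial), restricts to the event $\{m_n,m_n^i\in\ball(m,R)\}$ intersected with a ``many good blocks'' event, and handles $(\Omega^i)\compl$ via Cauchy--Schwarz together with \cref{lmm:posddrtran:expect}. The paper factors the uniform lower bound on $H_i$ into a separate lemma (\cref{lmm:medi:Gamma}) whose proof is precisely the block-partition Chernoff argument you describe inline; the only cosmetic difference is that the paper first boosts $\ell$ to a larger $\tilde\ell$ so the per-block success probability exceeds $15/16$, whereas you apply the multiplicative Chernoff bound directly at probability $p_0$.
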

\begin{proof}
    \cref{lmm:variancebound} shows
    \begin{equation}
        V_n \leq \frac1n \sum_{i=1}^n \EOf{\ol {m_n}{m_n^i}}
        \eqfs
    \end{equation}
    Set
    \begin{equation}
        H_i := \frac1n \sum_{j=1}^n (\ol {Y_j}{m_n} + \ol {m_n}{m_n^{i}})^{-1} \indOf{\bowtiesetcompl(m_n, m_n^i, w)}(Y_j)
        \eqfs
    \end{equation}
    Following the proof of \cref{lmm:mnmnibound} while using the variance inequality \cref{thm:median:vi} instead of \cref{thm:infdtr:vi} and omitting the positive term $\ol {Y_j^i}{m_n^i} + \ol {m_n}{m_n^{i}}$ implies
    \begin{equation}\label{eq:medi:mnmni:bound}
        \ol {m_n}{m_n^{i}} \leq \frac {4}{w^2 n} H_i^{-1}
        \eqfs
    \end{equation}
    Set $R := 6r$.
    Conditional on $m_n, m_n^i \in \ballclosed(m, R)$, we have
    \begin{align}
        H_i^{-1} 
        &\leq
        \sup_{q,p\in \ballclosed(m, R)}\br{\frac1n \sum_{j=1}^n (\ol {Y_j}{m_n} + \ol {m_n}{m_n^{i}})^{-1}\indOf{\bowtiesetcompl(q, p, w)}(Y_j)}^{-1}
        \\&\leq
        \sup_{q,p\in \ballclosed(m, R)}\br{\frac1n \sum_{j=1}^n (4R)^{-1}\indOf{\bowtiesetcompl(q, p, w)}(Y_j)\indOf{[0,R]}(\ol {Y_j}m)}^{-1}
        \\&\leq
        4 R \br{\inf_{q,p\in \ballclosed(m, R)} \frac1n \sum_{j=1}^n \indOf{\bowtiesetcompl(q, p, w)}(Y_j)\indOf{[0,R]}(\ol {Y_j}m)}^{-1}
        \eqfs
    \end{align}
    Let $\eta \in (0,1]$ to be specified later. Define the events
    \begin{align}
        B &:= \cb{\ol m{m_n} \leq R}\eqcm
        &
        \Gamma &:= \cb{\inf_{q,p\in \ballclosed(m, R)} \frac1n\sum_{j=1}^n \indOf{\bowtiesetcompl(q, p, w)}(Y_j)\indOf{[0,R]}(\ol {Y_j}m) \geq \eta}\eqcm
        \\
        B^i &:= \cb{\ol m{m_n^i} \leq R}\eqcm
        & 
        \Gamma^i &:= \cb{\inf_{q,p\in \ballclosed(m, R)} \frac1n\sum_{j=1}^n \indOf{\bowtiesetcompl(q, p, w)}(Y_j^i)\indOf{[0,R]}(\ol {Y_j^i}m) \geq \eta}
        \eqfs
    \end{align}
    Finally, denote the intersection of these events as
    \begin{equation}
        \Omega^i := B \cap B^i \cap \Gamma \cap \Gamma^i \eqfs
    \end{equation}
    On $\Omega^i$, we have
    \begin{equation}\label{eq:medi:Hiinv:bound}
        H_i^{-1} \leq
        \frac {4R}{\eta}
        \eqfs
    \end{equation}
    We split $V_n$ on $\Omega^i$ as follows
    \begin{equation}
        V_n
        \leq 
        \frac 1{n}\sum_{i=1}^n \EOf{\ol{m_n}{m_n^{i}}}
        =
        \frac 1{n}\sum_{i=1}^n \EOf{\ol{m_n}{m_n^{i}} \ind_{\Omega^i}}
        +
        \frac 1{n}\sum_{i=1}^n \EOf{\ol{m_n}{m_n^{i}} \ind_{(\Omega^i)\compl}}
        \eqfs
    \end{equation}
    For the first term, \eqref{eq:medi:mnmni:bound} and \eqref{eq:medi:Hiinv:bound} imply
    \begin{equation}
        \EOf{\ol{m_n}{m_n^{i}} \ind_{\Omega^i}}
        \leq 
        \frac{4}{w^2 n}\EOf{H_i^{-1} \ind_{\Omega^i}}
        \leq 
        \frac{16R}{w^2 \eta n}
        \eqfs
    \end{equation}
    Hence, 
    \begin{equation}
        \frac 1{n}\sum_{i=1}^n \EOf{\ol{m_n}{m_n^{i}} \ind_{\Omega^i}}
        \leq
        \frac{16R}{w^2 \eta n}
        \eqfs
    \end{equation}
    For the second term, we use the triangle inequality and Cauchy--Schwarz to obtain
    \begin{align}
        \EOf{\ol{m_n}{m_n^{i}} \ind_{(\Omega^i)\compl}}
        &\leq 
        \EOf{\ol{m}{m_n} \ind_{(\Omega^i)\compl}}
        +
        \EOf{\ol{m}{m_n^{i}} \ind_{(\Omega^i)\compl}}
        \\&=
        2 \EOf{\ol{m}{m_n} \ind_{(\Omega^i)\compl}}
        \\&\leq 
        2 \br{\EOf{\ol{m}{m_n}^2} \PrOf{(\Omega^i)\compl}}^{\frac12}
        \eqfs
    \end{align}
    To finish the proof, we show that $\Eof{\ol{m}{m_n}^2}$ can be bounded by a constant $\tilde C\in\Rpp$ (\cref{lmm:posddrtran:expect}) and the probability decreases exponentially in $n$, i.e., $\Prof{(\Omega^i)\compl} \leq \exp(-cn)$ with $c\in\Rpp$.
    
    We say that an event $\mc E_n$ depending on $n\in\N$ happens \emph{with high probability} or for short \emph{whp}, if there are $n_0 \in\N$ and $c \in \Rpp$ not depending on $n$ such that $\Prof{\mc E_n\compl} \leq \exp(-c n)$ for all $n \geq n_0$. Note that if $\mc E_n, \tilde{\mc E}_n$ happen whp, then $\mc E_n \cap\tilde{\mc E}_n$ whp.
    
    \cref{cor:median:tailbound} implies
    \begin{equation}
        \PrOf{\ol m{m_n} > R} \leq \br{2\Prof{\ol Ym > r}^{\frac13}}^n \leq \br{\frac23}^n
        \eqfs
    \end{equation}
    Thus, $B$ and $B^i$ whp.
    \cref{lmm:medi:Gamma} below shows that we can choose $\eta\in\Rpp$ so that $\Gamma$, $\Gamma^i$ whp. Thus, $\Omega^i$ whp.
    Hence, there are $n_0, C, c \in \Rpp$ such that for all $n \geq n_0$, we have
    \begin{equation}
        V_n
        \leq 
        \frac{16R}{w^2 \eta n}
        +
        2 \br{\tilde C \exp(-c n)}^{\frac12}
        \leq 
        C n^{-1}
        \eqfs
        \qedhere
    \end{equation}
\end{proof}
\begin{lemma}\label{lmm:medi:Gamma}
    Let $R\in\Rpp$.
    Assume there are $\ell \in \N$ and $w\in[0,1]$ such that
    \begin{equation}\label{eq:lmm:medi:Gamma:assu}
        \PrOf{\exists q,p \in \ballclosed(m, R) \colon Y_1, \dots, Y_\ell \in \bowtieset(q, p, w) \cup \ballclosed(m, R)\compl} < 1
        \eqfs
    \end{equation}
    For $\eta\in[0,1]$ and $n\in\N$, define
    \begin{equation}
        \Gamma_{n,\eta} := \cb{\inf_{q,p\in\ballclosed(m, R)} \frac1n\sum_{j=1}^n \indOf{\bowtiesetcompl(q, p, w)}(Y_j)\indOf{[0, R]}(\ol {Y_j}m) \geq \eta}
        \eqfs
    \end{equation}
    Then, there are $n_0 \in \N$, $\eta\in(0,1]$, and $c \in \Rpp$ such that
    \begin{equation}
        \PrOf{\Gamma_{n,\eta}\compl} 
        \leq 
        \exp(-c n)
    \end{equation}
    for all $n \geq n_0$.
\end{lemma}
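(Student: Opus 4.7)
The plan is a block argument that converts the hypothesis \eqref{eq:lmm:medi:Gamma:assu}, a joint statement about exactly $\ell$ samples, into exponential concentration for $n$ samples. Partition $\intset{n}$ into $k := \lfloor n/\ell\rfloor$ disjoint blocks $B_1,\dots,B_k$ of size $\ell$ (discarding any leftover indices), and define $X_b := 1$ if there exist $q,p\in\ball(m,R)$ with $Y_i\in\bowtieset(q,p,w)\cup\ball(m,R)\compl$ for all $i\in B_b$, and $X_b := 0$ otherwise. Because the blocks are disjoint, $X_1,\dots,X_k$ are iid Bernoulli variables, and by \eqref{eq:lmm:medi:Gamma:assu} their common mean is some $\mu' \leq \mu < 1$, where $\mu$ denotes the probability in the hypothesis.

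The key deterministic step is to translate $\Gamma_{n,\eta}\compl$ into a lower bound on $\sum_b X_b$. On $\Gamma_{n,\eta}\compl$ there exist (random) $q^\ast,p^\ast\in\ball(m,R)$ with $|S|<\eta n$, where $S := \setByEle{i\in\intset{n}}{Y_i\in\bowtiesetcompl(q^\ast,p^\ast,w)\cap\ball(m,R)}$. Any block $B_b$ disjoint from $S$ has all its members in $\bowtieset(q^\ast,p^\ast,w)\cup\ball(m,R)\compl$ and therefore contributes $X_b=1$; since the blocks are disjoint, at most $|S|<\eta n$ of them can intersect $S$. Using $n\leq (k+1)\ell$, this gives the deterministic inclusion
\[ \Gamma_{n,\eta}\compl \subseteq \cbOf{\sum_{b=1}^k X_b \geq (1-\eta\ell)k - \eta\ell}. \]

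Now choose $\eta := (1-\mu)/(2\ell)\in(0,1]$, so that $1-\eta\ell = (1+\mu)/2 > \mu$, and fix any $t\in(\mu,(1+\mu)/2)$. For $n$ (and hence $k$) large enough we have $(1-\eta\ell) - \eta\ell/k \geq t$, so the additive Chernoff bound \cref{prp:chernoff:addi}, applied after stochastic domination of the $X_b$ by iid $\mathrm{Bernoulli}(\mu)$ variables, yields
\[ \PrOf{\Gamma_{n,\eta}\compl} \leq \exp(-k\,\kullback(t,\mu)) \leq \exp(-cn) \]
for a positive constant $c$ depending only on $\mu$ and $\ell$, once $n\geq n_0$ is large enough.

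There is no serious obstacle here: the deterministic step is routine counting, and the Chernoff step is standard. The only mild nuisance is the $\eta\ell$ remainder coming from $n$ not being an exact multiple of $\ell$, which is harmlessly absorbed into $n_0$; measurability of the existential event is implicit in \eqref{eq:lmm:medi:Gamma:assu}.
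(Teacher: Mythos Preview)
Your proof is correct and follows the same block-plus-Chernoff strategy as the paper's; the only difference is that the paper first amplifies $\ell$ to a larger $\tilde\ell$ so that the block failure probability drops below $1/16$ (allowing it to invoke the crude bound $(2\rho^{t})^n$ from \cref{prp:chernoff:addi}), whereas you apply the KL-form of the Chernoff bound directly with blocks of size $\ell$, which is slightly more streamlined. Note that your stochastic-domination step is unnecessary: since each block contains exactly $\ell$ iid copies of $Y$, the $X_b$ are already iid $\mathrm{Bernoulli}(\mu)$ with $\mu'=\mu$.
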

\begin{proof}
    Because of \eqref{eq:lmm:medi:Gamma:assu}, we can choose $\tilde \ell \in\N$ large enough so that
    \begin{align}
        &\PrOf{\exists q,p \in \ballclosed(m, R) \colon Y_1, \dots, Y_{\tilde\ell} \in \bowtieset(q, p, w) \cup \ballclosed(m, R)\compl}
        \\&\leq
        \PrOf{\exists q,p \in \ballclosed(m, R) \colon Y_1, \dots, Y_{\ell} \in \bowtieset(q, p, w) \cup \ballclosed(m, R)\compl}^{\tilde\ell/\ell-1}
        \\&\leq
        \frac1{16}
        \eqfs
    \end{align}
    Set $n_0 := 6 \tilde\ell^2$. Let $n \geq n_0$.
    Let $K\in\N$ be the largest integer so that $n \geq K\tilde\ell$.
    For $k\in\{1,\dots,K\}$, define the events
    \begin{equation}
        G_k := \cb{\exists  q,p\in\ballclosed(m, R) \colon Y_{\tilde\ell(k-1)+1}, \dots,  Y_{\tilde\ell k} \in \bowtieset(q, p, w)  \cup \ballclosed(m, R)\compl}
        \eqfs
    \end{equation}
    Set
    \begin{equation}
        N := \left\lceil\frac{1}{2} K + (\tilde \ell-1)(K+1) \right\rceil
        \eqfs
    \end{equation}
    For $q,p\in\mc Q$, the event
    \begin{equation}
        \sum_{j=1}^n \indOf{\bowtieset(q, p, w)\cup \ballclosed(m, R)\compl}(Y_j) \geq N 
    \end{equation}
    implies that at least $N - (\tilde\ell-1) K - (n - K\tilde\ell)$ of the events $G_k$, $k = 1,\dots, K$ must occur.
    Set $\eta := \frac1{3 \tilde\ell}$. Then $1-\eta \geq N/n$ as
    \begin{equation}
        \frac{N}{n}
        \leq
        \frac{(\tilde \ell-\frac12) K + \tilde\ell}{K \tilde\ell}
        = 
        \frac{\tilde\ell-\frac12}{\tilde\ell} + \frac1K
        \leq
        1-\eta
        \eqcm
    \end{equation}
    since $n\geq n_0$ implies $K \geq 6\tilde\ell$.
    Hence,
    \begin{align}
        \PrOf{\Gamma_{n,\eta}\compl}
        &=
        \PrOf{\cb{\forall q,p \in\ballclosed(m, R) \colon \frac1n\sum_{j=1}^n \indOf{\bowtiesetcompl(q, p, w)}(Y_j)\indOf{[0, R]}(\ol {Y_j}m) \geq \eta}\compl}
        \\&=
        \PrOf{\exists q,p \in\ballclosed(m, R) \colon \frac1n\sum_{j=1}^n \indOf{\bowtiesetcompl(q, p, w)}(Y_j)\indOf{[0, R]}(\ol {Y_j}m) < \eta}
        \\&=
        \PrOf{\exists q,p \in\ballclosed(m, R) \colon \frac1n\sum_{j=1}^n \indOf{\bowtieset(q, p, w)\cup\ballclosed(m, R)\compl}(Y_j) > 1-\eta}
        \\&\leq  
        \PrOf{\exists  q,p\in\ballclosed(m, R) \colon\sum_{j=1}^n \indOf{\bowtieset(q, p, w)\cup \ballclosed(m, R)\compl}(Y_j) \geq N}
        \\&\leq 
        \PrOf{\sum_{k=1}^{K} \ind_{G_k} \geq N - (\tilde\ell-1)(K+1)}
        \\&\leq
        \PrOf{\frac{1}{K}\sum_{k=1}^{K} \ind_{G_k} \geq \frac12}
        \eqcm
    \end{align}
    as
    \begin{equation}
        N - (\tilde\ell-1) K - (n - K\tilde\ell) \geq N - (\tilde\ell-1)(K+1) 
        \geq
        \frac{1}{2} K
        \eqfs
    \end{equation}
    As the events $G_k$ are iid with $\rho_G := \PrOf{G_1} \leq \frac1{16}$, 
    \cref{prp:chernoff:addi} together with the identity 
    $\kullback\brOf{\frac12, \rho} = \frac12 \log\brOf{\frac{1}{4\rho(1-\rho)}}$
    and the monotonicity of $\rho \mapsto \rho(1-\rho)$ on $\br{0,\frac12}$ yield
    \begin{equation}
        \PrOf{\frac{1}{K}\sum_{k=1}^{K} \ind_{G_k} \geq \frac12}
        \leq
        \exp\brOf{-K \kullback\brOf{\tfrac12, \rho_G}}
        =
        \br{2\sqrt{\rho_G(1-\rho_G)}}^{K}
        \leq
        \br{\frac{\sqrt{15}}{8}}^{K}
        \leq
        \br{\frac12}^{K}
        \eqfs
    \end{equation}
    We arrive at
    \begin{equation}
        \PrOf{\Gamma_{n,\eta}\compl} \leq 2^{-K} \leq \exp\brOf{-\log(2) \br{\frac{n}{\tilde \ell} - 1}} \leq \exp(-cn)
    \end{equation}
    with $c = \frac12\log(2)/\tilde\ell$ as $n \geq 2 \tilde \ell$.
\end{proof}
\begin{proof}[Proof of \cref{thm:median:main}]
    By the minimizing property of $m_n$ and \cref{prp:median:Vn}, there are $n_0 \in \N$ and $C \in \Rpp$, such that
    \begin{equation}
        \EOf{\ol Y{m_n}-\ol Ym}
        \leq 
        V_n 
        \leq
        Cn^{-1}
    \end{equation}
    for all $n \geq n_0$.
    With $R, w$ as given in the theorem, set
    \begin{equation}
        \rho := \inf_{p \in \ballclosed(m, R)} \PrOf{Y \in \bowtiesetcompl(m, p, w)} > 0
        \eqfs
    \end{equation}
    Let $\tilde\chi \in\Rpp$ such that $\PrOf{\ol Ym \leq \tilde\chi} \geq 1 - \frac12\rho$. Then, \cref{lmm:median:vi:specific} yields, for all $q\in\mc Q$,
    \begin{align}
        \EOf{\ol Y{q}- \ol Ym}
        &= 
        \EOf{\ol Y{q}- \ol Ym}\ind_{[0, R]}(\ol qm)
        +
        \EOf{\ol Y{q}- \ol Ym}\ind_{(R, \infty)}(\ol qm)
        \\&\geq 
        \frac{\rho w^2}{4(\tilde\chi + R)}  \ol qm^2\ind_{[0, R]}(\ol qm)
        +
        \frac35 \ol qm\ind_{(R, \infty)}(\ol qm)
        \eqfs
    \end{align}
    Thus,
    \begin{align}
        \EOf{\ol Y{m_n}- \ol Ym} 
        &\geq
        \frac{\rho w^2}{4(\tilde\chi + R)}  \EOf{\ol m{m_n}^2\ind_{[0, R]}(\ol m{m_n})}
        +
        \frac35 \EOf{\ol m{m_n}\ind_{(R, \infty)}(\ol m{m_n})}
        \\&\geq
        \min\brOf{\frac35, \frac{\rho w^2}{4(\tilde\chi + R)}} \EOf{\min\brOf{\ol m{m_n}^2, \ol m{m_n}}}
        \eqfs
    \end{align}
    We obtain
    \begin{equation}
        \EOf{\min\brOf{\ol m{m_n}^2, \ol m{m_n}}}
        \leq 
        C \max\brOf{\frac53, \frac{4(\tilde\chi + R)}{\rho w^2}} \frac{1}{n}
        \eqfs
    \end{equation}
    Moreover, \cref{lmm:posddrtran:expect} shows that there are $r_0, C_2\in\Rpp$ depending only on $\xi$, and the distribution of $Y$, such that
    \begin{equation}
        \EOf{\ol m{m_n}^2 \indOfOf{(r_0, \infty)}{\ol m{m_n}}} = \mo O(\exp(-C_2 n))
        \eqfs
    \end{equation}
    Thus, as
    \begin{equation}
        \EOf{\ol m{m_n}^2 \indOfOf{[0, r_0]}{\ol m{m_n}}}
        \leq 
        \max(1, r_0) \EOf{\min\brOf{\ol m{m_n}^2, \ol m{m_n}}}
    \end{equation}
    we obtain
    \begin{equation}
        \EOf{\ol m{m_n}^2} = \mo O\brOf{\frac1n}
        \eqfs
    \end{equation}
\end{proof}
\subsection{Distributions That Are Not Concentrated on a Geodesic}\label{ssec:proof:median:geodesic}
The two bow tie conditions \eqref{eq:thm:median:main:bowtie:pop} and \eqref{eq:thm:median:main:bowtie:sample} of \cref{thm:median:main} couple the distribution of $Y$ to the geometry of $\mc Q$. Here we prove \cref{cor:median:notonageodesic}, which decouples the two: the distribution only has to fulfill the simple requirement of not being concentrated on a geodesic, whereas the geometric part is taken care of by the assumption that $\mc Q$ is a Hilbert space or a finite dimensional Hadamard manifold. The geometric property we actually use is that bow ties with knots in a bounded set are \emph{thin}, i.e., they contain no three points that are far from being collinear; see \cref{def:thinbowtie} and \cref{lmm:thinbowtie}.

Throughout, we identify a geodesic with its image whenever convenient. Recall the standing assumption that $\Prof{Y\in\mc Y} = 1$ for a separable set $\mc Y\subset\mc Q$. Hence, the \emph{support}
\begin{equation}
    \suppof Y := \setByEle{q\in\mc Q}{\forall t \in \Rpp\colon \PrOf{Y \in \ballopen(q, t)} > 0}
\end{equation}
is a closed set with $\Prof{Y\in\suppof Y} = 1$: its complement is a union of open balls of probability zero, and, as $\overline{\mc Y}$ is separable and hence Lindel\"of, countably many of them already cover $\overline{\mc Y}\setminus\suppof Y$.
\begin{notation}\label{nota:coll}
    For $y_1,y_2,y_3\in\mc Q$, set
    \begin{equation}
        \collOf{y_1,y_2,y_3}
        :=
        \min\brOf{d\brOf{y_1, \geodft{y_2}{y_3}},\ d\brOf{y_2, \geodft{y_1}{y_3}},\ d\brOf{y_3, \geodft{y_1}{y_2}}}
        \eqfs
    \end{equation}
    We call $y_1,y_2,y_3$ \emph{collinear} if and only if $\collOf{y_1,y_2,y_3} = 0$, i.e., if one of the three points lies on the geodesic connecting the other two. A set $A\subset\mc Q$ is called \emph{collinear} if and only if every triple of points in $A$ is collinear.
\end{notation}
\begin{lemma}\label{lmm:coll:lipschitz}
    Let $y_1,y_2,y_3,z_1,z_2,z_3\in\mc Q$ and $s\in\Rp$ with $\ol{y_i}{z_i}\leq s$ for $i\in\cb{1,2,3}$. Then
    \begin{equation}
        \absOf{\collOf{z_1,z_2,z_3} - \collOf{y_1,y_2,y_3}} \leq 2s
        \eqfs
    \end{equation}
    In particular, $\ms{coll}\colon\mc Q^3\to\Rp$ is continuous.
\end{lemma}
\begin{proof}
    As $\mc Q$ is Hadamard, the metric is convex. Thus, for the geodesics $\geodft{y_2}{y_3}$ and $\geodft{z_2}{z_3}$, parametrized affinely on $[0,1]$, we obtain
    \begin{equation}
        \ol{\geodft{y_2}{y_3}(t)}{\geodft{z_2}{z_3}(t)}
        \leq
        (1-t)\,\ol{y_2}{z_2} + t\,\ol{y_3}{z_3}
        \leq
        s
    \end{equation}
    for all $t\in[0,1]$, so the Hausdorff distance between these two geodesics is at most $s$. Hence,
    \begin{equation}
        \absOf{d\brOf{z_1,\geodft{z_2}{z_3}} - d\brOf{y_1,\geodft{y_2}{y_3}}}
        \leq
        \ol{y_1}{z_1} + s
        \leq
        2s
        \eqcm
    \end{equation}
    and the same bound holds for the two remaining terms in \cref{nota:coll}. As the minimum of finitely many functions inherits this bound, the claim follows.
\end{proof}
\begin{definition}[thin bow ties]\label{def:thinbowtie}
    Let $\mc B\subset\mc Q$. We say that \emph{bow ties are thin on $\mc B$} if and only if, for every $a\in\Rpp$, there is $w\in(0,1)$ such that
    \begin{equation}
        \collOf{z_1,z_2,z_3} \leq a
    \end{equation}
    for all $q,p\in\mc B$ and all $z_1,z_2,z_3\in\bowtieset(q,p,w)\cap\mc B$.
\end{definition}
\begin{lemma}\label{lmm:thinbowtie}
    Let $\mc B\subset\mc Q$ be bounded.
    \begin{enumerate}[label=(\roman*)]
        \item\label{lmm:thinbowtie:hilbert}
        If $\mc Q$ is a Hilbert space, then bow ties are thin on $\mc B$.
        \item\label{lmm:thinbowtie:manifold}
        If $\mc Q$ is a Hadamard manifold, then bow ties are thin on $\mc B$.
    \end{enumerate}
\end{lemma}
\begin{proof}
    Let $a\in\Rpp$ and $w\in(0,1)$. As $\bowtieset(q,q,w) = \cb q$, all triples of points in $\bowtieset(q,q,w)$ are collinear. Hence, we may restrict to knots $q,p\in\mc B$ with $q\neq p$ in both parts.
    \begin{enumerate}[label=(\roman*)]
        \item
        Let $q,p\in\mc B$ with $q\neq p$. Let $\gamma\colon\R\to\mc Q$ be the unit-speed geodesic extension of $\geodft qp$ to $\R$. Set $u := \frac{p-q}{\normof{p-q}}$. Then the geodesic image is $\gamma(\R) = \setByEleInText{q+tu}{t\in\R}$. For $y \neq q$, we have $\ol y{\gamma}\rd(0) = -\ipof{u}{\frac{y-q}{\normof{y-q}}}$ and thus
        \begin{equation}
            \ol y{\gamma}\rd(0)^2 \geq 1-w^2
            \ \ \Rightarrow\ \ 
            d(y,\gamma(\R))^2 = \normOf{y-q}^2 - \ipOf{u}{y-q}^2 \leq w^2 \normOf{y-q}^2
            \eqfs
        \end{equation}
        Analogously, $\ol y{\gamma}\ld(\ol qp) = -\ipof{u}{\frac{y-p}{\normof{y-p}}}$ for $y \neq p$, which yields $d(y,\gamma(\R))\leq w\normof{y-p}$, with the same line $\gamma(\R)$, as $p\in \gamma(\R)$. Furthermore, $d(y,\gamma(\R)) = 0$ for $y\in\cb{q,p}$. Hence,
        \begin{equation}\label{eq:thinbowtie:tube}
            d(y,\gamma(\R)) \leq w \,\diam(\mc B)
            \qquad\text{for all } y\in\bowtieset(q,p,w)\cap\mc B
            \eqfs
        \end{equation}
        Let $z_1,z_2,z_3\in\bowtieset(q,p,w)\cap\mc B$ and let $\hat z_i$ denote the orthogonal projection of $z_i$ onto $\gamma(\R)$, so that $\normof{z_i - \hat z_i}\leq w \,\diam(\mc B)$ by \eqref{eq:thinbowtie:tube}. As $\hat z_1,\hat z_2,\hat z_3$ lie on a line, one of them, say $\hat z_j$, lies between the other two, $\hat z_j\in\geodft{\hat z_i}{\hat z_k}$. By convexity of the metric, the Hausdorff distance between $\geodft{\hat z_i}{\hat z_k}$ and $\geodft{z_i}{z_k}$ is at most $w \,\diam(\mc B)$. Thus,
        \begin{equation}
            \collOf{z_1,z_2,z_3}
            \leq
            d\brOf{z_j,\geodft{z_i}{z_k}}
            \leq
            \normOf{z_j - \hat z_j} + w \,\diam(\mc B)
            \leq
            2w \,\diam(\mc B)
            \eqfs
        \end{equation}
        Hence, for $a\in\Rpp$, we can set $w = \min\brOf{\frac a{2 \,\diam(\mc B)},\frac12}$ to show thinness of bow ties according to \cref{def:thinbowtie}.
        \item
        We may assume that $\mc B$ is closed and hence, by the theorem of Hopf and Rinow, compact. For $q\in\mc Q$, denote the logarithmic map as $\exp_q^{-1}\colon \mc Q\to T_q\mc Q$. As $\exp_q$ is a diffeomorphism for every $q$ and $\mc Q$ has no cut locus, the map $(q,y)\mapsto \exp_q^{-1}(y)$ is continuous, and the first variation formula gives, for every unit-speed geodesic $\gamma$ and every $t_0$ in its domain,
        \begin{equation}\label{eq:thinbowtie:firstvar}
            \brOf{\frac{\dl}{\dl t}\, \ol{y}{\gamma(t)}\bigg|_{t = t_0}}^2 \geq 1 - w^2
            \ \ \Leftrightarrow\ \ 
            \ipOf{\gamma\pr(t_0)}{\exp^{-1}_{\gamma(t_0)}(y)}^2 \geq (1-w^2) \normOf{\exp^{-1}_{\gamma(t_0)}(y)}^2
            \eqcm
        \end{equation}
        where the right-hand side is also the correct reading for $y = \gamma(t_0)$, in which case both sides vanish and the one-sided derivatives are $\pm1$.
        
        Assume bow ties are not thin on $\mc B$. Then there are $a\in\Rpp$, a sequence $(w_k)\subset(0,1)$ with $w_k\searrow0$, knots $q_k,p_k\in\mc B$, and points $z_{k,1},z_{k,2},z_{k,3}\in\bowtieset(q_k,p_k,w_k)\cap\mc B$ with
        \begin{equation}\label{eq:thinbowtie:contra}
            \collOf{z_{k,1},z_{k,2},z_{k,3}} > a
            \eqfs
        \end{equation}
        In particular, $q_k \neq p_k$. Set $\gamma_k := \geodft{q_k}{p_k}$. As $\mc B$ is compact, so is the unit sphere bundle over $\mc B$. Hence, after passing to a subsequence, we may assume that $q_k \to q$, $p_k\to p$, $z_{k,i}\to z_i$, that
        \begin{equation}
            \gamma_k\pr(0) \to u \in T_q\mc Q
            \eqcm\qquad
            \gamma_k\pr(\ol{q_k}{p_k}) \to \tilde u \in T_p\mc Q
            \eqcm
        \end{equation}
        and that, for each $i\in\cb{1,2,3}$, the maximum in \cref{def:bowtie} is attained at the same knot for all $k$. Set $\gamma(t) := \exp_q(tu)$ and $\tilde\gamma(t) := \exp_p(t\tilde u)$.
        
        We claim $\gamma(\R) = \tilde\gamma(\R)$. If $q\neq p$, then $u$ and $\tilde u$ are the velocities of the geodesic $\gamma$ at its endpoints, so $\gamma$ and $\tilde\gamma$ are the unique complete geodesic through $q$ and $p$, up to reparametrization. If $q = p$, then $\ol{q_k}{p_k}\to0$ and, by continuity of the geodesic flow on $T\mc Q$, we obtain $\tilde u = \lim_k \gamma_k\pr(\ol{q_k}{p_k}) = \lim_k\gamma_k\pr(0) = u$, so $\gamma = \tilde\gamma$.
        
        Fix $i\in\cb{1,2,3}$ and assume that the maximum in \cref{def:bowtie} is attained at the knot $q_k$; the case of the knot $p_k$ is analogous with $\tilde\gamma$ in place of $\gamma$. By \eqref{eq:thinbowtie:firstvar},
        \begin{equation}
            \ipOf{\gamma_k\pr(0)}{\exp^{-1}_{q_k}(z_{k,i})}^2 \geq (1-w_k^2) \normOf{\exp^{-1}_{q_k}(z_{k,i})}^2
            \eqfs
        \end{equation}
        Letting $k\to\infty$ and using continuity of $(q,y)\mapsto \exp^{-1}_q(y)$ and of the inner product on $T\mc Q$, we obtain $\ipof{u}{\exp^{-1}_q(z_i)}^2 \geq \normof{\exp^{-1}_q(z_i)}^2$. As $\normof u = 1$, this is the equality case of the Cauchy--Schwarz inequality, so $\exp^{-1}_q(z_i) = \pm\normof{\exp^{-1}_q(z_i)} u$ and therefore $z_i \in\gamma(\R)$.
        
        Hence $z_1,z_2,z_3\in\gamma(\R)$, which implies $\collof{z_1,z_2,z_3} = 0$. On the other hand, \eqref{eq:thinbowtie:contra} and \cref{lmm:coll:lipschitz} yield $\collof{z_1,z_2,z_3}\geq a$, a contradiction.
        \qedhere
    \end{enumerate}
\end{proof}
\begin{lemma}\label{lmm:collinearset}
    Assume that geodesics in $\mc Q$ extend uniquely, i.e., every geodesic $\gamma\colon[a,b]\to\mc Q$ with $a<b$ is the restriction of exactly one geodesic $\R\to\mc Q$, up to reparametrization. This is the case if $\mc Q$ is a Hilbert space or a Hadamard manifold. Let $A\subset\mc Q$ be collinear. Then there is a geodesic $\gamma\colon\R\to\mc Q$ with $A\subset\gamma(\R)$.
\end{lemma}
\begin{proof}
    If $A$ contains at most one point, a constant map $\gamma\colon\R\to\mc Q$ with $A\subset\gamma(\R)$ is a geodesic in the sense of \cref{def:geodesic} and we are done. Otherwise, choose $y_1,y_2\in A$ with $y_1\neq y_2$ and let $\gamma\colon\R\to\mc Q$ be the unique geodesic with $\geodft{y_1}{y_2}\subset\gamma(\R)$. Let $y_3\in A$. As $A$ is collinear, one of the three points $y_1,y_2,y_3$ lies on the geodesic connecting the other two.
    
    If $y_3\in\geodft{y_1}{y_2}$, then $y_3\in\gamma(\R)$. If $y_1\in\geodft{y_3}{y_2}$, then $y_1$ and $y_2$ both lie on the geodesic $\geodft{y_3}{y_2}$, so, by uniqueness of geodesics in Hadamard spaces, $\geodft{y_1}{y_2}\subset\geodft{y_3}{y_2}$. Thus, the geodesic $\R\to\mc Q$ extending $\geodft{y_3}{y_2}$ also extends $\geodft{y_1}{y_2}$ and, by uniqueness of the extension, coincides with $\gamma$; in particular $y_3\in\gamma(\R)$. The case $y_2\in\geodft{y_1}{y_3}$ is analogous. Hence $A\subset\gamma(\R)$.
\end{proof}
\begin{proof}[Proof of \cref{cor:median:notonageodesic}]
    As $\PrOf{Y\in\suppof Y} = 1$, the assumption implies that $\suppof Y$ is not contained in the image of a geodesic $\R\to\mc Q$. Thus, \cref{lmm:collinearset} shows that $\suppof Y$ is not collinear, so there are $y_1,y_2,y_3\in\suppof Y$ with
    \begin{equation}
        a := \collOf{y_1,y_2,y_3} > 0
        \eqfs
    \end{equation}
    As $\lim_{r\to\infty}\PrOf{\ol Ym > r} = 0$, we may choose $r\in\Rpp$ with $\PrOf{\ol Ym>r}<\frac1{27}$ and $y_1,y_2,y_3\in\ballopen(m,6r)$. Set $R := 6r$ and $\mc B:=\ballclosed(m,R)$. Choose $s\in(0,\frac a4]$ small enough that $S_i := \ballclosed(y_i,s)\subset\ballopen(m,R)$ for $i\in\cb{1,2,3}$ and set
    \begin{equation}
        \pi_0 := \min_{i\in\cb{1,2,3}}\PrOf{Y\in S_i}
        \eqcm
    \end{equation}
    which is positive as $y_i\in\suppof Y$. By \cref{lmm:coll:lipschitz},
    \begin{equation}\label{eq:cor:median:noncoll}
        \collOf{z_1,z_2,z_3} \geq a - 2s \geq \frac a2
        \qquad\text{for all } z_i\in S_i,\ i\in\cb{1,2,3}
        \eqfs
    \end{equation}
    By \cref{lmm:thinbowtie}, bow ties are thin on the bounded set $\mc B$, so, applying \cref{def:thinbowtie} with $\frac a4$ in place of $a$, there is $w\in(0,1)$ such that $\collof{z_1,z_2,z_3} \leq \frac a4 < \frac a2$ for all $q,p\in\mc B$ and all $z_1,z_2,z_3\in\bowtieset(q,p,w)\cap\mc B$. As $S_i\subset\mc B$, this and \eqref{eq:cor:median:noncoll} imply
    \begin{equation}\label{eq:cor:median:onesetfree}
        \forall q,p\in\mc B\ \exists i\in\cb{1,2,3}\colon\quad S_i\cap\bowtieset(q,p,w) = \emptyset
        \eqcm
    \end{equation}
    as we could otherwise pick $z_i\in S_i\cap\bowtieset(q,p,w)$ for $i\in\cb{1,2,3}$ and obtain a contradiction.
    
    It remains to verify the conditions of \cref{thm:median:main} for this choice of $r$, $w$, and $\ell := 3$. Let $p\in\mc B$. Applying \eqref{eq:cor:median:onesetfree} to the knots $m,p\in\mc B$ yields an $i\in\cb{1,2,3}$ with
    \begin{equation}
        \PrOf{Y\in\bowtieset(m,p,w)} \leq 1 - \PrOf{Y\in S_i} \leq 1-\pi_0
        \eqcm
    \end{equation}
    so that \eqref{eq:thm:median:main:bowtie:pop} holds. Furthermore, set $E := \bigcap_{i=1}^3\cb{Y_i\in S_i}$, so that $\PrOf E \geq \pi_0^3 > 0$ by independence. On $E$, we obtain from \eqref{eq:cor:median:onesetfree}, for all $q,p\in\mc B$, an $i\in\cb{1,2,3}$ with $Y_i\in S_i\subset\ballopen(m,R)$ and $Y_i\notin\bowtieset(q,p,w)$. Hence, on $E$, the event in \eqref{eq:thm:median:main:bowtie:sample} does not occur, which yields
    \begin{equation}
        \PrOf{\exists q,p\in\ballclosed(m,R)\colon Y_1,Y_2,Y_3\in\bowtieset(q,p,w)\cup\ballclosed(m,R)\compl}
        \leq
        1 - \pi_0^3
        <
        1
        \eqfs
    \end{equation}
    Now \cref{thm:median:main} implies the claim.
\end{proof}

\section{Proofs of Section: Fast and Optimal Rates}\label{sec:proof:optimal}

\subsection{Fast Rates via Algorithm Stability}
\begin{proof}[Proof of \cref{thm:fast}]
    The proof of \cref{thm:power:main} shows that there is $\tilde C\in\Rpp$ such that
    \begin{equation}\label{eq:fast:excess}
        \EOf{\ol Y{m_n}^{\alpha}-\ol Ym^{\alpha}}
        \leq
        \tilde C n^{-1}
    \end{equation}
    for all $n\in \N$.
    By \cref{thm:infdtr:vi}, we have, for all $q\in\mc Q$,
    \begin{equation}
        \EOf{\ol Yq^\alpha-\ol Ym^\alpha} \geq \frac{\alpha(\alpha-1)}{2} \ol qm^2 \EOf{(\ol Ym + \ol qm)^{\alpha-2}}
        \eqfs
    \end{equation}
    Using \eqref{eq:fast:condition}, for $q\in \mc Q$ with $\ol qm \leq \epsilon$, we have
    \begin{align}
        \EOf{(\ol Ym + \ol qm)^{\alpha-2}} 
        &\geq 
        \EOf{(\ol Ym + \ol qm)^{\alpha-2} \ind_{[0, \ol qm]}(\ol Ym)} 
        \\&\geq 
        (2\, \ol qm)^{\alpha-2} \PrOf{\ol Ym \leq \ol qm}
        \\&\geq 
        2^{\alpha-2} b \, \ol qm^{\alpha-2} \ol qm^{\beta-\alpha}
        \eqfs
    \end{align}
    Hence,
    \begin{equation}\label{eq:fast:vi:fast}
        \EOf{\ol Yq^\alpha-\ol Ym^\alpha}
        \geq
        \alpha(\alpha-1) 2^{\alpha-3} b\, \ol qm^{\beta}
        \eqfs
    \end{equation}
    Let $\chi \in \median(\ol Ym)$.
    Then, for all $q\in \mc Q$, we have 
    \begin{align}
    	\EOf{(\ol Ym + \ol qm)^{\alpha-2}} 
    	\geq 
    	\frac12 (\chi + \ol qm)^{\alpha-2}
    	\geq 
    	2^{\alpha-3} \max(\chi, \ol qm)^{\alpha-2}
    	\eqfs
    \end{align}
    Hence,
    \begin{equation}\label{eq:fast:vi:std}
    	\EOf{\ol Yq^\alpha-\ol Ym^\alpha}
    	\geq
    	\alpha(\alpha-1) 2^{\alpha-4} \min(\chi^{\alpha-2} \ol qm^2, \ol qm^{\alpha})
    	\eqfs
    \end{equation}
    Combining \eqref{eq:fast:vi:fast} and \eqref{eq:fast:vi:std}, we can choose $c_0 \in \Rpp$ small enough so that
    \begin{equation} 
    	\EOf{\ol Yq^\alpha-\ol Ym^\alpha} \geq c_0 \min(\ol qm^{\beta}, \ol qm^{\alpha})
    \end{equation}
    for all $q\in\mc Q$.
    Applying this bound to \eqref{eq:fast:excess} yields
    \begin{equation}
    	\EOf{\min(\ol m{m_n}^{\beta}, \ol m{m_n}^{\alpha})} \leq \tilde C c_0^{-1} n^{-1}
    	\eqfs
		\qedhere
    \end{equation}
\end{proof}
\subsection{Fast Rates via Chaining and Peeling}
\begin{proof}[Proof of \cref{thm:fast:entropy}]
    As $\mc Q$ is Hadamard, \eqref{eq:entropy} together with the Hopf--Rinow theorem guarantees that $\mc Q$ has the Heine--Borel property (closed and bounded sets are compact). This allows us to use \cite[Corollary 2]{Park2026}, a strong law of large numbers for Fr\'echet means. In particular, for any $\delta>0$, $\PrOf{\ol m{m_n} > \delta} \xrightarrow{n\to\infty} 0$. Thus, we can focus on the event $\cb{\ol m{m_n} \leq \delta}$.
    
    \begin{table}
    \begin{tabular}{c|c}
        \cite{schoetz19} & here \\
        \hline
        $\alpha$ & $s/2$\\
        $\beta$ & $s/2$\\
        $\gamma$ & $\beta$\\
        $\zeta$ & $2$\\
        $\mc Q$ & $\ballclosed(m, \delta)$\\
        $\mc Y$ & $\mc Q$\\
        $\mf a(y,z)$ & $2 \dtran(\ol yz)$\\
        $\mf b(q,p)$ & $\ol qp$\\
        $\mf c(y,q)$ & $\tran(\ol yq) - \tran(\ol yo)$\\
        $\mf l(q,p)$ & $\ol qp$
    \end{tabular}
    \caption{Identification of symbols between \cite[Theorem 1]{schoetz19} and this section.}\label{tbl:ident}
    \end{table}
    
    We apply \cite[Theorem 1]{schoetz19} with the identification of variables as in  \cref{tbl:ident}. We now check the conditions of \cite[Theorem 1]{schoetz19} (in typewriter font): The moment condition implies \texttt{Existence}. \texttt{Weak Quadruple} is \cref{thm:infdtr:qi}. \texttt{Moment} is fulfilled due to our assumption $\Eof{\dtran(\ol Yq)^2} < \infty$. \texttt{Entropy} is assumed via \eqref{eq:entropy}. For \texttt{Growth}, we use the VI \cref{thm:infdtr:vi} and distinguish two cases as in the statement of the theorem: 
    \begin{enumerate}[label=(\roman*)]
        \item
        Case $\beta=2$.
        As $\Prof{\ol Ym < x_0} > 0$, there is $x_1<x_0$ such that $\PrOf{\ol Ym \leq x_1} > 0$. \cref{thm:infdtr:vi} implies
        \begin{equation}
            \EOf{\tran(\ol Yq) - \tran(\ol Ym)}
            \geq
            \frac12 \ol qm^2 \ddrtran(x_1 + \ol qm) \PrOf{\ol Ym \leq x_1}
            \eqfs
        \end{equation}
        We have $\Prof{\ol Ym \leq x_1}>0$ and $\ddrtran(x_1 + \ol qm)$ is bounded away from 0 for all $q$ with $\ol qm \leq (x_0-x_1)/2$.
        Furthermore, by \cref{thm:infdtr:vi}
        \begin{equation}
            r\mapsto \inf_{q\in\ballclosed(m,\delta)\setminus \ballclosed(m,r)} \frac{\EOf{\tran(\ol Yq)-\tran(\ol Ym)}}{r}
        \end{equation}
        is nondecreasing. Hence, we have
        \begin{equation}
            \liminf_{r\to0} \inf_{q\in\ballclosed(m,\delta)\setminus \ballclosed(m,r)} \frac{\EOf{\tran(\ol Yq)-\tran(\ol Ym)}}{r^2} > 0
            \eqfs
        \end{equation}
        \item 
        Case $\beta\in(1,2)$.
        \cref{thm:infdtr:vi} implies
        \begin{equation}
            \EOf{\tran(\ol Yq) - \tran(\ol Ym)}
            \geq
            \frac12 \ol qm^2 \ddrtran(2\ol qm) \PrOf{\ol Ym \leq \ol qm}
            \eqfs
        \end{equation}
        Hence, \eqref{eq:smallball} implies
        \begin{equation}
            \liminf_{r\to0} \inf_{q\in\ballclosed(m,\delta)\setminus \ballclosed(m,r)} \frac{\EOf{\tran(\ol Yq)-\tran(\ol Ym)}}{r^\beta} > 0
            \eqfs
        \end{equation}
    \end{enumerate}    
    Now all conditions of \cite[Theorem 1]{schoetz19} are validated and we obtain
    \begin{equation}
        \PrOf{\cb{n^{\frac{1}{2(\beta-1)}}\ol {m}{m_n} \geq t} \cap \cb{\ol m{m_n} \leq \delta}} \leq c \Eof{\dtran(\ol Ym)^2} t^{-2(\beta-1)}
        \eqfs
    \end{equation}
    Together with the result from the law of large numbers as mentioned at the beginning of the proof, we have
    \begin{equation}
        \ol {m}{m_n} = \Op\brOf{n^{-\frac{1}{2(\beta-1)}}}
        \eqfs
    \end{equation}
\end{proof}
\subsection{Asymptotic Distribution on the Real Line}
\begin{proof}[Proof of \cref{thm:distri:tran}]
    For $t\in\R$, write
    \begin{equation}
        \rho(t)
        :=
        \sign(t)\dtran(|t|)
        \eqfs
    \end{equation}
    Since $\dtran(0)=0$, the map
    $q\mapsto \tran(|x-q|)$ is continuously differentiable for every fixed
    $x\in\R$, with derivative $\rho(q-x)$.
    Moreover, by the subadditivity of $\dtran$ \cref{lmm:dtran:subadd}
    and \cref{lmm:tran:diff},
    \begin{equation}
        \absOf{\tran(|X-q|)-\tran(|X-m|)}
        \leq
        |q-m| \br{\dtran(|X-m|)+\dtran(|q-m|)}
        \eqcm
    \end{equation}
    which is uniformly integrable for $q$ in bounded sets, since
    $\Eof{\dtran(|X-m|)} < \infty$. Hence,
    \begin{equation}
        F(q)
        :=
        \Eof{\tran(|X-q|)-\tran(|X-m|)}
    \end{equation}
    is finite and differentiable, with
    \begin{equation}
        F\pr(q)
        =
        \Eof{\rho(q-X)}
        =
        \Eof{\sign(q-X)\dtran(|X-q|)}
        \eqfs
    \end{equation}
    Since $m$ minimizes $F$, we have
    \begin{equation}\label{eq:Fprime:m:zero}
        F\pr(m)=0.
    \end{equation}
    We first establish the local second order expansion of $F$. 
    Since $\dtran$ is concave, it is locally absolutely continuous on $\Rpp$.
    Consequently, $\rho$ is locally absolutely continuous on $\R$, and
    \begin{equation}
        \rho\pr(t)
        =
        \ddrtran(|t|)
    \end{equation}
    for Lebesgue-a.e. $t\in\R$. Therefore, for every $s\in\R$,
    \begin{align}
        F\pr(m+s)-F\pr(m)
        &=
        \EOf{\rho(m+s-X)-\rho(m-X)}
        \\
        &=
        \EOf{
            \int_0^s
            \ddrtran(|X-m-t|)
            \,\dl t
        }
        \eqfs
    \end{align}
    The last expression is understood as an oriented integral if $s<0$.
    Since the integrand is nonnegative, Tonelli's theorem gives
    \begin{equation}\label{eq:Fprime}
        F\pr(m+s)-F\pr(m)
        =
        \int_0^s
        \Eof{\ddrtran(|X-m-t|)}
        \,\dl t
        \eqfs
    \end{equation}
    By assumption \eqref{eq:thm:distri:tran:ddrtran}, and using \eqref{eq:Fprime:m:zero} as well as \eqref{eq:Fprime}, we obtain 
    \begin{equation}
        \frac{F'(m+s)}{s}
        =
        \frac1s\int_0^s \Eof{\ddrtran(|X-m-t|)} \,\dl t
        \xrightarrow{s \to 0}
        \sigma_{\ddrtran}
        \eqfs
    \end{equation}
    Hence,
    \begin{align}
        F(q)
        &=
        F(q)-F(m)
        \\&=
        \int_m^q F'(r)\,\dl r
        \\&=
        \int_0^{q-m} F'(m+s) \,\dl s
        \\&=
        \frac{\sigma_{\ddrtran}}2(q-m)^2
        +
        \mo o\brOf{(q-m)^2}
        \qquad
        \text{as }q\to m.
        \label{eq:F:expansion}
    \end{align}    
    By the definition of $x_0$ and the assumption $\Prof{|X-m|<x_0}>0$, we obtain below that $\sigma_{\ddrtran}>0$. Hence, the quadratic term in the expansion of $F(q)$ is dominating. 
    Indeed, choose $x_1<x_0$ such that $\Prof{|X-m|\le x_1}>0$. Since $\ddrtran$ is nonincreasing and $\ddrtran(x)>0$ for $x<x_0$, we have
    \begin{equation}
        \sigma_{\ddrtran}
        =
        \Eof{\ddrtran(|X-m|)}
        \geq
        \ddrtran(x_1)\Prof{|X-m|\le x_1}
        >
        0.
    \end{equation}
    It remains to transfer the expansion to the empirical minimizer. Define
    \begin{equation}
        \mathbb G_n [f]
        :=
        \frac1{\sqrt n}\sum_{i=1}^n
        \br{f(X_i)-\Eof{f(X)}}
        \eqfs
    \end{equation}
    For $q\in\R$, set
    \begin{equation}
        \psi_q(x) := \rho(q-x) = \sign(q-x)\dtran(|x-q|)
        \eqfs
    \end{equation}
    Since $F\pr(m)=0$, we have $\Eof{\psi_m(X)}=0$. Moreover,
    \begin{equation}
        \EOf{\psi_m(X)^2}
        =
        \EOf{\dtran(|X-m|)^2}
        =
        \sigma_{(\dtran)^2}
        <
        \infty
        \eqfs
    \end{equation}
    Hence, by the classical central limit theorem,
    \begin{equation}
        \mathbb G_n[\psi_m]
        \xrsquigarrow{n\to\infty}
        Z,
        \qquad
        Z\sim \mathcal N\brOf{0,\sigma_{(\dtran)^2}}.
    \end{equation}
    For $t\in\R$, define the localized empirical criterion
    \begin{equation}
        V_n(t)
        :=
        \sum_{i=1}^n
        \br{
            \tran(|X_i - m - t/\sqrt n|)
            -
            \tran(|X_i-m|)
        }
        \eqfs
    \end{equation}
    Then
    \begin{equation}
        \sqrt n(m_n-m)
        \in 
        \argmin_{t\in\R} V_n(t)
        \eqfs
    \end{equation}
    We claim that, for every fixed $t$,
    \begin{equation}\label{eq:V:expansion}
        V_n(t)
        =
        t \mathbb G_n[\psi_m]
        +
        \frac{\sigma_{\ddrtran}}2 t^2 
        +
        \mo o_{\Pr}(1)
        \eqfs
    \end{equation}
    Put $h=t/\sqrt n$ and
    \begin{equation}
        r_h(x)
        :=
        \tran(|x-m-h|)
        -
        \tran(|x-m|)
        -
        h\psi_m(x)
        \eqfs
    \end{equation}
    Then
    \begin{equation}
        V_n(t)
        =
        t\mathbb G_n[\psi_m]
        +
        n\Eof{r_h(X)}
        +
        \sum_{i=1}^n
        \br{r_h(X_i)-\Eof{r_h(X)}}
        \eqfs
    \end{equation}
    Using \eqref{eq:F:expansion} and $F(m) = F\pr(m)=0$, we obtain
    \begin{align}
        n\Eof{r_h(X)}
        &=
        n\br{F(m+h)-F(m)-hF'(m)}
        \\
        &=
        n\br{
            \frac{\sigma_{\ddrtran}}2 h^2+\mo o(h^2)
        }
        \\
        &=
        \frac{\sigma_{\ddrtran}}2 t^2
        +
        \mo o(1)
        \label{eq:mean:rh}
        \eqfs
    \end{align}
    It remains to show that the centered remainder is negligible. By the fundamental theorem of calculus,
    \begin{equation}
        \tran(|x-m-h|)
        -
        \tran(|x-m|)
        =
        \int_m^{m+h}\rho(q-x)\,\dl q
        \eqfs
    \end{equation}
    Hence,
    \begin{equation}\label{eq:rh}
        r_h(x)
        =
        h\int_0^1
            \rho(m+uh - x)-\rho(m-x)
        \,\dl u.
    \end{equation}
    Furthermore,
    \begin{equation}
        |\rho(q-X)-\rho(m-X)|
        \leq
        2\dtran(|X-m|)
        +
        \dtran(|q-m|)
        \eqfs
    \end{equation}
    For $q$ in a bounded neighborhood of $m$, the square of the right-hand side is bounded by
    \begin{equation}
        c\br{1+\dtran(|X-m|)^2}
    \end{equation}
    for some constant $c\in\Rpp$, which is integrable by the assumption $\sigma_{(\dtran)^2}<\infty$. Since, $\rho$ is continuous,
    \begin{equation}
        \rho(q-X)
        \xrightarrow{q\to m}
        \rho(m-X)
        \qquad\text{almost surely}\eqfs
    \end{equation}
    Thus, dominated convergence yields
    \begin{equation}
        \EOf{|\rho(q-X)-\rho(m-X)|^2}
        \xrightarrow{q\to m}
        0
        \eqfs
    \end{equation}
    Therefore, using \eqref{eq:rh}, by Jensen's inequality,
    \begin{align}
        \EOf{r_h(X)^2}
        &\leq
        h^2
        \int_0^1
        \EOf{|\rho(m+uh - X)-\rho(m-X)|^2}
        \,\dl u
        \\
        &=
        \mo o(h^2) \quad \text{as $h\to0$}
        \eqfs
    \end{align}
    Since $h=t/\sqrt n$, this yields
    \begin{align}
        \VOf{
            \sum_{i=1}^n
            \br{r_h(X_i)-\Eof{r_h(X)}}
        }
        &=
        n\VOf{r_h(X)}
        \\
        &\leq
        n\EOf{r_h(X)^2}
        \\
        &=
        \mo o(1) \quad \text{as $n\to\infty$}
        \eqfs
    \end{align}
    Hence the centered remainder is $\mo o_{\Pr}(1)$ by Chebyshev's inequality. 
    Together with \eqref{eq:mean:rh}, this proves \eqref{eq:V:expansion}.    
    Consequently, for every fixed $t$,
    \begin{equation}
        V_n(t)
        \xrsquigarrow{n\to\infty}
        V(t)
        :=
        tZ + \frac{\sigma_{\ddrtran}}2 t^2
        \eqfs
    \end{equation}
    The preceding expansion holds jointly for any finite collection $t_1,\dots,t_k$ with fixed $k$, because the remainders are coordinatewise $\mo o_{\Pr}(1)$ and $Z$ does not depend on $t$. 
    Hence, $V_n$ converges to $V$ in finite dimensional distributions.
    Each $V_n$ is convex in $t$, and $V$ is strictly convex because $\sigma_{\ddrtran}>0$. 
    Therefore, the argmin continuous mapping theorem for convex processes \cite[Theorem 1; with single-point index set $T=\{\star\}$]{Kato2009} yields
    \begin{equation}
        \sqrt n(m_n-m)
        =
        \argmin_{t\in\R} V_n(t)
        \xrsquigarrow{n\to\infty}
        \argmin_{t\in\R}
        \br{
            tZ+\frac12\sigma_{\ddrtran}t^2
        }
        \eqfs
    \end{equation}
    The unique minimizer of $t\mapsto tZ+\frac12t^2\sigma_{\ddrtran}$ is $-Z/\sigma_{\ddrtran}$.
    Hence, 
    \begin{equation}
        \sqrt n(m_n-m)
        \xrsquigarrow{n\to\infty}
        -\frac{Z}{\sigma_{\ddrtran}}
        \eqfs
    \end{equation}
    Since $Z\sim\mathcal N(0,\sigma_{(\dtran)^2})$, we obtain
    \begin{equation}
        -\frac{Z}{\sigma_{\ddrtran}}
        \sim
        \mathcal N\brOf{
            0,
            \frac{\sigma_{(\dtran)^2}}{\sigma_{\ddrtran}^2}
        }
        \eqfs
    \end{equation}
    This proves the claim.
\end{proof}

\begin{lemma}\label{lmm:alpha:slow:verify}
    Let $\alpha\in(1,2]$ and set $\tran(x)=x^\alpha$. 
    Assume there is $\beta>2$ such that
    \begin{equation}\label{eq:lmm:slow}
        \lim_{t\searrow0} t^{\alpha-\beta}\Prof{|X-m|\le t}=0
        \eqfs
    \end{equation}
    Then $\sigma_{\alpha-2} := \Eof{|X-m|^{\alpha-2}} < \infty$ and
    \begin{equation}\label{eq:lmm:avg}
        \frac1s\int_0^s\EOf{|X-m-t|^{\alpha-2}}\,\dl t
        \xrightarrow{s\to0}
        \sigma_{\alpha-2}
        \eqfs
    \end{equation}
\end{lemma}

\begin{proof}
    The statement is trivial for $\alpha=2$ as we use the convention $0^0 = 1$. Assume $\alpha\in(1,2)$.
    Without loss of generality assume $m=0$. Since $\Prof{|X|\leq t} \xrightarrow{t\to0}0$ by \eqref{eq:lmm:slow}, $\Prof{X=0}=0$, hence $|X|^{\alpha-2}$ is finite almost surely. 
    
    \emph{Step 1 ($\sigma_{\alpha-2}<\infty$).}
    For $x>0$, $x^{\alpha-2}=(2-\alpha)\int_x^\infty t^{\alpha-3}\,\dl t$
    (using $\alpha-2<0$). By Tonelli,
    \begin{equation}
        \Eof{|X|^{\alpha-2}}
        =(2-\alpha)\int_0^\infty t^{\alpha-3}\Prof{|X|\leq t} \,\dl t
        \eqfs
    \end{equation}
    By \eqref{eq:lmm:slow} there is $\delta\in(0,1]$ with $\Prof{|X|\leq t}\leq t^{\beta-\alpha}$
    for $t\le\delta$, so
    \begin{equation}
        \int_0^{\delta}t^{\alpha-3}\Prof{|X|\leq t}\,\dl t
        \leq
        \int_0^{\delta}t^{\beta-3}\,\dl t
        =
        \frac{\delta^{\beta-2}}{\beta-2}
        <
        \infty
        \eqcm
    \end{equation}
    finite precisely because $\beta-3>-1$, i.e.\ $\beta>2$. On $[\delta,\infty)$,
    $\Prof{|X|\leq t}\le1$ and $\int_\delta^\infty t^{\alpha-3}\,\dl t<\infty$ since
    $\alpha-3<-1$. Hence $\sigma_{\alpha-2}=\Eof{|X|^{\alpha-2}}<\infty$.
    
    \emph{Step 2 (bounds on the inner average).}
    Assume that $s>0$; the case for negative $s$ is symmetric.
    For $s>0$ and $x\in\R$ set
    \begin{equation}
        g_s(x)
        :=
        \frac1{s}\int_{0}^s |x-t|^{\alpha-2}\,\dl t
        =
        \frac1{s}\int_{x-s}^{x} |w|^{\alpha-2}\,\dl w
        \eqfs
    \end{equation}
    Since $|\cdot|^{\alpha-2}$ is even and decreasing in $|w|$, the average over
    a length-$s$ interval is largest when the interval is centered at $0$;
    therefore, uniformly in $x$,
    \begin{equation}\label{eq:gs:unif}
        g_s(x)
        \le
        \frac1{s}\int_{-s/2}^{s/2} |w|^{\alpha-2}\,\dl w
        =
        \frac{2^{2-\alpha}}{\alpha-1} s^{\alpha-2}
        \eqfs
    \end{equation}
    Moreover, if $|x|>2s$ then $|x-t|\ge|x|-s\ge|x|/2$ for $t\in[0,s]$, so
    \begin{equation}\label{eq:gs:far}
        g_s(x)\le\brOf{|x|/2}^{\alpha-2}=2^{2-\alpha}|x|^{\alpha-2}
        \eqfs
    \end{equation}
    Finally, for fixed $x\neq0$, $g_s(x)\to|x|^{\alpha-2}$ as $s\to0$ by
    continuity of $t\mapsto|x-t|^{\alpha-2}$ at $t = 0$ for $x\neq0$.
    
    \emph{Step 3.}
    By Tonelli,  for $\epsilon\in\Rpp$,
    \begin{equation}
        \frac1{s}\int_{0}^s \EOf{|X-t|^{\alpha-2}}\,\dl t
        =
        \Eof{g_s(X)}
        =
        \Eof{g_s(X)\mathbf 1\cb{|X|>\epsilon}}+\Eof{g_s(X)\mathbf 1\cb{|X|\le\epsilon}}
        \eqfs
    \end{equation}
   
    \emph{Far part.} On $\cb{|X|>\epsilon}$ and $s<\epsilon/2$ we have
    $|X-t|\ge\epsilon/2$ for $t\in[0,s]$, so
    $g_s(X)\le(\epsilon/2)^{\alpha-2}$. With the pointwise limit
    from Step 2, dominated convergence yields
    \begin{equation}
        \Eof{g_s(X)\mathbf 1\cb{|X|>\epsilon}}
        \xrightarrow{s\to0}
        \Eof{|X|^{\alpha-2}\mathbf 1\cb{|X|>\epsilon}}
        \eqfs
    \end{equation}
    
    \emph{Near part.} Split according to $|X|\lessgtr2s$ and use
    \eqref{eq:gs:far} on the first piece, \eqref{eq:gs:unif} on the second:
    \begin{equation}
        \Eof{g_s(X)\mathbf 1\cb{|X|\le\epsilon}}
        \le
        2^{2-\alpha} \Eof{|X|^{\alpha-2}\mathbf 1\cb{|X|\le\epsilon}}
        +
        c_\alpha s^{\alpha-2} \Prof{|X|\leq 2s}
        \eqfs
    \end{equation}
    By \eqref{eq:lmm:slow}, $s^{\alpha-2}\Prof{|X|\leq 2s}\xrightarrow{s\to0}0$ because $\beta>2$. Hence,
    \begin{equation}
        \limsup_{s\to0}\Eof{g_s(X)\mathbf 1\cb{|X|\le\epsilon}}
        \le
        2^{2-\alpha}\Eof{|X|^{\alpha-2}\mathbf 1\cb{|X|\le\epsilon}}
        \eqfs
    \end{equation}
    
    \emph{Combined.}
    Combining the two parts yields
    \begin{equation}
        \limsup_{s\to0}\absOf{\Eof{g_s(X)} - \sigma_{\alpha-2}}
        \le
        \Eof{|X|^{\alpha-2}\mathbf 1\cb{|X|\le\epsilon}}
        \eqfs
    \end{equation}
    The left-hand side is independent of $\epsilon$, and the right-hand side
    tends to $0$ as $\epsilon\to0$ because $\Eof{|X|^{\alpha-2}}<\infty$ (Step 1)
    and $\mathbf 1\cb{|X|\le\epsilon}\searrow0$ a.s. Therefore
    $\Eof{g_s(X)}\to\sigma_{\alpha-2}$, which implies \eqref{eq:lmm:avg}.
\end{proof}

\begin{lemma}\label{lmm:alpha:fast:secondderiv}
    Let $\alpha\in(1,2)$. Let $Z$ be a real-valued random variable.
    Assume there are $\beta\in(\alpha,2)$ and $b\in\Rpp$ such that
    \begin{equation}\label{eq:alpha:fast:largesmallball}
        \lim_{r\searrow 0}\frac{\mathbb{P}(0\le Z\le r)}{b r^{\beta-\alpha}} = 1
        \qquad\text{and}\qquad
        \lim_{r\searrow 0}\frac{\mathbb{P}(0\ge Z\ge -r)}{b r^{\beta-\alpha}} = 1
        \eqfs
    \end{equation}
    Then
    \begin{equation}
        s^{1-\beta} b^{-1}
        \EOf{\int_0^s |Z-t|^{\alpha-2} \dl t}
        \xrightarrow{s\searrow 0}
        C_{\alpha,\beta}
    \end{equation}
    where $C_{\alpha,\beta} \in\Rpp$ depends only on $\alpha$ and $\beta$.
\end{lemma}

\begin{proof}
    \emph{Step 1 (scaling).}
    For $x\in\R$ and $s>0$, the substitution $t=su$ gives
    \begin{equation}\label{eq:lem:homog}
        \int_0^s|x-t|^{\alpha-2}\,\dl t
        =
        s^{\alpha-1}G\brOf{\tfrac xs}
        \eqcm\qquad
        G(y):=\int_0^1|y-u|^{\alpha-2}\,\dl u
        \eqfs
    \end{equation}
    Since $w\mapsto\sign(w)|w|^{\alpha-1}/(\alpha-1)$ is an antiderivative of $|w|^{\alpha-2}$ on $\R$,
    \begin{equation}
        G(y)=\frac{\sign(y)|y|^{\alpha-1}-\sign(y-1)|y-1|^{\alpha-1}}{\alpha-1}
        \eqcm\qquad
        G'(y)=|y|^{\alpha-2}-|y-1|^{\alpha-2}
    \end{equation}
    for $y\notin\cb{0,1}$. Thus $G$ is continuous, positive and bounded, with $G(0)=1/(\alpha-1)$, and as $|y|\to\infty$
    \begin{equation}\label{eq:lem:Gtail}
        G(y)=|y|^{\alpha-2}\brOf{1+\mo o(1)}
        \eqcm\qquad
        G'(y)=\mo O\brOf{|y|^{\alpha-3}}
        \eqfs
    \end{equation}
    In particular $G'\in L^1(\R)$: the singularities at $0$ and $1$ have exponent $\alpha-2>-1$, and the tail \eqref{eq:lem:Gtail} is integrable since $\alpha-3<-1$. As
    $s^{1-\beta}b^{-1}\EOf{\int_0^s|Z-t|^{\alpha-2}\,\dl t}=b^{-1}s^{\alpha-\beta}\EOf{G(Z/s)}$, it suffices to prove
    \begin{equation}\label{eq:lem:goal}
        s^{-(\beta-\alpha)}\EOf{G(Z/s)}
        \xrightarrow{s\to0}
        bC_{\alpha,\beta}
        \eqcm\qquad
        C_{\alpha,\beta}:=(\beta-\alpha)\int_\R|y|^{\beta-\alpha-1}G(y)\,\dl y
        \eqfs
    \end{equation}
    
    \emph{Step 2 (positive part).}
    Let $H(r):=\Prof{0<Z\le r}$. By \eqref{eq:alpha:fast:largesmallball}, $\Prof{Z=0}=0$, so
    $\EOf{G(Z/s)}=I_+(s)+I_-(s)$ with $I_\pm(s):=\EOf{G(Z/s)\mathbf 1\cb{\pm Z>0}}$.
    For $z>0$ we have $G(z/s)=\int_{z/s}^\infty(-G'(y))\,\dl y$ since $G(\infty)=0$; inserting this and applying Fubini ($\int_\R|G'|<\infty$),
    \begin{equation}
        I_+(s)=\int_0^\infty(-G'(y))H(sy)\,\dl y
        \eqfs
    \end{equation}
    By \eqref{eq:alpha:fast:largesmallball} there is $\delta\in(0,1]$ with $H(r)\le 2br^{\beta-\alpha}$ for $r\le\delta$; combined with $H\le1$ this gives the global bound
    \begin{equation}\label{eq:lem:global}
        H(r)\le Kr^{\beta-\alpha}
        \quad\text{for all }r>0,
        \qquad
        K:=\max\cb{2b,\ \delta^{-(\beta-\alpha)}}
        \eqcm
    \end{equation}
    because $r^{\beta-\alpha}\ge\delta^{\beta-\alpha}$ when $r>\delta$. Hence
    \begin{equation}
        \absOf{(-G'(y))\frac{H(sy)}{s^{\beta-\alpha}}}
        \le
        K|G'(y)|y^{\beta-\alpha}=:D(y)
        \eqcm
    \end{equation}
    and $D\in L^1(0,\infty)$: near $0$, $D\asymp y^{\beta-2}$ with $\beta>1$; near $1$ the exponent is $\alpha-2>-1$; near $\infty$, $D\asymp y^{\beta-3}$ with $\beta<2$. Since
    $H(sy)/s^{\beta-\alpha}=\brOf{H(sy)/(sy)^{\beta-\alpha}}y^{\beta-\alpha}\to by^{\beta-\alpha}$ pointwise, dominated convergence and then integration by parts (boundary terms vanish as $y^{\beta-\alpha}G(y)\to0$ at $0$ and $\infty$, using $\alpha<\beta<2$) give
    \begin{equation}\label{eq:lem:plus}
        s^{-(\beta-\alpha)} I_+(s)
        \xrightarrow{s\to0}
        b\int_0^\infty(-G'(y)) y^{\beta-\alpha}\,\dl y
        =
        b (\beta-\alpha)\int_0^\infty y^{\beta-\alpha-1} G(y)\,\dl y
        \eqfs
    \end{equation}
    
    \emph{Step 3 (negative part and conclusion).}
    Applying Step~2 to $-Z$, which replaces $G$ by $y\mapsto G(-y)$ and, by the second part of \eqref{eq:alpha:fast:largesmallball}, satisfies \eqref{eq:lem:global} with the same $b$, yields
    \begin{equation}\label{eq:lem:minus}
        s^{-(\beta-\alpha)} I_-(s)
        \xrightarrow{s\to0}
        b (\beta-\alpha)\int_{-\infty}^0 |y|^{\beta-\alpha-1} G(y)\,\dl y
        \eqfs
    \end{equation}
    Adding \eqref{eq:lem:plus} and \eqref{eq:lem:minus} gives \eqref{eq:lem:goal}. Finally $C_{\alpha,\beta}\in\Rpp$: the integrand $|y|^{\beta-\alpha-1}G(y)$ is positive, behaves like $G(0) |y|^{\beta-\alpha-1}$ near $0$ (integrable as $\beta>\alpha$) and like $|y|^{\beta-3}$ at infinity (integrable as $\beta<2$), and depends only on $\alpha,\beta$.
\end{proof}
\begin{proof}[Proof of \cref{thm:distri:alpha}]
    The first part of the theorem follows from \cref{thm:distri:tran} together with \cref{lmm:alpha:slow:verify}. Now consider the second part.
    
    \emph{Step 0 (inheritance from \cref{thm:distri:tran}).}
    Set 
    \begin{equation}
        \rho(t) := \alpha\,\sign(t)|t|^{\alpha-1}
        \eqcm\quad
        \psi_q(x) := \rho(q-x)
        \quad\text{and}\quad
        F(q) := \Eof{|X-q|^\alpha-|X-m|^\alpha}
        \eqfs
    \end{equation}
    Following the proof of \cref{thm:distri:tran}, we obtain 
    \begin{equation}
        F\pr(q) = \EOf{\rho(q - X)} \quad\text{with}\quad F\pr(m) = 0
    \end{equation}
    as well as
    \begin{equation}\label{eq:distri:alpha:Fpr}
        F\pr(m+s) = \alpha (\alpha-1)  \EOf{\int_0^s|X-m-t|^{\alpha-2} \dl t}
    \end{equation}
    and
    \begin{equation}
        \Eof{\psi_m(X)^2} = \alpha^2 \Eof{|X-m|^{2\alpha-2}} = \alpha^2 \sigma_{2\alpha-2} < \infty
        \eqfs
    \end{equation}
    With 
    \begin{equation}
        \mathbb G_n [f]
        :=
        \frac1{\sqrt n}\sum_{i=1}^n
        \br{f(X_i)-\Eof{f(X)}}
    \end{equation}
    the central limit theorem implies
    \begin{equation}\label{eq:alpha:clt}
        \mathbb G_n[\psi_m]
        \xrsquigarrow{n\to\infty}
        Z,
        \qquad
        Z\sim\mathcal N\brOf{0,\alpha^2\sigma_{2\alpha-2}}
        \eqfs
    \end{equation}
    Only the local behavior of $F$ at $m$ differs from \cref{thm:distri:tran}; here the second derivative $x\mapsto\alpha(\alpha-1)x^{\alpha-2}$ is not integrable near $m$, so the quadratic expansion is replaced by one of order $\beta$.
    
    \emph{Step 1 (local expansion of $F$).}
    The tail hypothesis \eqref{eq:distri:alpha:tail} is precisely the assumption of \cref{lmm:alpha:fast:secondderiv}. The lemma gives
    \begin{equation}
        \EOf{\int_0^s |X-m-t|^{\alpha-2}\,\dl t}
        =
        \sign(s) b C_{\alpha,\beta} |s|^{\beta-1}+\mo o\brOf{|s|^{\beta-1}}
        \quad\text{as }s\to0
        \eqfs
    \end{equation}
    Hence, with
    \begin{equation}\label{eq:alpha:cstar}
        c_\star:=\alpha(\alpha-1) b C_{\alpha,\beta}\in(0,\infty)
    \end{equation}
    and \eqref{eq:distri:alpha:Fpr}, we obtain 
    \begin{equation}\label{eq:alpha:Phi}
        F\pr(m+s)=c_\star\,\sign(s) |s|^{\beta-1}+\mo o\brOf{|s|^{\beta-1}}
        \quad\text{as }s\to0
        \eqfs
    \end{equation}
    Integrating and using $F\pr(m)=0$,
    \begin{equation}\label{eq:alpha:F:expansion}
        F(m+s)-F(m)
        =\int_0^s F\pr(m+\sigma)\,\dl\sigma
        =\frac{c_\star}{\beta} |s|^{\beta}+\mo o\brOf{|s|^{\beta}}
        \quad\text{as }s\to0
        \eqfs
    \end{equation}
    
    \emph{Step 2 (localized criterion).}
    Set $a_n:=n^{\frac1{2(\beta-1)}}$ and, for $t\in\R$,
    \begin{equation}
        V_n(t):=\sum_{i=1}^n
        \brOf{|X_i-m-t/a_n|^\alpha-|X_i-m|^\alpha}
        \eqcm
    \end{equation}
    so that $a_n(m_n-m)\in\argmin_{t}V_n(t)$. With $h:=t/a_n$ and
    $r_h(x):=|x-m-h|^\alpha-|x-m|^\alpha-h\psi_m(x)$ as in
    \cref{thm:distri:tran},
    \begin{equation}
        V_n(t)
        =h\sum_{i=1}^n\psi_m(X_i)
        +n\Eof{r_h(X)}
        +\sum_{i=1}^n\brOf{r_h(X_i)-\Eof{r_h(X)}}
        \eqfs
    \end{equation}
    The two terms balance at the scale
    $\kappa_n:=\sqrt n/a_n=n/a_n^{\beta}$. We analyze $\tilde V_n:=V_n/\kappa_n$, whose argmin
    equals that of $V_n$.
    
    For the linear term, since $h/\kappa_n=(t/a_n)(a_n/\sqrt n)=t/\sqrt n$,
    \begin{equation}
        \frac1{\kappa_n} h\sum_{i=1}^n\psi_m(X_i)
        =t \mathbb G_n[\psi_m]
        \xrsquigarrow{n\to\infty} tZ
        \eqcm
    \end{equation}
    using $\Eof{\psi_m(X)}=0$ and \eqref{eq:alpha:clt}.
    For the drift term, since $n/\kappa_n=a_n^{\beta}$ and $F'(m)=0$,
    \begin{equation}
        \frac1{\kappa_n} n\Eof{r_h(X)}
        =a_n^{\beta}\brOf{F(m+h)-F(m)}
        \xrightarrow{n\to\infty}
        \frac{c_\star}{\beta} |t|^{\beta}
        \eqcm
    \end{equation}
    by \eqref{eq:alpha:F:expansion} with $h=t/a_n\to0$ and
    $a_n^\beta\cdot\mo o(a_n^{-\beta})=\mo o(1)$.
    The centered remainder is $\mo o_{\Pr}(1)$ by the same arguments as in the proof of \cref{thm:distri:tran} using  $\sigma_{2\alpha-2}<\infty$. Combining the three pieces,
    for every fixed $t$,
    \begin{equation}
        \tilde V_n(t)\xrsquigarrow{n\to\infty}
        V(t):=tZ+\frac{c_\star}{\beta} |t|^{\beta}
        \eqfs
    \end{equation}
    
    \emph{Step 3 (argmin).}
    Again with the same arguments as in the proof of \cref{thm:distri:tran} we can apply the argmin continuous mapping theorem for convex processes \cite{Kato2009} (using that $V$ is strictly convex and coercive because $\beta>1$ and $c_\star>0$). We obtain
    \begin{equation}
        a_n(m_n-m)=\argmin_t\tilde V_n(t)
        \rightsquigarrow
        \argmin_t V(t)
        \eqfs
    \end{equation}
    Solving $V\pr(t)=Z+c_\star\,\sign(t)|t|^{\beta-1}=0$ gives the minimizer
    \begin{equation}
        t^\star
        =-\sign(Z)\brOf{\frac{|Z|}{c_\star}}^{\frac1{\beta-1}}
        \eqfs
    \end{equation}
    
    \emph{Step 4 (identification of the limit).}
    Write $Z\overset d=\alpha W$ with $W\sim\mathcal N(0,\sigma_{2\alpha-2})$.
    Then $\sign(Z)=\sign(W)$ and
    $|Z|^{1/(\beta-1)}=\alpha^{1/(\beta-1)}|W|^{1/(\beta-1)}$, so
    \begin{equation}
        t^\star
        \overset d=
        -\brOf{\frac{\alpha}{c_\star}}^{\frac1{\beta-1}}
        \sign(W) |W|^{\frac1{\beta-1}}
        \overset d=
        C_{\alpha,\beta,b}\,\sign(W) |W|^{\frac1{\beta-1}}
        \eqcm
    \end{equation}
    the last step by symmetry of $W$, where
    $C_{\alpha,\beta,b}:=\brOf{\alpha/c_\star}^{1/(\beta-1)}\in\Rpp$ depends
    only on $\alpha,\beta,b$ through \eqref{eq:alpha:cstar}. Therefore
    \begin{equation}
        n^{\frac1{2(\beta-1)}}(m_n-m)
        \rightsquigarrow
        C_{\alpha,\beta,b}\,\sign(W) |W|^{\frac1{\beta-1}}
        \eqcm
    \end{equation}
    which is the claim.
\end{proof}

\subsection{Optimality}
\begin{proof}[Proof of \cref{prop:optimality}]
    As $\mc Q$ contains two distinct points $q,p$, the image of $\geodft qp$ is isometric to $[0,\ol qp]$, and restricting it to a subinterval centered at its midpoint yields a unit-speed geodesic $\gamma\colon[-h,h]\to\mc Q$ for every $h \in (0, \frac12 \ol qp]$.
    The image $\gamma([-h,h])$ is closed and convex, so the $\alpha$-Fr\'echet means of $P^\rho$ and of its empirical versions lie on $\gamma$ and may be computed in $\R$ \cite[Proposition 5.2]{varinequ}. We identify $\gamma(t)$ with $t\in[-h,h]$ throughout.
    The objective function $x\mapsto\frac\rho2\absof{x+h}^\alpha + \frac\rho2\absof{x-h}^\alpha + (1-\rho)\absof{x}^\alpha$ is strictly convex and symmetric about $0$, so $m=0$. Hence $\ol Ym = h$ with probability $\rho$ and $\ol Ym = 0$ with probability $1-\rho$, which gives $\sigma_\varphi = \rho h^\varphi$ for $\varphi\in\Rpp$ and, using $(\alpha-1)\phi = 2-\alpha$,
    \begin{equation}
        \sigma_{\alpha-1}^{\phi} \sigma_{2\alpha-2}^{\frac12}
        =
        \rho^{\phi+\frac12}  h^{(\alpha-1)\phi + (\alpha-1)}
        =
        \rho^{\phi+\frac12}  h
        \eqcm\qquad
        \sigma_\alpha^{\frac1\alpha} = \rho^{\frac1\alpha}  h
        \eqfs
    \end{equation}
    The identity for $L_1/L_2$ follows from $\phi + 1 - \frac1\alpha = \frac{\alpha(2-\alpha)+(\alpha-1)^2}{\alpha(\alpha-1)} = \frac1{\alpha(\alpha-1)}$, that is, $\phi + \frac12 - \frac1\alpha = \nu_\alpha$, together with $\frac1{\alpha(\alpha-1)}-\frac12 = \nu_\alpha$.
    
    \ref{prop:optimality:first}:
    Let $\rho=1$. Then $\sigma_{\alpha-2} = h^{\alpha-2}$ and $\sigma_{\alpha-1}^{\phi} = h^{(\alpha-1)\phi} = h^{2-\alpha}$, which are reciprocal.
    The hypothesis of \cref{cor:lower:alpha}\,(i) holds for every $\beta>2$, as $\PrOf{\ol Ym\leq t} = 0$ for $t<h$, and $\sigma_{2\alpha-2} = h^{2\alpha-2}<\infty$.
    The variance of the limit is $(\alpha-1)^{-2}\sigma_{\alpha-2}^{-2}\sigma_{2\alpha-2} = (\alpha-1)^{-2} h^{2(2-\alpha)} h^{2\alpha-2} = (\alpha-1)^{-2} h^2$.
    Hence, applying \cref{cor:lower:alpha}\,(i) with $g(x) = x^\alpha$ and taking the power $\frac1\alpha$,
    \begin{equation}
        \liminf_{n\to\infty} \sqrt n  \EOf{\ol m{m_n}^\alpha}^{\frac1\alpha}
        \geq
        \frac{h}{\alpha-1} \EOf{\absof{Z}^\alpha}^{\frac1\alpha}
        \eqcm
    \end{equation}
    where $Z\sim\mc N(0,1)$.
    As $\sqrt n  L_1(n) = h$ and $\alpha - 1 < 1$, we obtain
    \begin{equation}
        \liminf_{n\to\infty} \frac{\EOf{\ol m{m_n}^\alpha}^{\frac1\alpha}}{L_1(n)}
        \geq
        \frac{\EOf{\absof{Z}^\alpha}^{\frac1\alpha}}{\alpha-1}
        \geq
        \EOf{\absof{Z}}
        =
        \sqrt{\frac2\pi}
        >
        \frac34
        \eqfs
    \end{equation}
    
    \ref{prop:optimality:last}:
    Let $n\geq2$, $\rho=n^{-2}$, and let $E$ be the event that exactly one of the samples equals $\gamma(-h)$ and all others equal $\gamma(0)$. As $n\rho = \frac1n \leq \frac12$, Bernoulli's inequality gives $(1-\rho)^{n-1} \geq 1-n\rho \geq \frac12$ and hence
    \begin{equation}
        \PrOf{E} = \frac{n\rho}{2}\br{1-\rho}^{n-1} \geq \frac{n\rho}{4}
        \eqfs
    \end{equation}
    On $E$, the empirical objective function is $x \mapsto \absof{x+h}^\alpha + (n-1)\absof{x}^\alpha$, which is strictly convex with minimizer $-t_n$ for some $t_n \in (0,h)$ characterized by $(n-1) t_n^{\alpha-1} = (h-t_n)^{\alpha-1}$. Thus,
    \begin{equation}
        \ol m{m_n}
        =
        t_n
        =
        \frac{h}{(n-1)^{\frac1{\alpha-1}}+1}
        \geq
        \frac{h}{2}  n^{-\frac1{\alpha-1}}
        \eqfs
    \end{equation}
    Hence, using $\frac1\alpha - \frac1{\alpha-1} = -\frac1{\alpha(\alpha-1)}$ and $4^{\frac1\alpha}\leq4$,
    \begin{equation}
        \EOf{\ol m{m_n}^\alpha}^{\frac1\alpha}
        \geq
        \PrOf{E}^{\frac1\alpha}  \frac{h}{2}  n^{-\frac1{\alpha-1}}
        \geq
        \frac{\rho^{\frac1\alpha}  h}{2 \cdot 4^{\frac1\alpha}}  n^{-\frac1{\alpha(\alpha-1)}}
        \geq
        \frac18 L_2(n)
        \eqfs
        \qedhere
    \end{equation}
\end{proof}
\begin{proof}[Proof of \cref{ex:tripod}]
    Throughout, set $\nu := \alpha-1 \in (0,1]$ and write $Y_i = (L_i, R_i)$ for the sample.
    For $j\in\cb{1,2,3}$ let
    \begin{equation}
        I_j := \setByEle{i\in\cb{1,\dots,n}}{L_i = j}
        \eqcm\qquad
        N_j := \absof{I_j}
        \eqcm\qquad
        S_j := \sum_{i\in I_j} R_i^\nu
        \eqcm\qquad
        T := \sum_{i=1}^n R_i^\nu
        \eqfs
    \end{equation}
    In both parts we have $R\geq1$ almost surely, so $T\geq n>0$.
    
    \emph{Step 1: A stickiness criterion.}
    For $j\in\cb{1,2,3}$ and $s\in\Rp$ set
    \begin{equation}
        f_{n,j}(s)
        :=
        \sum_{i=1}^n d(Y_i, (j,s))^\alpha
        =
        \sum_{i\in I_j} \absof{R_i-s}^\alpha + \sum_{i\notin I_j} \br{R_i+s}^\alpha
        \eqcm
    \end{equation}
    \begin{equation}\label{eq:tripod:G}
        G_{n,j}(s)
        :=
        \sum_{i\in I_j} \sign(R_i-s)\absof{R_i-s}^{\nu} - \sum_{i\notin I_j} \br{R_i+s}^{\nu}
        \eqfs
    \end{equation}
    As $\alpha>1$, the function $x\mapsto\absof x^\alpha$ is strictly convex and continuously differentiable on $\R$, so $f_{n,j}$ is strictly convex and continuously differentiable on $\Rp$ with
    \begin{equation}\label{eq:tripod:deriv}
        f_{n,j}'(s) = -\alpha  G_{n,j}(s)
        \eqcm
    \end{equation}
    and $G_{n,j}$ is decreasing. We claim that, for every $s\in\Rp$,
    \begin{equation}\label{eq:tripod:crit}
        \cb{\ol o{m_n} > s}
        =
        \biguplus_{j=1}^3 \cb{G_{n,j}(s) > 0}
        \eqcm
    \end{equation}
    where $\uplus$ denotes a disjoint union.
    
    First, the union is disjoint: since $G_{n,j}$ is decreasing, $G_{n,j}(s)>0$ implies $G_{n,j}(0) = 2S_j - T > 0$; if this held for two distinct $j,k$, then $2T \geq 2(S_j+S_k) > 2T$, a contradiction.
    
    Next, let $G_{n,j}(s)>0$. By \eqref{eq:tripod:deriv} and monotonicity of $G_{n,j}$ we have $f_{n,j}' < 0$ on $[0,s]$, so the minimizer of the strictly convex function $f_{n,j}$ lies in $(s,\infty)$, say at $t>s$, and $f_{n,j}(t) < f_{n,j}(0)$. Since $f_{n,j}(0) = \sum_i \ol{Y_i}o^\alpha$ for every $j$, this yields $m_n \neq o$, hence $m_n = (k,u)$ with $u>0$ for some leg $k$. If we had $k\neq j$, then $u$ minimizes $f_{n,k}$, so $f_{n,k}'(0)<0$ by strict convexity, i.e., $G_{n,k}(0)>0$, while also $G_{n,j}(0)\geq G_{n,j}(s)>0$, contradicting the disjointness shown above. Hence $k=j$, $u=t>s$, and $\ol o{m_n}>s$.
    
    Conversely, let $\ol o{m_n}>s$, say $m_n = (j,t)$ with $t>s$. Then $t$ minimizes $f_{n,j}$, and strict convexity gives $f_{n,j}'(s)<0$, i.e., $G_{n,j}(s)>0$. This proves \eqref{eq:tripod:crit}. Since $m=o$, we have $\ol m{m_n} = \ol o{m_n}$ throughout.
    
    \emph{Step 2: Proof of (i).}
    Let $R=1$ almost surely. Then $G_{n,j}(0) = N_j - (n-N_j) = 2N_j-n$, so \eqref{eq:tripod:crit} with $s=0$ and $N_j\sim\ms{Bin}(n,\frac13)$ give
    \begin{equation}
        \PrOf{m_n\neq m}
        =
        \sum_{j=1}^3 \PrOf{2N_j > n}
        =
        3 \PrOf{B_n>\frac n2}
        \eqfs
    \end{equation}
    Moreover, as $\alpha$-Fr\'echet means stay within the closed convex hull of their distribution \cite[Proposition 5.2]{varinequ}, we have $\ol m{m_n}\leq1$ almost surely.
    
    Set $k_n := \lfloor n/2\rfloor +1$, so that $\cb{B_n>n/2} = \cb{B_n\geq k_n}$. For $k\geq k_n$ we have $n-k < n/2 < k+1$ and hence
    \begin{equation}\label{eq:tripod:ratio}
        \frac{\PrOf{B_n=k+1}}{\PrOf{B_n=k}}
        =
        \frac{n-k}{k+1}\cdot\frac{1/3}{2/3}
        \leq
        \frac12
        \eqfs
    \end{equation}
    Consequently, $\PrOf{B_n = k_n+l} \leq 2^{-l}\PrOf{B_n=k_n}$ for all $l\in\N_0$ and
    \begin{equation}\label{eq:tripod:tailloc}
        \PrOf{B_n=k_n}
        \leq
        \PrOf{B_n>\frac n2}
        \leq
        2 \PrOf{B_n=k_n}
        \eqfs
    \end{equation}
    By Stirling's formula there are constants $0<c\leq C<\infty$ with
    $c n^{-\frac12}e^{nH(k/n)} \leq \binom nk \leq C n^{-\frac12}e^{nH(k/n)}$
    for all $n\in\N$ and all $k$ with $k/n\in[\frac14,\frac34]$, where $H(u) := -u\log u - (1-u)\log(1-u)$. With $D(u) := u\log(3u) + (1-u)\log\br{\frac32(1-u)}$ this gives
    \begin{equation}
        \PrOf{B_n = k} = \Theta\brOf{n^{-\frac12}e^{-nD(k/n)}}
        \eqcm\qquad k/n\in\left[\tfrac14,\tfrac34\right]
        \eqfs
    \end{equation}
    As $D$ is Lipschitz on $[\frac14,\frac34]$, say with constant $L$, and $\absof{k_n/n - \frac12}\leq \frac1n$, we obtain $\absof{nD(k_n/n) - nD(\frac12)} \leq L$, and $D(\frac12) = \frac12\log\frac98$ yields
    \begin{equation}\label{eq:tripod:loc}
        \PrOf{B_n=k_n}
        =
        \Theta\brOf{n^{-\frac12}\br{\tfrac89}^{\frac n2}}
        \eqfs
    \end{equation}
    Together with \eqref{eq:tripod:tailloc} this proves the first display of (i).
    
    Turning to the moment bound, fix $j$ and $k>n/2$ and work on $\cb{N_j = k}$. By Step 1, $m_n = (j,t)$ with $t\in(0,1]$ the unique zero of $G_{n,j}$, i.e.,
    $k(1-t)^\nu = (n-k)(1+t)^\nu$. Setting $x := \frac{2k-n}{n}\in(0,1]$, so that $\frac{k}{n-k} = \frac{1+x}{1-x}$, this is equivalent to
    \begin{equation}
        \artanh(t) = \frac1\nu \artanh(x)
        \eqcm\qquad\text{i.e.}\qquad
        t = \tanh\brOf{\tfrac1\nu\artanh(x)}
        \eqcm
    \end{equation}
    with the convention $\tanh(\infty)=1$ for $x=1$. Since $\frac1\nu\geq1$ and $\tanh$ is concave and increasing on $\Rp$ with $\tanh(0)=0$, we get $\tanh(u) \leq \tanh(\frac1\nu u)\leq\frac1\nu\tanh(u)$ for $u\in\Rp$ and therefore
    \begin{equation}\label{eq:tripod:disp}
        x \leq \ol m{m_n} \leq \frac{x}{\nu}
        \eqcm\qquad
        x = \frac{2N_j-n}{n}
        \eqfs
    \end{equation}
    By \eqref{eq:tripod:crit}, \eqref{eq:tripod:disp}, and disjointness,
    \begin{align}
        \EOf{\ol m{m_n}^\alpha}
        &=
        \Theta\brOf{n^{-\alpha}\sum_{j=1}^3\ \sum_{k>n/2} (2k-n)^\alpha\PrOf{N_j = k}}
        \\&=
        \Theta\brOf{n^{-\alpha}\sum_{l=0}^\infty\br{2k_n-n+2l}^\alpha\PrOf{B_n = k_n+l}}
        \eqfs
    \end{align}
    The summand with $l=0$ is at least $\PrOf{B_n=k_n}$, and by \eqref{eq:tripod:ratio} and $2k_n-n\leq2$ the whole sum is at most $\PrOf{B_n=k_n}\sum_{l=0}^\infty(2l+2)^\alpha2^{-l}<\infty$. Hence, with \eqref{eq:tripod:loc},
    \begin{equation}
        \EOf{\ol m{m_n}^\alpha}
        =
        \Theta\brOf{n^{-\alpha}\PrOf{B_n=k_n}}
        =
        \Theta\brOf{n^{-\alpha-\frac12}\br{\tfrac89}^{\frac n2}}
        \eqfs
    \end{equation}
    
    \emph{Step 3: Setup for (ii).}
    Let $\lambda>\alpha$ and $\Prof{R\geq r} = r^{-\lambda}$ for $r\geq1$. Set $\beta := \frac\lambda\nu$. As $\alpha\leq2$, we have $\frac{\alpha}{\alpha-1}\geq2$ and hence
    \begin{equation}\label{eq:tripod:beta}
        \beta = \frac{\lambda}{\alpha-1} > \frac{\alpha}{\alpha-1}\geq 2
        \eqfs
    \end{equation}
    In particular $\Eof{R^\alpha} = \frac{\lambda}{\lambda-\alpha}<\infty$, i.e., $\Eof{\ol Ym^\alpha}<\infty$, and $\Eof{R^{2\nu}} = \frac{\lambda}{\lambda-2\nu}<\infty$ because $2\nu\leq\alpha<\lambda$.
    
    Fix $s\in\Rp$. By \eqref{eq:tripod:G} we may write $G_{n,1}(s) = \sum_{i=1}^n V_i(s)$ with $V_1(s),\dots,V_n(s)$ independent and identically distributed copies of
    \begin{equation}
        V(s) :=
        \begin{cases}
            \sign(R-s)\absof{R-s}^\nu, & L = 1,\\
            -\br{R+s}^\nu, & L\neq1,
        \end{cases}
    \end{equation}
    and we set $c(s) := -\Eof{V(s)} = \frac23\Eof{\br{R+s}^\nu} - \frac13\EOf{\sign(R-s)\absof{R-s}^\nu}$. Using $\absof{\sign(R-s)\absof{R-s}^\nu}\leq (R+s)^\nu$ and $R\leq R+s\leq R(1+s)$, we obtain
    \begin{equation}\label{eq:tripod:c}
        \frac13\br{1+s}^\nu
        \leq
        \frac13\EOf{\br{R+s}^\nu}
        \leq
        c(s)
        \leq
        \Eof{R^\nu}\br{1+s}^\nu
        \eqcm
    \end{equation}
    and, with $A := 9\Eof{R^{2\nu}}<\infty$,
    \begin{equation}\label{eq:tripod:var}
        \EOf{V(s)^2}
        \leq
        \EOf{\br{R+s}^{2\nu}}
        \leq
        \Eof{R^{2\nu}}\br{1+s}^{2\nu}
        \leq
        A c(s)^2
        \eqfs
    \end{equation}
    Finally, $V(s)\leq R^\nu$ almost surely with $\PrOf{R^\nu\geq v} = v^{-\beta}$ for $v\geq1$, and $v^{-\beta}>1$ for $v\in(0,1)$, so
    \begin{equation}\label{eq:tripod:Vtail}
        \PrOf{V(s)\geq v} \leq v^{-\beta}
        \qquad\text{for all } v\in\Rpp
        \eqfs
    \end{equation}
    
    \emph{Step 4: A uniform upper bound.}
    We claim that for every $K\geq1$ there is $C<\infty$, depending only on $\alpha$, $\lambda$, and $K$, such that
    \begin{equation}\label{eq:tripod:ub}
        \PrOf{\ol m{m_n}>s}
        \leq
        C n^{1-\beta}\br{1+s}^{-\lambda} + C n^{-\frac K2}
        \qquad\text{for all } s\in\Rp,\ n\geq 2K
        \eqfs
    \end{equation}
    Fix $s$, put $W_i := V_i(s)+c(s)$, $\eta := n c(s)$ and $b := \eta/K$, and truncate, $\bar W_i := \min(W_i, b)$. Then
    \begin{equation}
        \PrOf{G_{n,1}(s)>0}
        =
        \PrOf{\sum_{i=1}^n W_i > \eta}
        \leq
        n\PrOf{W_1 > b} + \PrOf{\sum_{i=1}^n \bar W_i > \eta}
        \eqfs
    \end{equation}
    For the first term, $n\geq2K$ implies $c(s)\leq\frac b2$, so that $b - c(s)\geq \frac b2$, and \eqref{eq:tripod:Vtail} gives
    \begin{align}
        n\PrOf{W_1>b}
        &=
        n\PrOf{V(s) > b - c(s)}
        \\&\leq
        n\PrOf{V(s) \geq \frac b2}
        \\&\leq
        n\brOf{\frac{n c(s)}{2K}}^{-\beta}
        \\&=
        (2K)^\beta  n^{1-\beta} c(s)^{-\beta}
        \eqfs
    \end{align}
    For the second term we use the one-sided Bennett inequality \cite[Theorem 2.9]{Boucheron2013}, which holds for independent summands with nonpositive mean bounded above by $b$: as $\Eof{\bar W_i}\leq\Eof{W_i} = 0$, $\bar W_i\leq b$, and $\Sigma := \sum_i\Eof{\bar W_i^2}\leq n\Eof{V(s)^2}\leq nAc(s)^2$ by \eqref{eq:tripod:var},
    \begin{equation}
        \PrOf{\sum_{i=1}^n \bar W_i > \eta}
        \leq
        \exp\brOf{-\frac{\Sigma}{b^2}h\brOf{\frac{b\eta}{\Sigma}}}
        \leq
        \brOf{1+\frac{b\eta}{\Sigma}}^{-\frac{\eta}{2b}}
        \leq
        \brOf{\frac{n}{KA}}^{-\frac K2}
        \eqcm
    \end{equation}
    where $h(u) = (1+u)\log(1+u)-u$ and we used $h(u)\geq\frac u2\log(1+u)$, which follows from $\log(1+u)\geq\frac{2u}{2+u}$, together with $\frac{b\eta}{\Sigma}\geq\frac{n}{KA}$ and $\frac{\eta}{2b} = \frac K2$. Combining both bounds, multiplying by $3$ according to \eqref{eq:tripod:crit}, and inserting $c(s)^{-\beta}\leq 3^\beta(1+s)^{-\nu\beta} = 3^\beta(1+s)^{-\lambda}$ from \eqref{eq:tripod:c} proves \eqref{eq:tripod:ub}.
    
    \emph{Step 5: A lower bound.}
    We claim that for every fixed $s\in\Rp$ there are $c>0$ and $n_0\in\N$ with
    \begin{equation}\label{eq:tripod:lb}
        \PrOf{\ol m{m_n}>s} \geq c n^{1-\beta}
        \qquad\text{for all } n\geq n_0
        \eqfs
    \end{equation}
    Let $E_i := \cb{V_i(s) > 2nc(s)}$ and $F_i := \cb{\sum_{k\neq i}V_k(s) > -2nc(s)}$; note that $E_i$ and $F_i$ are independent and that $E_i\cap F_i\subset\cb{G_{n,1}(s)>0}$. Since $R\geq1$, for $n$ large enough
    \begin{equation}
        p_n
        :=
        \PrOf{E_1}
        =
        \frac13\PrOf{R > s+\br{2nc(s)}^{\frac1\nu}}
        \geq
        \frac13\brOf{2\br{2nc(s)}^{\frac1\nu}}^{-\lambda}
        =
        \frac{2^{-\lambda}}{3}\br{2nc(s)}^{-\beta}
        \eqfs
    \end{equation}
    Conversely, as $2nc(s)\geq\frac{2n}3\geq1$ by \eqref{eq:tripod:c}, we also have $p_n\leq\frac13\PrOf{R^\nu > 2nc(s)}\leq\frac13\br{2nc(s)}^{-\beta}$, so that $np_n = \mo O\br{n^{1-\beta}}\to0$ by \eqref{eq:tripod:beta}.
    By \eqref{eq:tripod:var} and Chebyshev's inequality,
    $\PrOf{F_i\compl} \leq \frac{(n-1)\Eof{V(s)^2}}{\br{(n+1)c(s)}^2}\leq \frac An$,
    so $\PrOf{F_i}\geq\frac12$ for $n$ large. Hence, for $n$ large enough that additionally $np_n\leq\frac12$, Bonferroni's inequality gives
    \begin{align}
        \PrOf{G_{n,1}(s)>0}
        &\geq
        \PrOf{\bigcup_{i=1}^n \br{E_i\cap F_i}}
        \\&\geq
        \sum_{i=1}^n\PrOf{E_i}\PrOf{F_i} - \sum_{i<k}\PrOf{E_i}\PrOf{E_k}
        \\&\geq
        \frac n2 p_n - \frac{\br{np_n}^2}{2}
        \\&\geq
        \frac n4 p_n
        \eqcm
    \end{align}
    which is of order $n^{1-\beta}$. With \eqref{eq:tripod:crit} this proves \eqref{eq:tripod:lb}.
    
    \emph{Step 6: Proof of (ii).}
    Applying \eqref{eq:tripod:ub} with $s=0$ and any $K\geq2(\beta-1)$, and \eqref{eq:tripod:lb} with $s=0$, we obtain
    $\PrOf{m_n\neq m} = \Theta\br{n^{1-\beta}}$, which is the first assertion.
    
    Turning to the moment bound, note first that on the event $\cb{\max_{i\leq n}R_i < s}$ every summand in \eqref{eq:tripod:G} is negative, so $G_{n,j}(s)<0$ for all $j$ and \eqref{eq:tripod:crit} yields $\ol m{m_n}\leq s$. Hence, as $\PrOf{R\geq s} = s^{-\lambda}$ for $s\geq1$,
    \begin{equation}\label{eq:tripod:crude}
        \PrOf{\ol m{m_n}>s}
        \leq
        \PrOf{\max_{i\leq n}R_i\geq s}
        \leq
        n s^{-\lambda}
        \eqcm\qquad
        s\geq1
        \eqfs
    \end{equation}
    Choose $K \geq \frac{2(\beta-1+\alpha/\lambda)}{1-\alpha/\lambda}$, let $C$ be as in \eqref{eq:tripod:ub}, and set $s_n := n^{\frac{K/2+1}{\lambda}}$. Splitting the integral at $s_n$ and using \eqref{eq:tripod:ub} below and \eqref{eq:tripod:crude} above $s_n$,
    \begin{align}
        \EOf{\ol m{m_n}^\alpha}
        &=
        \int_0^\infty \alpha s^{\alpha-1}\PrOf{\ol m{m_n}>s}\dl s
        \\&\leq
        C n^{1-\beta}\int_0^\infty \alpha s^{\alpha-1}\br{1+s}^{-\lambda}\dl s
        + C n^{-\frac K2}s_n^\alpha
        + \frac{\alpha n}{\lambda-\alpha}s_n^{\alpha-\lambda}
        \eqfs
    \end{align}
    The first integral is finite because $\lambda>\alpha$. The last two terms are both of order $n^{-\frac K2\br{1-\frac\alpha\lambda}+\frac\alpha\lambda}$, which is at most $n^{1-\beta}$ by the choice of $K$. Hence $\EOf{\ol m{m_n}^\alpha} = \mo O\br{n^{1-\beta}}$. Conversely, \eqref{eq:tripod:lb} with $s=1$ gives
    $\EOf{\ol m{m_n}^\alpha} \geq \PrOf{\ol m{m_n}>1} \geq c  n^{1-\beta}$.
    Thus $\EOf{\ol m{m_n}^\alpha} = \Theta\br{n^{1-\beta}}$, and $1-\beta<1-\frac{\alpha}{\alpha-1} = -\frac1{\alpha-1}$ by \eqref{eq:tripod:beta}.
\end{proof}

\end{document}